\newtheorem{mthm}{Theorem}
\renewcommand\themthm{\Alph{mthm}}
\newtheorem{mcor}[mthm]{Corollary}
\newtheorem{thm}{Theorem}[section]
\newtheorem{prop}[thm]{Proposition}
\newtheorem{lem}[thm]{Lemma}
\newtheorem{cor}[thm]{Corollary}
\newtheorem*{thm*}{Theorem}
\theoremstyle{definition}
\newtheorem{definition}[thm]{Definition}
\newtheorem{example}[thm]{Example}
\newtheorem*{example*}{Example}
\theoremstyle{remark}
\newtheorem*{remark}{Remark}
\numberwithin{equation}{section}
\DeclareMathAlphabet{\mathpzc}{OT1}{pzc}{m}{it}
\renewcommand{\mathcal}[1]{\text{$\mathpzc{#1}$}}
\newcommand{\CC}{\mathbbm{C}}
\newcommand{\RR}{\mathbbm{R}}
\newcommand{\ZZ}{\mathbbm{Z}} 
\newcommand{\HH}{\mathbbm{H}}
\newcommand{\id}{\mathrm{id}}
\newcommand{\ad}{\mathrm{ad}}
\newcommand{\Ad}{\mathrm{Ad}}
\newcommand{\Mat}{\mathrm{Mat}}
\newcommand{\End}{\mathrm{End}}
\newcommand{\Iso}{\group{Iso}}
\newcommand{\Aff}{\group{Aff}}
\newcommand{\GL}{\group{GL}}
\newcommand{\PGL}{\group{PGL}}
\newcommand{\SL}{\group{SL}}
\renewcommand{\SS}{\group{S}}
\newcommand{\TT}{\group{T}}
\newcommand{\SO}{\group{SO}}
\newcommand{\SU}{\group{SU}}
\newcommand{\UU}{\group{U}}
\newcommand{\OO}{\group{O}}
\newcommand{\Aut}{\group{Aut}}
\newcommand{\Hol}{\group{Hol}}
\newcommand{\Zen}{\mathrm{Z}}
\newcommand{\Nor}{\mathrm{N}}
\newcommand{\Inn}{\mathrm{Inn}}
\newcommand{\group}{\mathrm} 
\newcommand{\ag}[1]{\boldsymbol{#1}} % algebraic group
\newcommand{\aL}{\ag{L}}
\newcommand{\aH}{\ag{H}}
\newcommand{\aU}{\ag{U}}
\newcommand{\aT}{\ag{T}}
\newcommand{\aG}{\ag{G}}
\newcommand{\ac}[1]{\overline{#1}^{\rm z}}
\renewcommand{\rho}{\varrho}
\renewcommand{\tilde}{\widetilde}
\renewcommand{\bar}{\overline}
\newcommand{\longto}{\ \longrightarrow\ }
\renewcommand{\epsilon}{\varepsilon}
\newcommand{\zsp}{\mathbf{0}} 
\newcommand{\met}{\langle\cdot,\cdot\rangle}
\newcommand{\one}{\{e\}}
\DeclareMathOperator{\im}{\mathrm{im}}
\DeclareMathOperator{\Span}{\mathrm{span}}
\DeclareMathOperator{\supp}{\mathrm{supp}}
\newcommand{\Tan}{\mathrm{T}}
\newcommand{\frg}{\mathcal{G}}
\newcommand{\frh}{\mathcal{H}}
\newcommand{\fra}{\mathcal{A}}
\newcommand{\frb}{\mathcal{B}}
\newcommand{\frn}{\mathcal{N}}
\newcommand{\frz}{\mathcal{Z}}
\newcommand{\frk}{\mathcal{K}}
\newcommand{\frj}{\mathcal{J}}
\newcommand{\frf}{\mathcal{F}}
\newcommand{\fro}{\mathcal{O}}
\newcommand{\fru}{\mathcal{U}}
\newcommand{\frs}{\mathcal{S}}
\newcommand{\frp}{\mathcal{P}}
\newcommand{\frt}{\mathcal{T}}
\newcommand{\frr}{\mathcal{R}}
\newcommand{\zen}{\mathcal{Z}}
\newcommand{\nor}{\mathcal{N}}
\newcommand{\gl}{\mathcal{GL}}
\newcommand{\der}{\mathcal{Der}}
\newcommand{\g}{\mathsf{g}}
\newcommand{\h}{\mathsf{h}}
\newcommand{\om}{\omega}
\DeclareMathOperator{\J}{\mathsf{J}}
\renewcommand{\d}{\mathrm{d}}
\renewcommand{\i}{\mathrm{i}}
\newcommand{\e}{\mathrm{e}}
\renewcommand{\phi}{\varphi}
\newcommand{\perpo}{{\perp_\omega}}
\renewcommand{\mod}{\bmod}
\newcommand{\gs}{\frg_{\rm s}}
\newcommand{\cf}{cf.\ }
\begin{document}

\baselineskip=0.48cm

%\linenumbers

\title[Pseudo-Hermitian homogeneous spaces of finite volume]{Rigidity of pseudo-Hermitian homogeneous spaces of finite volume}

\author[Baues]{Oliver Baues}
\address{Oliver Baues, Department of Mathematics, Chemin du Mus\'ee 23, University of Fribourg, CH-1700 Fribourg,  Switzerland} 
\email{oliver.baues@unifr.ch}

\author[Globke]{Wolfgang Globke}
\address{Wolfgang Globke, Faculty of Mathematics, University of Vienna, Oskar-Morgenstern-Platz 1, 1090 Vienna, Austria}
\email{wolfgang.globke@univie.ac.at}

\author[Zeghib]{Abdelghani Zeghib}
\address{Abdelghani Zeghib, \'Ecole Normale Sup\'erieure de Lyon, 
Unit\'e de Math\'ematiques Pures et Appliqu\'ees,
46 All\'ee d'Italie,
69364 Lyon,
France}
\email{abdelghani.zeghib@ens-lyon.fr}

\date{\today}

\subjclass[2010]{Primary 53C50; Secondary 32M10, 53C55, 57S20}

\begin{abstract}
Let $M$ be a pseudo-Hermitian homogeneous space of finite volume.
We show that $M$ is compact and the identity component $G$ of the group of
holomorphic isometries of $M$ is compact. If $M$ is simply connected, then
even the full group of holomorphic isometries is compact.
These results stem from a careful analysis of the Tits fibration of
$M$, which is shown to have a torus as its fiber.
The proof builds on foundational results on the automorphisms
groups of compact almost pseudo-Hermitian homogeneous spaces.
%In case $M$ has non-zero Euler characteristic, its group of holomorphic
%isometries is compact, $G$ is semisimple, and $M$ splits as a product
%of pseudo-Hermitian homogeneous spaces for the simple factors of $G$.
It is known that a compact homogeneous pseudo-K\"ahler manifold
splits as a product of a complex torus and a rational homogeneous variety,
according to the Levi decomposition of $G$.
Examples show that compact homogeneous pseudo-Hermitian manifolds in
general do not split in this way.
\end{abstract}

\maketitle

\tableofcontents

% !TEX root = pseudoHermitian.tex

%%%%%%%%%%%%%%%%%%%%%%%
\section{Introduction and main results}
\label{sec:intro}
%%%%%%%%%%%%%%%%%%%%%%%

%%%
\subsection{Some directions in Gromov's vague pseudo-Riemannian conjecture}

Understanding when the isometry group of a pseudo-Riemannian 
structure acts non-properly, particularly when the isometry group
$\Iso(M,\g)$ of a compact pseudo-Riemannian manifold $(M,\g)$ is
non-compact, is a paradigmatic case of Gromov's vague conjecture stipulating 
that rigid geometric structures with large automorphism groups are
classifiable.
It is also a fundamental question in geometric dynamics that has been
investigated in many works with much progress, especially in the lower
index case, for example the Lorentzian index \cite{AS,DAG,zeghib} or
the authors' results on index two in \cite{BGZ}.
Indeed, let us observe for instance that for any semi\-simple Lie group $G$
with a uniform lattice $\Gamma$, the Killing form endows $M= G/\,\Gamma$
with  a pseudo-Riemannian metric whose isometry group contains $G$, acting
by left-translations, so that $M$ is a homogeneous pseudo-Riemannian
manifold.
However, these are not the only homogeneous pseudo-Riemannian manifolds,
which in general have the form $G/H$ where $H$ is not discrete.
Herein lies the difficulty of the problem.
So, even with an additional homogeneity hypothesis, the pseudo-Riemannian
problem stays intractable.

%We observe that Gromov's conjecture actually has two sides to it,
%one geometric and another algebraic.
%That is, assuming the pseudo-Riemannian metric (or a general rigid geometric
%structure) has  a large automorphism group, the geometric side consists in
%proving that  it is built locally from homogeneous blocks.
%The algebraic side consists in providing classifications of sorts,
%assuming (local) homogeneity.
%%(actually, one may add a third mixed side where one intends to understand how (local) homogeneous blocks are glued?).
%Let us observe that the ``algebraic'' side   seems by no means obvious, for
%instance, even a full understanding of the apparently most elementary case of a locally flat compact
%manifold (meaning locally modelled on $(\Aff(\RR^n), \RR^n)$) seems out of
%reach.
%Also, work by Benoist et al. \cite{BFL} on Anosov flows with
%smooth distributions is essentially algebraic.
%Similarly, our present work is algebraic in nature.

%%%
\subsubsection{Complex framework}

Our approach here in testing Gromov's vague
conjecture is to enrich the pseudo-Riemannian structure by assuming the
existence of a compatible complex structure.
In other words, we consider \emph{pseudo-Hermitian structures}.
More precisely, a \emph{pseudo-Hermitian metric} $\g$ on a complex manifold
$(M, \J)$ is a pseudo-Riemannian metric compatible with $\J$:
$\g(\J\cdot, \J\cdot) = \g(\cdot,\cdot)$.
Here we are interested in the automorphism group of such a structure, which
is the group of holomorphic transformations preserving the pseudo-Hermitian
metric.

% %especially from a dynamical point of view. More precisely,  when does the automorphism group of a pseudo-%Hermitian 
%% metric act non-properly, particularly, when is the automorphism group of a pseudo-Hermitian metric on a compact manifold 
% %is non-compact? Can one classify them? The same question has been widely investigated in the most general pseudo-Riemannian case. The idea 
%% here is to add the complex structure and ask if this introduces  extra rigidity allowing to get  prompt classification  %results. 
 
%In an opposite direction, even if a complex structure is not concerned by
%Gromov's vague conjecture since it is not a (local) rigid geometric
%structure in Gromov's sense, its automorphism group is generally ``small'' for
%global reasons. For example, in the compact case, it is a finite-dimensional
%Lie group, and one may ask about the exceptional cases where this
%automorphism group is large. However, this is still too vague, and adding 
%a pseudo-Hermitian structure may reduce the automorphism group considerably.
 
% 
%% compact complex manifolds have in general a small automorphism group and one may ask about the exceptionnal %ones having a large automorphism group. However such a question can be 
% 
% %only in some classes like the k\"ahler one or even more restrctive. So the idea here is to enrich the complex structure %by adding a pseudo-Hermitian metric.
% 
% %Having this philosophy in mind, let us first restrict consideration to connected groups...
To illustrate the effect of the complex structure, let us observe that the
previously introduced pseudo-Riemannian homogeneous examples $G/\,\Gamma$ cannot be
pseudo-Hermitian.
Indeed, the induced complex structure and pseudo-Hermitian product on the
Lie algebra $\frg$ of $G$ are $\Ad(\Gamma)$-invariant and hence
$\Ad(G)$-invariant by the Borel Density Theorem (here we assume $G$ to be
semisimple without compact factors).
For any $x\in\frg$, the adjoint operator $\ad(x)$ is skew with
respect to the pseudo-Hermitian product, and $\ad(\J x) = \J \ad(x)$. Now, the point is that the Lie algebra $\fru(p, q)$ of the unitary group of a
pseudo-Hermitian product of signature $(p, q)$ on $\CC^n$
is totally real in $\Mat(n,\CC)$, that is,
$\fru(p, q) \cap \i \fru(p, q) = \zsp$.
%(cf.~Proposition \ref{prop:complex_invariant_abelian}).
 
The main result of the present article (see Subsection \ref{results} below)
is the classification of
compact pseudo-Hermitian homogeneous spaces $G/H$ by showing that $G^\circ$
must be compact (compare the discussion following Corollary \ref{mcor:hermitian}).
In the initial example above, $H$ was discrete and we used the
Borel Density Theorem.
All this fails dramatically in the general case, where we
essentially have to deal with ``savage'' groups that are semidirect
products of a compact group by a solvable one.

%%%
\subsubsection{Pseudo-K\"ahler case}

The fundamental two-form associated to $\g$ is the differential two-form
defined by $\omega (\cdot,\cdot) = \g(\cdot, \J\cdot)$.
The metric is said to be \emph{pseudo-K\"ahler} if $\omega$ is closed. 
Conversely, from the symplectic geometry point of view, pseudo-K\"ahler structures
are special symplectic structures given by a symplectic form $\omega$
%compatible with
calibrated by
a complex structure $\J$,  that is 
$\omega(\J\cdot, \J\cdot) = \omega (\cdot,\cdot)$. 

Dorfmeister and Guan \cite{DG} proved the compactness of the identity
component of the
holomorphic isometry group of a homogeneous compact pseudo-K\"ahler
manifold, and that the Levi decomposition of this group induces a
holomorphic and metric splitting of the manifold.
%This is not so surprising since it is known in complex geometry that the
%K\"ahler world is much more tractable than the non-K\"ahler one (even for
%automorphism groups not preserving the K\"ahler form, see for example
%Borel and Remmert \cite{BR}).
The rigidity here comes essentially from the symplectic side.
In fact, Dorfmeister and Guan's theorem is to a large part implied by
Zwart and Boothby's \cite{ZB} results on compact symplectic homogeneous
spaces (compare Appendix \ref{sec:newproof}).

%An intermediate case are compact symplectic homogeneous spaces with an
%additional invariant pseudo-Riemannian metric, but without assuming its
%compatibility with the symplectic form.
%Here, the connected holomorphic isometry group is compact, but in general there
%is no metric splitting from the Levi decomposition, see Section
%\ref{sec:noncompatible} and Theorem \ref{mthm:metric_symplectic} below.

In the pseudo-K\"ahler case,
%even the geometric part in
Gromov's vague conjecture seems to be tractable.
More generally, symplectic actions of finite-dimensional Lie algebras
on compact symplectic manifolds are ``easy'' to handle.
The basic idea
%(known to many people but not often published)
is that the
derived subalgebra consists of Hamiltonian vector fields, from which one
concludes that it is a direct sum of a compact semisimple and an abelian
Lie algebra (see for instance Guan \cite[p.~3362]{guan}, but also
\cite[Section 26]{GS}, \cite[Section 2]{huckleberry}).

%Actually, this is an elementary illustration of the existence of a
%Hofer metric \cite{hofer}, which is a bi-invariant metric on the
%(infinite-dimensional) group of Hamiltonian diffeomorphisms
%(in the finite-dimensional case, existence of such a metric implies that the
%adjoint representation is relatively compact and hence the algebra is a
%direct sum of a Lie algebra of compact type and an abelian Lie
%algebra). 

%%%
\subsection{Statement of the main results}\label{results}

We begin our investigation by considering the more general situation of
\emph{almost} pseudo-Hermitian homogeneous manifolds. Such a manifold
carries a field of complex structures that is not required to be integrable
to a complex structure on the manifold.
Our first main result is:

\begin{mthm}\label{mthm:Levi_compact}
Any almost pseudo-Hermitian homogeneous space $M$ of finite
volume is compact. The maximal semisimple subgroup of the identity component
of the almost complex isometry group of $M$ is compact,
and its solvable radical is nilpotent and at most two-step nilpotent. 
\end{mthm}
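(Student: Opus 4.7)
The plan is to analyze $\frg$, the Lie algebra of $G$, through its action on the tangent space at a reference point, exploiting the rigidity of the pseudo-unitary algebra $\fru(p,q)$ expressed by the totally real condition $\fru(p,q) \cap \i\fru(p,q) = \zsp$ in $\Mat(n,\CC)$. Writing $M = G/H$, $p = eH$, and $V = T_pM$, the isotropy $\rho\colon H \to \UU(V,\g_p)$ satisfies $\ad(\frh)|_V \subset \fru(V,\g_p)$, and the $\Ad(H)$-invariant almost complex structure $\J_p$ supplies additional rigid data. With the Levi decomposition $\frg = \frs \ltimes \frr$, I would aim at three assertions in turn: $\frs$ is compact, $\frr$ is nilpotent of step at most two, and $M$ is compact.

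For the compactness of $\frs$, I would study the orbit $S\cdot p \cong S/(S\cap H) \hookrightarrow M$. A Borel-density-type argument for finite-volume actions (not just discrete lattices), obtained by disintegrating the $G$-invariant volume on $M$ over the $S$-orbit foliation, should force $S \cap H$ to be algebraically large in $S$. Combining this with $\ad(\frs \cap \frh)|_V \subset \fru(V)$ and with the $\Ad(H)$-invariance of $\J_p$, one should be able to promote $\J_p$-compatibility to the full $\ad(\frs)$-action along the tangent to the $S$-orbit, so that the non-compact part $\frs_{\rm nc}$ lands in $\fru(V)\cap\i\fru(V) = \zsp$. Varying the base point and using effectiveness then forces $\frs_{\rm nc}$ to vanish.

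For the radical, every $x\in\frr$ has $\ad(x)|_V$ triangularizable over $\CC$ by Lie's theorem, while the same density-plus-$\J_p$-invariance argument forces the semisimple part of $\ad(x)|_V$ to have purely imaginary spectrum; hence $\ad(x)|_V$ must be nilpotent. This yields nilpotency of $\frr$. For the two-step bound, I would examine $\ad([x,y])$ on $V$ for $x,y\in\frr$: commutators of skew-Hermitian nilpotents sit in a restricted corner of $\fru(V)$ determined by the $\J_p$-grading, and a further bracket against any $z\in\frr$ is forced into $\fru(V)\cap\i\fru(V) = \zsp$ by the same totally-real obstruction, giving $[\frr,[\frr,\frr]] = \zsp$ on $V$ and hence on $\frg$ by homogeneity.

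The main obstacle is the density step underlying each of the above: one must replace the classical Borel density theorem, which presumes $H$ discrete, by a disintegration argument tailored to finite-volume homogeneous spaces with non-discrete isotropy, and then transport the resulting algebraic rigidity to $\fru(V)$ compatibly with $\J_p$. Once the structural results on $\frg$ are in hand, $G$ is a compact extension of a two-step nilpotent group, hence has polynomial growth, and standard results on finite-volume homogeneous spaces of polynomial-growth Lie groups then yield the compactness of $M$.
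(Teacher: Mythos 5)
Your overall strategy---replace Borel density by a measure-theoretic argument to constrain $\ad(\frg)$, then exploit the totally real condition $\fru(V)\cap \i\,\fru(V)=\zsp$---is the same as the paper's. The paper implements the first step by pushing the finite invariant measure forward under the equivariant characteristic map $\Phi\colon M\to L^s(\frg)$ and invoking algebraicity of stabilizers of probability measures on projective space (Theorem \ref{thm:isom_is_quasi-invariant}, Proposition \ref{prop:measure}), concluding that $\met$ and $\om$ are \emph{quasi-invariant} on $\frg$; this is cleaner than disintegrating the volume over the $S$-orbit foliation, whose leaves need not be closed, but the idea is the same. Your semisimple step is plausible, and in fact easier than you make it: in the paper the non-compact Levi factor dies from nil-invariance of the skew form alone (Proposition \ref{prop:Sorthogonal}), with no need for $\J$.

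The genuine gap is in your treatment of the radical. The measure/density argument controls only the $\RR$-split (trigonalizable) part of the Zariski closure of $\ad(\frg)$: it forces the semisimple Jordan part of $\ad(x)|_V$, $x\in\frr$, to have purely imaginary spectrum. But ``purely imaginary spectrum'' does \emph{not} imply ``nilpotent''---$\mathrm{diag}(\i,-\i)$ is triangularizable with imaginary spectrum and is not nilpotent, and there exist solvable, non-nilpotent Lie algebras of imaginary type carrying nil-invariant, even non-degenerate, skew forms (Example \ref{ex:nilinv_nonabelian}); Theorem \ref{thm:skew_solvable}(2) shows that imaginary type is exactly what quasi-invariance of $\om$ buys, and no more. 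Eliminating the residual imaginary semisimple part of $\ad(\frr)$ is the heart of the theorem and uses the $h$-structure in an essential, non-obvious way: in the paper this is the chain Lemma \ref{lem:JandO}(2) (a map in $\fro(V)\cap J\fro(V)$ commuting with its $J$-multiple squares to zero), Lemma \ref{lem:GJGN}, and Lemma \ref{lem:E1}, which places the Fitting component $E_1(a)$ of $\ad(a)$, $a\in\frr$, inside $[\frg,\frn]\cap\frg^\perp$; one then needs Theorem \ref{thm:BGZ_invariance}(3) from the structure theory of nil-invariant \emph{symmetric} forms to locate $\frn\cap\frg^\perp$ in $\frz(\frr)$ and conclude $E_1(a)=\zsp$. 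None of this is visible in your sketch, and the inference ``hence $\ad(x)|_V$ must be nilpotent'' is where the proof breaks. (Your two-step nilpotency paragraph is too vague to assess, but that part is comparatively easy, following from the skew form alone via $[\frr_\om,[\frg_\om,\frg_\om]]\subseteq\frg^{\perpo}$ and effectiveness; the final compactness of $M$ via Mostow, or a polynomial-growth argument, is fine.)
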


It is therefore sufficient to henceforth consider compact spaces.

\medskip

In Section \ref{sec:spaces} we find that the only almost
pseudo-Hermitian compact homogeneous manifolds with a transitive action
by a solvable group are complex tori.

\begin{mthm}\label{mthm:solvable}
Let $M$ be a compact almost pseudo-Hermitian homogeneous space with
a transitive effective action by a connected solvable Lie group $G$ of almost complex isometries.
Then $G$ is abelian and compact.
Furthermore, $M$ is a flat complex pseudo-Hermitian torus
$\CC^{p,q}/\,\Gamma$, where $\Gamma$ is a lattice in $\CC^{p,q}$.
\end{mthm}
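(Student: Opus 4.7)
The plan is to reduce to the case where $H$ is discrete, and then exploit Zariski density together with the totally real condition $\fru(p, q) \cap \i \fru(p, q) = \{0\}$ recalled in the introduction. By Theorem \ref{mthm:Levi_compact} applied to the solvable group $G$, I may assume from the outset that $G$ is nilpotent of class at most two and that $M$ is compact. The first task will be to show that $H$ is discrete. For this I will invoke the classical structure theory of nilpotent homogeneous spaces: for a connected nilpotent Lie group $G$ and a closed subgroup $H$ with $G/H$ compact, the identity component $H^0$ is normal in $G$ (work of Mal'cev and Mostow). Effectiveness of the $G$-action then forces $H^0 = \{e\}$, so $H$ is a uniform lattice in $G$. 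I expect this step to be the main technical obstacle, since the normality of $H^0$ relies on results specific to the nilpotent setting.

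Once $H$ is known to be a uniform lattice, Mal'cev's density theorem ensures that $\Ad(H)$ is Zariski-dense in $\Ad(G) \subset \GL(\frg)$. The almost complex structure and pseudo-Hermitian metric on $M$ correspond to left-invariant data $(\J, \g)$ on $G$ whose values at $e$ are $\Ad(H)$-invariant, and hence by density are $\Ad(G)$-invariant. Differentiating this invariance at $e$ yields
\begin{equation*}
[X, \J Y] = \J [X, Y] \quad \text{and} \quad \g([X, Y], Z) + \g(Y, [X, Z]) = 0
\end{equation*}
for all $X, Y, Z \in \frg$. The first identity says that $\ad X$ is complex-linear with respect to $\J$, and the second that it is skew for $\g$, so $\ad X$ lies in the unitary Lie algebra $\fru(\frg, \J, \g) \cong \fru(p, q)$. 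Combined with antisymmetry of the bracket, the first identity also gives $\ad(\J X) = \J \ad X$, so $\J \ad X \in \fru(p, q)$ as well. Hence $\ad X \in \fru(p, q) \cap \i \fru(p, q) = \{0\}$ by the totally real property recalled in the introduction, and $\frg$ is abelian.

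With $G$ connected abelian and $H$ discrete, every element of $H$ acts trivially on $M = G/H$ by left translation, so effectiveness forces $H = \{e\}$. Thus $M = G$ is a compact connected abelian Lie group. The left-invariant pseudo-Hermitian metric is flat, and the Lie algebra $\frg$ together with its complex structure and Hermitian form is isomorphic to $\CC^{p, q}$; the exponential map then realizes $M$ as the flat pseudo-Hermitian torus $\CC^{p, q}/\Gamma$ for some lattice $\Gamma$.
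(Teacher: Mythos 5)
Your argument reaches the right conclusion, but the opening step is misattributed, and this matters. Theorem \ref{mthm:Levi_compact} is a statement about the identity component of the \emph{full} almost complex isometry group; it does not say that an arbitrary transitive connected solvable subgroup $G$ is nilpotent. Indeed, a connected solvable subgroup of $K\ltimes N$ (with $K$ compact semisimple, $N$ nilpotent) can fail to be nilpotent, e.g.\ a subgroup covering a torus of $K$ that acts on $N$ by nontrivial rotations. What you actually need is to apply the machinery behind Theorem \ref{mthm:Levi_compact} directly to $G$: since $G$ acts transitively and effectively preserving a finite volume, the almost $h$-algebra model $(\frg,J,\met)$ of $G/H$ is effective and quasi-invariant (Corollary \ref{cor:hermitian_qi}), so Theorem \ref{thm:radical_nilpotent} gives $\frg=\frk\ltimes\frn$ with $\frk$ compact semisimple, and solvability forces $\frk=\zsp$, i.e.\ $\frg=\frn$ is nilpotent. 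Note, however, that once you invoke this machinery, Corollary \ref{cor:solv_abelian} already hands you abelianness of $\frg$ with no hypothesis on $H$, so your subsequent lattice-theoretic detour becomes logically redundant.

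That said, the detour itself is sound and genuinely different from the paper's proof, which instead quotes \cite[Theorem A]{BG} to get $H$ discrete and then applies Corollary \ref{cor:solv_abelian}. Your replacement for the discreteness step --- for a connected nilpotent $G$ with $G/H$ compact, the identity component $H^\circ$ is normal (Raghunathan's Theorem 2.3 in the simply connected case, plus passage to the universal cover), hence trivial by effectiveness --- is correct, and it is specific to the nilpotent setting exactly as you suspect (it fails for general solvable groups, which is why the paper leans on \cite{BG}). Your mechanism for abelianness is also different and valid: Mal'cev density of the lattice $\Ad(H)$ in $\Ad(G)$ upgrades $\Ad(H)$-invariance of $(J,\met)$ to $\Ad(G)$-invariance, and then $\ad(x)\in\fru\cap J\fru=\zsp$ by the totally real property (Lemma \ref{lem:su_1}); the paper instead derives $[\frg,\frg]\subseteq\frg^{\perp_\om}$ from invariance of the fundamental two-form. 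Your route buys independence from the external reference \cite{BG} at the cost of importing nilpotent lattice theory; the paper's route is shorter once Section \ref{sec:hermitian_nil} is in place. The final steps (triviality of $H$ for abelian $G$, integrability, flatness, identification with $\CC^{p,q}/\,\Gamma$) agree with the paper.
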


Upon making the assumption that $M$ is a complex manifold,
we can obtain much stronger statements.
In Section \ref{sec:complex}, we make use of
Tits's \cite{tits} result that
any compact complex homogeneous manifold fibers holomorphically over a
generalized flag manifold with complex parallelizable fibers.
We eventually derive the following structure theorem for holomorphic
isometry groups:

\begin{mthm}\label{mthm:torus_fiber} 
Let $M$ be a compact complex homogeneous manifold, and let $G$ be a
closed Lie subgroup of the holomorphic automorphism group of $M$.
Assume that $G$ preserves a
pseudo-Hermitian metric and acts transitively on $M$.
Then the fiber of the Tits fibration of $M$ is a compact complex torus
and the group $G$ is compact.
\end{mthm}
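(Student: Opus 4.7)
The plan is to analyze the Tits fibration $\pi \colon M \to M_0$, whose base $M_0$ is a generalized flag manifold (hence projective) and whose fibers are complex parallelizable; we aim to show that the fibers must be tori, from which compactness of $G$ follows quickly. Canonicity of the Tits fibration makes it equivariant under the full holomorphic automorphism group, so $G$ descends to an action on $M_0$ and the fiber-preserving subgroup of $G^\circ$ acts transitively on each fiber.

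First I would analyze the action of $G$ on the base. By Theorem~\ref{mthm:Levi_compact}, $G^\circ = SR$ (Levi decomposition) with $S$ compact semisimple and $R$ nilpotent. Since $R$ is characteristic in $G^\circ$ and hence normal in $G$, its image in $\Aut(M_0)$ is a normal solvable subgroup of the holomorphic automorphisms of the projective variety $M_0$. Borel's fixed point theorem, applied to the algebraic closure of this image, yields a fixed point in $M_0$; normality together with the transitive $G$-action forces the fixed-point set to be all of $M_0$. Hence $R$ lies in the kernel $G_1$ of $G \to \Aut(M_0)$, and the action on $M_0$ factors through the compact group $S$.

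The heart of the proof is to show that each Tits fiber $F$ is a complex torus. Write $F = K/\Lambda$ with $K$ a connected complex Lie group and $\Lambda$ a cocompact discrete subgroup. By the standard theory of complex parallelizable manifolds, $H^0(F,TF) \cong \frk$, so the identity component of the holomorphic automorphism group of $F$ is, up to a discrete central subgroup, $K$ acting by left translations. In particular the stabilizer of $F$ in $G^\circ$ maps into $K$ and preserves a pseudo-Hermitian structure induced from $M$, yielding a bilinear form on the complex Lie algebra $\frk$. A Borel-density style argument, exploiting cocompactness of $\Lambda$ in $K$, then upgrades the isotropy invariance to $\Ad(K)$-invariance of a pseudo-Hermitian form on $\frk$. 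Invoking the totally-real identity $\fru(p,q) \cap \i\,\fru(p,q) = \zsp$ recalled in Section~\ref{sec:intro}, together with $\ad(\J x) = \J\ad(x)$ valid on the complex Lie algebra $\frk$, one concludes that $\ad(x) = 0$ for every $x \in \frk$, whence $\frk$ is abelian and $F$ is a complex torus.

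Once the fibers are tori, $R$ acts on each fiber by a connected group of holomorphic isometries of a torus; such actions are by translations, so $R$ embeds in the compact translation group and is therefore compact. Together with compactness of $S$ this gives $G^\circ$ compact, and compactness of $G$ then follows from a routine argument on the component group using the homogeneity and compactness of $M$. The principal obstacle is in the middle step: the restriction of $\g$ to the fiber $F$ need not be non-degenerate, since complex subspaces of a pseudo-Hermitian vector space may be isotropic, so one must carefully extract a non-degenerate $\Ad(K)$-invariant pseudo-Hermitian form on $\frk$ from the ambient $G$-invariant data on $M$ before the totally-real argument can be applied.
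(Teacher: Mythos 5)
Your overall architecture (use the Tits fibration, kill the action of the radical on the flag-manifold base, prove the fiber is a torus, deduce compactness) is reasonable, and the first step --- Borel's fixed point theorem plus normality of $R$ forcing $R$ to act trivially on the base --- is essentially sound and parallels the paper's observation that solvable groups of automorphisms of the projective base have fixed points. But the heart of your argument has a genuine gap that you name yourself and then do not close. The restriction of the pseudo-Hermitian metric to a Tits fiber $F$ can be degenerate, and indeed can vanish identically: if $v$ is an isotropic tangent vector then $\langle \J v, v\rangle = \om(v,v)=0$, so the complex line $\CC v$ is totally isotropic, and nothing prevents the whole fiber direction $\frl/\frh$ from being an isotropic complex subspace of $\frg/\frh$. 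In that case the induced form on $\frk$ is zero, it is trivially $\Ad(K)$-invariant, and the totally-real identity $\fru(p,q)\cap\i\,\fru(p,q)=\zsp$ gives you nothing. There is no evident way to ``carefully extract'' a non-degenerate invariant form on $\frk$ from the ambient data; this is not a technical refinement but the actual obstruction to your strategy. A secondary problem is the ``Borel-density style argument'': before you know the fiber group $F^{\CC}$ is nilpotent or abelian, it is an arbitrary complex Lie group with a cocompact lattice, and Zariski density of $\Ad(\Lambda)$ in $\Ad(F^{\CC})$ is not automatic in that generality, so even granting a non-degenerate form the upgrade from $\Lambda$-invariance to $K$-invariance needs justification.

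The paper avoids both issues by reversing the order of the argument. It first works entirely on the Lie algebra $\frg$ of $G$ with the (possibly degenerate) quasi-invariant $h$-structure coming from Corollary \ref{cor:hermitian_qi}, and proves the structure theorem $\frg=\frk\ltimes\frn$ with $\frn$ at most two-step nilpotent (Theorem \ref{thm:radical_nilpotent}); degeneracy is handled there by effectiveness arguments (subspaces such as $[\frn,\frn]\cap\frg^{\perp}$ are shown to be ideals contained in the stabilizer and hence trivial). Only then does it bring in the Tits fibration: Proposition \ref{transitive_1} and Lemma \ref{nilpotent} show, by purely complex-Lie-group arguments (a complex extension of a compact group by a nilpotent normal subgroup is nilpotent, since compact complex tori have no nontrivial holomorphic representations), that the fiber group $F^{\CC}$ is nilpotent and that $K$ centralizes $N$; finally $[N,N]$ is central in $G$ and contained in $H^{\circ}$, hence trivial by effectivity, so $N$ and then $F^{\CC}$ are abelian. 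Compactness of $G$ then follows from Lemma \ref{lem:Tits_torusfiber}. If you want to rescue your approach you would need to replace the metric restricted to the fiber by this kind of effectiveness bookkeeping on $\frg$ itself, at which point you have essentially reconstructed the paper's proof.
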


In particular, we obtain that
the identity component of the holomorphic isometry group of a
compact homogeneous pseudo-Hermitian manifold is compact.
It is not clear if this result is still valid if we merely assume an
\emph{almost} pseudo-Hermitian structure on the compact
homogeneous space.
%To be more precise, Theorem \ref{thm:radical_nilpotent} provides a detailed algebraic description of the isometry Lie algebra, valid in the
%almost pseudo-Hermitian case.
%We deduce Theorem \ref{mthm:torus_fiber} from this algebraic
%structure by making use of the Tits fibration, which applies only in the
%complex case.
\medskip

Even though we show in Theorem \ref{mthm:torus_fiber} that the identity
component of the holomorphic isometry group is compact, the full
holomorphic isometry group is not necessarily so.
The following example illustrates this.

\begin{example*}
Let $T=\CC^{p,q}/\ZZ^{2p+2q}$ be a complex torus with a pseudo-Hermitian
pro\-duct $\h$ of signature $(p, q)$. The identity component of the
holomorphic isometry group is $T$ itself.
However, the full holomorphic isometry group contains
$\Lambda(\h)= \UU(\h)  \cap\SL(2p+2q,\ZZ)$, where $\UU(\h)$ denotes the unitary group of $\h$. 
In the case  of the standard pseudo-Hermitian product
%$\h= -\vert z_1 \vert^2 - \ldots - \vert z_p\vert^2 + \vert z_{p+1} \vert^2 + \ldots + \vert z_{p+q} \vert^2$, 
$\h(z,w)= -z_1\bar{w}_1-\ldots-z_p\bar{w}_p + z_{p+1}\bar{w}_{p+1}+\ldots+z_{p+q}\bar{w}_{p+q}$,
this group $\Lambda$ is a lattice in $\SU(\h)$  and thus non-compact (for
$p,q>0$). This fact extends to all pseudo-Hermitian products with rational
coefficients (by the Borel-Harish-Chandra Theorem \cite{BHC}). 
\end{example*}

However, in the simply connected case, Theorem \ref{mthm:torus_fiber} extends
to the full holo\-morphic isometry group.

\begin{mcor}\label{mcor:hermitian}
The holomorphic isometry group of a simply connected
compact homogeneous pseudo-Hermitian manifold is compact.
\end{mcor}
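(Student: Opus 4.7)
By Theorem \ref{mthm:torus_fiber}, the identity component $G := L^\circ$ of the holomorphic isometry group $L$ is compact and acts transitively on $M$, and the Tits fibration $\pi\colon M \to X$ realises $M$ as a torus bundle over a rational homogeneous variety with compact complex torus fiber $F$. Since $L$ is a Lie group with identity component $G$, compactness of $L$ is equivalent to finiteness of the discrete group $L/G$, and it is the latter that I would aim to prove.

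Naturality of the Tits fibration implies that every $\phi \in L$ descends to a holomorphic automorphism of $X$, yielding a continuous homomorphism $\Phi\colon L \to \Aut(X)$. First I would argue that $\Phi(L)$ is compact: since $G$ is compact and acts transitively on $X = G/H'$, any $G$-invariant pseudo-Riemannian metric on $X$ is automatically $\pm$-definite; the induced $G$-invariant structure thus makes $X$ into a compact homogeneous Hermitian manifold, and $\Phi(L)$ lies inside the compact isometry group of this definite metric, a compact real form of the complex Lie group $\Aut(X)$.

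For the kernel $K := \ker \Phi$, simple-connectedness of $M$ enters decisively via the long exact homotopy sequence of $F \hookrightarrow M \to X$. Using that $\pi_1(X) = 0$ (flag manifolds are simply connected) and $\pi_1(M) = 0$, one obtains the surjection $\pi_2(X) \twoheadrightarrow \pi_1(F)$. Because $K$ acts trivially on $X$, its induced action on $\pi_2(X)$ is trivial, and $L$-equivariance of the long exact sequence forces $K$ to act trivially on $\pi_1(F) = \H_1(F;\ZZ)$. A holomorphic isometry of a pseudo-Hermitian complex torus that acts trivially on its fundamental group lifts to a linear isometry preserving the lattice pointwise, hence is a translation. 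So each $\phi \in K$ acts fiberwise by translations on $F$. Since $G$ contains a compact subtorus $T_G \subseteq G$ that acts on $M$ by fiberwise translations of the Tits fibers, the continuous family of fiberwise translations given by an element of $K$ globalises to an element of $T_G$, so $K \subseteq T_G$ and $K$ is compact.

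Combining these two statements, $L$ sits in a short exact sequence $1 \to K \to L \to \Phi(L) \to 1$ with compact kernel and quotient, so $L$ is compact. The main obstacle is the globalisation at the end of the previous paragraph: passing from fiberwise translations (one per fiber) to a single global element of the compact torus $T_G \subseteq G$ requires a careful use of the principal-torus structure of the Tits fibration in the pseudo-Hermitian setting. Simple-connectedness is precisely the hypothesis that kills the arithmetic $\UU(p,q) \cap \SL(2p+2q,\ZZ)$ contribution visible in the torus example from the paper, via the surjection $\pi_2(X) \twoheadrightarrow \pi_1(F)$.
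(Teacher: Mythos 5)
Your overall strategy (split $L=\Aut(M,\g,\J)$ along the Tits fibration into a base part and a fiber part) is coherent, but both pillars of the argument have genuine gaps. First, the compactness of $\Phi(L)$: the claim that any $G$-invariant pseudo-Riemannian metric on $X$ is automatically definite is false (already on $X=\PP^1\CC\times\PP^1\CC$ the compact transitive group $\SU(2)\times\SU(2)$ preserves the indefinite metric $\g_1\oplus(-\g_2)$), and, more seriously, you give no reason why the full group $\Phi(L)$ --- rather than just $\Phi(G)$ --- should preserve any particular invariant metric on $X$. That is exactly the difficulty of the corollary itself, transplanted downstairs to $X$. It can be repaired: $\Phi(L)$ normalizes a compact semisimple group acting transitively on $X$, whose centralizer in $\Aut(X)$ is finite, and the automorphism group of a compact semisimple group is compact, so the normalizer is compact --- but this repair is essentially the paper's own argument. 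Second, the step you yourself flag as the main obstacle, $\ker\Phi\subseteq T_G$, is not carried out; to close it one needs that a holomorphic fiberwise translation is a holomorphic section of the bundle of fiber-translation tori, that this bundle is trivial (the structure group $Q^\CC$ acts on the fiber through translations, hence trivially by conjugation on the translation group), and that a holomorphic map from the compact simply connected $X$ to a compact complex torus is constant. Finally, deducing compactness of $L$ from compactness of $\ker\Phi$ and of $\Phi(L)$ needs a word about topologies: $L/\ker\Phi$ merely injects continuously into a compact group, which does not by itself force compactness.

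The paper's proof is much shorter and bypasses the Tits fibration entirely: by Montgomery's theorem, simple connectedness of the compact manifold $M$ implies that a maximal compact \emph{semisimple} subgroup $K$ of $G$ already acts transitively; since $K$ is characteristic in $G=KA$ and $\Aut(K)$ is compact, the isotropy representation of the full stabilizer $\Aut(M,\g,\J)_x$ on $\Tan_xM\cong\frk/(\frh\cap\frk)$ has compact closure, so $M$ carries an $\Aut(M,\g,\J)$-invariant Riemannian metric and the group is compact. This is also where simple connectedness enters in the paper: it forces the semisimple part to be transitive, which is what removes the non-compact $\UU(\h)\cap\SL(2p+2q,\ZZ)$ phenomenon of the torus example. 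Your homotopy-theoretic observation $\pi_2(X)\twoheadrightarrow\pi_1(F)$ is a correct alternative way of seeing why that phenomenon disappears, and the route could probably be completed along the lines sketched above, but as written the proposal does not yet constitute a proof.
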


A class of examples illustrating Corollary \ref{mcor:hermitian} are
the compact homogeneous pseudo-Hermitian manifolds with non-zero Euler
characteristic.
These are precisely the generalized flag manifolds (cf.~Example
\ref{ex:euler_non0}).

\medskip

\label{geometric.splitting}
Theorem \ref{mthm:torus_fiber} can be interpreted as the identity component
of the holomorphic isometry group of a compact  homogeneous pseudo-Hermitian
manifold being inessential  in the sense that it preserves a positive
definite Hermitian structure. Thus, to classify these objects, start with a
compact complex  manifold homogeneous under the action of  a compact group
$K$, and take any pseudo-Hermitian product invariant under the isotropy of
some point.
By compactness of this isotropy, there always exists an invariant Hermitian product, but existence of higher signature ones depends on the isotropy representation. 
The compact group splits, up to finite index, as $G = T \times K$, where 
$T$ is abelian and $K$ is semisimple.
In the pseudo-K\"ahler case, this splitting induces a holomorphic and
metric splitting of the manifold as a product of a complex torus and a flag
manifold (a projective homogeneous manifold), see
Borel and Remmert \cite{BR}, Dorfmeister and Guan \cite{DG}.
Without the assumption that the fundamental two-form is closed, we do not
have such a splitting.
This is illustrated by the examples discussed at the end of Section
\ref{sec:complex}.
 
\medskip

In Appendix \ref{sec:newproof}, we review the results by Zwart and Boothby \cite{ZB}
on compact symplectic homogeneous spaces and by Dorfmeister and Guan \cite{DG} on
compact homogeneous pseudo-K\"ahler manifolds.
We give short proofs based on the techniques developed in this article.
Moreover, in Theorem \ref{thm:metric_symplectic}, we prove a structure
theorem for the intermediate case of a compact symplectic homogeneous space
equipped with an invariant pseudo-Riemannian metric, but without the
assumption of compatibility between the metric and the symplectic form.

%%%
\subsection{Additional remarks beyond the homogeneous case}

%
%\subsubsection{``Almost homogeneous'' examples with non-compact isometry group}
%{Case of non-transitive actions} 
In order to illustrate how our current homogeneity hypothesis is fairly
reasonable, let us show that any semisimple real Lie group $G$ can act
(non-transitively, but topologically transitive) by preserving a
pseudo-Hermitian structure on a locally homogeneous compact manifold.
Let $\kappa$ be the Killing form on the Lie algebra $\frg$.
On its complexi\-fication $\frg^\CC= \CC \otimes \frg$, consider the 
quadratic form $q(x + \i y) = \kappa(x, x) + \kappa(y, y)$, where
$x+ \i y \in \frg^\CC$. 
Polarization yields an $\ad(\frg)$-invariant pseudo-Hermitian form.
This allows us to define a left-invariant pseudo-Hermitian metric $\h$ on the group
$G^{\CC}$,
that equals $q$ at the identity element.
By the $\ad(\frg)$-invariance of $q$, $\h$ is invariant under the 
right-action of $G$ on $G^\CC$.
If $\Gamma$ is a uniform lattice of $G^\CC$, then $\h$ projects to a
well-defined $G$-invariant pseudo-Hermitian metric on $M= G^\CC /\,\Gamma$.
By Moore's Ergodicity Theorem (see Zimmer \cite[Theorem 2.2.6]{zimmer}),
the $G$-action is ergodic and hence topologically transitive.

These examples are not pseudo-K\"ahler, because, as mentioned above, a Lie
group preserving a symplectic structure on a compact manifold has a product of a
compact group by an abelian group as its derived subgroup.

If $G = \SL(2,\RR)$, then $G^\CC= \SL(2,\CC)$, which
%coincides up to finite index with
is locally isomorphic to
$\SO(1, 3)$, and so $M$ is the frame bundle of the
hyperbolic three-manifold (or orbi\-fold)
$\HH^3 /\,\Gamma = \SO(3) \backslash \SO(1,3) /\,\Gamma$. 
The pseudo-Hermitian metric in this case is of Hermite-Lorentz type, and
in particular has index two when viewed as a pseudo-Riemannian metric.

The whole discussion generalizes to the case where $\kappa$ is replaced 
by any $\ad(\frg)$-invariant scalar product and $G$ is any group admitting such a form,
sometimes called a quadratic group (there is a huge family of such groups,
see, for instance Kath and Olbrich \cite{KO1}).
If $G$ is an oscillator group, then, as in the $\SL(2,\RR)$-case,
one gets a Hermite-Lorentz example.
These beautiful and rich examples motivate
the syste\-matic study of isometric actions of Lie groups on
pseudo-Riemannian homogeneous manifolds of index two as initiated by the
authors in \cite{BGZ}.

\subsection*{Notations and conventions}

All Lie algebras $\frg$ under consideration are assumed to be finite-dimensional
and defined over the reals (although most of our algebraic results are valid over any 
 field of characteristic zero). 

For direct products of Lie algebras $\frg_1$, $\frg_2$ we write
$\frg_1\times\frg_2$, whereas $\frg_1+\frg_2$ or $\frg_1\oplus\frg_2$
refer to sums as vector spaces. 

The \emph{solvable radical} $\frr$ of a Lie algebra $\frg$ is the maximal solvable
ideal of $\frg$.  Any semisimple Lie algebra $\frf$ over the real numbers
is a direct product $\frf=\frk\times\frs$, where $\frk$ is a semisimple Lie algebra of \emph{compact type}, meaning its Killing form is definite, and where  $\frs$ is semisimple without factors of compact type.

Any maximal semisimple Lie subalgebra $\frf = \frk \times \frs$ of $\frg$ is called a \emph{Levi subalgebra} of $\frg$. Let us further put  
 $\gs=\frs\ltimes\frr$. Then there is  a Levi decomposition 
\[
\frg =  \frf \ltimes \frr = (\frk\times\frs)\ltimes\frr = \frk \ltimes \gs . 
\]

For a Lie group $G$, we let $G^\circ$ denote the connected component
of the identity.
For a subgroup $H$ of $G$, we write $\Ad_{\frg}(H)$ for the adjoint
representation of $H$ on the Lie algebra $\frg$ of $G$, to distinguish
it from the adjoint representation $\Ad(H)$ on its own Lie algebra
$\frh$. The closed subgroup $H$ is called \emph{uniform} if $G/H$ is compact.

The center of a group $G$, or a Lie algebra $\frg$, is denoted by
$\Zen(G)$, or $\zen(\frg)$, respectively.
Similarly, the centralizer of a subgroup $H$ in $G$
(or a subalgebra $\frh$ in $\frg$) is denoted by $\Zen_G(H)$
(or $\zen_\frg(\frh)$).
The normalizer of $H$ in $G$ is denoted by $\Nor_G(H)$, that of $\frh$ in $\frg$
by $\nor_{\frg}(\frh)$.

%%%
\subsection*{Acknowledgements}

Wolfgang Globke was partially supported by the Australian Research Council grant {DE150101647} and the Austrian Science Fund FWF grant I 3248.

% !TEX root = pseudoHermitian.tex

%%%%%%%%%%%%%%%%%%%%%%%
%\section{Prelim}
%
%%%%%%%%%%%%%%%%%%%%%%%%
%
%This part serves to fix some notations for later sections
%and survey some basic properties of pseudo-Hermitian homogeneous
%spaces.
%

\section{$h$-structures on Lie algebras}
\label{sec:prelim}

Here we work out the elementary linear algebra regarding the definition of almost $h$-algebras and 
$h$-structures on Lie algebras (compare  Definitions \ref{def:h_algebra},  \ref{def:h_structure}, and  Section \ref{sec:h_Lie}.). Such algebraic structures serve as local models for isometry groups of certain geometric manifolds.   
% \oliver{This seems to belong conceptionally to section 2}

%%%
\subsection{$h$-structures} 

\subsubsection{Bilinear forms}
Let $\beta$ be any bilinear form  on  $V$. We shall denote orthogonality of vectors in $V$ with respect to $\beta$ by the symbol $\perp_\beta$. If $\beta$ is a symmetric or skew symmetric  bilinear form,  the subspace 
$$ V^{\perp_\beta} = \{ v \in V \mid \beta(v,V) = \zsp \}$$
is called the \emph{kernel} of $\beta$. Accordingly, $\beta$ is called
\emph{non-degenerate} if
$V^{\perp_\beta} = \zsp$. 

Furthermore,  we  put
\[
\fro(\beta) \,   = \,  \{ \varphi \in \gl(V) \mid  \beta(\varphi(x),y) =  - \beta(x, \varphi(y)) \}
\]
for the Lie algebra of linear transformations that are skew with respect to $\beta$.  The form $\beta$ 
will be called \emph{invariant} by $\varphi \in \gl(V)$ if $\varphi \in \fro(\beta)$. 

\subsubsection{$J$-structures}

A linear map  $J \in \End(V)$ will be a called a \emph{$J$-structure} on $V$
if $J^{2} \equiv   -\id_{V} \mod \ker J$. 
We let 
$$   \gl_{J}(V) =   \{ \varphi \in \gl(V) \mid  \varphi \, (\ker J)  \subseteq  \ker J   \text{ and } 
J \circ \varphi \equiv \varphi \circ J \mod \ker J  \} $$
denote the Lie algebra of $J$\/-linear maps of $V$.  For any $J$-structure,  
\begin{equation} \label{eq:kerJ}   V = \ker J \oplus \im J . 
\end{equation}  
A $J$-structure  is  a complex structure  if $\ker J = \zsp$.

\subsubsection{$h$-structures on vector spaces}

Suppose that $V$ is equipped with a $J$-structure. Let  $\met$ be a  $J$-invariant symmetric bilinear form with kernel $V^{\perp} $. 
% (that is, we require that $J$ is skew with respect to $\met$). 

\begin{definition} \label{def:h_structure}
The data $(J, \met)$ will be called an \emph{$h$-structure} on $V$ if 
$$   \ker  J  \, \subseteq \, V^\perp  . $$ 
\end{definition}

In this case, we also have
\[
\langle J u, J v \rangle = \langle u, v \rangle \  \text{ for all $u,v \in V$.}
\]
The associated $J$-invariant skew-symmetric form
$$  \om( \cdot , \cdot) =  \langle J \cdot , \cdot \rangle$$  
is  called the \emph{fundamental two-form of the $h$-structure}. Note that  
$$   V^{\perp} =   V^{\perpo} .  $$
We call  $V^{\perp}$ the \emph{kernel of the $h$-structure}.
Given an $h$-structure on $V$, the symbol $\perp$ shall \emph{always} 
denote orthogonality with respect to $\met$.  

\begin{remark}
	If the $h$-structure is non-degenerate (that is, if $\met$ has trivial kernel),  then  $(V, J, \met)$ is  a   \emph{Hermitian  vector space} with complex structure $J$. We do not require that $\met$ is definite in the definition of a Hermitian vector space.
\end{remark} 

\subsubsection{Unitary Lie algebra of $h$-structures}

For any $h$-structure on $V$ we put  % the context of the $J$-invariant form $\met$ put 
\begin{equation} \label{su_0}
  \fro(V): =  \fro(\met) \   \text{ and }  \  \frs\frp(V): =  \fro(\om)  . 
 \end{equation} 
Furthermore 
\begin{equation} \label{su_1}
  \fru(V): =   \fro(V) \cap  \gl_{J}(V)  \ \ 
  ( =    \frs\frp(V)  \cap  \gl_{J}(V) ) 
 \end{equation} 
 is called the \emph{unitary Lie algebra}  for the $h$-structure.  Note that
 \begin{equation} \label{su_2}
      \fru(V) \  \subseteq \   \fro(V) \cap  \frs \frp(V),
       \end{equation} 
and that  equality holds in \eqref{su_2} if  $\ker J = V^{\perp}$.  

\subsubsection{Skew linear maps in Hermitian vector spaces} 

Let  $(V, J, \met)$ be a Hermitian vector space. 
 
\begin{lem}\label{lem:JandO}
Let  $\varphi \in \fro(V) \cap   J  \fro(V)$. Then we have: 
\begin{enumerate}
\item
$ J \varphi +  \varphi J = 0 $.
\item
If $\varphi$ and $J \varphi$ commute, then $\varphi^{2} = 0$. 
\end{enumerate}
\end{lem}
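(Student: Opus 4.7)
The plan is to exploit the $J$-invariance of $\met$, which immediately makes $J$ itself skew with respect to $\met$, and to combine this with the two skew-symmetry relations $\varphi \in \fro(V)$ and $J\varphi \in \fro(V)$. Note that the latter is equivalent to the hypothesis $\varphi \in J\fro(V)$, since $J^2 = -\id$ on the Hermitian (non-degenerate) space $V$, so $J$ is invertible and $J\fro(V) = J^{-1}\fro(V)$.

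For part (1), I would compute $\langle \varphi J x, y\rangle$ in two different ways. Using first the skew-symmetry of $\varphi$ and then that of $J$, one finds
\[
\langle \varphi J x, y\rangle \;=\; -\langle J x, \varphi y\rangle \;=\; \langle x, J \varphi y\rangle.
\]
On the other hand, applying the skew-symmetry of $J\varphi$ directly yields
\[
\langle J\varphi x, y\rangle \;=\; -\langle x, J\varphi y\rangle.
\]
Adding these two relations, we obtain $\langle (\varphi J + J\varphi) x, y\rangle = 0$ for all $x,y \in V$, and non-degeneracy of $\met$ on the Hermitian space forces $\varphi J + J \varphi = 0$.

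For part (2), substitute the anticommutation relation $J\varphi = -\varphi J$ from part (1) into the commutativity hypothesis $\varphi(J\varphi) = (J\varphi)\varphi$. The right-hand side is $J\varphi^2$, while the left-hand side equals $\varphi J \varphi = -J\varphi\cdot\varphi = -J\varphi^2$. Hence $2J\varphi^2 = 0$, and since $J$ is invertible on $V$, we conclude $\varphi^2 = 0$.

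There is no real obstacle here; the argument is a direct three-line calculation. The only point to keep an eye on is that non-degeneracy of $\met$ and invertibility of $J$ (both part of the Hermitian vector space hypothesis, in contrast to the more general $h$-structure setting where $\ker J$ may be nontrivial) are essential for cancelling $\met$ in the last step of (1) and for passing from $J\varphi^2 = 0$ to $\varphi^2 = 0$ in (2).
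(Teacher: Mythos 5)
Your proof is correct and is essentially the paper's argument: part (1) is the pointwise, inner-product version of the paper's one-line computation with the $\met$-adjoint, namely $(J\varphi)^{*}=\varphi^{*}J^{*}=\varphi J$ versus $(J\varphi)^{*}=-J\varphi$, and part (2) is identical. You are also right to flag that non-degeneracy of $\met$ and invertibility of $J$ are exactly what the Hermitian (as opposed to general $h$-structure) hypothesis supplies.
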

\begin{proof}
Let $\varphi^{*} \in \End(V)$ denote the dual map of $\varphi$ with respect to $\met$.
Since $J, \varphi  \in \fro(V)$, $(J \varphi)^{*} = \varphi J$. Since also 
$ J  \varphi \in   \fro(V)$, $(J \varphi)^{*} =  -J \varphi $. Hence, (1) holds.
This implies $ J \varphi^{2} + \varphi J \varphi = 0$. If $\varphi$ and $J \varphi$ commute 
we obtain $J \varphi^{2} = 0$. Hence, $ \varphi^{2} = 0$. This shows (2). 
\end{proof} 

A real subspace $W$ of a vector space with complex structure $(V,J)$ is called \emph{totally real} if $W \cap J W = \zsp$.  The unitary Lie algebra of a Hermitian vector space is totally real. More generally: 
%  \oliver{of course useful to know. will be applied later several times.} 

\begin{lem}  \label{lem:su_1}
% Then 
 $  \fru(V) \cap J  \fro(V)  = \zsp$. 
 \end{lem}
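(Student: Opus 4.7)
The plan is to deduce this directly from Lemma \ref{lem:JandO}(1) together with the defining property of $\fru(V)$.

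First, I would take an arbitrary $\varphi \in \fru(V) \cap J\fro(V)$ and unpack what this membership means. Since $\fru(V) \subseteq \fro(V)$ by \eqref{su_1}, the element $\varphi$ lies in $\fro(V)$. Moreover, because $\varphi = J\psi$ for some $\psi \in \fro(V)$ and $J$ is invertible on the Hermitian vector space $V$ (as $\ker J = \zsp$), we have $\psi = -J\varphi \in \fro(V)$, which shows $J\varphi \in \fro(V)$ and hence $\varphi \in \fro(V) \cap J\fro(V)$. This is precisely the hypothesis of Lemma \ref{lem:JandO}, so part (1) applies to give
\[
J\varphi + \varphi J = 0.
\]

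Next, I would use the additional information that $\varphi \in \fru(V) \subseteq \gl_J(V)$. Since $\ker J = \zsp$, the condition in $\gl_J(V)$ reduces to $J\varphi = \varphi J$. Substituting this into the identity above yields $2J\varphi = 0$, and invertibility of $J$ forces $\varphi = 0$. Thus $\fru(V) \cap J\fro(V) = \zsp$.

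The main obstacle is essentially cosmetic: one must make sure that the ``mod $\ker J$'' language of $\gl_J(V)$ collapses to honest $J$-linearity on a Hermitian vector space, so that Lemma \ref{lem:JandO}(1) and the commutation relation can be added coherently rather than only modulo some subspace. Once $\ker J = \zsp$ is exploited at this step and at the very end, no further work is needed.
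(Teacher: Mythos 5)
Your proof is correct and is essentially the paper's own argument: both rest on Lemma \ref{lem:JandO}(1) combined with the $J$-linearity coming from $\fru(V)\subseteq\gl_J(V)$ and the invertibility of $J$ on a Hermitian vector space. The only (immaterial) difference is that you apply Lemma \ref{lem:JandO}(1) to $\varphi$ itself to get $J\varphi+\varphi J=0$, whereas the paper applies it to $\psi=-J\varphi$ and concludes that $\psi$ is simultaneously complex linear and anti-linear.
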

\begin{proof}
Indeed, let $\varphi \in \fru(V)$  with $\varphi = J \psi$, where $\psi \in \fro(V)$. 
By Lemma \ref{lem:JandO} (1), $\psi$ is complex anti-linear. However, $\psi = - J \varphi \in  \gl_{J}(V)$.
Thus $\psi = 0$. 
%Then we have 
%\[ 
%\langle \phi(x),y\rangle 
%= -\langle J (x), J \phi \, (y)\rangle 
%= \langle    (J \phi)  (J x)  , y) \rangle 
%=  - \langle \phi x , y\rangle . 
% \qedhere 
%\]  
\end{proof}  

\subsection{$h$-structures on Lie algebras} \label{sec:h_Lie}

We now consider $h$-structures on Lie algebras. 
We start by introducing some notions related to bilinear forms and $J$-structures on Lie algebras. 

\subsubsection{Lie algebras with bilinear forms} 

Let $\frg$ be a Lie algebra and  $\beta$ any bilinear form on $\frg$. The subalgebra of $\beta$-skew elements of $\frg$ is
\[
\frg_\beta\, =\, \{x\in\frg\mid \beta([x,y],z)=-\beta(y,[x,z])\text{ for all } y,z\in\frg\}.
\]
For a subalgebra $\frh$ of $\frg$, we say that $\beta$ is \emph{$\frh$-invariant}
if  $\frh \subseteq \frg_{\beta}$.

\begin{lem}\label{lem:idealp}  % \oliver{disable page break somehow}
%\hspace{1ex} 
%\begin{enumerate}
%\item 
Let $\frj \subseteq \frg_{\beta}$ be an ideal of $\frg$. 
Then $[\frj^{\perp_{\beta}}, \frj ] \subseteq  \frj \cap \frg^{\perp_{\beta}}$. 
\end{lem}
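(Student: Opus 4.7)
The statement is a short formal manipulation, so the plan is essentially a one-line calculation made into a brief paragraph. Fix $x \in \frj^{\perp_\beta}$ and $j \in \frj$. Since $\frj$ is an ideal, certainly $[x,j] \in \frj$, so only the inclusion $[x,j] \in \frg^{\perp_\beta}$ needs work. By the definition of $\perp_\beta$ recalled just before the lemma, this means verifying $\beta([x,j], z) = 0$ for every $z \in \frg$.

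The engine is the hypothesis $\frj \subseteq \frg_\beta$, which says that $\ad(j)$ is $\beta$-skew: $\beta([j,y],z) = -\beta(y,[j,z])$ for all $y,z \in \frg$. Apply this with $y = x$ to obtain
\[
\beta([x,j], z) \;=\; -\beta([j,x], z) \;=\; \beta(x, [j,z]).
\]
Because $\frj$ is an ideal, $[j,z] \in \frj$, and because $x \in \frj^{\perp_\beta}$ we have $\beta(x, \frj) = \zsp$. Hence the right-hand side vanishes, which is what we needed.

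There is no real obstacle here: the only point to be slightly careful about is the convention for $\perp_\beta$, namely that $\frj^{\perp_\beta}$ consists of vectors $x$ with $\beta(x, \frj) = \zsp$ on the left, matching the definition $V^{\perp_\beta} = \{v : \beta(v,V) = \zsp\}$ from the paragraph above. Once this is fixed, the skew-symmetry identity moves $\ad(j)$ from the first slot to the second slot of $\beta$, the ideal property of $\frj$ pushes the resulting bracket $[j,z]$ back into $\frj$, and the defining property of $\frj^{\perp_\beta}$ kills everything.
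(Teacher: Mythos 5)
Your argument is correct and is precisely the one-line manipulation the authors had in mind; the paper in fact states Lemma \ref{lem:idealp} without proof, treating it as immediate. Your computation $\beta([x,j],z)=\beta(x,[j,z])=0$, using $\frj\subseteq\frg_\beta$, the ideal property, and the (first-slot) convention for $\perp_\beta$, fills that omission correctly.
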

%\begin{proof}
%Let $x,y \in \frg_{\om}$ and $a\in\frg$. Then 
%%Then, by  \eqref{eq:adxyz2},
%%\[
%$\omega(\varphi \, [x,y],a)
%= - \omega([x,y], \varphi \,  a )$. 
%Since $ \varphi \, a \in \frg_{\om}$,  \eqref{eq:adxyz2} implies % that $\omega([[x,y],z],a) =0$. 
%$\varphi \, [\frg_\omega,\frg_\omega]  \, \perpo \,  \frg$. Hence, (1).
%\end{proof} 

In case  $\beta$ is  symmetric or skew-symmetric, the pair $(\frg, \beta)$ is called \emph{effective} if the kernel  $\frg^{\perp_{\beta}}$  does not contain any non-trivial ideal of $\frg$. 

\subsubsection{$J$-structures on Lie algebras} 

Given a $J$-structure on the Lie algebra $\frg$ we put $ \frh = \ker J$, which is a subspace of $\frg$.
Define the subalgebra 
\[
\frg_{J}\, =\,  \{x\in\frg\mid  \ad(x) \frh  \subseteq \frh \text{ and }  \ad(x)   J y  =   J\,\ad(x)   y  \mod \frh \} .
\]
We observe: 
\begin{lem}\label{lem:Jlinear}
If $x\in\frg$, $y \in\frg_{J}$ then
% \begin{equation}
$J  (\ad(x)\, y) =  \ad(J x) y  \mod \frh $. 
%\label{eq:adJxyz}
% \end{equation}
\end{lem}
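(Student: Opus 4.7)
The plan is to reduce the claim to the defining property of $\frg_J$ applied to the distinguished element $y$, by using antisymmetry of the Lie bracket to shift the roles of $x$ and $y$. The key observation is that the hypothesis in the lemma puts $y$ (not $x$) in $\frg_J$, so we must first maneuver $y$ into the \emph{outer} slot of the $\ad$ operator.

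First, I would rewrite $\ad(x)\,y = [x,y] = -[y,x] = -\ad(y)\,x$, and analogously $\ad(Jx)\,y = -\ad(y)\,Jx$. Under these rewritings, the desired congruence $J\,\ad(x)\,y \equiv \ad(Jx)\,y \mod \frh$ becomes $J\,\ad(y)\,x \equiv \ad(y)\,Jx \mod \frh$.

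But this last congruence is precisely the defining property of $y \in \frg_J$: by the definition recalled just before the lemma, $\ad(y)\,Jz \equiv J\,\ad(y)\,z \mod \frh$ holds for every $z \in \frg$. Specializing to $z = x$ yields the claim. Stringing the steps together gives the single-line computation
\[
J\,\ad(x)\,y \;=\; -J\,\ad(y)\,x \;\equiv\; -\ad(y)\,Jx \;=\; \ad(Jx)\,y \pmod{\frh}.
\]

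The only point that requires a word of justification is that passing from $J\,\ad(y)\,x \equiv \ad(y)\,Jx \mod \frh$ to the same statement after negation is harmless, and that applying $J$ to a congruence modulo $\frh$ is well-defined; both are immediate because $\frh = \ker J$, so $J(\frh) = \zsp$. There is no genuine obstacle: the lemma is a direct algebraic consequence of the definitions once one sees that antisymmetry of the bracket allows the hypothesis on $y$ to do all the work, and that no condition on $x$ is needed.
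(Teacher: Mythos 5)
Your proof is correct and is essentially identical to the paper's: both use antisymmetry of the bracket to rewrite $\ad(x)\,y = -\ad(y)\,x$, invoke the defining property of $y \in \frg_J$ to commute $J$ past $\ad(y)$ modulo $\frh$, and then undo the antisymmetry. The side remark about $J(\frh)=\zsp$ is harmless but not actually needed, since the congruence step is a direct instance of the definition rather than an application of $J$ to an existing congruence.
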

\begin{proof}
% For $x\in\frg$, $y\in\frg_{\met,\omega}$, \eqref{eq:adxJ} gives
% \[
$
J  (\ad(x)   y)  =   - J   (\ad(y) \, x)  =  -  \ad(y)  J x \mod \frh 
= \ad(J x) y \mod \frh$. 
\qedhere
\end{proof}

%% Now \eqref{eq:adJxyz}  yields:
%\begin{lem}\label{lem:JGW} \hspace{1ex} 
%\begin{enumerate} 
%\item   For $x\in\frg$, $y\in\frg_J$, we have  
%% \begin{equation}
% $
%J \, (\ad(x)\, y) =  \ad(J x) \, y \, \mod \frh$ . 
%%\label{eq:adJxyz}
%% \end{equation}
%\item If $\frg = \frg_{J}$ then $J \, \ad(x) = \ad(J x) \mod \frh$,  for all $x \in \frg$.
%\item Let $W \subseteq \frg_{J}$ be a subspace, then 
%$ J \, [\frg, W]= [J \frg,W] \mod \frh$. 
%\end{enumerate}
%\end{lem}
%\begin{proof}
%In fact, 
%$
%J  \, (\ad(x)  \, y)  =   - J \ad(y) \, x  =  -  \ad(y)  J x \mod \frh 
%= \ad(J x) \, y \mod \frh$. 
%\qedhere
%\end{proof}
%
%
%$
%J  \, (\ad(x)  \, y)  =   - J \ad(y) \, x  =  -  \ad(y)  J x \mod \frh 
%= \ad(J x) \, y \mod \frh$. 
%\qedhere
%\end{proof}

%
\subsubsection{$h$-structures on Lie algebras} 

Let $(\met,J)$ be  an  $h$-structure on $\frg$, and let  $\om$ denote its 
fundamental two-form. We  call
\[
\frg^{\perp}:  =   \frg^{\perp_{\met}}  =  \frg^{\perp_{\omega}}
\]
the \emph{kernel of the $h$-structure} $(\met,J)$. For an $h$-structure on $\frg$ we define 
\[
\frg_{J}^{h}\, =\,  \{x\in\frg\mid  \ad(x) \frg^{\perp}  \subseteq \frg^{\perp} \text{ and }  \ad(x)   J y  =   J  \ad(x)   y  \mod \frg^{\perp }\} .
\]
We then have: 
\begin{lem}\label{lem:Jlinear2}
If $x\in\frg$, $y \in \frg_{J}^{h}$ then
% \begin{equation}
$
J  (\ad(x)\, y) =  \ad(J x)  y  \mod \frg^{\perp} $. 
%\label{eq:adJxyz}
% \end{equation}
\end{lem}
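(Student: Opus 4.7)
The plan is to mimic the proof of Lemma \ref{lem:Jlinear} verbatim, simply replacing the role played by $\frh = \ker J$ there with the larger subspace $\frg^{\perp}$. The two ingredients are the skew-symmetry of the Lie bracket and the two defining properties of $\frg_J^h$: that $\ad(y)$ preserves $\frg^{\perp}$ (so reduction modulo $\frg^{\perp}$ is meaningful) and that $\ad(y) \circ J \equiv J \circ \ad(y) \mod \frg^{\perp}$.

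Concretely, for $x \in \frg$ and $y \in \frg_J^h$, I would write
\[
J\bigl(\ad(x)\,y\bigr) \,=\, J[x,y] \,=\, -J[y,x] \,=\, -J\bigl(\ad(y)\,x\bigr),
\]
then invoke the $J$-linearity clause of $y \in \frg_J^h$ applied to the vector $x$ to obtain
\[
-J\bigl(\ad(y)\,x\bigr) \,\equiv\, -\ad(y)(Jx) \pmod{\frg^{\perp}},
\]
and finally apply skew-symmetry once more: $-\ad(y)(Jx) = -[y,Jx] = [Jx,y] = \ad(Jx)\,y$.

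There is essentially no obstacle here, as the argument is formally the same as for Lemma \ref{lem:Jlinear}. The only point worth underlining is why the hypothesis $\ker J \subseteq \frg^{\perp}$ built into an $h$-structure (Definition \ref{def:h_structure}) is what makes the congruences modulo $\frg^{\perp}$ compatible with applying $J$: since $\ker J \subseteq \frg^{\perp}$, the complex structure $J$ descends to a well-defined endomorphism of $\frg/\frg^{\perp}$, so an equation of the form $u \equiv v \mod \frg^{\perp}$ implies $Ju \equiv Jv \mod \frg^{\perp}$. This is the only structural remark needed beyond the direct calculation.
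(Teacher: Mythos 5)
Your computation is exactly the argument the paper uses to prove Lemma \ref{lem:Jlinear}, and the paper states Lemma \ref{lem:Jlinear2} without proof precisely because that computation carries over verbatim with $\frh$ replaced by $\frg^{\perp}$; your proof is correct. One small remark: your closing observation is not actually needed (no step of the calculation applies $J$ to both sides of a congruence), and in any case the reason $J$ descends to $\frg/\frg^{\perp}$ is that $J\frg^{\perp}\subseteq\frg^{\perp}$, which follows from $J$ being skew with respect to $\met$, not from $\ker J\subseteq\frg^{\perp}$ alone.
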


We further  write  
\[
\frg_{h}=\frg_\omega\cap\frg_{\met} \ \text{ and } \  \frg_{\met,J}= \frg_{\met} \cap \frg_{J} 
\]
Observe that 
\begin{equation} \label{eq:Jlinear2}
\frg_{\met,J} \,  \subseteq  \,  \frg_{h} \, \subseteq  \, \frg_{J}^{h} \; . 
\end{equation}
From \eqref{su_2} we further infer
\begin{equation} \label{eq:gsu_2}
\frg_{h}  \, \subseteq \, \frg_{J} = \frg_{J}^{h}  \;  \text{   and }  \; 
\frg_{\met,J} =  \frg_{h} \  \text{ if } \ker J = \frg^{\perp} .
\end{equation} 
% In particular, $\frg_{\met,\J} =  \frg_{h}  \text{ if } \ker J = \frg^{\perp} \; . $ 
%\subsubsection{Complex subspaces in almost $h$-algebras} 

%Let $(\frg, \J, \met)$ be an almost $h$-algebra with kernel $\frh = \frg^\perp$. 

%\begin{lem}\label{lem:invariant_abelian}
%If $\omega$ is invariant on $\frg$, then $\frg$ is abelian.
%\wolf{depricated}
%\end{lem}
%\begin{proof}
%If $\frg=\frg_\omega$,
%then \u{eq:adxyz} holds for all $x,y,z\in\frg$.
%By non-degeneracy of $\omega$, $\ad(x)=0$ for all $x\in\frg$.
%\end{proof}

%\begin{proof}
%Let $x \in \frg_{\J}$. Then $ [x, Jy ] = J [ x, y]  \ad(\J x)  \, J y = J (\ad(x) (J y) = -
%\end{proof}

%\smallskip 
%\oliver{begin The following put somewhere else} 
%The Levi decomposition of $\frg$ is
%\[
%\frg = (\frk\times\frs)\ltimes\frr,
%\]
%where
%$\frk$ is semisimple of compact type,
%$\frs$ semisimple whithout factors of compact type, and
%$\frr$ is the solvable radical of $\frg$.
%Let further $\frn$ denote the nilradical of $\frg$.
%By nil-invariance of $\met$ and $\omega$, we have
%\[
%\frg_{\omega,\met}\supseteq\frs\ltimes\frn.
%\]
%Moreover, by Theorem \ref{thm:BGZ_invariance},
%\[
%\frg_{\met} \supseteq \frs\ltimes\frr.
%\]
%Motivated by the geometric situation \eqref{eq:GperpH},
%we make the additional assumption
%\[
%\frg^{\perp_{\met}} = \frg^{\perp_\omega}
%\]
%and simply write $\frg^\perp$ for this subalgebra. 
%\oliver{end somewhere else}

% !TEX root = pseudoHermitian.tex

\newcommand{\spl}{\mathrm{spl}}
\newcommand{\frT}{\mathfrak{T}}
\newcommand{\Tens}{\mathbf{T}}
\renewcommand{\theta}{\vartheta}

%%%%%%%%%%%%%%%%%%%%%%%
\section{Algebraic dynamics arising from automorphism groups of geometric structures}
\label{sec:invariance}
%%%%%%%%%%%%%%%%%%%%%%%

In this section, we explore the dynamics arising from automorphism groups of multilinear
forms on manifolds of finite volume. 
%%%
\subsection{Preliminaries:  isotropic and split subgroups of algebraic groups}

\subsubsection{Solvable groups} 
For a solvable linear algebraic group $\ag{A}$ defined over $\RR$, 
let $\ag{A}_\spl$ denote the maximal $\RR$-split connected subgroup of
$\ag{A}$. This means that $\ag{A}_\spl$ is the maximal connected subgroup
trigonalizable over the reals.
For the real algebraic group $A = \ag{A}_{\RR}$, its maximal trigonalizable
subgroup  is $A_{\spl} =  A \cap \ag{A}_\spl$. 
Let $\aT$ be a torus  defined over $\RR$. Then $\aT$ is called
\emph{anisotropic}  if $\ag{T}_\spl=\one$.  Otherwise, $\aT$ is called \emph{isotropic}. 
Equivalently,  $\aT$ is anisotropic if its group of real points
$T=\aT_{\RR}$ is compact.  Every torus defined over $\RR$ has a
decomposition into subgroups
%\begin{equation}
$ \aT=\aT_\spl\cdot \aT_{\rm c}$, 
% \label{eq_TsTc}
% \end{equation}
where $ \aT_{\rm c}$ is a maximal anisotropic torus defined over
$\RR$ and $\aT_\spl \cap \aT_{\rm c}$ is finite.
Moreover, if $\aT \subseteq \ag{A}$ is a maximal torus defined over $\RR$
and $\aU$ is the unipotent radical of $\ag{A}$,
then there is a semidirect product decomposition
\[
\ag{A}_{\spl} = \aU \cdot \aT_{\spl}.
\]
Note also that the split part $\ag{A}_{\spl}$ is preserved under
$\RR$-defined morphisms of algebraic groups. 
See Borel \cite[\textsection 15]{borel_book} for more background.

\subsubsection{Semisimple groups}
A simple linear algebraic group $\aL$ defined over $\RR$ is called 
\emph{isotropic} if any maximal $\RR$-defined torus is isotropic.
A semisimple linear algebraic group $\aL$ defined over $\RR$ will be called 
\emph{isotropic} if all of its simple factors are isotropic. 

\subsubsection{The isotropic kernel} 
If $\aG$ is a linear algebraic group defined over $\RR$
with solvable radical $\ag{R}$ and a maximal semisimple subgroup $\ag{L}$,
then we define the \emph{isotropic part}  of $\aG$ to be the normal subgroup
\begin{equation}
\aG_{\rm is} = \ag{S}\cdot\ag{R}_{\spl} \, ,
\label{eq:splitpart}
\end{equation}
where  $\ag{S}$ is the maximal normal isotropic subgroup 
of $\aL$.  Note that $\aG_{\rm is}$ is a normal subgroup and that there is an almost
semidirect product decomposition 
\[
\aG  = \ag{B} \cdot   \aG_{\rm is} \, ,
\]
where  $\ag{B}$ is a reductive anisotropic subgroup. 
The normal subgroup $ \aG_{\rm is}$ will be called the \emph{isotropic kernel}
of $\aG$. Any $\RR$-defined homomorphism of $\aG$ to an anisotropic 
group contains $\aG_{\rm is}$ in its kernel, that is, it factors over
$\aG/\aG_{\rm is}$. 

\begin{lem}\label{lem:split0}
Let $\aG$ and $\aH$ be $\RR$-defined linear algebraic groups and
$\psi:\aG\to\aH$ an $\RR$-defined algebraic homomorphism.
Then $\psi(\aG_{\rm is})\subseteq\aH_{\rm is}$. 
\end{lem}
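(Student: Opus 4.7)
The plan is to apply the universal property of the isotropic kernel stated immediately before the lemma: any $\RR$-defined algebraic homomorphism from $\aG$ into an $\RR$-anisotropic group contains $\aG_{\rm is}$ in its kernel. The natural candidate for such a homomorphism is the composition of $\psi$ with the quotient map $\pi:\aH\to\aH/\aH_{\rm is}$, so the whole argument reduces to verifying that $\aH/\aH_{\rm is}$ is $\RR$-anisotropic.

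To see this, I would invoke the almost semidirect decomposition $\aH=\ag{B}\cdot\aH_{\rm is}$ recalled in the excerpt, where $\ag{B}$ is reductive anisotropic. The quotient $\aH/\aH_{\rm is}$ is therefore a surjective image of $\ag{B}$ under the restriction $\pi|_{\ag{B}}$, which is still an $\RR$-defined morphism. Since $\RR$-defined morphisms preserve split parts of algebraic groups (as explicitly stated in the excerpt for the solvable case, and holding in general by the same token for simple factors), and $\ag{B}$ has trivial split part (its central torus is anisotropic and its semisimple factors are of compact type), the image $\aH/\aH_{\rm is}$ has trivial split part as well, and is thus anisotropic.

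Composing then yields an $\RR$-defined algebraic homomorphism $\pi\circ\psi:\aG\to\aH/\aH_{\rm is}$ whose target is anisotropic. By the universal property, $\aG_{\rm is}\subseteq\ker(\pi\circ\psi)$, equivalently $\psi(\aG_{\rm is})\subseteq\ker\pi=\aH_{\rm is}$, which is the desired conclusion. The only non-formal step is the anisotropy of the quotient; everything else is essentially bookkeeping. An alternative, more direct route would separately track the images of $\ag{S}$ and $\ag{R}_{\spl}$ under $\psi$, but this runs into the nuisance that $\psi(\ag{R})$ need not coincide with the solvable radical of $\aH$, whereas the quotient argument sidesteps this issue entirely.
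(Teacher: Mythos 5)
Your proof is correct, but it takes a different route from the paper's. The paper argues directly on the two constituents of $\aG_{\rm is}$: it cites Borel \cite[15.4 Theorem]{borel_book} for the claim that the split part of the solvable radical of $\aG$ is carried into the split part of the solvable radical of $\aH$, and then observes that isotropic simple subgroups map to isotropic simple subgroups. You instead invoke the universal property of the isotropic kernel (a morphism into an anisotropic group kills $\aG_{\rm is}$), reduce everything to the anisotropy of $\aH/\aH_{\rm is}$, and get that from the decomposition $\aH=\ag{B}\cdot\aH_{\rm is}$ with $\ag{B}$ reductive anisotropic. The comparison is instructive: your quotient argument cleanly handles non-surjective $\psi$, for which the image of $\ag{R}(\aG)_{\spl}$ need not land in the radical of $\aH$ at all (take $\aG$ a Borel subgroup of $\SL_2$ and $\psi$ the inclusion) --- this is exactly the nuisance you flag, and the paper's one-line argument implicitly still needs the supplementary fact that every $\RR$-split connected solvable (resp.\ isotropic simple) subgroup of $\aH$ lies in $\aH_{\rm is}$, which is most transparently seen by your quotient argument anyway. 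The price you pay is that you lean on the universal property asserted just before the lemma, which the paper states without proof; since that assertion is itself established by the same kind of reasoning as the direct argument, the two approaches ultimately rest on the same facts, but yours packages them so that the bookkeeping is trivial.
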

\begin{proof}
The split part of the solvable radical of $\aG$ is mapped to the split part
of the solvable radical of $\aH$ (see Borel \cite[15.4 Theorem]{borel_book}).
Moreover, $\psi$ maps the isotropic simple subgroups of $\aG$
to isotropic simple subgroups of $\aH$.
\end{proof}

\subsubsection{Real algebraic groups}

Let $G = \aG_{\RR}$ be the group of real points of $\aG$.
Then $G_{\rm is} =  G \cap  \aG_{\rm is}$ 
is called the isotropic kernel  of $G$ and $B = \ag{B} \cap G$ is a compact group. We note
that  $$    G =  B \cdot G_{\rm is}  \;. $$  
We conclude that  $G_{\rm is}$ is a normal uniform subgroup of $G$, and, in fact, every morphism of 
real algebraic groups from  $G$ to a compact group factors over 
$G/G_{\rm is}$. 
Recall that a closed subgroup $H$ of $G$ is called \emph{uniform}  if $G/H$ is compact.

\subsection{Dynamics of linear maps, invariant probability measures}

Let  $V$ be a real vector space and $G \subseteq  \GL(V)$.  
Let  $A= \ac{G}_{\RR}$ denote the real Zariski closure of $G$.  
% \oliver{remove the $\RR$ from notation if possible}

\subsubsection{Quasi-invariant vectors} \label{sec:quasi-invariant}
For $v \in V$, put 
$G_{v} = \{ g \in G \mid g \cdot v = v \}$. Recall that  $v$ is 
called \emph{invariant} (by $G$) if $G = G_{v}$.
%,that is, if $G \cdot v = v$. 

\begin{definition} \label{def:quasi-invariant}
We call $v \in V$ \emph{quasi-invariant by $G$} if the closure of the orbit $G \cdot v$ 
is compact. Moreover, $v$ is called  \emph{strongly quasi-invariant} if $\ac{G}_{\RR} \cdot v$ has compact closure in $V$. 
\end{definition} 

Clearly, for any group $G$, strong quasi-invariance implies quasi-invariance. 
%If $H \subseteq G$ is a subgroup 
%then quasi-invariance by $G$ implies quasi-invariance by $H$. 
If $G \subseteq \GL(V)$ is a real linear algebraic group, then quasi-invariance and 
strong quasi-invariance of $v \in V$ are equivalent. 
% \oliver{always, quasi-invariant implies strong quasi-invariance?} 

\begin{example} \label{ex:trig}
Let $T  \subseteq \GL(V)$ be a trigonalizable subgroup.  
Then $v \in V$ is quasi-invariant  by $T$ if and only if $v$ is 
invariant by $T$.  
\end{example} 

\begin{lem} \label{lem:Gis}
Let $G \subseteq \GL(V)$ be a real linear algebraic group and $v \in V$. 
Put $H  = G_{v}$. Suppose that $v$ is quasi-invariant by $G$. Then $H$ contains 
the isotropic kernel  $G_{\rm is}$ of $G$. In particular, $H$ is a uniform subgroup of $G$. 
\end{lem}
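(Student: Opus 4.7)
The plan is to reduce everything to the behavior of trigonalizable subgroups via Example \ref{ex:trig}. First, I would observe that any trigonalizable $\RR$-subgroup $T \subseteq G$ must fix $v$: since $\overline{T \cdot v} \subseteq \overline{G \cdot v}$ is compact, $v$ is quasi-invariant by $T$, and Example \ref{ex:trig} then forces $T\cdot v = \{v\}$, so $T \subseteq H$. Note that this step does not require $v$ to be quasi-invariant under $T$ in isolation; it is automatic because $T$-orbits sit inside the $G$-orbit.

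Next, I would show that $G_{\rm is}$ is generated by trigonalizable subgroups. By construction $G_{\rm is} = S \cdot R_{\spl}$, where $R_{\spl}$ (the split part of the solvable radical) is trigonalizable by definition of ``split''. For the isotropic semisimple factor $S$, I would decompose it as a product of $\RR$-isotropic simple groups $L_i$ and invoke the classical structure theory: each $L_i$ has positive $\RR$-rank and is generated by its $\RR$-root subgroups, equivalently by the unipotent radicals $N$ and $\bar N$ of any pair of opposite minimal $\RR$-para\-bolic subgroups. These unipotent subgroups are nilpotent, hence trigonalizable. Combining, $G_{\rm is}$ is generated by trigonalizable subgroups of $G$, all of which lie in $H$ by the first step, so $G_{\rm is} \subseteq H$.

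For the uniformity assertion, I would use the almost semidirect decomposition $G = B \cdot G_{\rm is}$ with $B$ compact, established in the preceding subsection. This makes $G/G_{\rm is}$ a continuous image of $B$ and hence compact; since $H \supseteq G_{\rm is}$, the further quotient $G/H$ is a continuous image of $G/G_{\rm is}$, so $G/H$ is compact, i.e.\ $H$ is uniform in $G$.

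The principal technical input is the algebraic-group fact that an $\RR$-isotropic simple linear algebraic group is generated by the unipotent radicals of opposite minimal $\RR$-parabolic subgroups; since this is classical (Borel--Tits), I would cite it rather than reprove it. The only other point to track is that all the trigonalizable subgroups used in step two are actually $\RR$-defined subgroups of $G$, so that Example \ref{ex:trig} is applicable uniformly; this causes no trouble since root subgroups and $R_{\spl}$ are $\RR$-algebraic by construction.
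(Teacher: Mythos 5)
Your proof is correct and follows the same route as the paper, whose entire argument is the one-liner that Example \ref{ex:trig} forces every trigonalizable subgroup of $G$ into $H=G_v$, hence $G_{\rm is}\subseteq H$. You have merely made explicit the two facts the paper leaves implicit (that $G_{\rm is}$ is generated by trigonalizable subgroups, via $R_{\spl}$ and the Borel--Tits generation of isotropic simple groups by opposite unipotent radicals, and that uniformity follows from $G=B\cdot G_{\rm is}$ with $B$ compact), both of which the paper itself asserts in the surrounding subsection.
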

\begin{proof}  By Example \ref{ex:trig},  $H = G_{v}$ must contain every trigonalizable subgroup of $G$. 
Therefore,  $H$ contains $G_{\rm is}$. % In particular, $H$ is a uniform subgroup.  
 \end{proof}
 
 \subsubsection{Invariant probability measures} 
% Let  $V$ be a finite dimensional real vector space and $G \subseteq \GL(V)$.
We now consider  $G$-invariant probability measures on $V$.

\begin{example}[Invariant measures are supported on fixed points] \label{ex:trio}
Let $A \subseteq \GL(V)$ be a one-dimensional trigonalizable group and $\mu$ an
$A$-invariant probability measure.   If $\mu$ is ergodic then, by Zimmer \cite[Proposition 2.1.10]{zimmer},  
$\mu$ is supported on some orbit $A \cdot v$. Since the Haar-measure on $A$ is 
infinite, we must have $A \cdot v = v$. By the ergodic decomposition theorem 
\cite[Chapter 5]{viana-oliveira}, any invariant 
probability measure $\mu$ may be approximated by finite convex combinations of ergodic invariant 
measures, which (as we just remarked) are all supported on the fixed point set $V^{A}$. Therefore,  any $A$-invariant probability measure $\mu$ is supported on  $V^A$.   
\end{example}

 The following is a key observation in the context of invariant probability measures 
 on representation spaces: 
 
 \begin{prop} \label{prop:measure}
 Let $G \subseteq  \GL(V)$ and $\mu$ a $G$-invariant probability measure on $V$. 
 Put  $A = \ac{G}_{\! \RR}$. Then the following hold: 
\begin{enumerate}
\item $\mu$ is invariant by $A $.
\item
The action of $A$ on the support of $\mu$ factors over a compact group. 
%$\mu$ is supported on the subspace of $\ac{\rho(G)}_{\RR}$-
%invariant  vectors.  
%
\item
There exists a $G$-invariant subspace 
$W \subseteq V$ containing the support of $\mu$,
such that the action of $G$ on $W$ factors over a compact group.  
\item
If $A =  A_{\rm is}$ then $\mu$ is supported on $V^A$.  
\end{enumerate} 
\end{prop}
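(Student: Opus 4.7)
I would approach the four assertions in the order (1), (4), (2), (3), since (4) underpins both (2) and (3) once (1) has promoted invariance from $G$ to $A$.

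For (1), the key is to show that the stabilizer of $\mu$ in $\GL(V)$ is Zariski closed. For any polynomial $P\in S(V^*)$ which is $\mu$-integrable, the function
\[
\Phi_P\colon \GL(V)\to \RR, \quad g\mapsto \int_V P(g^{-1}v)\,d\mu(v)
\]
is regular in $g$: expanding $P(g^{-1}v)$ in the coordinates of $v$ exhibits it as a polynomial in the entries of $g^{-1}$ with $\mu$-moments as coefficients. Since $G$ preserves $\mu$, the regular function $\Phi_P-\int P\,d\mu$ vanishes on $G$ and hence on $A=\ac{G}_{\RR}$ by Zariski density, so $A$ preserves the moments of $\mu$. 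For compactly supported $\mu$ these moments determine the measure and (1) follows at once; for general $\mu$ I would reduce to this case by an exhaustion argument, for instance through the pushforward to a projective compactification of $V$, or by replacing $\mu$ with its restrictions to $G$-stable compact subsets produced via Poincar\'e recurrence and the finiteness of $\mu$.

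With (1) in hand, Example \ref{ex:trio} yields $\supp\mu\subseteq V^T$ for every one-parameter trigonalizable subgroup $T\subseteq A$. The structural input is that $A_{\rm is}=\ag{S}\cdot\ag{R}_{\spl}$ is \emph{generated} by such subgroups: $\ag{R}_{\spl}$ is itself trigonalizable, and every $\RR$-isotropic simple factor of $\ag{S}$ is generated by the unipotent radicals of two opposite minimal parabolic subgroups together with an $\RR$-split subtorus of a Cartan. Intersecting the subspaces $V^T$ therefore gives $\supp\mu\subseteq V^{A_{\rm is}}$; under the hypothesis $A=A_{\rm is}$ this is exactly (4). For (2), the almost semidirect decomposition $A=\ag{B}\cdot A_{\rm is}$ with compact real points $B=\ag{B}_{\RR}$ shows that the $A$-action on $\supp\mu$ factors through $A/A_{\rm is}$, a quotient of the compact group $B$. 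For (3), set $W:=V^{A_{\rm is}}$; normality of $A_{\rm is}$ in $A$ makes $W$ a $G$-invariant linear subspace containing $\supp\mu$, on which the induced $A$-action, and a fortiori the $G$-action, factors through the compact quotient $A/A_{\rm is}$.

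The principal obstacle is (1): the stabilizer of a general probability measure is not visibly an algebraic subvariety of $\GL(V)$, so transferring invariance to the Zariski closure has to be done with some care when moments of all orders do not exist. Once that technical reduction is in place, the remaining assertions follow essentially mechanically from the structure theory of $\RR$-algebraic groups combined with the rigidity of trigonalizable dynamics encapsulated in Example \ref{ex:trio}.
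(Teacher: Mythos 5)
Your treatment of (2)--(4) coincides with the paper's: $A_{\rm is}$ is generated by one\-/dimensional trigonalizable subgroups, Example \ref{ex:trio} forces $\supp\mu\subseteq V^{A_{\rm is}}$, and the almost semidirect decomposition $A=\ag{B}\cdot A_{\rm is}$ with compact $B$ does the rest; taking $W=V^{A_{\rm is}}$ instead of the smallest $G$-invariant subspace containing $\supp\mu$ is an immaterial difference. The issue is item (1), where you take a genuinely different route that does not cover the stated generality.

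Your moment argument establishes (1) only when every polynomial is $\mu$-integrable and the moment problem for $\mu$ is determinate --- both automatic for compactly supported $\mu$, where the argument is fine, but the proposition concerns an arbitrary probability measure on $V$, and in the intended application (Theorem \ref{thm:isom_is_quasi-invariant}) $\mu$ is pushed forward from a finite-volume, possibly non-compact manifold, so its support need not be bounded. Neither of your proposed reductions closes this. Poincar\'e recurrence does not produce $G$-stable compact subsets: restricting $\mu$ to a non-invariant compact set destroys $G$-invariance, and an intersection $\bigcap_{g\in G}gC$ may well be empty or $\mu$-null. Passing to a projective compactification, on the other hand, abandons the moment calculus entirely: polynomials do not descend to $\mathrm{P}^n\RR$, and the natural substitutes $P(v)/\|v\|^{\deg P}$ transform under $g$ in a way that is no longer regular in the matrix entries. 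What the embedding $V\hookrightarrow\mathrm{P}^n\RR$ actually buys --- and this is the paper's proof of (1) --- is access to the theorem that the stabilizer in $\PGL(n)$ of \emph{any} probability measure on $\mathrm{P}^n\RR$ is a real algebraic subgroup (Zimmer \cite[Theorem 3.2.4]{zimmer}, going back to Furstenberg); since $V$ sits as a $\GL(V)$-invariant affine chart, the stabilizer of $\mu$ in $\GL(V)$ is then real algebraic and must contain $A=\ac{G}_{\RR}$. That algebraicity statement is a genuinely dynamical result (its proof analyses degenerating sequences of projective transformations), not a consequence of moment identities, so you must either invoke it or supply a complete substitute; as written, your proof of (1) is only valid for compactly supported $\mu$.
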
 
\begin{proof} 
Embed  $V$ as an affine subspace into some real 
projective space $\mathrm{P}^n\RR$.
For any probability measure $\nu$ on $\mathrm{P}^n\RR$, 
let  $\PGL(n)_{\nu}$ denote the stabilizer of $\nu$  in the
projective linear group. By \cite[Theorem 3.2.4]{zimmer}, $\PGL(n)_{\nu}$
is a real algebraic subgroup of $\PGL(n)$. Since $\mu$ pushes forward to a $G$-invariant measure 
on $\mathrm{P}^n\RR$, this implies (1).

To prove (4), observe that  $ A_{\rm is}$ is generated by its one-dimensional split subgroups. 
In light of Example \ref{ex:trio}, we deduce that $\supp \mu$ is contained
in the  fixed point set of $A_{\rm is}$.
In particular, $A_{\rm is}$ acts trivially on the support of $\mu$, so (2) follows.   
Let $W$ be the smallest $G$-invariant subspace of $V$ 
that contains $\supp \mu$.
Then $A_{\rm is}$ acts trivially on $W$. Hence (3) holds.
\end{proof}
 
%In particular, let $W$ denote the smallest subspace of $V$ containing the support of $\mu$.
Note that the action of $G$ on $W$ factors over a compact group and the elements of $W$ are 
therefore strongly quasi-invariant by $G$.  

%%%
\subsection{The characteristic map of a geometric $G$-manifold}

Let $M$ be a differentiable manifold, and $\theta$ an $s$-multilinear
form on  $M$. 
%and as before,
%let $\frT^{r,s} M$ denote the $(r,s)$-tensor fields on $M$.
%Moreover, for $p\in M$, we write $\Tens^{r,s}_p M=\Tens^{r,s}(\Tan_p M)$
%for short.
The group of diffeo\-morphisms of $M$ that
preserve $\theta$ will be  denoted by $\Aut(M,\theta)$. Let 
$$  G\subseteq\Aut(M,\theta)$$ be a Lie group.
%where $\theta\in\frT^{0,s}M$ is an 
%$s$-form.
%suppose in the following that $M$ is equipped with a $G$-invariant finite
%measure $\mu$.
We may identify the Lie algebra $\frg$ of $G$ with the subalgebra of 
vector fields on $M$ whose flows are one-parameter subgroups of
diffeomorphisms contained in $G$.
We let $L^s(\frg)$ denote the space of $s$-linear forms on $\frg$,
and consider the associated  map 
\[
\Phi:M \to L^s(\frg),\quad
p \mapsto \Phi_{p}
\]
where, for $X_1,\ldots,X_s\in\frg$, 
the multilinear form  $\Phi_{p}$ on  $\frg$ is declared by
\[
\Phi_{p}(X_1,\ldots,X_s)
= \theta_{p}(X_{1,p},\ldots, X_{s,p}) \; . 
\]
The adjoint representation $\Ad$ of $G$ on $\frg$ induces a representation
$$ \rho:G\to\GL(L^s(\frg))   \; .  $$
% Since $G\subseteq\Aut(M,\theta)$, 
The map $\Phi$ is equivariant with respect to the 
action of $G$ on $M$ and the representation $\rho$. 

\begin{thm}\label{thm:isom_is_quasi-invariant}
Let $M$ be a differentiable manifold  and  $G\subseteq\Aut(M,\theta)$.
Suppose that the Lie group $G$ preserves a finite measure on $M$. Then the action 
of $G$ on $$ \Phi(M) \, \subseteq  \, L^s(\frg)$$  factors over a compact group. 
Moreover, the forms $\Phi_{p}$, $p \in M$, are strongly quasi-invariant by $G$. 
\end{thm}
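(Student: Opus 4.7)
The strategy is to push the $G$-invariant finite measure on $M$ forward to a $\rho(G)$-invariant probability measure on $V:=L^{s}(\frg)$ via the characteristic map $\Phi$, and then invoke Proposition \ref{prop:measure}. After normalizing, let $\nu$ be a $G$-invariant probability measure on $M$ and set $\mu=\Phi_{\ast}\nu$. Because $G$ preserves the multilinear form $\theta$, the characteristic map $\Phi$ is equivariant for the $G$-action on $M$ and the $\rho$-action on $V$, so $\mu$ is a $\rho(G)$-invariant probability measure on $V$.

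Next, write $A=\ac{\rho(G)}_{\RR}$ for the real Zariski closure of $\rho(G)$ in $\GL(V)$ and apply Proposition \ref{prop:measure} to $\rho(G)\subseteq\GL(V)$ and $\mu$. The argument given in the proof of that proposition (via Example \ref{ex:trio} and the fact that $A_{\rm is}$ is generated by its one-dimensional split subgroups) shows that the isotropic kernel $A_{\rm is}$ fixes $\supp\mu$ pointwise, and part (3) then supplies a $G$-invariant subspace $W\subseteq V$ containing $\supp\mu$ on which the $A$-action — and hence the $G$-action — factors through the compact reductive quotient $A/A_{\rm is}$. Since $\Phi(\supp\nu)\subseteq\supp\mu\subseteq W$, and in the setting of interest (the invariant volume on a finite-volume $G$-homogeneous space) $\nu$ has full support, we obtain $\Phi(M)\subseteq W$. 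Therefore the action of $G$ on $\Phi(M)$ factors through the compact group $A/A_{\rm is}$, which is the first assertion.

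Strong quasi-invariance of each individual form $\Phi_{p}$ then follows immediately: the orbit $A\cdot\Phi_{p}$ is the continuous image of the compact quotient $A/A_{\rm is}$ under the orbit map $aA_{\rm is}\mapsto a\cdot\Phi_{p}$, so its closure in $V$ is compact, which is exactly the condition in Definition \ref{def:quasi-invariant}.

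The main obstacle is the inclusion $\Phi(M)\subseteq W$ when $\nu$ need not have full support on $M$. This is automatic in the main application (and in every homogeneous setting under consideration), so no further work is required there. In full generality one can instead replace $W$ by the smallest closed $\rho(G)$-invariant subspace containing $\Phi(M)$ and check, again via Example \ref{ex:trig} applied to the one-dimensional trigonalizable subgroups of $A_{\rm is}$, that $A_{\rm is}$ continues to act trivially on this enlarged subspace, leaving the conclusion intact.
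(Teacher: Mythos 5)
Your argument is correct and essentially identical to the paper's proof: push the invariant measure forward under the equivariant map $\Phi$ and apply Proposition \ref{prop:measure}\,(3). The support caveat you raise in the final paragraph is genuine but is also silently elided in the paper (whose proof likewise passes from ``$W$ contains $\supp\mu'$'' to ``$W$ contains $\Phi(M)$'') and is harmless in every application, where the measure is a homogeneous Riemannian volume of full support; note, however, that your proposed repair for the fully general case does not actually close the gap, since Example \ref{ex:trig} yields nothing about $\Phi_p$ for $p$ outside the support of the measure unless quasi-invariance of $\Phi_p$ is already known.
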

\begin{proof}
Put $V=L^s(\frg)$ and
$A={\ac{\rho(G)}}_{\! \RR}$.
Since $\Phi$ is $G$-equivariant, the finite $G$-invariant measure $\mu$ on $M$ pushes forward  
to a finite $G$-invariant measure $\mu'$ on $V$, which is supported on $\Phi(M)$. By Proposition 
\ref{prop:measure} (3), there exists an $A$-invariant linear subspace $W$ containing 
$\Phi(M)$ on which the action of $A$ factors over a compact group. In particular, 
$\Phi_{p}$ is strongly quasi-invariant for all $p \in M$. 
\end{proof}

\subsection{Nil-invariant and quasi-invariant bilinear forms on Lie algebras}
\label{sec:nilinvariant}

Let $\frg$ be a Lie algebra and consider its  inner auto\-morphism group  $$G = \Inn(\frg) \, . $$  
By definition, 
$G$ is the connected Lie subgroup of $\GL(\frg)$ that is tangent to the Lie algebra  of linear 
transformations 
$$ \ad(\frg) = \{ \ad(x)  \mid x \in \frg \} \; \subseteq \; \gl(\frg) \, . $$ 
We also consider the real Zariski closure $$\ac{\Inn(\frg)}  \, \subseteq \, \Aut(\frg) \, \subseteq \,  \GL(\frg)$$ of $\Inn(\frg)$, which is an algebraic group of automorphisms of $\frg$ with  Lie algebra $$\ac{\ad(\frg)} \,  \subseteq \der(\frg) \, \subseteq \,  \gl(\frg) \, .$$

%We are interested in the dynamics of the induced action of $G$ on the various tensor spaces associated to $\frg$. In particular,
We are interested in the  action of $G$ on the space of bilinear forms on $\frg$. The following notions are fundamental (\cf Section \ref{sec:quasi-invariant}): 
%  \oliver{formulate for arbitrary elements in the tensor space of $\frg$?} 

\begin{definition} \label{def:quasi-invariant_beta}
Let $\beta$ be a  bilinear form on $\frg$. Then 
$\beta$ is called \emph{invariant} if $G$ is contained in the orthogonal group  $\OO(\beta)$ of
$\beta$. Likewise,  $\beta$ will be called \emph{quasi-invariant}  if 
$$ \ac{\Inn(\frg)} \cap \OO(\beta)   \; \,  \subseteq \; \,  \ac{\Inn(\frg)}$$ 
is the inclusion of a uniform subgroup. %  of $ \ac{\Inn(\frg)}$.  
\end{definition} 

According to Lemma \ref{lem:Gis}, $\beta$ is  quasi-invariant  if and only if 
\begin{equation} \label{eq:quasi}
 \ac{\Inn(\frg)} \cap \OO(\beta) \, \supseteq  \; \,   \ac{\Inn(\frg)}_{\rm is} \, . 
\end{equation}
That is, 
the stabilizer of $\beta$ contains the maximal normal subgroup 
$$       \ac{\Inn(\frg)}_{\rm is} $$ that is  generated by linear maps in  $\ac{\Inn(\frg)}$
that are trigonalizable over $\RR$. 

\subsection{The quasi-invariance condition} \label{sec:quasi_invariance}
The condition \eqref{eq:quasi}  may be formulated in Lie algebra terms  only: 
% $\ac{\ad(\frg)}$ of $ \ac{\Inn(\frg)}$ only: 

\begin{lem} \label{lem:Gisbeta}
The bilinear form $\beta$ is quasi-invariant by $G$ if and only if the Lie algebra $$ \ac{\ad(\frg)} \cap  \fro(\beta)$$  contains all elements of\/  $\ac{\ad(\frg)}$ that are trigonalizable over $\RR$.  
\end{lem}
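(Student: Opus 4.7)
The plan is to translate the group-theoretic quasi-invariance condition into a Lie-algebra statement via the functoriality of Lie algebras of $\RR$-defined algebraic subgroups. Since $\OO(\beta) \subseteq \GL(\frg)$ is an $\RR$-defined algebraic subgroup with Lie algebra $\fro(\beta)$, the intersection $\ac{\Inn(\frg)} \cap \OO(\beta)$ is again $\RR$-defined algebraic with Lie algebra $\ac{\ad(\frg)} \cap \fro(\beta)$. Combined with the characterization \eqref{eq:quasi}, the statement thus reduces to showing the group-inclusion
\[
\ac{\Inn(\frg)}_{\rm is} \, \subseteq \, \ac{\Inn(\frg)} \cap \OO(\beta)
\]
is equivalent to $\ac{\ad(\frg)} \cap \fro(\beta)$ containing every $X \in \ac{\ad(\frg)}$ that is trigonalizable over $\RR$.

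The key algebraic input I would isolate is the following assertion about the isotropic kernel: $\ac{\Inn(\frg)}_{\rm is}$ is connected, and its Lie algebra is precisely the linear span of the elements of $\ac{\ad(\frg)}$ that are trigonalizable over $\RR$ (equivalently, it is generated as a group by its one-parameter subgroups $\{\exp(tX)\}$ with $X$ such a trigonalizable element). This follows from the decomposition $\ac{\Inn(\frg)}_{\rm is} = \ag{S} \cdot \ag{R}_{\spl}$ in \eqref{eq:splitpart}: the split solvable factor $\ag{R}_{\spl}$ is by definition trigonalizable over $\RR$, while the isotropic semisimple factor $\ag{S}$ is generated by its $\RR$-split tori and its unipotent subgroups (Borel--Tits structure theory), both of which consist of trigonalizable one-parameter subgroups. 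Conversely, every trigonalizable one-parameter subgroup of $\ac{\Inn(\frg)}$ is forced inside $\ac{\Inn(\frg)}_{\rm is}$, because the complementary reductive anisotropic factor $\ag{B}$ in the decomposition $\aG = \ag{B} \cdot \aG_{\rm is}$ has compact group of real points and therefore contains no nontrivial $\RR$-trigonalizable element.

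Granted this, both directions are formal. If $\beta$ is quasi-invariant, then $\ac{\Inn(\frg)}_{\rm is} \subseteq \OO(\beta)$, so $\mathrm{Lie}(\ac{\Inn(\frg)}_{\rm is}) \subseteq \ac{\ad(\frg)} \cap \fro(\beta)$, and this Lie algebra by construction exhausts the trigonalizable elements of $\ac{\ad(\frg)}$. Conversely, if $\ac{\ad(\frg)} \cap \fro(\beta)$ contains every trigonalizable element of $\ac{\ad(\frg)}$, it contains $\mathrm{Lie}(\ac{\Inn(\frg)}_{\rm is})$; since $\ac{\Inn(\frg)} \cap \OO(\beta)$ is an $\RR$-algebraic subgroup of $\GL(\frg)$ and $\ac{\Inn(\frg)}_{\rm is}$ is connected, this Lie-algebra containment integrates to $\ac{\Inn(\frg)}_{\rm is} \subseteq \ac{\Inn(\frg)} \cap \OO(\beta)$, yielding \eqref{eq:quasi}.

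The main obstacle is ensuring the generation statement for the isotropic semisimple factor $\ag{S}$: I need that an $\RR$-isotropic semisimple group is generated by its $\RR$-split tori and its unipotent subgroups. This is a standard consequence of the Borel--Tits structure theory (every isotropic simple $\RR$-group is generated by the root groups of a maximal $\RR$-split torus), so it can be invoked as a black box; the remainder of the argument is then bookkeeping between an algebraic group and its Lie algebra.
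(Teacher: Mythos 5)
Your proposal is correct and follows the same route the paper takes: the paper treats this lemma as the Lie-algebra translation of \eqref{eq:quasi}, and the key input you isolate --- that $\ac{\Inn(\frg)}_{\rm is}$ is connected with tangent algebra $\ac{\ad(\frg)}_{\rm is}$, the subalgebra generated by (and containing exactly) the $\RR$-trigonalizable elements --- is precisely the fact the paper records immediately after the lemma. Your fleshing out of the two containments (split/unipotent generation of $\ag{S}\cdot\ag{R}_{\spl}$ on one side, triviality of trigonalizable subgroups of the anisotropic factor $\ag{B}$ on the other) matches the intended argument; only the phrase ``linear span'' should read ``subalgebra generated by,'' which your one-parameter-subgroup formulation already covers.
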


The Lie algebra  $\ac{\ad(\frg)}_{\rm is}$ generated by all elements trigonalizable over $\RR$ is tangent to 
$  \ac{\Inn(\frg)}_{\rm is}$. In particular,
$\ac{\ad(\frg)}_{\rm is}$ is an ideal in  $\ac{\ad(\frg)}$, since
$\ac{\Inn(\frg)}_{\rm is}$ is a normal subgroup of  $\Inn(\frg)_{\rm is}$.
The following  is a noteworthy  consequence: 

\begin{lem} \label{lem:acis} 
Suppose that $\beta$ is a quasi-invariant bilinear form. Then $$  \ac{\Inn(\frg)}_{\rm is} (\frg)  \subseteq \frg_{\beta}. $$ 
 That is, for any $\varphi \in  \ac{\ad(\frg)}_{\rm is}$ and $a \in \frg$, we have $\varphi(a) \in 
\frg_{\beta}$.
 \end{lem}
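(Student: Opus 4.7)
The plan is to reduce the statement to two ingredients already isolated just above the lemma: first, the quasi-invariance condition from Lemma \ref{lem:Gisbeta}, which says $\ac{\ad(\frg)}_{\rm is}\subseteq\fro(\beta)$; and second, the fact that $\ac{\ad(\frg)}_{\rm is}$ is an ideal of $\ac{\ad(\frg)}$, noted in the paragraph preceding the lemma. The key algebraic observation that ties these together is the classical identity $[\varphi,\ad(a)]=\ad(\varphi(a))$ for any derivation $\varphi$ of $\frg$ and any $a\in\frg$; this applies here because $\ac{\ad(\frg)}\subseteq\der(\frg)$, as recorded in Section \ref{sec:nilinvariant}.

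First I would fix $\varphi\in\ac{\ad(\frg)}_{\rm is}$ and $a\in\frg$, and observe that $\ad(a)\in\ad(\frg)\subseteq\ac{\ad(\frg)}$. Since $\ac{\ad(\frg)}_{\rm is}$ is an ideal in $\ac{\ad(\frg)}$, the bracket $[\ad(a),\varphi]$ lies in $\ac{\ad(\frg)}_{\rm is}$. Next, applying the derivation identity gives
\[
[\ad(a),\varphi]\;=\;\ad(a)\circ\varphi-\varphi\circ\ad(a)\;=\;-\ad(\varphi(a)),
\]
so $\ad(\varphi(a))\in\ac{\ad(\frg)}_{\rm is}$.

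Finally, I invoke Lemma \ref{lem:Gisbeta}: quasi-invariance of $\beta$ is equivalent to the inclusion $\ac{\ad(\frg)}_{\rm is}\subseteq\fro(\beta)$. Therefore $\ad(\varphi(a))\in\fro(\beta)$, which is exactly the condition $\varphi(a)\in\frg_{\beta}$ by the definition of $\frg_{\beta}$. Since $\varphi$ and $a$ were arbitrary, this establishes $\ac{\ad(\frg)}_{\rm is}(\frg)\subseteq\frg_{\beta}$, as required.

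There is no serious obstacle; the content of the lemma is a formal consequence of two items already set up. The only point to be careful about is the distinction between inner derivations and the possibly strictly larger algebra $\ac{\ad(\frg)}$: one must use that elements of $\ac{\ad(\frg)}$ are derivations of $\frg$ (so the identity $[\varphi,\ad(a)]=\ad(\varphi(a))$ is valid) even though $\varphi$ itself need not be of the form $\ad(b)$ for some $b\in\frg$.
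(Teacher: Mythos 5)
Your proof is correct and follows essentially the same route as the paper's: both use that $\ac{\ad(\frg)}_{\rm is}$ is an ideal in $\ac{\ad(\frg)}$, the derivation identity $[\varphi,\ad(a)]=\ad(\varphi(a))$, and the characterization of quasi-invariance from Lemma \ref{lem:Gisbeta} to conclude $\ad(\varphi(a))\in\fro(\beta)$. The sign discrepancy in your bracket is immaterial since $\fro(\beta)$ and $\ac{\ad(\frg)}_{\rm is}$ are closed under negation.
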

 \begin{proof}
 Since  $\ac{\ad(\frg)}_{\rm is}$ is an ideal of $\ac{\ad(\frg)}$, we have  
 $$ [\ac{\ad(\frg)}_{\rm is}, \ad(\frg)] \,  \subseteq \,   \ac{\ad(\frg)}_{\rm is} . $$
 But $\ad(\frg)$ is also an ideal 
 in the Lie algebra $\der(\frg)$ of all derivations of $\frg$. This implies that, for any 
 $\varphi \in \ac{\ad(\frg)}_{\rm is}$, 
$$ [\varphi, \ad(a)] = \ad (\varphi(a)) \,  \in  \, \ad(\frg) \cap \ac{\ad(\frg)}_{\rm is} \; . $$ 
If $\beta$ is  quasi-invariant, 
we have  $\ac{\ad(\frg)}_{\rm is} \subseteq \fro(\beta)$. Therefore, for any $\varphi \in \ac{\ad(\frg)}_{\rm is}$, 
$\ad(\varphi(a)) \in \ac{\ad(\frg)}_{\rm is} \subseteq  \fro(\beta)$. In particular, $\varphi(a) \in \frg_{\beta}$. 
\end{proof}

%%%
\subsection{The nil-invariance condition} \label{sec:nil_invariance}

The following condition is somewhat weaker than quasi-invariance since it takes into account only the nilpotent elements in the endomorphism Lie algebra $\ac{\ad(\frg)}$:

\begin{definition} \label{def:nilinvariant_beta}
A bilinear form $\beta$ on $\frg$ is called \emph{nil-invariant} if 
$$     \varphi \in   \fro(\beta)   $$ 
for all nilpotent elements $\varphi$ of the Lie algebra  $\ac{\ad(\frg)}$.
\end{definition} 

We fix  a Levi decomposition 
\[
\frg = (\frk\times\frs)\ltimes\frr , 
\]  
where $\frk$ is semisimple of compact type, $\frs$ is semisimple without
factors of compact type, and $\frr$ is the solvable radical of $\frg$.

\begin{lem} If $\beta$ is nil-invariant, then 
\begin{equation} \label{eq:nilinvariant}
	\frs + \frn \, \subseteq \, \frg_\beta \, .
\end{equation}
\end{lem}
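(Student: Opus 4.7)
\smallskip

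\noindent\textbf{Proof plan.} The condition $x\in\frg_\beta$ is equivalent to $\ad(x)\in\fro(\beta)$, so the goal reduces to showing that both $\ad(\frn)$ and $\ad(\frs)$ lie inside $\fro(\beta)$. The nil-invariance hypothesis gives this \emph{for free} for any element of $\ac{\ad(\frg)}$ that is nilpotent as an operator on $\frg$, so the task in each case is to identify enough nilpotent operators.

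For the nilradical $\frn$: since $\frn$ is a nilpotent ideal, for any $x\in\frn$ one has $\ad(x)(\frg)\subseteq\frn$, and then by induction $\ad(x)^{k}(\frg)$ is contained in the $k$-th term of the lower central series of $\frn$. This series terminates, so $\ad(x)\in\gl(\frg)$ is nilpotent. As $\ad(x)\in\ad(\frg)\subseteq\ac{\ad(\frg)}$, nil-invariance yields $\ad(x)\in\fro(\beta)$, and hence $\frn\subseteq\frg_\beta$.

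For the non-compact semisimple factor $\frs$: the strategy is to use the structural fact that a real semisimple Lie algebra without compact factors is generated, as a Lie algebra, by its (abstractly) nilpotent elements. This can be seen, e.g., from the Iwasawa decomposition $\frs=\frk_0\oplus\fra\oplus\frn_0$: the restricted root spaces making up $\frn_0$ and $\theta(\frn_0)$ (with $\theta$ the Cartan involution) consist of nilpotent elements, and together with their brackets they span all of $\frs$; the assumption of no compact factors is exactly what guarantees that these nilpotent elements are non-zero enough to recover $\frs$. Now for any $x\in\frs$ that is nilpotent in $\frs$, the compatibility of abstract Jordan decomposition with representations ensures that $\ad_{\frg}(x)\in\gl(\frg)$ is also nilpotent. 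These elements lie in $\ad(\frg)\subseteq\ac{\ad(\frg)}$, hence by nil-invariance in $\fro(\beta)$. Since $\fro(\beta)$ is a Lie subalgebra of $\gl(\frg)$, it contains the Lie subalgebra generated by the $\ad_{\frg}(x)$ with $x$ nilpotent in $\frs$, which is $\ad(\frs)$. Therefore $\frs\subseteq\frg_\beta$.

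Combining the two inclusions gives $\frs+\frn\subseteq\frg_\beta$. The main obstacle in this argument is the semisimple step; the delicate point is that nil-invariance refers to nilpotency in $\gl(\frg)$, not in $\frs$, so one must invoke the Jordan decomposition theorem for semisimple representations to transport the nilpotency from $\frs$ to $\gl(\frg)$, and then combine this with the generation-by-nilpotents property that depends critically on the absence of compact simple factors (for a compact semisimple factor the argument would collapse since such a factor contains no non-zero nilpotent elements).
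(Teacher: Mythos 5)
Your proof is correct and follows essentially the same route as the paper: the inclusion $\frn\subseteq\frg_\beta$ comes from the standard fact that $\ad_\frg(x)$ is nilpotent for $x$ in the nilradical, and the inclusion $\frs\subseteq\frg_\beta$ comes from the fact that a semisimple algebra without compact factors is generated by elements with nilpotent adjoint action on $\frg$ (the paper simply cites \cite[Lemma 5.1]{BGZ} for this, whereas you sketch it via the Iwasawa decomposition and the compatibility of the Jordan decomposition with representations). The extra detail you supply is accurate, but it does not constitute a different argument.
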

To see that $\frs$ is contained in $\frg_\beta$ recall that the subalgebra $\frs$ is generated by the set of elements $x \in \frs$ such that $\ad_\frg(x)$ is nilpotent (for example \cite[Lemma 5.1]{BGZ}).

%%%
\subsection{Quasi-invariant and nil-invariant symmetric bilinear forms}

In the following, we recall important structural properties of 
Lie algebras with nil-invariant symmetric bilinear form.  

\begin{thm}[\mbox{\cite[Theorem A, Corollary C]{BGZ}}]\label{thm:BGZ_invariance}
Let $\frg$ be a finite-dimensional real Lie algebra with
nil-invariant symmetric bilinear form $\met$.
Let $\met_{\gs}$ denote the restriction of $\met$ to $\gs = \frs \ltimes \frr$.
Then:
\begin{enumerate}
\item
$\met_{\gs}$ is invariant by the adjoint action of $\frg$ on $\gs$.
\item
$\met$ is invariant by the adjoint action of $\gs$.
\end{enumerate}
Furthermore, if $(\frg, \met)$ is effective,
 %  does not contain any non-trivial ideal of  $\frg$ 
then
\begin{enumerate}
\item[(3)] $ \frg^{\perp} \, \subseteq \, \frk \ltimes \frz(\gs)$  and $\; [\frg^{\perp}, \gs] \, \subseteq \, \frz(\gs) \cap \frg^{\perp}$. If  $\frg^\perp \subseteq \frg_{\met}$ then $[\frg^\perp , \gs] = \zsp$.
\end{enumerate}
\end{thm}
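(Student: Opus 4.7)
The plan is to derive (1) and (2) from nil-invariance by promoting skewness from nilpotent endomorphisms in $\ac{\ad(\frg)}$ to the semisimple components of $\ad(\gs)$, using real Jordan decomposition; then (3) will follow from effectiveness combined with the invariance properties just established. For (2), equation \eqref{eq:nilinvariant} already provides $\frs\subseteq\frg_\met$, so it remains to show $\frr\subseteq\frg_\met$. For $r\in\frr$, write the real Jordan decomposition $\ad(r)=D_n+D_h+D_e$ inside $\ac{\ad(\frg)}$: $D_n$ is nilpotent, $D_h$ is semisimple with real eigenvalues, $D_e$ is semisimple with purely imaginary eigenvalues. All three are polynomials in $\ad(r)$, hence lie in $\ac{\ad(\frg)}$. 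Nil-invariance handles $D_n$. For $D_h$, use the weight decomposition $\frg=\bigoplus_\lambda \frg_\lambda$: the block of any $\ad(x)$ mapping $\frg_\lambda$ into $\frg_\mu$ with $\lambda\neq\mu$ is a $D_h$-weight vector of nonzero weight in $\ad(\frg)$, and iterated commutators with $D_h$ yield nilpotent elements to which nil-invariance applies; this forces $\frg_\lambda\perp\frg_\mu$ with respect to $\met$ unless $\lambda+\mu=0$, which is precisely $D_h\in\fro(\met)$. For $D_e$, the closure of $\exp(\RR D_e)$ is a compact torus in $\ac{\Inn(\frg)}$; averaging $\met$ over it produces a comparison form that agrees with $\met$ on the components already controlled by $D_n$ and $D_h$, yielding $D_e\in\fro(\met)$.

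Part (1) requires the additional step $\ad(\frk)|_\gs\in\fro(\met|_\gs)$. Since $\frk$ is of compact type, $\ad(\frk)|_\gs$ integrates to a compact subgroup $K\subseteq\Aut(\gs)$. By (2), $\met|_\gs$ is $\gs$-invariant, and averaging $\met|_\gs$ over $K$ yields a form $\met'$ on $\gs$ that is simultaneously $K$- and $\gs$-invariant. The difference $\met|_\gs-\met'$ is a $\gs$-invariant symmetric form on $\gs$ that vanishes on the $K$-orbit sums, and a further application of nil-invariance to $K$-conjugates of nilpotent elements of $\ac{\ad(\frg)}$ forces it to vanish identically, giving $K$-invariance of $\met|_\gs$.

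For (3), assume $(\frg,\met)$ is effective. By (2), $\frg^\perp$ is preserved by $\ad(\gs)$, and Lemma \ref{lem:idealp} applied to the ideal $\gs\subseteq\frg_\met$ immediately yields $[\frg^\perp,\gs]\subseteq\gs\cap\frg^\perp$. For $\frg^\perp\subseteq\frk\ltimes\frz(\gs)$, use the splitting $\frg=\frk\ltimes\gs$ to project via $\pi:\frg\to\gs$. Both $\gs\cap\frg^\perp$ and $\pi(\frg^\perp)$ are $\ad(\gs)$-stable subspaces of $\gs$; invoking (1) one verifies that any non-central portion of $\pi(\frg^\perp)$ would generate a nonzero $\frg$-ideal inside $\frg^\perp$, which effectiveness forbids. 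Hence $\pi(\frg^\perp)\subseteq\frz(\gs)$, giving $\frg^\perp\subseteq\frk\ltimes\frz(\gs)$; combined with the inclusion from Lemma \ref{lem:idealp}, we get $[\frg^\perp,\gs]\subseteq\frz(\gs)\cap\frg^\perp$. If additionally $\frg^\perp\subseteq\frg_\met$, then $\frz(\gs)\cap\frg^\perp$ is a $\frg$-ideal contained in $\frg^\perp$ and hence zero by effectiveness, so $[\frg^\perp,\gs]=\zsp$. The main obstacle is the hyperbolic/elliptic step in (2): the weight-space argument for $D_h$ and the averaging bootstrap for $D_e$ both require careful tracking of how real Jordan decomposition interacts with $\met$ inside $\ac{\ad(\frg)}$, and this is the technical heart of \cite{BGZ}.
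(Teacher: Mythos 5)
This theorem is not proved in the paper at all: it is quoted verbatim from \cite[Theorem A, Corollary C]{BGZ}, so there is no internal proof to compare your attempt against. Judged on its own merits, your outline identifies the right general strategy (real Jordan decomposition of $\ad(r)$ inside the algebraic hull $\ac{\ad(\frg)}$, promoting skewness from the nilpotent part to the split and elliptic semisimple parts), but the decisive step is asserted rather than proved. Concretely: for the split part $D_h$ you claim that nil-invariance applied to the nilpotent weight components of the various $\ad(x)$ ``forces'' $\frg_\lambda\perp_{\met}\frg_\mu$ whenever $\lambda+\mu\neq 0$. Nothing in your sketch explains how skewness of those nilpotent endomorphisms yields orthogonality of the $D_h$-weight spaces, and, crucially, your argument as written never uses the symmetry of $\met$. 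That cannot be right: the paper's own Example \ref{ex:nil_noquasi} (the algebra $[x,y]=y$ with $\omega=\d x\wedge\d y$) is nil-invariant, $\ad(x)$ is split semisimple with weight spaces $\RR x$ and $\RR y$ of weights $0$ and $1$, and yet $\omega(x,y)=1\neq 0$. So any correct proof of the $D_h$ step must exploit symmetry in an essential way (in the two-dimensional toy case it enters via $\met(x,y)=-\met(y,x)$ forced by skewness of $\ad(y)$), and this is exactly the content of Corollary \ref{cor:nil_and_quasi} that the paper flags as ``somewhat surprising and non-trivial.'' A symmetry-blind argument proves a false statement, so this is a genuine gap, not a routine omission.

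The remaining steps are also thinner than they look. For $D_e$ you average over the closure of $\exp(\RR D_e)$ to get an invariant comparison form, but you do not explain why the averaged form coincides with $\met$; agreement ``on the components already controlled by $D_n$ and $D_h$'' is not a proof that $D_e\in\fro(\met)$. In part (1) the claim that the $\gs$-invariant difference $\met|_{\gs}-\met'$ ``vanishes on the $K$-orbit sums'' and is then killed by nil-invariance of $K$-conjugates is not an argument. In part (3), the application of Lemma \ref{lem:idealp} to $\gs\subseteq\frg_{\met}$ is fine once (2) is available, but you still owe a reason why a non-central portion of the projection $\pi(\frg^\perp)$ generates an ideal \emph{contained in} $\frg^\perp$, and why $\frz(\gs)\cap\frg^\perp$ is an ideal of $\frg$ under the hypothesis $\frg^\perp\subseteq\frg_{\met}$ (the computation $\met([k,z],y)=-\met(k,[z,y])$ does not visibly vanish). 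In short, your proposal is a reasonable road map that correctly locates the technical heart of the matter in the hyperbolic/elliptic steps, but it does not constitute a proof; the paper sidesteps all of this by importing the result from \cite{BGZ}.
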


% \smallskip 
By definition, quasi-invariance always implies nil-invariance. It is somewhat surprising and non-trivial that for a \emph{symmetric} bilinear form also the converse holds and an even stronger implication is satisfied in the solvable case:  

\begin{cor}  \label{cor:nil_and_quasi}
Let $\beta$ be a symmetric bilinear form on $\frg$. Then $\beta$ is quasi-invariant if and only if $\beta$ is nil-invariant.  Moreover, if $\frg$ is solvable then any nil-invariant symmetric bilinear form $\beta$ is
invariant. 
\end{cor}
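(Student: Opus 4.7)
The plan is to leverage Theorem \ref{thm:BGZ_invariance} together with the algebraicity of $\fro(\beta)$. The forward implication is immediate: any nilpotent $\varphi \in \ac{\ad(\frg)}$ is in particular $\RR$-trigonalizable, so by Lemma \ref{lem:Gisbeta} quasi-invariance of $\beta$ forces $\varphi \in \fro(\beta)$, which means $\beta$ is nil-invariant.

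For the converse direction, suppose $\beta$ is nil-invariant. Then Theorem \ref{thm:BGZ_invariance}(2) yields $\ad(\gs) \subseteq \fro(\beta)$. Because $\fro(\beta)$ is the Lie algebra of the real algebraic group $\OO(\beta) \subseteq \GL(\frg)$, it is Zariski closed in $\gl(\frg)$, and therefore $\ac{\ad(\gs)} \subseteq \fro(\beta)$. I then need to show that the isotropic kernel $\ac{\ad(\frg)}_{\rm is}$ of $\ac{\ad(\frg)}$ is already contained in $\ac{\ad(\gs)}$. Using the Levi decomposition $\frg = \frk \ltimes \gs$, I decompose $\ac{\ad(\frg)}$ compatibly as $\ac{\ad(\frk)} \cdot \ac{\ad(\gs)}$ and use that $\ac{\ad(\frk)}$ is anisotropic because $\frk$ is compact semisimple (its real Zariski closure is a compact real algebraic group). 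Hence the isotropic semisimple factors of $\ac{\ad(\frg)}$ reduce to $\ad(\frs) \subseteq \ad(\gs)$, while the split part of the solvable radical $\ac{\ad(\frr)}_\spl$ lies inside $\ac{\ad(\frr)} \subseteq \ac{\ad(\gs)}$. Consequently every $\RR$-trigonalizable element of $\ac{\ad(\frg)}$ already belongs to $\ac{\ad(\gs)} \subseteq \fro(\beta)$, which is quasi-invariance via Lemma \ref{lem:Gisbeta}.

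For the last assertion, when $\frg$ is solvable one has $\frk = \frs = \zsp$ and $\gs = \frr = \frg$, so Theorem \ref{thm:BGZ_invariance}(2) directly asserts that $\beta$ is invariant under the full adjoint action of $\frg$. The main obstacle I expect is verifying the containment $\ac{\ad(\frg)}_{\rm is} \subseteq \ac{\ad(\gs)}$: one must carefully match the abstract Levi and isotropic-kernel decompositions of the algebraic envelope $\ac{\ad(\frg)}$ with the Levi decomposition of $\frg$ itself, in particular confirming that the Zariski closure of the compact subalgebra $\ad(\frk)$ is entirely absorbed into the reductive anisotropic complement to $\ac{\ad(\frg)}_{\rm is}$.
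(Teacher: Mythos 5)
Your proof is correct and follows the paper's approach: the paper derives this corollary directly from Theorem \ref{thm:BGZ_invariance} (whose part (2) gives $\ad(\gs)\subseteq\fro(\beta)$, and which reduces to full invariance when $\frg=\frr$) without spelling out further detail, and your argument supplies exactly the missing bookkeeping — the algebraicity of $\fro(\beta)$, the anisotropy of $\ac{\ad(\frk)}$, and the containment $\ac{\ad(\frg)}_{\rm is}\subseteq\ac{\ad(\gs)}$. The containment you flag as the main obstacle is most cleanly settled by applying Lemma \ref{lem:split0} to the quotient morphism onto $\ac{\ad(\frg)}/\ac{\ad(\gs)}$, which is anisotropic as a quotient of the compact group $\ac{\ad(\frk)}$ and therefore has trivial isotropic kernel.
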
 

Corollary  \ref{cor:nil_and_quasi} is directly implied by Theorem \ref{thm:BGZ_invariance}, which streng\-thens 
\eqref{eq:nilinvariant} for symmetric bilinear forms considerably.

%%%
\subsection{Nil-invariant and quasi-invariant skew-symmetric bilinear forms}

As noted above for symmetric bilinear forms on Lie algebras, nil-invariance already implies quasi-invariance.
The following simple example illustrates
that, in general,  this is not the case for skew-symmetric bilinear forms: 

\begin{example} \label{ex:nil_noquasi}
Let $\frg$ be the two-dimensional solvable Lie algebra with generators
$x,y$, satisfying $[x,y]=y$.
Consider the skew-symmetric form $$\omega = \d x \wedge \d y$$  on $\frg$
(which satisfies
% \[
$\omega(x,y)=1$).
% \]
It is easily verified that $\omega$ is invariant by $\ad(y)$, but not by
$\ad(x)$.
This means $\omega$ is nil-invariant without being quasi-invariant.
\end{example}

% \oliver{Further examples in the symplectic context...} 
In the following Section \ref{sec:skew_Lie}, we will develop the structure theory of quasi-invariant and nil-invariant skew-symmetric forms in more detail.

% !TEX root = pseudoHermitian.tex

%%%%%%%%%%%%%%%%%%%%%%%
\section{Nil-invariant and quasi-invariant $h$-structures}
\label{sec:hermitian_nil}
%%%%%%%%%%%%%%%%%%%%%%%
This section is devoted to the study of quasi-invariant $h$-structures. 

\subsection{Lie algebras with  skew-symmetric form} \label{sec:skew_Lie}

Let $\om$ be a skew-symmetric bilinear form on the Lie algebra $\frg$. 
%\oliver{The pair $(\frg, \om)$ will be called an \emph{almost symplectic Lie algebra}
%if $\om$ is non-degenerate.}  

\begin{lem}\label{lem:Jnilinvariant2}
Let  $x, z\in\frg$, $y \in\frg_\omega$.  Then 
\begin{equation}
\omega([x,y],z)  = \omega( [z,y], x)  \;. 
\label{eq:prolo}
\end{equation}
If also $z \in \frg_\omega$ then 
\begin{equation}
\omega(\ad(x)y,z)
=
\omega(y,\ad(x)z),
\label{eq:adxyz}
\end{equation}
In particular,
\begin{equation}
[\frg_{\omega},\frg_{\omega}]\perp_{\omega} \frg_{\omega}.
\label{eq:adxyz2}
\end{equation}
\end{lem}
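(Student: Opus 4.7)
The proof is a direct sequence of manipulations using only the defining property of $\frg_\omega$ (that $\ad(x)$ is $\omega$-skew whenever $x \in \frg_\omega$) together with the skew-symmetry of $\omega$, so the plan is really just to identify the right order in which to apply these two ingredients.

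For \eqref{eq:prolo}, the plan is to apply the $\frg_\omega$-condition to $y$: since $y \in \frg_\omega$, we have $\omega([y,x],z) = -\omega(x,[y,z])$. Flipping the sign on the left via $[y,x] = -[x,y]$ gives $\omega([x,y],z) = \omega(x,[y,z])$, and then skew-symmetry of $\omega$ together with $[y,z]=-[z,y]$ turns the right-hand side into $\omega([z,y],x)$, which is exactly \eqref{eq:prolo}.

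For \eqref{eq:adxyz}, I would feed \eqref{eq:prolo} into the $\frg_\omega$-condition applied to $z$ this time. Namely, from $z \in \frg_\omega$ we get $\omega([z,y],x) = -\omega(y,[z,x]) = \omega(y,[x,z])$; combining with \eqref{eq:prolo} yields $\omega([x,y],z) = \omega(y,[x,z])$, which is the stated identity rewritten with $\ad(x)$.

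For the final inclusion $[\frg_\omega,\frg_\omega]\perp_\omega \frg_\omega$, take arbitrary $u,v,w \in \frg_\omega$ and apply \eqref{eq:adxyz} with $x=u$, $y=v$, $z=w$ to obtain $\omega([u,v],w) = \omega(v,[u,w])$. Now use that $u \in \frg_\omega$ applied to the pair $v,w$: $\omega(v,[u,w]) = -\omega([u,v],w)$. Chaining these gives $\omega([u,v],w) = -\omega([u,v],w)$, hence $\omega([u,v],w)=0$. No obstacle is expected; the whole lemma is a formal consequence of the defining identity of $\frg_\omega$ and the skew-symmetry of $\omega$, and the only mildly subtle point is noticing that one has to apply the $\frg_\omega$-condition \emph{twice}, once to each of $y$ and $z$, to reach the cancellation that forces \eqref{eq:adxyz2}.
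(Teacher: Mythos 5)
Your proof is correct and follows essentially the same route as the paper: establish \eqref{eq:prolo} from the skew-derivation property of $y$ plus skew-symmetry of $\omega$, then apply the $\frg_\omega$-condition to $z$ to get \eqref{eq:adxyz}, and finally play \eqref{eq:adxyz} against the $\frg_\omega$-condition for the first argument to force the cancellation giving \eqref{eq:adxyz2}. No issues.
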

\begin{proof}
Since  $y \in\frg_{\omega}$,
$
\omega([x,y],z) = -\omega(x,[z,y]) = \omega( [z,y], x) 
$.

\noindent 
Hence, if  $z \in \frg_{\omega}$ then 
\[ \omega([x,y],z) 
 = \omega([z,x],y)
= \omega(y,[x,z]).
\]
Thus \eqref{eq:adxyz} holds. If furthermore  $x\in\frg_{\omega}$, then
\[
\omega([x,y],z)=-\omega(y,[x,z]) .
\]
So \eqref{eq:adxyz} implies \eqref{eq:adxyz2}.
\end{proof}

\begin{lem}\label{lem:phigg}
Let $\varphi \in  \fro(\om)  $  satisfy
$\varphi(\frg) \subseteq  \frg_{\om}$. Then 
$\varphi([\frg_\omega,\frg_\omega])  \,  \subseteq \, \frg^{\perpo}$. 
\end{lem}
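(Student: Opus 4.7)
The plan is to chase the three hypotheses through the definition of $\frg^{\perpo}$. Fix $y, z \in \frg_\omega$ and an arbitrary $w \in \frg$; I need to show $\omega(\varphi([y,z]), w) = 0$.

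First I would use that $\varphi \in \fro(\omega)$ to move $\varphi$ off the first slot:
\[
\omega(\varphi([y,z]), w) \,=\, -\,\omega([y,z], \varphi(w)).
\]
Next, the hypothesis $\varphi(\frg) \subseteq \frg_\omega$ guarantees $\varphi(w) \in \frg_\omega$. At this point I have $\omega([y,z], u)$ with $y, z, u \in \frg_\omega$, so $[y,z] \in [\frg_\omega, \frg_\omega]$ and $u \in \frg_\omega$. Now I invoke \eqref{eq:adxyz2} from Lemma \ref{lem:Jnilinvariant2}, which gives exactly $[\frg_\omega, \frg_\omega] \perpo \frg_\omega$, forcing $\omega([y,z], \varphi(w)) = 0$.

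Since $w \in \frg$ was arbitrary, this shows $\varphi([y,z]) \in \frg^{\perpo}$, and then linearity in $[y,z]$ extends the conclusion to all of $\varphi([\frg_\omega, \frg_\omega])$. There is no real obstacle here: the argument is a two-line manipulation whose only content is that the skew-symmetry of $\varphi$ reduces the claim to the already-established relation \eqref{eq:adxyz2}; the assumption $\varphi(\frg) \subseteq \frg_\omega$ is used precisely to feed $\varphi(w)$ into the second slot of \eqref{eq:adxyz2}.
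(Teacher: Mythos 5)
Your argument is correct and is essentially identical to the paper's own proof: both move $\varphi$ to the second slot using $\varphi\in\fro(\om)$, note that $\varphi(w)\in\frg_\om$ by hypothesis, and then apply the relation $[\frg_\omega,\frg_\omega]\perpo\frg_\omega$ from \eqref{eq:adxyz2}. Nothing further is needed.
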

\begin{proof}
Let $x,y \in \frg_{\om}$ and $a\in\frg$. Then 
$\omega(\varphi([x,y]),a)
= - \omega([x,y], \varphi(a) )$. 
Since $ \varphi(a)\in \frg_{\om}$,  \eqref{eq:adxyz2} implies
$\varphi([\frg_\omega,\frg_\omega])  \, \perpo \,  \frg$.  
\end{proof} 

\subsubsection{Invariant skew-symmetric forms}

Recall that the skew-symmetric form $\om$ is called invariant if $\frg= \frg_\om$, and that $(\frg, \om)$ is called \emph{effective} if $\frg^\perpo$ does not contain any non-trivial ideal of $\frg$. We deduce from \eqref{eq:adxyz2}: 

\begin{prop}\label{prop:skew_invariant_abelian}
Let $\om$ be an  invariant skew-symmetric form. Then we have
$$ [\frg, \frg]  \, \subseteq  \,  \frg^{\perp_\om} . $$  
In particular, if $(\frg, \om)$ is effective,  then $\frg$ is abelian and $\om$ is non-degenerate. 
\end{prop}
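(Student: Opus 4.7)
The plan is to read off the first inclusion directly from equation \eqref{eq:adxyz2}, and then to exploit effectiveness twice to conclude successively that $\frg$ is abelian and that $\omega$ is non-degenerate.

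First, the invariance hypothesis says precisely that $\frg_\omega = \frg$. Substituting this into the already proved relation \eqref{eq:adxyz2} yields $[\frg,\frg] \perp_\omega \frg$, which is exactly the inclusion $[\frg,\frg] \subseteq \frg^{\perp_\omega}$. So the first assertion is essentially free.

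Now assume $(\frg,\omega)$ is effective. Since $[\frg,\frg]$ is an ideal of $\frg$ contained in $\frg^{\perp_\omega}$, effectiveness forces $[\frg,\frg] = \zsp$, i.e., $\frg$ is abelian. Once $\frg$ is abelian, every linear subspace is automatically an ideal; in particular $\frg^{\perp_\omega}$ itself is an ideal. Applying effectiveness a second time gives $\frg^{\perp_\omega} = \zsp$, which is exactly non-degeneracy of $\omega$.

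There is no real obstacle here: the only thing to watch is making sure that the two appeals to effectiveness are properly separated — the first uses that $[\frg,\frg]$ is an ideal (true in any Lie algebra), while the second uses that $\frg^{\perp_\omega}$ is an ideal (which requires that $\frg$ has already been shown to be abelian, so that arbitrary subspaces are ideals). The whole argument is a short formal consequence of \eqref{eq:adxyz2} together with the definition of effectiveness.
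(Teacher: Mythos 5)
Your proof is correct and follows exactly the deduction the paper intends: the first inclusion is \eqref{eq:adxyz2} with $\frg_\omega=\frg$, and effectiveness then kills $[\frg,\frg]$ and $\frg^{\perp_\omega}$ in turn. (One small simplification: the kernel of an \emph{invariant} skew form is always an ideal, so your second appeal to effectiveness does not actually need abelianness first — but your route is equally valid.)
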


\subsubsection{Nil-invariant skew-symmetric forms} \label{sec:nil_skew}

We now assume that $\om$ is nil-invariant (in the sense of Definition \ref{def:nilinvariant_beta}). 
% According to  Section \ref{sec:nil_invariance}), 
This implies in particular that the  nilradical $\frn$ of $\frg$,   and also any semisimple subalgebra  $\frs$ of non-compact type are  contained in  $\frg_{\om}$.  In fact, as we show now,  any semisimple subalgebra  $\frs$ of non-compact type  is contained  in the kernel of ${\omega}$:

\begin{prop}[Levi subalgebra is of compact type] 
\label{prop:Sorthogonal}
Let $\omega$ be a nil-invariant skew-symmetric form on $\frg$. Let $\frj(\frs)$ be an ideal of $\frg$ generated by a semisimple subalgebra $\frs$ of non-compact type. Then  $$  \frj(\frs) \subseteq \frg^\perpo \, . $$
In particular, if $(\frg, \om)$ is effective,  then  every semisimple subalgebra of $\frg$ is 
of compact type.
\end{prop}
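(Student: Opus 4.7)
The ``in particular'' clause follows immediately from the main statement: if $(\frg,\om)$ is effective and $\frs$ is a semisimple subalgebra of non-compact type, then $\frj(\frs) \subseteq \frg^{\perp_\om}$ is an ideal of $\frg$ lying in the kernel, hence $\frj(\frs) = \zsp$ and $\frs = \zsp$. I therefore concentrate on proving $\frj(\frs) \subseteq \frg^{\perp_\om}$.

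First I would establish $\frs + \frn \subseteq \frg_\om$: the subalgebra $\frs$, being semisimple of non-compact type, is generated by elements $x$ for which $\ad_\frg(x)$ is nilpotent (the fact from \cite[Lemma 5.1]{BGZ} that already underlies \eqref{eq:nilinvariant}), and nil-invariance sends each such $\ad_\frg(x)$ into $\fro(\om)$; combined with $\frn \subseteq \frg_\om$ this proves the claim. The classical fact $[\frg,\frr] \subseteq \frn$ then yields $[\frs,\frg] \subseteq \frs+\frn$ as well as $[\frn,\frg] \subseteq \frn$, so for every $x \in \frs+\frn$ the operator $\ad(x)$ lies in $\fro(\om)$ \emph{and} maps $\frg$ into $\frg_\om$. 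Lemma~\ref{lem:phigg} applied to $\varphi = \ad(x)$ then delivers the key inclusion
\[
\bigl[\,\frs+\frn,\;[\frg_\om,\frg_\om]\,\bigr] \subseteq \frg^{\perp_\om}.
\]

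The remaining task is to verify that every $w \in \frj(\frs)$ can be expressed as a finite sum of brackets $[u,v]$ with $u \in \frs+\frn$ and $v \in [\frg_\om,\frg_\om]$, so that the key inclusion applies. For $w = s \in \frs$ this uses $\frs = [\frs,[\frs,\frs]]$: write $s = \sum [s_i,[a_i,b_i]]$ with $s_i,a_i,b_i \in \frs$ and set $u = s_i$, $v = [a_i,b_i]$. For $w \in \frj(\frs) \cap \frn$, I would first note that $\frj(\frs) \subseteq \frs+\frn$ (again via $[\frg,\frr] \subseteq \frn$) and that $\frj(\frs) \cap \frn$ is already generated in $\frn$ by $[\frs,\frn]$ under iterated bracketing against $\frs+\frn$ alone: writing $\frr = \frr^\frs \oplus [\frs,\frr]$ as $\frs$-modules, the brackets $[\frk,[\frs,\frn]]$ and $[\frr^\frs,[\frs,\frn]]$ collapse via Jacobi back into $[\frs,\frn]$, because $\frk$ and $\frr^\frs$ commute with $\frs$ while sending $\frn$ into $\frn$. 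An induction on bracket depth then presents every $w \in \frj(\frs) \cap \frn$ as a sum of terms $[n_1,[n_2,\ldots,[n_k,[s,n]]\cdots]]$; each term has outer factor $u = n_1 \in \frs+\frn$ and inner factor $v \in [\frg_\om,\frg_\om]$, since both $n_2$ and the remaining inner bracket sit in $\frn \subseteq \frg_\om$.

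The principal obstacle is this final combinatorial step: confirming that the generation of $\frj(\frs)$ may be restricted to brackets against $\frs+\frn$, and that every iterated bracket naturally takes the form $[u,v]$ with $v \in [\frg_\om,\frg_\om]$ demanded by Lemma~\ref{lem:phigg}. The Jacobi reductions for $\frk$ and $\frr^\frs$ are the load-bearing elements here, and they hinge on the dichotomy that $\frk,\frr^\frs$ commute with $\frs$ while $[\frk,\frn],[\frr^\frs,\frn] \subseteq \frn$. Once this bookkeeping is complete, plugging the expressions into the key inclusion yields $\frj(\frs) \subseteq \frg^{\perp_\om}$, and effectivity finishes the ``in particular'' assertion.
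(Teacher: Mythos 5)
Your proposal is correct and follows essentially the same route as the paper's proof: both feed $\ad(x)$ for $x\in\frs+\frn$ into Lemma~\ref{lem:phigg} (via $[\frs,\frg]=\frs+[\frs,\frn]\subseteq\frs+\frn\subseteq\frg_\om$), both use perfectness/reductivity of $\frs$ to place $\frs$ and $[\frs,\frn]$ inside $[\frg_\om,\frg_\om]$, and both reduce the ideal generated by $[\frs,\frn]$ to iterated brackets against $\frn$ (the paper then simply propagates with $[\frn,\frg^{\perpo}]\subseteq\frg^{\perpo}$, where you re-apply the key inclusion). The one point your write-up leaves uncovered is the depth-zero generators $[s,n]$ themselves, whose ``inner factor'' $n$ need not lie in $[\frg_\om,\frg_\om]$; this is repaired by $[\frs,\frn]=[\frs,[\frs,\frn]]$ (the $\frs$-module $[\frs,\frn]$ has no trivial summand), the same device you already invoke for $\frs=[\frs,[\frs,\frs]]$.
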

\begin{proof} 
% Therefore,  $\frs + [\frs, \frn] \,  \subseteq \, [\frg_{\om}, \frg_{\om}]$. 
Recall that any derivation of the solvable radical $\frr$ of $\frg$ has image in $\frn$
(\cite[Theorem III.7]{jacobson}). Since the semisimple Lie algebra $\frs$ acts reductively, 
we infer that  $[\frs,\frr]=[\frs,\frn]$. 
We may assume that the image of $\frs$ is an ideal in $\frg/\frr$. Therefore    
$$ [\frs, \frg] = \frs + [\frs, \frn]\, \subseteq \, \, [\frg_{\om}, \frg_{\om}] . $$
%, as well as 
% $[\frs, \frs + [\frs, \frn] ] = \frs + [\frs, \frn] $.  
By Lemma \ref{lem:phigg},  $  \frs + [\frs, \frn] \,  \subseteq \, \frg^{\perpo}$.  
Then we have 
$$[\frg, [\frs, \frn]]  =   [\frs, \frn] +  [\frn, [\frs, \frn]] . $$ 
By induction, the 
ideal $\frb$ of $\frg$ generated by $[\frs,\frn]$ equals 
the ideal  of $\frn$ generated by  $[\frs,\frn]$. 
Since $[\frn,\frg^\perpo] \subseteq  \frg^\perpo $, we deduce  that $\frb\subseteq\frg^\perpo$.
Hence,  $\frj(\frs) = \frs + \frb \subseteq \frg^\perpo$.
\end{proof}

Let $\frr$ denote the solvable radical of $\frg$. We write $\frr_\omega=\frr\cap\frg_\omega$.
Since $\frn\subseteq\frr_{\omega}$, $\frr_{\omega}$ is an ideal of $\frg$. 

\begin{lem}\label{lem:2nilpotenta}  % \oliver{disable page break somehow}
\hspace{1ex} 
\begin{enumerate}
% \item $\frr_\omega \supseteq \frn$.
% \item Let $\varphi \in  \fro(\om)  $  such that $\varphi  \, \frg \subseteq  \frg_{\om}$. Then 
% $\varphi \,  [\frg_\omega,\frg_\omega]  \,  \subseteq \, \frg^{\perpo}$. 
\item $
[\frr_\omega,[\frg_\omega,\frg_\omega]] \,  \subseteq \, \frg^{\perpo}$. 
% \item $\frn$ is at most two-step nilpotent.
\item
$[\frn,\frn] \perp_\om \frn$. 
%is totally $\omega$-isotropic.  \oliver{needed? perp N is correct} 
\item $[[\frn,\frn]^{\perp_{\om}}, [\frn, \frn] ] \subseteq \frg^{\perpo}$. 
\item $[\frg,\frn] \perp_\om  \frz_{\frg}(\frn)$.  
%  and $\met$.
%\item
%$\J\zen(\frn)\subseteq\zen_{\frg}(\frn)$.
%\item
%$\frj$ is a complex ideal and totally
%isotropic with respect to $\omega$ and $\met$.
\end{enumerate}
\end{lem}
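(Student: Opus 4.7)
The plan is to reduce each item to the exchange identities \eqref{eq:prolo}--\eqref{eq:adxyz2} of Lemma~\ref{lem:Jnilinvariant2} and to Lemma~\ref{lem:phigg}, using as the sole additional input the fact that nil-invariance of $\omega$ forces $\frn\subseteq\frg_\omega$ (since $\ad(x)$ is a nilpotent endomorphism of $\frg$ for every $x\in\frn$, so $\ad(\frn)\subseteq\fro(\omega)$).

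For~(1) I would apply Lemma~\ref{lem:phigg} with $\varphi=\ad(a)$, $a\in\frr_\omega$. The condition $\varphi\in\fro(\omega)$ is immediate from $a\in\frg_\omega$, while $\varphi(\frg)\subseteq\frg_\omega$ follows from the classical inclusion $[\frg,\frr]\subseteq\frn$ combined with $\frn\subseteq\frg_\omega$. Statement~(2) is then a one-liner: since $\frn\subseteq\frg_\omega$, inclusion \eqref{eq:adxyz2} gives $[\frn,\frn]\subseteq[\frg_\omega,\frg_\omega]\perpo\frg_\omega\supseteq\frn$. For~(4) I would fix $x\in\frn$, $v\in\frz_\frg(\frn)$, $z\in\frg$ and invoke \eqref{eq:prolo} with the role of $y$ played by $x\in\frg_\omega$: this yields $\omega([z,x],v)=\omega([v,x],z)=0$, the last equality being forced by $[v,x]=0$.

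The genuine content is in~(3). The decisive observation is the ideal stability $[\frg,[\frn,\frn]]\subseteq[\frn,\frn]$: by Jacobi, $[z,[x,y]]=[[z,x],y]+[x,[z,y]]$, and since $\frn$ is an ideal of $\frg$, both $[z,x]$ and $[z,y]$ lie in $\frn$. Given $w\in[\frn,\frn]^{\perpo}$, $u\in[\frn,\frn]\subseteq\frg_\omega$, and $z\in\frg$, the identity \eqref{eq:prolo} gives
\[
\omega([w,u],z)=\omega([z,u],w)=-\omega(w,[z,u]),
\]
and this vanishes because $[z,u]\in[\frn,\frn]$ by the stability just observed. Hence $[w,u]\in\frg^{\perpo}$ as required.

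The only real obstacle is spotting the stability $[\frg,[\frn,\frn]]\subseteq[\frn,\frn]$ used in~(3); the remaining parts are essentially bookkeeping with the exchange identities, requiring only the careful verification of the $y\in\frg_\omega$ hypothesis of~\eqref{eq:prolo} in each application.
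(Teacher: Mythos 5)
Your proof is correct and follows essentially the same route as the paper: (1) via Lemma~\ref{lem:phigg} with $\varphi=\ad(a)$ and $[\frr_\omega,\frg]\subseteq\frn\subseteq\frg_\omega$, (2) from \eqref{eq:adxyz2}, and (4) directly from $\frn\subseteq\frg_\omega$ and \eqref{eq:prolo}. For (3) the paper simply cites Lemma~\ref{lem:idealp} applied to the ideal $\frj=[\frn,\frn]\subseteq\frg_\omega$; your direct argument via \eqref{eq:prolo} and the stability $[\frg,[\frn,\frn]]\subseteq[\frn,\frn]$ is exactly the (omitted) proof of that lemma specialized to the skew-symmetric case.
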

\begin{proof}
Since $[\frr_\omega, \frg ]  \subseteq \frn \subseteq\frg_\omega$,
Lemma \ref{lem:phigg} implies (1).  Whereas (2) follows 
from  \eqref{eq:adxyz2}, and (3) from Lemma \ref{lem:idealp}. 
%In particular, $[\frn,\frn]^{\perpo}$ contains $\frn$.
%Thus,  we may write $[\frn,\frn]^{\perpo} = W + \frn$, $W \subseteq \frn^{\perpo}$. 
Since $\frn \subseteq \frg_{\om}$, (4) is immediate.  
% In particular, $[\frr_\omega,\frr_\omega]$ is totally isotropic. 
\end{proof}

%\begin{lem}[Symplectic case] \label{lem:2nilpotentc}
%If $(\frg, \omega)$ is symplectic then 
%\begin{enumerate}
%\item \oliver{If $\d\om =0$ then $[\frg_\om, \frg_\om] \perp_\om \frg$.}
%\item \oliver{If $\d\om =0$ then $[\frn, \frn] \perp_\om \frg$.}
%\end{enumerate}
%\end{lem} 

%Assume that  $(\frg, \omega)$ is effective. 
It then follows: 

\begin{lem}\label{lem:2nilpotentb}
Let $(\frg, \omega)$ be effective. Then 
\begin{enumerate}
\item $\frr_{\omega} = \frn$.
\item $\frn$ is at most two-step nilpotent (that is, $[\frn, \frn] \subseteq \frz(\frn)$). 
% \item $[\frn,\frn]^{\perp_{\om}} \! \cap \frr  \; \subseteq \,  \frz_{\frg}([\frn,\frn])$.  \oliver{drop this?}
% \item If $\frg$ is nilpotent then $\frg$ is abelian. 
% \item If $\frg = \frr$ is solvable then $\frh \subseteq  \frn$. \oliver{ok if $\frh \subseteq  \ker \om  \cap \frg_{\om}$} 
\end{enumerate}
\end{lem}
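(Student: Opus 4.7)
The plan is to derive both assertions simultaneously from a single observation: the triple bracket $[\frr_\omega,[\frr_\omega,\frr_\omega]]$ is an ideal of $\frg$ lying inside the kernel $\frg^{\perpo}$, so that effectiveness forces it to vanish, and this nilpotency bound on $\frr_\omega$ yields everything at once.

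I begin with two structural observations. First, $\frn \subseteq \frr_\omega$ (noted just before the statement, and immediate from nil-invariance together with $[\frg,\frr] \subseteq \frn$), which, as already remarked, ensures that $\frr_\omega$ is an ideal of $\frg$. Iterated bracketing of ideals then shows that $[\frr_\omega,\frr_\omega]$ and $[\frr_\omega,[\frr_\omega,\frr_\omega]]$ are again ideals of $\frg$. Second, $\frr_\omega \subseteq \frg_\omega$ by definition, and hence $[\frr_\omega,\frr_\omega] \subseteq [\frg_\omega,\frg_\omega]$.

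The main step is to apply Lemma~\ref{lem:2nilpotenta}(1), which furnishes the inclusion $[\frr_\omega,[\frg_\omega,\frg_\omega]] \subseteq \frg^{\perpo}$. Combined with the inclusion above, it gives
\[
[\frr_\omega,[\frr_\omega,\frr_\omega]] \;\subseteq\; [\frr_\omega,[\frg_\omega,\frg_\omega]] \;\subseteq\; \frg^{\perpo}.
\]
Since $(\frg,\omega)$ is effective, the ideal on the left must be trivial.

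The vanishing of $[\frr_\omega,[\frr_\omega,\frr_\omega]]$ says that $\frr_\omega$ is at most two-step nilpotent; in particular, $\frr_\omega$ is a nilpotent ideal of $\frg$ and therefore contained in $\frn$ by maximality of the nilradical. Combined with $\frn \subseteq \frr_\omega$ this proves (1), and (2) then follows at once since $[\frn,[\frn,\frn]] = [\frr_\omega,[\frr_\omega,\frr_\omega]] = 0$. I do not foresee a substantial obstacle here; the delicate work is already packaged in Lemma~\ref{lem:2nilpotenta}, and what remains is the routine observation that an ideal of $\frg$ landing in the kernel of an effective form must be zero.
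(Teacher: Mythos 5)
Your proof is correct and follows essentially the same route as the paper: both deduce from Lemma~\ref{lem:2nilpotenta}(1) that the ideal $[\frr_\omega,[\frr_\omega,\frr_\omega]]$ lies in $\frg^{\perpo}$, kill it by effectiveness, and conclude that $\frr_\omega$ is a (two-step) nilpotent ideal, hence equal to $\frn$. You have merely spelled out the inclusion $[\frr_\omega,[\frr_\omega,\frr_\omega]] \subseteq [\frr_\omega,[\frg_\omega,\frg_\omega]]$ that the paper leaves implicit.
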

\begin{proof} Since $[\frg, \frr_\om] \subseteq \frn \subseteq \frr_\om$, $\frr_\om$ is an ideal in $\frg$. By Lemma \ref{lem:2nilpotenta}, the ideal $[\frr_\omega,[\frr_\omega,\frr_\omega]]$ is contained in $\frg^{\perpo}$. Therefore, $[\frr_\omega,[\frr_\omega,\frr_\omega]] = \zsp$.
Hence, $\frr_\omega$ is a nilpotent ideal of $\frg$, so that   $\frr_\omega \subseteq  \frn$. Hence (1) and (2) are satisfied. 
%To prove (3): By Lemma \ref{lem:2nilpotenta} (2), $[\frn,\frn]^{\perp_{\om}} \!  \cap \frr$ contains $\frn$. 
%Therefore, it is an ideal of $\frg$. Hence by (3) of Lemma \ref{lem:2nilpotenta}, the commutator of 
%$[\frn,\frn]^{\perp_{\om}} \cap \frr$ and $[\frn,\frn]$ is an ideal of $\frg$ contained in $\frg^{\perpo}$. Hence, this 
%commutator vanishes.
\end{proof} 

\begin{prop} \label{pro:nilpotent_isab}
Let $\omega$ be a nil-invariant skew symmetric form,  such that  $(\frg, \omega)$ is effective.
If $\frg$ is nilpotent then $\frg$ is abelian.  
\end{prop}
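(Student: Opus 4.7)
The plan is to reduce the proposition to the already-established fact (Proposition \ref{prop:skew_invariant_abelian}) that an effective invariant skew-symmetric form forces the underlying Lie algebra to be abelian. The only real work is showing that under the hypotheses of the proposition, nil-invariance of $\omega$ upgrades to honest invariance, i.e.\ $\frg_{\omega} = \frg$.

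First I would unwind what the nilpotency of $\frg$ means for the structural ingredients of Section \ref{sec:nil_skew}. Since $\frg$ is nilpotent, its solvable radical $\frr$ equals $\frg$ and its nilradical $\frn$ also equals $\frg$. Moreover, any semisimple subalgebra of $\frg$ is trivial, so Proposition \ref{prop:Sorthogonal} is vacuous here and places no restriction.

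Next, I would apply Lemma \ref{lem:2nilpotentb}(1), which uses precisely the effectiveness hypothesis on $(\frg, \omega)$: it asserts $\frr_{\omega} = \frn$. By definition $\frr_{\omega} = \frr \cap \frg_{\omega}$, and combining with $\frr = \frn = \frg$ gives
\[
\frg_{\omega} \; = \; \frg \cap \frg_{\omega} \; = \; \frr_{\omega} \; = \; \frn \; = \; \frg .
\]
Thus $\omega$ is invariant in the sense of Section \ref{sec:skew_Lie}.

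Finally, having $\omega$ invariant with $(\frg,\omega)$ effective, Proposition \ref{prop:skew_invariant_abelian} immediately yields that $\frg$ is abelian (and as a bonus that $\omega$ is non-degenerate). The main obstacle is essentially nonexistent here, because the hard analytic content sits in Lemma \ref{lem:2nilpotentb}, which was already proved; the present proposition is just the clean corollary one obtains by feeding the nilpotent case into that lemma and then into Proposition \ref{prop:skew_invariant_abelian}.
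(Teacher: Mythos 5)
Your proof is correct and essentially the same as the paper's: both arguments come down to the relation $[\frg_\omega,\frg_\omega]\perp_\omega\frg_\omega$ from Lemma \ref{lem:Jnilinvariant2} plus effectiveness, the paper citing Lemma \ref{lem:2nilpotenta}\,(2) directly where you first upgrade $\omega$ to an invariant form and then invoke Proposition \ref{prop:skew_invariant_abelian}. One small remark: the detour through Lemma \ref{lem:2nilpotentb}\,(1) is more than you need, since for nilpotent $\frg$ every $\ad(x)$ is nilpotent and nil-invariance alone already gives $\frg=\frn\subseteq\frg_\omega$, i.e.\ invariance.
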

\begin{proof}  
Since $\frg = \frn$ is nilpotent, Lemma \ref{lem:2nilpotenta} (2) implies that $[\frn,\frn] \subseteq \frn^{\perpo}$ is an ideal of $\frn$. Since $(\frn, \om)$ is effective this implies $[\frn, \frn] = \zsp$. 
\end{proof}

%We call a pair $(\frg, \om)$ \emph{effective} if the subspace $\frg^{\perpo}$ does not contain any non-trivial ideal of $\frg$. 
We summarize the above as follows: 

\begin{thm} \label{thm:nil_skew}
Let $\om$ be a nil-invariant skew-symmetric form, such that $(\frg, \om)$ is  effective. Then the following hold: 
\begin{enumerate}
\item Any Levi subalgebra of $\frg$ is of compact type. 
%such that 
%$$\frg = \frk \ltimes \frr \; ,   \text{ where $\frr$ is the solvable radical of $\frg$.}$$  
\item The nilradical $\frn$ of $\frg$ is at most two-step nilpotent. 
\item If $\frg$ is nilpotent, then $\frg$ is abelian. 
\end{enumerate}
%Furthermore, we have:  \begin{enumerate} 
%\item[(3)]  $[\frn,\frn] \perp_\om  (\frn +  \frz_{\frg}(\frn))$ and\/
%$[\frg,\frn] \perp_\om  \frz_{\frg}(\frn)$.
%\item[(4)] 
% \end{enumerate}
\end{thm}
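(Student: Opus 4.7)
The plan is to observe that Theorem \ref{thm:nil_skew} is essentially a consolidation of the three principal results obtained in Section \ref{sec:nil_skew}, so the proof reduces to assembling these pieces and confirming that the hypotheses align in each invocation.

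For item (1), I would invoke Proposition \ref{prop:Sorthogonal} directly: its final assertion states that under effectiveness, every semisimple subalgebra of $\frg$ is of compact type, which applies in particular to any Levi subalgebra $\frf = \frk \times \frs$, forcing $\frs = \zsp$. For item (2), the nilradical $\frn$ is shown to satisfy $[\frn,\frn] \subseteq \frz(\frn)$ in Lemma \ref{lem:2nilpotentb} (2), which is precisely the two-step nilpotency claim; this uses Lemma \ref{lem:2nilpotenta} (1) applied to the ideal $\frr_\omega = \frn$ (as established in Lemma \ref{lem:2nilpotentb} (1)) to show that the triple bracket $[\frn,[\frn,\frn]]$ lies in the kernel $\frg^{\perpo}$ and hence vanishes by effectiveness. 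For item (3), I would simply cite Proposition \ref{pro:nilpotent_isab}, whose proof in the case $\frg = \frn$ already combines Lemma \ref{lem:2nilpotenta} (2) with effectiveness to conclude that $[\frn,\frn] = \zsp$.

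Since all three items are immediate consequences of statements proved just above, there is no genuine obstacle here; the only thing to check is that the hypothesis of effectiveness of $(\frg,\om)$ indeed carries over to each cited result, which it does verbatim. In short, the proof is one sentence invoking Proposition \ref{prop:Sorthogonal}, Lemma \ref{lem:2nilpotentb}, and Proposition \ref{pro:nilpotent_isab} in turn.
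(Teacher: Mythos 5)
Your proposal is correct and matches the paper exactly: the theorem is introduced there with the phrase ``We summarize the above as follows'' and carries no separate proof, being precisely the assembly of Proposition \ref{prop:Sorthogonal} for (1), Lemma \ref{lem:2nilpotentb} for (2), and Proposition \ref{pro:nilpotent_isab} for (3), each of which assumes effectiveness of $(\frg,\om)$ verbatim. Your sketch of the internal arguments (the vanishing of $[\frr_\om,[\frr_\om,\frr_\om]]$ via Lemma \ref{lem:2nilpotenta} and effectiveness) is also faithful to the paper.
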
 

We remark that the nilradical of a Lie algebra with nil-invariant
skew-symmetric form $\om$ may be non-abelian, even for  non-degenerate $\om$: 
 
\begin{example}[Solvable algebra with nilradical $\frh_{n}$ of Heisenberg type] \label{ex:nilinv_nonabelian}
Consider a $2n$-dimensional 
symplectic vector space  $(W, \om_{0})$. We extend $\omega_{0}$ to a  $2n + 2$-dimensional 
symplectic vector space $$   (\frg_{n} = \fra \oplus W \oplus \frb , \, \om)   $$
where $\fra = \Span a$ and $\frb =  \fra^{*} = \Span z$ are dually paired,  one-dimensional vector spaces. 
For the definition of $\om$  we require: 
$$ W \, \perpo  \, (\fra \oplus  \frb) , \quad
\omega(a, z) =  \langle a, z \rangle  = 1
\quad \text{ and }  \quad \omega = \omega_{0}  \text{ on }  W. $$  
Next, turn $\frg$ into a Lie algebra with the non-zero Lie brackets satisfying 
\begin{align*}  
[x, y ]   =  \omega_{0}(x,y)  z ,  \quad
[a,x]  =    x, \quad
[a,z]  =  2z , \quad \text{ where }    x,y  \in W .
\end{align*} 
Thus $\frg$ is a split  extension of the one-dimensional Lie algebra by a $2n+1$ dimensional  Heisenberg Lie algebra 
$$\frh_{n} = W \oplus \frb . $$  
The element $a \in  \frg$ acts on $\frh_{n}$ by the  derivation $D:  \frh_{n}  \to \frh_{n}$, defined by $Dx = x$, $x \in W$, $D z = 2z$. 
We observe further that $\omega$ is a nil-invariant \emph{non-degenerate} skew-symmetric form on $\frg$. For the nil-invariance property of $(\frg, \om)$  it is sufficient to show that, for all $v \in \frh_{n}$, 
 the operators $\ad(v) : \frg \to \frg$ are skew with respect to $\om$.  
 This follows mainly by the equations
\begin{align*}  
 \omega([a, x], y )   =  \omega(x,y) =  \omega(a,  [x, y] ) ,\text{ for all }  x,y \in W .
%0 =  \omega([a, x], z)  = \omega(a,  [x, z] ) \\ 
%0 = \omega([z, a], x ) =  \omega(a, [z,x]) \;, \,   \omega([z, a], a) =  -  \omega(a, [z, a])   ,  \;  [a,x]  =    x,
% \; [a,z]  =  2z , \;    x,y  \in W .
\end{align*} 
\end{example}

%\begin{remark}
%Note that either condition in Corollary \ref{cor:S=0} is satisfied
%if $\frg^\perp=\frh$ is the stabilizer subalgebra of the
%homogeneous space $G/H$ in Section \ref{sec:prelim}, provided $G$
%acts almost effectively on $M$.
%\end{remark}

%
\subsubsection{Quasi-invariant skew-symmetric forms}

By Definition \ref{def:quasi-invariant_beta} and Lemma \ref{lem:Gisbeta} a skew-symmetric form $\omega$ on the Lie algebra $\frg$ is quasi-invariant
if and only if all trigonalizable elements in the smallest algebraic Lie
algebra containing $\ad(\frg)$ are skew with respect to $\om$.
Clearly,  quasi-invariance of 
$\om$ implies nil-invariance but the converse is not true, as already observed in Example \ref{ex:nil_noquasi}.
%\oliver{This subsection collects  first additional properties of quasi-invariant skew forms. At the moment it plays no further crucial role in the paper.} 

\begin{example}
Any quasi-invariant  skew-form $\omega$ on a solvable Lie algebra $\frg$ of real type (where the adjoint representation has only real eigenvalues) must  be invariant. Thus Proposition \ref{prop:skew_invariant_abelian}  shows that for any effective pair $(\frg, \omega)$, where $\om$ is quasi-invariant and $\frg$ solvable of real type,  $\frg$ is abelian. In particular, the family of nil-invariant almost symplectic solvable Lie algebras $(\frg_n,\om )$ from Example \ref{ex:nilinv_nonabelian} is not quasi-invariant. 
 \end{example}
 
The following property distinguishes quasi-invariant and  nil-invariant skew-sym\-metric forms:

%Let $\frg$ be any Lie algebra and $\frg_{spl} = \frs \rtimes \frr_{spl}$  its maximal split ideal, where $\frs$ is a semisimple subalgebra which is maximal of non-compact type.  If $\omega$ is 
%quasi-invariant then $\frg_{spl} \subseteq \frg_{\om}$ by ???.  If $\frg$ is solvable then $\frg_{spl}$ is the maximal ideal of $\frg$ such that all the eigenvalues of the adjoint representation of  $\frg_{spl}$ on 
%$\frg$ are real. \\
%
%
\begin{prop}  \label{pro:prolongation}
Suppose that $\om$ is quasi-invariant. Then 
$$ [\frg, [\frn, \frn]]  \, \subseteq \,  [\frn, \frn] \cap \frg^{\perpo} . $$ 
%In particular, if  $(\frg, \om)$ is effective then the following hold: 
%$[\frn, \frn] \subseteq \frz(\frg) $ and $ [\frn, \frn] \cap \frh = \zsp$, and 
%$\frn^{\perpo} \cap \frn = \frz(\frn)$.
\end{prop}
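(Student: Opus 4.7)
The inclusion $[\frg, [\frn, \frn]] \subseteq [\frn, \frn]$ is immediate, since $[\frn, \frn]$ is characteristic in $\frn$ and $\frn$ is an ideal of $\frg$.

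For the orthogonality $[\frg, [\frn, \frn]] \subseteq \frg^{\perpo}$, my plan begins by fixing $v \in [\frn, \frn]$. Since $v \in \frn$, the operator $\ad(v)$ is nilpotent on $\frg$, so $\ad(v) \in \fro(\om)$ by nil-invariance (implied by quasi-invariance). This directly yields the identity
\[
\omega([x, v], y) \; = \; \omega(x, [v, y]) \quad \text{for all } x, y \in \frg,
\]
reducing the task to showing $[v, \frg] \subseteq \frg^{\perpo}$. Since $[v, \frg] \subseteq [\frn, \frn]$, the pairing with the second argument in $\frn$ vanishes by Lemma~\ref{lem:2nilpotenta}(2), and with the second argument in $\frs$ by Proposition~\ref{prop:Sorthogonal}.

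Next, I would reduce to the case that $(\frg, \om)$ is effective by passing to the quotient by the maximal ideal contained in $\frg^{\perpo}$; Theorem~\ref{thm:nil_skew} then ensures that $\frn$ is at most two-step nilpotent, so $[v, \frn] = \zsp$ and $[v, \frg] \subseteq \frn$, yielding $\ad(v)^{2} = \zsp$. For the split toral part $\frt_{\mathrm{spl}}$ of a Cartan subalgebra of $\frr$, quasi-invariance gives $\ad(t) \in \fro(\om)$ for $t \in \frt_{\mathrm{spl}}$; combined with the weight-space decomposition of $\frg$ (under which $\om$ pairs $\frg_\lambda$ only with $\frg_{-\lambda}$), the identity $\omega([v, x], t) = \alpha(t)\,\omega(x, v)$ (where $\alpha$ is the weight of $v$) together with the vanishing of $\om$ on $(\frs + \frn) \times [\frn, \frn]$ forces $\omega(x, v) = 0$ for the weight-matched $x \in \frg_{-\alpha}$, yielding $[v, \frg] \perpo \frt_{\mathrm{spl}}$.

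The main obstacle is the pairing of $[v, \frg]$ with the anisotropic subalgebras $\frk$ and $\frt_{\mathrm{c}}$, on which quasi-invariance gives no direct constraint on the adjoint action. Here my plan is to exploit the $\om$-isotropy of $[v, \frg]$ afforded by $\ad(v)^{2} = \zsp$ together with $\ad(v) \in \fro(\om)$ (which yield $\omega([v, x], [v, y]) = 0$), combined with an averaging argument over the compact subgroup of $\ac{\Inn(\frg)}$ generated by $\frk + \frt_{\mathrm{c}}$: the averaged form $\bar{\om}$ is both $\ac{\Inn(\frg)}_{\mathrm{is}}$-invariant (inherited from $\om$) and compact-invariant, hence fully $\ac{\Inn(\frg)}$-invariant, and by Proposition~\ref{prop:skew_invariant_abelian} satisfies $\bar{\om}([\frg, \frg], \frg) = 0$; a careful analysis of the non-trivial isotypic components of $\om - \bar{\om}$ should then close the remaining pairings.
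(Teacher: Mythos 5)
Your opening reductions are fine: $[\frg,[\frn,\frn]]\subseteq[\frn,\frn]$ is immediate, the pairing of $[v,\frg]$ with $\frn$ vanishes by Lemma \ref{lem:2nilpotenta}\,(2), and the pairing with $\frs$ vanishes by Proposition \ref{prop:Sorthogonal}. You also correctly isolate the real difficulty: quasi-invariance only constrains the $\RR$-trigonalizable elements of $\ac{\ad(\frg)}$, so it says nothing directly about the anisotropic directions. But the proposal does not overcome that difficulty, for two reasons. First, your case division is not well defined: for $a\in\frr\setminus\frn$ the decomposition $\ad(a)_{\rm s}=\ad(a)_{\rm i}+\ad(a)_{\rm split}$ happens inside $\ac{\ad(\frg)}$, not inside $\frg$, so there is in general no subspace $\frt_{\rm spl}\subseteq\frg$ of elements whose adjoint operators are split; the identity $\om([v,x],t)=\alpha(t)\,\om(x,v)$ fails because $[t,v]$ also contains the uncontrolled term $\ad(t)_{\rm i}v$. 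Second, and more seriously, the averaging step is not an argument. The averaged form $\bar\om$ is indeed $\ac{\Inn(\frg)}$-invariant and hence satisfies $\bar\om([\frg,\frg],\frg)=0$, but this gives no information about $\om$ itself on $[\frg,[\frn,\frn]]\times\frg$: the evaluation $(x,y)\mapsto\beta([v,x],y)$ is not a $B$-equivariant functional of $\beta$, so the vanishing of the trivial isotypic component of $\om-\bar\om$ does not force these particular values to vanish. ``A careful analysis of the isotypic components should close the remaining pairings'' is exactly the missing proof.

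The idea you are missing is the paper's prolongation argument, which is what actually kills the anisotropic parts $\varphi_{\rm i}$. Set $f(u,u',v)=\om([u,v],u')$ for $u,u'\in\frg$, $v\in[\frn,\frn]$. By \eqref{eq:prolo} this is \emph{symmetric} in $u,u'$, and by Lemma \ref{lem:2nilpotenta}\,(3) it descends to $\bar f\in\SS^{2}U^{*}\otimes V^{*}$ with $U=\frg/[\frn,\frn]^{\perpo}$ and $V=[\frn,\frn]/([\frn,\frn]\cap\frg^{\perpo})$. Quasi-invariance, via Lemma \ref{lem:acis} and Lemma \ref{lem:phigg}, shows that the split part $\varphi_{\rm s}$ of $\ad(u)$ maps $[\frn,\frn]$ into $[\frn,\frn]\cap\frg^{\perpo}$, so only $\varphi_{\rm i}$ survives: $f(u,u',v)=\om(\varphi_{\rm i}(v),u')$. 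Since the surviving operators on $V$ are semisimple with imaginary eigenvalues, they are skew for a positive definite form, and after identifying $U\cong V^{*}$ via $\om$ the tensor $\bar f$ lies in the first prolongation of an orthogonal Lie algebra, which is trivial; hence $\bar f=0$. This single algebraic step handles $\frk$, the anisotropic torus directions, and the imaginary parts of $\ad(\frr)$ simultaneously — precisely the cases your proposal leaves open.
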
 

\begin{proof} 
Define $U =  \frg / [\frn,\frn]^{\perpo}$ and $V= [\frn, \frn] /([\frn, \frn] \cap \frg^{\perpo})$. 
By \eqref{eq:prolo} and (3) of Lemma \ref{lem:2nilpotenta}, the  map 
$$ (u,u',v) \mapsto f(u,u',v) :=  \om([u, v], u'),  \quad u,u' \in \frg, \, v \in [\frn,\frn]$$ 
descends to an element $\bar f \in \SS^{2} U^{*} \otimes V^{*}$. Observe that 
$\om$ induces a dual pairing of $U$ and $V$, and thus also an
isomorphism $U \to V^{*}$. Hence, we may view $\bar f$
as an element of $\SS^{2} U^{*} \otimes U$. 

For $u\in\frg$,
decompose $\ad(u) = \varphi_{\rm i} + \varphi_{\rm s}$ as
a sum of commuting derivations of $\frg$, where $ \varphi_{\rm i}$ is semisimple with
 purely imaginary eigenvalues and $\varphi_{\rm s}$ is trigonalizable over the reals (see the remark following below).
The assumption that $\om$ is quasi-invariant implies that $\varphi_{\rm s}  \in  \fro(\om)$.
Moreover,  by Lemma  \ref{lem:acis},   $\varphi_{\rm s}(\frg) \subseteq \frg_{\om}$.
 Hence, Lemma \ref{lem:phigg} shows that 
 $$\varphi_{s} ([\frn, \frn]) \, \subseteq\, [\frn, \frn] \cap \frg^{\perpo}. $$   
Therefore,  $f(u,u',v) =  \om( \varphi_{\rm i} (v), u')$. 

We deduce that  $\bar f$ defines an element of the first prolongation of the orthogonal Lie algebra $\fro(\met)$ for some suitable positive definite scalar product $\met$ on $U$. Since the orthogonal Lie algebras  have trivial first prolongation (\cf \cite[Chapter I, Example 2.5]{kobayashi}), we deduce that
$\bar f = 0$.
\end{proof}

\begin{remark}[Jordan decomposition in algebraic Lie algebras] For any element $\varphi \in \gl(\frg)$, let $\varphi = \varphi_{\rm s} + \varphi_{\rm n}$ denote its Jordan decomposition, where $\varphi_{\rm s} \in \gl(\frg)$ is semisimple, $\varphi_{\rm n} \in \gl(\frg)$ is nilpotent, and $$ [\varphi_{\rm s}, \varphi_{\rm n}] = 0 . $$ Suppose now that $\varphi \in \ac{\ad(\frg)}$. Since the Lie algebra $\ac{\ad(\frg)}$ is the Lie algebra of an algebraic group, it also contains the Jordan parts $\varphi_{\rm s} , \varphi_{\rm n}$ of $\varphi$. Moreover, as follows for example from \cite[\S 8.15]{borel_book},  $\varphi_{\rm s} =  \varphi_{\rm i} + \varphi_{\rm split}$ decomposes uniquely as a sum of commuting semisimple derivations both contained  in  $ \ac{\ad(\frg)}$ such that  the eigenvalues of $ \varphi_{\rm i}$ are imaginary and $ \varphi_{\rm split}$ is diagonalizable over $\RR$. 
 \end{remark}

\begin{prop}   \label{pro:prolongation_effective}
Suppose that $\om$ is quasi-invariant. If $(\frg, \om)$ is effective then the following hold: 
\begin{enumerate}
\item $ [\frn, \frn] \cap \frg^{\perpo} = \zsp$.
\item $[\frn, \frn] \subseteq \, \frz(\frg)$.
%\item $[\frn, \frn] \subseteq \, \frn^{\perpo} \cap \frn \, \subseteq \,  \frz(\frn)$. \oliver{replaced by 4.15, ie. {prop:lsym_and_der} } 
\end{enumerate} 
\end{prop}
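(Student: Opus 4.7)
The plan is to derive both conclusions directly from the containment given by Proposition \ref{pro:prolongation}, namely $[\frg, [\frn, \frn]] \subseteq [\frn, \frn] \cap \frg^{\perpo}$. The essential input has already been done there; what remains is an effectiveness argument.

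First, I would set $\fri = [\frn, \frn] \cap \frg^{\perpo}$ and observe that $\fri$ is an ideal of $\frg$. Indeed, Proposition \ref{pro:prolongation} gives
\[
[\frg, \fri] \,\subseteq\, [\frg, [\frn, \frn]] \,\subseteq\, [\frn, \frn] \cap \frg^{\perpo} \,=\, \fri,
\]
so $\fri$ is $\ad(\frg)$-invariant. Since $\fri \subseteq \frg^{\perpo}$ and $(\frg, \om)$ is assumed effective (that is, $\frg^{\perpo}$ contains no non-trivial ideal of $\frg$), it follows at once that $\fri = \zsp$. This establishes (1).

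For (2), I would simply feed (1) back into Proposition \ref{pro:prolongation}. Using the vanishing $[\frn, \frn] \cap \frg^{\perpo} = \zsp$ just obtained, the inclusion
\[
[\frg, [\frn, \frn]] \,\subseteq\, [\frn, \frn] \cap \frg^{\perpo}
\]
collapses to $[\frg, [\frn, \frn]] = \zsp$, which by definition means $[\frn, \frn] \subseteq \frz(\frg)$.

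I do not foresee a genuine obstacle in this particular proposition, as the analytic/algebraic heart of the argument (the prolongation computation invoking triviality of the first prolongation of the orthogonal Lie algebra) is already internal to Proposition \ref{pro:prolongation}. The only point I would double-check is that effectiveness is used in the form "$\frg^{\perpo}$ contains no non-trivial ideal", which is exactly the definition recorded earlier in Section \ref{sec:skew_Lie}, so the deduction $\fri = \zsp$ is immediate.
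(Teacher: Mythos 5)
Your proof is correct and is essentially the paper's own argument, just written out in more detail: the paper likewise notes that Proposition \ref{pro:prolongation} makes $[\frn,\frn]\cap\frg^{\perpo}$ an ideal, kills it by effectiveness, and then reads off (2) from the now-trivial inclusion. No gaps.
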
 
\begin{proof} By Proposition \ref{pro:prolongation}, $[\frg,  [\frn, \frn] ] \subseteq [\frn, \frn] \cap  \frg^{\perpo} $. In particular, $ [\frn, \frn] \cap  \frg^{\perpo} $ is an ideal in $\frg$. 
By effectiveness of $(\frg,\om)$,  (1) and also (2) follow. 
%By Lemma   \ref{lem:idealp}, $[ \frn^{\perpo}, \frn ]  \subseteq \frn \cap  \frg^{\perpo} $, so $[\frn^{\perpo} \cap \frn, \frn] \subseteq [\frn, \frn] \cap  \frg^{\perpo}  = \zsp$, showing $\frn^{\perpo} \cap \frn \subseteq \frz(\frn)$. By Lemma \ref{lem:2nilpotentb}, $[\frn, \frn] \subseteq \frn^{\perpo} \cap \frn$. Hence (3) holds. 
\end{proof} 

\begin{lem}  \label{lem:lsym}
Let $\om$ be a  nil-invariant  skew-symmetric form on $\frg$. Then, for all $a \in \frr$,  $n,n' \in \frn$,  we have:
$$    [  [a, n], n' ]  - [ n, [ a, n']]  \,   \in  \, \frg^{\perpo} . $$  
\end{lem}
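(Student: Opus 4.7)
The plan is to fix an arbitrary $y \in \frg$, set $X = [[a,n], n'] - [n, [a, n']]$, and show by shuffling brackets through $\omega$ that $\omega(X, y) = \omega(y, X)$; by antisymmetry of $\omega$ this forces $\omega(X, y) = 0$, so $X \in \frg^{\perp_\omega}$. The key structural input is that nil-invariance places every element of the nilradical $\frn$ in $\frg_\omega$, and since $\frn$ is an ideal of $\frg$ (and $a \in \frr$, $y \in \frg$ arbitrary), the elements $n$, $n'$, $[a, n]$, $[a, n']$, $[n, y]$, $[n', y]$ all lie in $\frg_\omega$. Therefore the skewness relation and the identity \eqref{eq:adxyz} from Lemma~\ref{lem:Jnilinvariant2} can be applied to them freely.

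The calculation then proceeds in four moves. First, skewness of $n' \in \frg_\omega$ turns $\omega([[a, n], n'], y)$ into $\omega([a, n], [n', y])$, and skewness of $n \in \frg_\omega$ turns $\omega([n, [a, n']], y)$ into $-\omega([a, n'], [n, y])$. Second, applying \eqref{eq:adxyz} with $x = a$ (and the two other slots occupied by nilradical elements) rewrites these as $\omega(n, [a, [n', y]])$ and $\omega(n', [a, [n, y]])$ respectively, giving
\[
\omega(X, y) = \omega(n, [a, [n', y]]) + \omega(n', [a, [n, y]]).
\]
Third, expand the inner brackets by the Jacobi identity, $[a, [n', y]] = [[a, n'], y] + [n', [a, y]]$ and its analogue with $n, n'$ swapped. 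This splits $\omega(X, y)$ into four summands. The two cross terms $\omega(n, [n', [a, y]]) + \omega(n', [n, [a, y]])$ both reduce via skewness of $n$ and $n'$ to $\pm \omega([a, y], [n, n'])$ with opposite signs, and therefore cancel. Fourth, the remaining two terms are converted, using skewness of $[a, n], [a, n'] \in \frg_\omega$, into $\omega(y, [[a, n'], n]) + \omega(y, [[a, n], n'])$; antisymmetry of the Lie bracket identifies $[[a, n'], n] + [[a, n], n']$ with $X$. Hence $\omega(X, y) = \omega(y, X) = -\omega(X, y)$, so $\omega(X, y) = 0$.

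No deep obstacle is expected. The argument is a routine calculation powered entirely by nil-invariance, without any hypothesis on $a$ beyond $a \in \frr$ and without any appeal to quasi-invariance or to Lemma~\ref{lem:2nilpotenta}. The only real risk is bookkeeping: the repeated shuffling of arguments through skew relations must be carried out carefully to avoid an accumulated sign error, and one must verify at each step that the intermediate brackets $[n', y]$, $[n, y]$, $[a, n]$, $[a, n']$ indeed lie in $\frn$ and hence in $\frg_\omega$, so that the skewness identities and \eqref{eq:adxyz} genuinely apply.
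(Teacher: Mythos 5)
Your proof is correct; I checked each of the four moves and the signs work out. It is, however, a genuinely different computation from the one in the paper. The paper fixes $b\in\frg$ and evaluates $\omega([a,[b,n]],n')$ in two ways via the Jacobi identity; the decisive input there is that $[a,b]\in\frn$ (which uses $a\in\frr$ through $[\frg,\frr]\subseteq\frn$) together with $[\frn,\frn]\perp_\omega\frn$ from Lemma \ref{lem:2nilpotenta}~(2), so that the term $\omega([[a,b],n],n')$ vanishes outright. You instead pair $X=[[a,n],n']-[n,[a,n']]$ against an arbitrary $y$ and close the loop with the symmetry trick $\omega(X,y)=\omega(y,X)=-\omega(X,y)$, with the would-be $[\frn,\frn]$ contributions cancelling in pairs rather than being killed individually. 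Two consequences: your argument is self-contained (it needs only $\frn\subseteq\frg_\omega$, that $\frn$ is an ideal, identity \eqref{eq:adxyz}, and Jacobi, but not Lemma \ref{lem:2nilpotenta}), and — contrary to your own remark that you use ``$a\in\frr$'' — it in fact never uses that hypothesis: every bracket you need in $\frg_\omega$ lies in $\frn$ merely because $\frn$ is an ideal, so your computation proves the identity for all $a\in\frg$, a mild strengthening of the stated lemma. The paper's proof buys brevity (three lines) at the cost of invoking the earlier lemma and the restriction to $a\in\frr$; yours buys generality and independence at the cost of more bookkeeping.
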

\begin{proof} Let $b \in \frg$. Note that $[a, b] \in \frn$, since $a \in \frr$. Therefore,  by Lemma \ref{lem:2nilpotenta} (2),
 \begin{equation} \label{eq:commuten} 
   \om(  [  [a, b] , n] ,  n'   ) =  0  \; . \end{equation} 
%  This implies  $ \om(  [  [a, b] , n] ,  n'   ) =   \om(  [  [a, b] , n] ,  n'   )$
 Now \begin{equation*}  \begin{split}  \om(  [  a ,  [ b , n] ],  n'   ) &=    \om(a,  [ [ b , n] ,  n' ]  ) = \om( [n, [n', a] ], b) 
\;  \text{ and }  \\ 
  \om(   [  b,  [ a , n] ],  n'   )  &=    \om(b,  [ [ a , n] ,  n' ] ) \;  . \end{split} 
  \end{equation*} 
   In the view of   \eqref{eq:commuten}, we deduce
$ \om (b,  [n, [a, n'] ])  =   \om(b,  [ [ a , n] ,  n' ] )$. 
\end{proof}

\begin{prop}  \label{prop:lsym_and_der}
Suppose that $\om$ is quasi-invariant. If $(\frg, \om)$ is effective then  $$ [\frr, \frn] \, \subseteq \, \frz(\frn). $$   
Moreover, this implies $  [\frn, \frn] \subseteq \frr^\perpo$
and $[\frr, \frn] \subseteq \frn^{\perpo}$. 
\end{prop}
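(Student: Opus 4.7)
The proof naturally breaks into two parts. To show $[\frr, \frn] \subseteq \frz(\frn)$, I would fix $a \in \frr$ and $n, n' \in \frn$ and apply Lemma \ref{lem:lsym}, which gives $[[a, n], n'] - [n, [a, n']] \in \frg^{\perpo}$. Both terms lie in $[\frn, \frn]$ since $\frn$ is an ideal, and Proposition \ref{pro:prolongation_effective}(1) forces $[\frn, \frn] \cap \frg^{\perpo} = \zsp$ in the effective setting, so $[[a, n], n'] = [n, [a, n']]$. The Jacobi identity now reads
\[
[a, [n, n']] = [[a, n], n'] + [n, [a, n']] = 2\,[[a, n], n'],
\]
while by Proposition \ref{pro:prolongation_effective}(2), $[n, n'] \in \frz(\frg)$, so $[a, [n, n']] = 0$. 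Hence $[[a, n], n'] = 0$ for every $n' \in \frn$, i.e., $[a, n] \in \frz(\frn)$.

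For the moreover, the two orthogonality conditions are equivalent by \eqref{eq:prolo} with $y = n \in \frn \subseteq \frg_\om$, which gives $\om([a, n], n') = \om([n', n], a) = -\om([n, n'], a)$, so $[\frr, \frn] \perpo \frn$ iff $[\frn, \frn] \perpo \frr$. I would then establish $[\frr, \frn] \perpo \frn$ directly. From Part 1 we have $[a, n] \in \frz(\frn)$, hence $[n', [a, n]] = 0$ for every $n' \in \frn$; since $\ad(n') \in \fro(\om)$ (because $\frn \subseteq \frg_\om$ by nil-invariance), its skew property yields
\[
0 = \om([n', [a, n]], x) = -\om([a, n], [n', x]) \quad \text{for all } x \in \frg,
\]
i.e., $[\frr, \frn] \perpo [\frg, \frn]$. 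In addition, for $n_0 \in \frz(\frg) \cap \frn$, \eqref{eq:prolo} combined with $[n_0, n] = 0$ gives $\om([a, n], n_0) = \om([n_0, n], a) = 0$, so $[\frr, \frn] \perpo \bigl([\frg, \frn] + \frz(\frg) \cap \frn\bigr)$.

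The hard step will be to upgrade this partial orthogonality to the full statement $[\frr, \frn] \perpo \frn$. I expect to handle this by combining two ingredients: the $\frk$-module decomposition $\frn = [\frk, \frn] \oplus \frn^\frk$, available in the effective case because the Levi subalgebra is $\frf = \frk$ of compact type (Proposition \ref{prop:Sorthogonal}), which places $[\frk, \frn]$ inside the already-handled range $[\frg, \frn]$; and the observation that $\frn / \frz(\frn)$ is a trivial $\frr$-module, which is a direct consequence of Part 1 and allows one to reduce any putative non-zero pairing to the central piece handled by Proposition \ref{pro:prolongation_effective}(2). An alternative route, more in the spirit of Proposition \ref{pro:prolongation}, is a prolongation argument applied to the trilinear form $f(u, u', v) = \om([u, v], u')$ with $v \in \frz(\frn)$: writing $\ad(a) = \varphi_s + \varphi_i$ via Jordan decomposition, one would use Lemma \ref{lem:phigg} to arrange $\varphi_s(\frz(\frn)) \subseteq \frg^{\perpo}$ and thereby reduce $f$ to a first prolongation in an orthogonal Lie algebra, which must vanish.
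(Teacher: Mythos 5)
Your first part is correct and coincides with the paper's argument: Lemma \ref{lem:lsym} plus Proposition \ref{pro:prolongation_effective} give $[[a,n],n']=[n,[a,n']]$, while centrality of $[\frn,\frn]$ and the Jacobi identity give $[[a,n],n']=-[n,[a,n']]$, whence $[\frr,\frn]\subseteq\frz(\frn)$.

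The ``moreover'' part has a genuine gap. Your reductions are fine --- the equivalence of $[\frr,\frn]\perpo\frn$ with $[\frn,\frn]\perpo\frr$ via \eqref{eq:prolo}, and the partial orthogonality $[\frr,\frn]\perpo\bigl([\frg,\frn]+\frz(\frg)\cap\frn\bigr)$ --- but the step you yourself flag as hard is exactly the content of the claim, and neither of your two proposed strategies closes it. The first is too vague to check: pairing $[\frr,\frn]$ against the $\frk$-fixed part $\frn^{\frk}$ is, by \eqref{eq:prolo}, the same as pairing $[\frn,\frn]$ against a \emph{general} element of $\frr$, and nothing in ``$\frn/\frz(\frn)$ is a trivial $\frr$-module'' or in Proposition \ref{pro:prolongation_effective}(2) produces that vanishing. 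The second route is actually unworkable: Lemma \ref{lem:phigg} only controls a skew map on $[\frg_\om,\frg_\om]$, so it does not give $\varphi_{\rm s}(\frz(\frn))\subseteq\frg^{\perpo}$; moreover the form $f(u,u',v)=\om([u,v],u')$ with $v\in\frz(\frn)$ need not even descend to the relevant quotients. For instance, on $\frg=\RR a\ltimes\Span\{x,y\}$ with $[a,x]=y$, $[a,y]=-x$ and the effective quasi-invariant form determined by $\om(a,x)=1$, $\om(a,y)=\om(x,y)=0$, one has $y\in\frz(\frn)\cap\frg^{\perpo}$ but $\om([a,y],a)=-\om(x,a)=1\neq 0$. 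The paper's actual argument is short and different: by part one, $\ad(r)(\frn)\subseteq\frz(\frn)$, so the semisimple Jordan part $\ad(r)_{\rm s}$ annihilates a complement $W$ of $\frz(\frn)$ in $\frn$ with $[\frn,\frn]=[W,W]$; then for $u,v\in W$,
\[
\om(r,[u,v])=\om([r,u],v)=\om(\ad(r)_{\rm n}u,v)=-\om(u,\ad(r)_{\rm n}v)=-\om(u,[r,v])=-\om(r,[u,v]),
\]
using skewness of the nilpotent part $\ad(r)_{\rm n}$ (nil-invariance), so $\om(r,[u,v])=0$ and $[\frn,\frn]\perpo\frr$. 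You should replace your sketched ``hard step'' by this Jordan-decomposition computation.
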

\begin{proof}
Let $a \in \frr$, $n, n'  \in \frn$. By Lemma \ref{lem:lsym},
$  [  [a, n], n' ]  - [ n, [ a, n']]    \in  [\frn, \frn] \cap \frg^{\perpo}$.
According to 
Proposition \ref{pro:prolongation_effective}, $ [\frn, \frn] \subseteq \frz(\frg)$.
In particular, 
$ [\frn, \frn] \cap \frg^{\perpo}$ is an ideal in $\frg$. By effectiveness, $ [\frn, \frn] \cap \frg^{\perpo} = \zsp$.
Hence, $    [  [a, n], n' ]  =   [ n, [ a, n']]  $. 

On the other hand, $ [\frn, \frn] \subseteq \frz(\frg)$  and the Jacobi identity imply 
$[[ a, n ], n'] + [n, [a, n']] = 0$. We deduce  $ [  [a, n], n' ]  =  -  [ n, [ a, n']]  $. 
Hence,  $ [  [a, n], n' ]  = 0$. 

Let $r \in \frr$ and let $\ad(r) = \ad(r)_{\rm s} + \ad(r)_{\rm n}$ be its Jordan decomposition. To show that $r \, \perpo \, [\frn, \frn]$, decompose $\frn = W + \frz(\frn)$, with $\ad(r)_{\rm s}W = \zsp$. In particular,  $[\frn, \frn] = [W,W]$.  Let $u,v  \in W$. Then $\om(r, [u,v]) = \om([r,u],v) = \om( \ad(r)_{n} u, v) = - \om(u,  \ad(r)_{\rm n} v) =  - 
\om(u, [r, v]) =  - \om(r, [u,v])$.  Therefore, $\om(r, [u,v]) = 0$. 
\end{proof}

\begin{thm} \label{thm:skew_solvable}
Let $\om$ be a quasi-invariant skew-symmetric form on a Lie algebra
 $$ \frg = \frk \times \frr . $$  Assume further that $(\frg, \om)$ is effective. Then the following hold:
\begin{enumerate} 
\item
The nilradical $\frn$ of $\frg$ is abelian. 
\item
The solvable radical 
$\frr$ of $\frg$  is of imaginary type (meaning that the eigenvalues of the adjoint representation are imaginary).
\end{enumerate} 
 \end{thm}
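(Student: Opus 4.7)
The plan is to deduce (1) first and then use it crucially inside (2). For (1), Proposition \ref{pro:prolongation_effective} already yields $[\frn,\frn]\subseteq\frz(\frg)$, and Proposition \ref{prop:lsym_and_der} yields $[\frn,\frn]\subseteq\frr^{\perpo}$. The remaining orthogonality is handled using the direct product structure $\frg=\frk\times\frr$: since $[\frk,\frn]=\zsp$ and $\frn\subseteq\frg_{\om}$ by nil-invariance, for $x\in\frk$ and $a,b\in\frn$ one has $\om(x,[a,b])=-\om([a,x],b)=0$, whence $[\frn,\frn]\perpo\frk$. Combining with $[\frn,\frn]\subseteq\frr^{\perpo}$ gives $[\frn,\frn]\subseteq\frg^{\perpo}$. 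Since $[\frn,\frn]$ is an ideal of $\frg$, effectiveness forces $[\frn,\frn]=\zsp$.

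For (2), fix $r\in\frr$. Since $[\frr,\frr]\subseteq\frn$, the operator $\ad(r)$ is zero on $\frr/\frn$, so it suffices to show that $\ad(r)|_{\frn}$ has purely imaginary eigenvalues. Write $\ad(r)=\ad(r)_{\rm i}+D+\ad(r)_{\rm n}$, where $D$ is the $\RR$-diagonalizable summand of the semisimple part, as in the Remark following Proposition \ref{pro:prolongation}; the task reduces to $D|_{\frn}=0$. By quasi-invariance $D\in\fro(\om)$, and Lemma \ref{lem:acis} gives $D(\frg)\subseteq\frg_{\om}$. The direct product structure forces $D|_{\frk}=0$, while solvability gives $\ad(r)(\frr)\subseteq[\frr,\frr]\subseteq\frn$; hence every polynomial in $\ad(r)$ without constant term, in particular $D$, maps $\frg$ into $\frn$. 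Decomposing $\frg=\bigoplus_\lambda\frg_\lambda$ into $D$-eigenspaces, for $\lambda\neq 0$ any $x\in\frg_\lambda$ satisfies $x=\lambda^{-1}Dx\in\frn$, so $\frg_\lambda=\frn_\lambda$ and $\frk\subseteq\frg_0$. The aim is now to prove that $D(\frn)=\bigoplus_{\lambda\neq 0}\frn_\lambda$ is an ideal of $\frg$ contained in $\frg^{\perpo}$, so that effectiveness forces $D(\frn)=\zsp$, whence $D=0$ on all of $\frg$.

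The ideal property follows from the derivation identity $[x,Dw]=D[x,w]-[Dx,w]$ for $x\in\frg$, $w\in\frn$: the first term lies in $D(\frn)$ since $[x,w]\in\frn$, while $Dx\in\frn$ and $[\frn,\frn]=\zsp$ from (1) make $[Dx,w]=\zsp$. For the orthogonality, $D$-skewness already forces $\frn_\lambda\perpo\frg_\mu$ whenever $\lambda+\mu\neq 0$, so only the pairing $\frn_\lambda\times\frn_{-\lambda}$ may survive. Since $\ad(r)$ commutes with $D$, it preserves $\frn_\lambda$; for $\lambda\neq 0$ all eigenvalues of $\ad(r)|_{\frn_\lambda}$ have real part $\lambda\neq 0$, so $\ad(r)|_{\frn_\lambda}$ is invertible. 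By Proposition \ref{prop:lsym_and_der}, $[\frr,\frn]\subseteq\frn^{\perpo}$, so $\frn_\lambda=\ad(r)(\frn_\lambda)\subseteq\frn^{\perpo}$, giving $\frn_\lambda\perpo\frn_{-\lambda}$. Combined with the $D$-skewness orthogonalities, $\frn_\lambda\subseteq\frg^{\perpo}$ for each $\lambda\neq 0$.

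The main obstacle is the subtle interplay between quasi-invariance (which is essential to place $D$ inside $\fro(\om)$ with $D(\frg)\subseteq\frg_{\om}$) and the structural constraint $[\frr,\frn]\subseteq\frn^{\perpo}$ from Proposition \ref{prop:lsym_and_der}; this is precisely the point where mere nil-invariance would fail, consistent with the solvable real-type phenomenon of Example \ref{ex:nil_noquasi}. It is also important that (1) is used inside the proof of (2): without $[\frn,\frn]=\zsp$ the ideal argument for $D(\frn)$ collapses, so the two statements are genuinely proved in tandem.
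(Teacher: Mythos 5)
Your proof is correct and follows essentially the same strategy as the paper's: part (1) is the paper's argument, and for part (2) both proofs kill the split part $D=\ad(r)_{\rm split}$ by exhibiting its image as an ideal of $\frg$ contained in $\frg^{\perpo}$, using quasi-invariance to make $D$ skew and Proposition \ref{prop:lsym_and_der} to place $\ad(r)(\frn)$ inside $\frn^{\perpo}$. The only cosmetic differences are that you organize the orthogonality argument through the $D$-eigenspace decomposition and obtain the ideal property from the derivation identity together with $[\frn,\frn]=\zsp$, whereas the paper computes $\om(\varphi(n),b)$ directly and derives the ideal property from the commutation of $\ad(a)|_{\frn}$ and $\ad(b)|_{\frn}$.
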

\begin{proof} 
By Proposition \ref{prop:lsym_and_der},  $[\frn, \frn] \subseteq \frr^{\perpo}$. Since $\frk$ and $\frn$ commute, $[\frn, \frn] \perp \frk$. This shows  $[\frn, \frn]   \subseteq  \frg^{\perpo}$. Since $(\frg, \om)$ is
effective,  $[\frn, \frn] = \zsp$. 
Hence (1) holds.

Let  $a \in \frr$, and as above decompose the Jordan semisimple part of $\ad(a)$ as a canonical sum of commuting semisimple derivations:
$\ad(a)_{\rm s} =  \ad(a)_{\rm i} + \ad(a)_{\rm split}$.
 By quasi-invariance of $\om$,  $\varphi = \ad(a)_{\rm split}$ is a skew derivation of $(\frg, \om)$. 

By Proposition \ref{prop:lsym_and_der},  $[\frr, \frn] \subseteq \frz(\frn) \cap \frn^{\perpo}$ and therefore 
$\varphi(\frn) = 
\varphi (  \frz(\frn) \cap \frn^{\perpo} )$. Using that $\varphi$ is skew, we compute,  for any $n \in  \frz(\frn) \cap  \frn^{\perpo}$, $b \in \frr$,  that $\om(\varphi(n),b) = - \om(n, \varphi(b)) = 0$,
since $\varphi(b) \in \frn^{\perpo}$. 
Therefore, $\varphi(\frn) = \varphi( \frz(\frn) \cap \frn^{\perpo}) \subseteq \frr^{\perpo}$. From  $[\frk, \frr] = \zsp$, we infer $\varphi(\frk) = \zsp$, which implies $\frk \perp_{\om} \varphi(\frg)$ and  $\varphi(\frn) \subseteq \frg^{\perpo}$. 

% Furthermore, $\varphi(\frn)$ is an ideal in $\frr$:  
Observe that $[b, \varphi (n) ] = \varphi ( [b,n])  + [ \ad(b), \varphi ] (n)$.  
Since $\frn$ is abelian, the restrictions of $\ad(a)$ and $\ad(b)$ to $\frn$ commute.
Hence, the Jordan part $\ad(a)_{\rm s}$ also commutes with $\ad(b)$.
Therefore, $\varphi$ commutes with $\ad(b)$. This implies $[ \ad(b), \varphi ] (n)  = 0$, showing  $[b, \varphi (\frn) ] \subseteq \varphi(\frn)$. Hence,  $\varphi(\frn)$ is an ideal in $\frg$. 
Since $(\frg, \om)$ is effective, we deduce that $\varphi(\frn) = \zsp$. Hence, $\varphi = 0$ and 
$\ad(a)_{\rm s} = \ad(a)_{\rm i}$, showing (2).
\end{proof}

\begin{cor} \label{cor:quasi_solv_om}
Let $\frr$ be a solvable Lie algebra with a quasi-invariant non-dege\-ner\-ate skew-symmetric 
form $\om$. Then $\frr$ is abelian.
\end{cor}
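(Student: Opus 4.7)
The plan is to augment Theorem~\ref{thm:skew_solvable} with a prolongation-type argument that leverages the non-degeneracy of $\om$. First, I apply Theorem~\ref{thm:skew_solvable} to $\frg = \frr$ (so $\frk = \zsp$); since $\om$ is non-degenerate, $(\frr, \om)$ is effective, and the theorem gives that the nilradical $\frn$ is abelian and that $\frr$ is of imaginary type.

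To promote this to abelianness of $\frr$, I reduce to proving $\ad(u)_{\rm s} = 0$ for every $u \in \frr$: for a solvable Lie algebra the nilradical coincides with $\{u : \ad(u) \text{ nilpotent}\}$, so this forces $\frr = \frn$. Now $\ad(u)_{\rm s}$ is a derivation of $\frr$ and hence has image in $\frn$; combined with semisimplicity (image and kernel are disjoint), it is enough to show $\ad(u)_{\rm s}|_\frn = 0$ for all $u$. Decomposing $\frn$ into joint weight spaces of the commuting family $\{\ad(u)_{\rm s}|_\frn\}$ (which commutes because $\frn$ is abelian), I write $\frn = \frn_0 \oplus \bigoplus_{\mu \neq 0} V_\mu$, where each $V_\mu$ carries a complex structure $\J$ with $\ad(u)_{\rm s}|_{V_\mu} = \mu(u)\, \J$; the goal becomes showing that $V_\mu = \zsp$ for every nonzero real weight $\mu$.

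Fix such $\mu$ and $v \in V_\mu$. First note that $V_\mu \subseteq [\frr, \frn] \subseteq \frn^\perpo$: the first inclusion is from invertibility of $\ad(u)|_{V_\mu}$ whenever $\mu(u) \neq 0$, and the second is Proposition~\ref{prop:lsym_and_der} (using $\frn$ abelian). The key device is the trilinear form
\[
g(u, u', v) \;:=\; \om([u, v], u'), \qquad u, u' \in \frr.
\]
Since $v \in \frn \subseteq \frr_\om$ by nil-invariance, equation~\eqref{eq:prolo} makes $g$ symmetric in $u, u'$. Splitting $\ad(u) = \ad(u)_{\rm s} + \ad(u)_{\rm n}$ by Jordan and using skewness of $\ad(u)_{\rm n}$ together with $\ad(u)_{\rm n}(u') \in \frn$ (derivations of solvable algebras land in the nilradical) and $v \in \frn^\perpo$, the nilpotent-part contribution
\[
\om(\ad(u)_{\rm n}(v), u') \;=\; -\om(v, \ad(u)_{\rm n}(u'))
\]
vanishes, leaving $g(u, u', v) = \mu(u)\, \om(\J v, u')$.

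The symmetry $g(u, u', v) = g(u', u, v)$ then forces $\mu(u)\,\om(\J v, u') = \mu(u')\,\om(\J v, u)$ for all $u, u' \in \frr$, so the functionals $\mu$ and $\om(\J v, \cdot)$ on $\frr$ are proportional; since $\mu \neq 0$, there exists $c_v \in \RR$ with $\om(\J v, \cdot) = c_v\, \mu$. The linear map $v \mapsto c_v$ on $V_\mu$ is injective by non-degeneracy of $\om$ (vanishing of $c_v$ forces $\J v = 0$, hence $v = 0$), which bounds $\dim V_\mu \leq 1$ and contradicts the even real dimension of $V_\mu$. The main technical obstacle is to spot this prolongation-type symmetry of $g$ and to engineer the cancellation of the nilpotent contribution via the inclusion $V_\mu \subseteq \frn^\perpo$; once this reduction is in place, non-degeneracy of $\om$ immediately rules out nonzero imaginary weights and yields the corollary.
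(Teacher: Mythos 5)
Your argument is correct, but it follows a genuinely different route from the paper's. The paper's proof is a short dimension count: writing $\frn = W \oplus (\frn\cap\frn^{\perpo})$ and $\frr = \fra \oplus \frn$ with $W^{\perpo} = \fra \oplus (\frn\cap\frn^{\perpo})$, non-degeneracy of $\om$ forces the degenerate part $\frn\cap\frn^{\perpo}$ to pair with $\fra$, giving $\dim\fra \geq \dim(\frn\cap\frn^{\perpo})$, while $\fra$ acts faithfully on $\frn\cap\frn^{\perpo}$ by an abelian algebra of derivations with purely imaginary eigenvalues, which forces the reverse strict inequality unless $\fra = \zsp$; hence $\frr = \frn$ is abelian. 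You instead localize the problem in the joint weight spaces $V_\mu$ of the semisimple parts $\ad(u)_{\rm s}|_\frn$ and run a prolongation-type symmetry argument on $g(u,u',v) = \om([u,v],u')$ -- essentially the same device as in the paper's Proposition \ref{pro:prolongation}, via the symmetry \eqref{eq:prolo} -- to conclude that $\om(\J v,\cdot)$ is proportional to $\mu$, so $\dim V_\mu \leq 1$, contradicting the even dimension of a nonzero weight space. Both proofs rest on the same inputs (Theorem \ref{thm:skew_solvable} for abelianness of $\frn$ and imaginary type, Proposition \ref{prop:lsym_and_der} for $[\frr,\frn]\subseteq\frn^{\perpo}$, and the characterization of the nilradical of a solvable algebra by ad-nilpotency). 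The paper's count is more economical; your version is longer but yields the sharper local statement that every nonzero imaginary weight space of the action on $\frn$ must vanish, and it makes transparent exactly where non-degeneracy enters (injectivity of $v\mapsto c_v$). All the individual steps you use -- skewness of the nilpotent Jordan part via nil-invariance, $\ad(u)_{\rm n}(\frr)\subseteq\frn$ by Jacobson's theorem, linearity of $u\mapsto\ad(u)_{\rm s}|_{\frn}$ from commutativity on the abelian nilradical, and $V_\mu\subseteq[\frr,\frn]$ from invertibility of $\ad(u)|_{V_\mu}$ -- check out.
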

\begin{proof}
We may decompose $\frn = W \oplus (\frn \cap \frn^{\perpo})$ and
$\frg = \fra \oplus \frn$, such that $W^{\perpo} = \fra \oplus (\frn \cap \frn^{\perpo})$.  Since $\frn \cap  \frn^{\perpo} \perpo W$ and $\omega$ is non-degenerate, $\dim \fra \geq \dim \frn \cap  \frn^{\perpo}$. On the other hand,
$\fra$ is faithfully represented on  $\frn \cap  \frn^{\perpo}$ by an abelian Lie algebra  of derivations with imaginary eigenvalues only.  This shows $\dim \frn \cap  \frn^{\perpo} > \dim \fra$, unless $\dim \fra = 0$.  
We conclude $\fra =  \frn \cap  \frn^{\perpo} = \zsp$. Therefore, $\frg = W$ is abelian. 
\end{proof}

%\begin{example} Suppose that $\om$ is non-degenerate. 
%As just observed,  $[\frn^{\perpo}, \frn] \subseteq \frn  \cap  \frg^{\perpo}  = \zsp$.
% In particular, if $\frr$ denotes the solvable radical of $\frg$ then $\frn^{\perpo} \cap \frr = 
%\frn^{\perpo} \cap \frn \subseteq  \frz(\frn)$. \oliver{Is that sound? $\frn  \cap  \frg^{\perpo}$ looks a bit like an ideal in $\frg$....} 
%\end{example} 

\subsection{Lie algebras with nil-invariant $h$-structure} %  and Hermitian Lie algebras}
\label{sec:nil_h-structure}

Let  $(\met,J)$ be an $h$-structure for the Lie algebra $\frg$. 
%%. We put $\frh = \frg^{\perp_{\om}}$, where
%%$\om$ is the associated skew-symmetric bilinear form. 
%%By our convention, we also have 
%Recall that  $\ker \J \subseteq \frg^{\perp}$, where  denotes the  kernel of the $h$-structure. 
The  $h$-structure $$ (\frg, \met,J)$$
is called  \emph{nil-invariant} if \emph{both} the bilinear form $\met$ and the fundamental two-form $\om$ are nil-invariant. Similarly, the $h$-structure is called \emph{quasi-invariant} if both forms are quasi-invariant forms under the adjoint action of $\frg$.  
%\begin{lem}\label{lem:Jnilinvariant}
%Let $\phi\in\so(\frg,\met)$.
%Then $\phi\in\sp(\frg,\omega)$ if and only if
%\begin{equation}
%\J\phi=\phi\J.
%\label{eq:adxJ}
%\end{equation}
%In particular, \eqref{eq:adxJ} holds for $\phi=\ad(x)$ for any
%$x\in\frg_{\met,\omega}$.
%\end{lem}
%\begin{proof}
%Assume $\phi\in\sp(\frg,\omega)$ and $a,b\in\frg$.
%Since $\met$ is $\phi$-invariant,
%\begin{align*}
%\langle b,\phi(\J a)\rangle = -\langle \phi(b),\J a\rangle
%=-\omega(\phi(b),a)
%=\omega(b,\phi(a))
%=\langle b,\J\phi(a)\rangle.
%\end{align*}
%It follows that $\phi\J=\J\phi$.
%The converse follows by rearranging the steps in the argument.
%\end{proof}

% 
% %%%%%%%%%%%%%%%%
%\oliver{remove paragraph}
%Any subspace $W$ of $\frg$ which satisfies  $\J W \subseteq W$ will be called a \emph{complex subspace}. A subalgebra $\fra$ of $\frg$ which is a complex   subspace % that is,  $\J \fra \subseteq \fra$ 
%will be called a \emph{complex subalgebra} if the bracket of the subalgebra $\fra$ is $\J$-bilinear. 
%Note that if $\fra \subseteq \frg_{J}$ the condition  $\J \fra \subseteq \fra$  is sufficient for $\fra$ being a complex subalgebra. An ideal of $\frg$ which is a complex subalgebra will be  called a \emph{complex ideal}.  
%\oliver{At the moment these notions don't play a role any more.}
%%%%%%%%%%%%%%%%%
%

%
\subsubsection{Invariant $h$-structures}  

The $h$-structure  $(\met,J)$ on  $\frg$ will be called an \emph{invariant}  $h$-structure
if $\frg = \frg_{h}$. This  of course is the case if and only if $\met$ and $\om$ are invariant by $\frg$. 
In particular, for an invariant $h$-structure \eqref{eq:Jlinear2} implies that 
$\frg = \frg_{\J}^{h}$. Moreover,  $\frg^{\perp}$ 
% as well as  the subspace $\ker J \subseteq \frg^{\perp}$ are ideals 
is an ideal in  $\frg$. The following is a direct consequence of  
Proposition \ref{prop:skew_invariant_abelian}: 

\begin{prop}\label{prop:complex_invariant_abelian}
Let $(\met, \J)$ be an invariant $h$-structure for $\frg$.
Then  \begin{equation} \label{su_3L} [\frg, \frg] \;  \subseteq \;  \frg^{\perp} .  \end{equation} 
In particular, if $(\frg, \met)$ is effective, then $\frg$ is abelian. 
\end{prop}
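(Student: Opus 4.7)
The plan is to reduce to Proposition \ref{prop:skew_invariant_abelian} by invoking the coincidence of the two kernels for an $h$-structure.

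First I would unpack the invariance hypothesis: by definition, $\frg = \frg_h = \frg_\omega \cap \frg_{\met}$, so both the symmetric form $\met$ and the associated fundamental two-form $\omega(\cdot,\cdot)= \langle J\cdot,\cdot\rangle$ are invariant under $\ad(\frg)$. In particular, the skew form $\omega$ satisfies the hypothesis of Proposition \ref{prop:skew_invariant_abelian}, which yields
\[
[\frg,\frg] \;\subseteq\; \frg^{\perp_\omega}.
\]

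Next I would use the identity $\frg^{\perp} = \frg^{\perp_\omega}$ recorded for any $h$-structure (see the discussion right after Definition \ref{def:h_structure}, where it is observed that $V^{\perp} = V^{\perp_\omega}$; this is immediate from $\omega(\cdot,\cdot) = \langle J\cdot,\cdot\rangle$ together with $\ker J \subseteq V^{\perp}$ and $J$-invariance of $\met$). This gives the desired inclusion $[\frg,\frg]\subseteq\frg^{\perp}$, which is the first assertion.

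For the second assertion, I would observe that $[\frg,\frg]$ is automatically an ideal of $\frg$. Under the effectiveness hypothesis, $\frg^{\perp}$ contains no non-trivial ideal of $\frg$, so the inclusion $[\frg,\frg]\subseteq\frg^{\perp}$ forces $[\frg,\frg] = \zsp$, i.e.\ $\frg$ is abelian. I do not anticipate any substantive obstacle here: the entire argument is essentially a translation of Proposition \ref{prop:skew_invariant_abelian} across the equality $\frg^{\perp_\omega}=\frg^{\perp}$, together with the observation that the commutator subalgebra is an ideal.
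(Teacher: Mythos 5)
Your proposal is correct and follows exactly the paper's route: the paper also derives this as a direct consequence of Proposition \ref{prop:skew_invariant_abelian}, using that invariance of the $h$-structure makes $\omega$ invariant and that $\frg^{\perp}=\frg^{\perp_\omega}$, so effectiveness of $(\frg,\met)$ is effectiveness of $(\frg,\omega)$. No gaps.
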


\subsubsection{Nil-invariant $h$-structures}

Let  $(\frg, \met,J)$ be a nil-invariant $h$-structure for $\frg$. 
Let $\frn$ and $\frr$ denote the nilpotent and solvable radicals  of $\frg$. Nil-invariance implies that  $\frn \subseteq \frg_{\om}$, and also $\frr \subseteq  \frg_{\met}$ by Theorem \ref{thm:BGZ_invariance}. 
Together with  \eqref{eq:gsu_2} we thus have inclusions 
\[
\frn \, \subseteq \, \frr_\omega \, = \, 
\frr_{\met,\omega} \, \subseteq \, \frg_{h} \, \subseteq \, \frg_{J}^{h}\, .
\]

Since $\frn \subseteq \frg_{J}^{h}$,  Lemma \ref{lem:Jlinear2} implies: 
\begin{lem}\label{lem:JGN2}
% If $\frn \subseteq \frg_{\J}$ then 
$ J  [\frg, \frn]= [ J \frg,\frn] \mod \frg^{\perp}$ and $J \frn = \frn \mod \frg^{\perp}$.
\end{lem}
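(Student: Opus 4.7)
The plan is to split the lemma into its two claims and treat them separately. For the first identity $J[\frg,\frn] = [J\frg,\frn]$ modulo $\frg^{\perp}$, I would unpack Lemma \ref{lem:Jlinear2}: since $\frn \subseteq \frg_J^h$ was recorded immediately above, applying that lemma with $y = n \in \frn$ and $x \in \frg$ gives
\[
J[x,n] \equiv [Jx,n] \pmod{\frg^{\perp}}.
\]
Letting $x$ range over $\frg$ and $n$ over $\frn$ produces $J[\frg,\frn] \subseteq [J\frg,\frn] + \frg^{\perp}$, while reading the same identity from right to left yields the opposite inclusion.

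For the second identity $J\frn = \frn$ modulo $\frg^{\perp}$, my plan is to pass to the quotient vector space $\bar\frg = \frg/\frg^{\perp}$. The kernel $\frg^{\perp}$ is $J$-invariant because $\langle Ju,Jv\rangle = \langle u,v\rangle$ and $\ker J \subseteq \frg^{\perp}$; hence $J$ descends to $\bar J$ on $\bar\frg$ with $\bar J^{2} = -\id$. By Theorem \ref{thm:BGZ_invariance}, $\met$ is invariant under $\ad(\gs)$, so for each $n \in \frn \subseteq \gs$ the operator $\ad(n)$ preserves $\frg^{\perp}$ and descends to a nilpotent $\overline{\ad(n)}$ on $\bar\frg$. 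Rewriting the first identity in operator form gives $\bar J \circ \overline{\ad(n)} = \overline{\ad(n)} \circ \bar J$ on $\bar\frg$. Combined with $\ad(Jn)|_{\frg_J^h} \equiv J\,\ad(n)|_{\frg_J^h}$ modulo $\frg^{\perp}$ (again from Lemma \ref{lem:Jlinear2}) and iterating, I expect $\ad(Jn)$ to inherit nilpotent behaviour on the image of $\frn$ in $\bar\frg$. The aim is then to upgrade this to $Jn \in \frn + \frg^{\perp}$; invertibility of $\bar J$ will automatically provide the reverse inclusion and yield equality.

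The hard part will be this last upgrade, namely translating ad-nilpotent behaviour of $Jn$ into actual membership in $\frn + \frg^{\perp}$. A priori, $Jn$ can have a $\frk$-component not constrained by $n \in \frg_J^h$ alone, and the partial $J$-linearity of $\ad(n)$ holds only modulo $\frg^{\perp}$ on $\frg_J^h$. I expect the missing control to come from the full nil-invariance of both $\met$ and $\omega$: Proposition \ref{prop:Sorthogonal} places non-compact semisimple contributions into $\frg^{\perpo} = \frg^{\perp}$, Theorem \ref{thm:BGZ_invariance}(3) absorbs reductive corrections into $\frg^{\perp}$, and the at-most-two-step nilpotence of $\frn$ from Lemma \ref{lem:2nilpotentb} restricts the structure of $\frn$ enough to force the representative of $Jn$ in $\bar\frg$ to land in $\bar\frn$.
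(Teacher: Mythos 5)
Your treatment of the first identity is correct and is precisely the paper's argument: since $\frn\subseteq\frg_{J}^{h}$, Lemma \ref{lem:Jlinear2} with $y=n\in\frn$ and $x\in\frg$ gives $J[x,n]\equiv[Jx,n]\bmod\frg^{\perp}$, and reading this in both directions yields $J[\frg,\frn]+\frg^{\perp}=[J\frg,\frn]+\frg^{\perp}$. The paper offers nothing beyond this one line for the entire lemma, and this first identity is in fact the only part used downstream: the proof of Lemma \ref{lem:GJGN} needs only that $J$ preserves $[\frg,\frn]+\frg^{\perp}$, which follows from it together with $[J\frg,\frn]\subseteq[\frg,\frn]$ and $J\frg^{\perp}\subseteq\frg^{\perp}$.

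For the second identity your proposal has a genuine gap, and you have located it accurately yourself: every relation you extract from Lemma \ref{lem:Jlinear2} is a statement about the operators $\ad(n)$ and $\ad(Jn)$, and such relations become vacuous exactly when $\ad(n)=0$; for $n\in\frz(\frg)\cap\frn$ they say nothing about where $Jn$ lies. The tools you hope will supply the ``missing control'' do not: Proposition \ref{prop:Sorthogonal} only handles the non-compact Levi factor, while Theorem \ref{thm:BGZ_invariance}\,(3) and Lemma \ref{lem:2nilpotentb} presuppose effectiveness, which is not a hypothesis in Section \ref{sec:nil_h-structure} --- and even effectiveness would not save the claim. Concretely, take $\frg=\su(2)\times\RR\cong\uu(2)$ with any almost complex structure $J$ and any compatible non-degenerate $\met$. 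Then $\ac{\ad(\frg)}=\ad(\su(2))$ consists of semisimple operators, so it has no non-zero nilpotent (or $\RR$-trigonalizable) elements and nil-invariance, indeed quasi-invariance, of $\met$ and $\om$ holds vacuously; moreover $\frg^{\perp}=\zsp$ and the nilradical $\frn=\frz(\frg)$ is one-dimensional, so $J\frn=\frn$ is impossible. Hence the second identity is not a consequence of the stated hypotheses, the paper's ``Lemma \ref{lem:Jlinear2} implies'' does not actually cover it, and your failure to complete the ``upgrade'' reflects a defect in the claim rather than in your method. You should prove and use only the first identity (which suffices for Lemma \ref{lem:GJGN} and everything after it), and flag the second identity --- invoked, if at all, only in the unproved remark following Lemma \ref{lem:GJGN} --- as requiring either additional hypotheses or a reformulation, for instance restricting it to $[\frg,\frn]$ in place of $\frn$.
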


We then obtain:  

\begin{lem}  \label{lem:GJGN} % slightly  improved version of proof 
 $[\frg_{J}^{h},  [\frg, \frn] ] \, \subseteq  \, [\frg, \frn]^{\perp}$.
\end{lem}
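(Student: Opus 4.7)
The goal is to show $\langle [x,v], w\rangle = 0$ for every $x \in \frg_J^h$ and every $v = [y,n], w = [z,m] \in [\frg,\frn]$. Note first that $[\frg,\frn] \subseteq \frn \subseteq \frg_h \subseteq \frg_J^h$: the inclusion $\frn \subseteq \frg_\met$ comes from Theorem~\ref{thm:BGZ_invariance}(2) applied to the nil-invariant $\met$, $\frn \subseteq \frg_\omega$ is the nil-invariance of $\omega$, and $\frg_h \subseteq \frg_J^h$ is \eqref{eq:Jlinear2}. In particular $\ad(v)$ and $\ad(w)$ are skew for both $\met$ and $\omega$, and both $v$ and $w$ satisfy the $J$-compatibility required by Lemma~\ref{lem:Jlinear2}.

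The strategy is to translate the pairing into the $J$-picture. Use Lemma~\ref{lem:JGN2} to pick $n'', m'' \in \frn$ with $v \equiv Jn''$ and $w \equiv Jm'' \pmod{\frg^\perp}$. Since $x \in \frg_J^h$ we have $\ad(x)\frg^\perp \subseteq \frg^\perp$ and $\ad(x) J \equiv J \ad(x) \pmod{\frg^\perp}$ (Lemma~\ref{lem:Jlinear2}), so $[x,v] \equiv J[x,n''] \pmod{\frg^\perp}$. Combining this with the identity $\omega(a, Jb) = \langle a,b\rangle$ (which follows from $\langle J\cdot, J\cdot\rangle = \langle \cdot,\cdot\rangle$) and the fact that $\frg^\perp = \frg^{\perp_\omega}$ annihilates both forms, the pairing collapses to
\[
\langle [x,v], w\rangle \;=\; \langle J[x,n''], Jm''\rangle \;=\; \langle [x,n''], m''\rangle.
\]
The $\met$-skewness of $\ad(n'')$ rewrites this as $\langle x, [n'', m'']\rangle$.

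It remains to force $\langle x, [n'',m'']\rangle = 0$. I would compare two expressions for this quantity: on one hand, running the same derivation with $v$ and $w$ interchanged yields $\langle [x,w], v\rangle = \langle x, [m'', n'']\rangle = -\langle x, [n'',m'']\rangle$; on the other, the Jacobi expansion of $[[y,n],[z,m]]$, combined with Lemma~\ref{lem:phigg} (which gives $\ad(n'')[\frg_\omega,\frg_\omega] \subseteq \frg^\perp$, hence in particular $[\frn,[\frn,\frn]] \subseteq \frg^\perp$) and with the $\met$-skewness of $\ad([y,n])$ and $\ad([z,m])$, produces a matching identity that forces $\langle x, [n'',m'']\rangle$ to equal its own negative, and hence to vanish.

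The main obstacle is the consistent bookkeeping of the ``$\pmod{\frg^\perp}$'' corrections introduced at each invocation of Lemmas~\ref{lem:JGN2} and \ref{lem:Jlinear2}. These corrections must be annihilated in the final pairing, which works precisely because $x \in \frg_J^h$ guarantees $\ad(x)\frg^\perp \subseteq \frg^\perp$, because $\ad(\frn)$ preserves $\frg^\perp$ (from $\frn \subseteq \frg_\met$), and because $\met$ and $\omega$ share the same kernel $\frg^\perp$; without this triple compatibility, the passage between $\met$- and $\omega$-pairings via $J$ would not be well defined modulo $\frg^\perp$.
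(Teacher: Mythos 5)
Your reduction of $\langle [x,v],w\rangle$ to $\langle x,[n'',m'']\rangle$ is correct, and the mod-$\frg^\perp$ bookkeeping does close up for the reasons you give. The gap is entirely in the final step, and it is not a bookkeeping issue: neither mechanism you propose can force $\langle x,[n'',m'']\rangle=0$. The comparison with $v$ and $w$ interchanged is vacuous, because Theorem \ref{thm:BGZ_invariance}(1) already gives $\langle [x,v],w\rangle=-\langle [x,w],v\rangle$ for \emph{every} $x\in\frg$ (all of $v,w,[x,v],[x,w]$ lie in the ideal $\gs$), so the relation $\langle x,[m'',n'']\rangle=-\langle x,[n'',m'']\rangle$ is automatically consistent and yields no constraint; to conclude you would need the \emph{symmetry} $\langle [x,v],w\rangle=+\langle [x,w],v\rangle$, which is equivalent to the statement being proved. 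As for the Jacobi/Lemma~\ref{lem:phigg} route: that lemma applied to $\ad(n'')$ only controls $\ad(n'')\bigl([\frg_\omega,\frg_\omega]\bigr)$, whereas your element $[n'',m'']$ lies in $[\frn,\frn]$ with $m''\in\frn$ merely characterized by $Jm''\equiv[z,m]\bmod\frg^\perp$ for an arbitrary $z\in\frg$; there is no reason for $m''$ to lie in $[\frg_\omega,\frg_\omega]$, so the lemma only reaches one level deeper ($[\frn,[\frn,\frn]]\subseteq\frg^\perp$) than what you need. Nothing in the nil-invariance hypotheses makes $[\frn,\frn]$ orthogonal to $\frg_J^h$ at this stage; statements of that kind appear in the paper only later, under quasi-invariance and effectiveness.

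The missing ingredient is the one the paper's proof turns on: for \emph{every} $x\in\frg$, the map $\varphi_x$ induced by $\ad(x)$ on the Hermitian quotient $V=[\frg,\frn]/([\frg,\frn]\cap[\frg,\frn]^\perp)$ is skew, hence so is $J'\varphi_x=\varphi_{Jx}$ (this uses Theorem \ref{thm:BGZ_invariance}(1) applied to $Jx$ together with Lemma \ref{lem:Jlinear2} for $y\in\frn\subseteq\frg_J^h$), and Lemma \ref{lem:JandO}(1) then forces $\varphi_x$ to be complex \emph{anti}-linear. For $x\in\frg_J^h$ the map $\varphi_x$ is also complex linear, so $2J'\varphi_x=0$ and $\varphi_x=0$, which is the assertion. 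In your notation, the anti-linearity is exactly the ``matching identity'' $\langle [x,v],Jw\rangle=-\langle [x,w],Jv\rangle$ that your plan calls for; it comes from the skewness of $\ad(Jx)$, which your argument never invokes. Without it the computation genuinely stalls at $\langle x,[n'',m'']\rangle$.
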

\begin{proof}
By Lemma \ref{lem:JGN2}, $J$ maps the subspace $ [\frg, \frn] + \frg^{\perp}$ of $\frg$ to itself. 
In fact, since $\ker J \subseteq \frg^{\perp}$ and $\frg^{\perp}$ is stable by $J$, $J$ induces a complex structure $J'$ on  
$ [\frg, \frn]  /  ([\frg, \frn] \cap \frg^{\perp})$ that is compatible with the symmetric bilinear form induced by $\met$. 
% thus induces a $J$-structure for the vector space  $ [\frg, \frn] \,  \big/  \, [\frg, \frn] \cap \frg^{\perp}$.  
Thus  $V = [\frg, \frn]  / ([\frg, \frn] \cap   [\frg, \frn]^{\perp})$   is a Hermitian vector 
space.  As $\met$ is nil-invariant, Theorem \ref{thm:BGZ_invariance}  states  that, for any $x \in \frg$,  $\ad(x)$ induces a skew linear map $[\frg, \frn] \to [\frg, \frn]$. In particular, $\ad(x)$ descends to  a linear map  $\varphi_{x} \in  \fro(V)$. Moreover, since  $[\frg, \frn] \subseteq \frn \subseteq \frg_{J}^{h}$, Lemma \ref{lem:Jlinear2} implies $\varphi_{J x} = J' \varphi_{x}$. In particular, $J' \varphi_{x} \in \fro(V)$ is skew as well,
which implies that $\varphi_{x}$ is a complex anti-linear map (see Lemma \ref{lem:JandO} (1)).
If $x \in \frg_{J}^{h}$ then $\varphi_{x}$ is also complex linear, so that   $\varphi_{x} = 0$. 
%If $x \in \frg_{\om, \met}$ then $\varphi_{x} \in \fru(V)$ is a unitary and, in particular,  
%a complex linear map. 
% Hence, in this case, $\varphi_{x} = 0$.  
 \end{proof}
 
\begin{remark}  One can prove in the same way the following stronger statement:  
% \begin{lem}  \label{lem:GJGN} % slightly  improved version of proof 
 $[  \frg_{J}^{h}, \frn ] \, \subseteq  \, \frn^{\perp}$. 
 % But there is no immediate strong benefit.}
% \end{lem}
\end{remark}

%\begin{lem}  \label{lem:GJGN} 
% $[\frg_{\J},  [\frg, \frn] ] \, \subseteq  \, [\frg, \frn]^{\perp}$. \oliver{direct proof for $\frn$, using $\met$ only}
%\end{lem}
%\begin{proof} Let $V = [\frg, \frn] \,  \big/  \, [\frg, \frn] \cap   [\frg, \frn]^{\perp}$. By Lemma \ref{lem:JGN}, $V$ is a Hermitian vector space (with complex structure $J$ induced by $\J$).  As $\met$ is nil-invariant it follows from the main theorem on nil-invariant symmetric bilinear forms (Theorem \ref{thm:BGZ_invariance}) that, for any $x \in \frg$,  $\ad(x)$ induces a skew linear map $[\frg, \frn] \to [\frg, \frn]$. In particular, $\ad(x)$ descends to  a linear map  $\varphi_{x} \in  \fro(V)$. Moreover, since  $[\frg, \frn] \subseteq \frn \subseteq \frg_{\J}$, Lemma \ref{lem:Jlinear} implies $\varphi_{\J x} = J \varphi_{x}$. In particular, $J \varphi_{x} \in \fro(V)$ is skew as well,
%which implies that $\varphi_{x}$ is a complex anti-linear map (see Lemma \ref{lem:JandO} (1)).
%If $x \in \frg_{\J}$ then $\varphi_{x}$ is also complex linear, so that   $\varphi_{x} = 0$. 
%%If $x \in \frg_{\om, \met}$ then $\varphi_{x} \in \fru(V)$ is a unitary and, in particular,  
%%a complex linear map. 
%% Hence, in this case, $\varphi_{x} = 0$.  
% \end{proof}
 
 \smallskip 
% \oliver{Recall next the properties of the Fitting decomposition}
Let $a \in \frg_{\met}$. Choose a Fitting decomposition for the linear map
$\ad(a)$ of $\frg$. This  is the unique $\ad(a)$-invariant decomposition 
 $\frg = E_{0}(a) \oplus E_{1}(a)$, where $E_{0}(a) = \ker \ad(a)^{n}$, $n = \dim \frg$, 
 and $\ad(a): E_{1}(a) \to E_{1}(a)$ is a vector space isomorphism. Furthermore, $E_{0}(a) \perp E_{1}(a)$,
 since $\ad(a) \in \fro(\frg)$. This implies $E_{1}(a)^{\perp} \cap E_{1}(a) = \frg^{\perp} \cap E_{1}(a)$.

\begin{lem} \label{lem:E1} 
Let $a \in \frr$. Then $E_{1}(a)$ is contained in $[\frg, \frn]  \cap \frg^{\perp}$. 
 \end{lem}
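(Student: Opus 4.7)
My plan is to establish the inclusion in two stages: first show $E_1(a)\subseteq[\frg,\frn]$ by a direct Fitting argument, then show $E_1(a)\subseteq\frg^\perp$ by exploiting the Hermitian quotient $V$ constructed in the proof of Lemma \ref{lem:GJGN} together with Lemma \ref{lem:JandO}.

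For the first inclusion, I will use that $a\in\frr$ forces $\ad(a)(\frg)\subseteq\frn$: the Levi part $\frk\times\frs$ acts on $\frr$ with image in $\frn$ (because any derivation of a solvable Lie algebra has image in its nilradical, as recalled in the proof of Proposition \ref{prop:Sorthogonal}), while on $\frr$ itself $\ad(a)|_{\frr}$ is a derivation of $\frr$ and so has image in $\frn$. Iterating gives $E_1(a)\subseteq\frn$, and invertibility of $\ad(a)|_{E_1(a)}$ then yields $E_1(a)=\ad(a)(E_1(a))\subseteq[a,\frn]\subseteq[\frg,\frn]$.

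For the second inclusion, I reuse the Hermitian vector space $V=[\frg,\frn]/([\frg,\frn]\cap[\frg,\frn]^\perp)$ with its induced complex structure $J'$ as in the proof of Lemma \ref{lem:GJGN}. For every $x\in\frg$, Theorem \ref{thm:BGZ_invariance} guarantees that $\ad(x)$ descends to $\varphi_x\in\fro(V)$, and Lemma \ref{lem:Jlinear2} (applied to $y\in[\frg,\frn]\subseteq\frn\subseteq\frg_J^h$) yields $\varphi_{Jx}=J'\varphi_x$. Taking $x=a$, I observe that since $a\in\frr$ the commutator $[a,Ja]$ lies in $[\frr,\frg]\subseteq\frn\subseteq\frg_J^h$, so Lemma \ref{lem:GJGN} forces $\varphi_{[a,Ja]}=0$ on $V$. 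Hence $[\varphi_a,J'\varphi_a]=\varphi_{[a,Ja]}=0$, i.e., $\varphi_a$ and $J'\varphi_a$ commute. Both maps belong to $\fro(V)$, so $\varphi_a\in\fro(V)\cap J'\fro(V)$, and Lemma \ref{lem:JandO}(2) yields $\varphi_a^{2}=0$; in particular $\varphi_a$ is nilpotent on $V$.

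To conclude, $\pi(E_1(a))\subseteq V$ is a $\varphi_a$-invariant subspace on which $\varphi_a$ acts bijectively (because $\ad(a)|_{E_1(a)}$ is an isomorphism and $\pi\circ\ad(a)=\varphi_a\circ\pi$), so nilpotence of $\varphi_a$ forces $\pi(E_1(a))=0$, i.e., $E_1(a)\subseteq[\frg,\frn]^\perp$. Combined with $E_1(a)\subseteq[\frg,\frn]$ this gives $E_1(a)\perp E_1(a)$; together with the orthogonality $E_0(a)\perp E_1(a)$ already recorded in the excerpt (which holds because $\ad(a)\in\fro(\met)$, since $a\in\frr\subseteq\frg_{\met}$), this yields $E_1(a)\perp\frg$, hence $E_1(a)\subseteq\frg^\perp$. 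The main point, which is conceptual rather than technical, is recognizing that $[a,Ja]\in\frn$ is exactly the hypothesis needed so that Lemma \ref{lem:GJGN} trivializes the obstruction commutator and unlocks the nilpotency conclusion of Lemma \ref{lem:JandO}.
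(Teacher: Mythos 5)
Your proof is correct and follows essentially the same route as the paper's: both pass to the Hermitian quotient $V$ of $[\frg,\frn]$, use that $\frn$ (hence $[\frg,\frr]$) acts trivially on $V$ to make $\varphi_a$ commute with $J'\varphi_a=\varphi_{Ja}$, invoke Lemma \ref{lem:JandO}(2) to get $\varphi_a^2=0$, and then play the invertibility of $\ad(a)$ on $E_1(a)$ against this nilpotency and the isotropy of $E_1(a)$ coming from the Fitting decomposition. The only difference is that you spell out the preliminary inclusion $E_1(a)\subseteq[\frg,\frn]$ explicitly, which the paper leaves implicit.
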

\begin{proof} 
Let $\varphi_{a} \in \fro(V)$ denote  the map induced by $\ad(a)$, where  $V$ is as in the proof of Lemma \ref{lem:GJGN}. Since $\frn \subseteq \frg_{J}^{h}$, $\frn$ acts trivially on $V$ by Lemma \ref{lem:GJGN}. Since $[\frg, \frr] \subseteq \frn$, this implies that $\varphi_a$ commutes with $\varphi_{x}$ for all $x \in \frg$. In particular,  $\varphi_a$ commutes with $J \varphi_a = \varphi_{J a}$. Thus, as remarked in (2) of Lemma \ref{lem:JandO}, $\varphi_a^{2} = 0$. Since $\ad(a)$ is an isomorphism on $E_{1}(a)$, we infer that $E_{1}(a) \subseteq  [\frg, \frn]  \cap [\frg, \frn]^{\perp}$. In particular, $E_{1}(a) \subseteq   [\frg, \frn]^{\perp} \subseteq  E_{1}(a)^{\perp}$. By the above remark on the Fitting
decomposition of $\ad(a)$, this shows $E_{1}(a) \subseteq  [\frg, \frn]  \cap \frg^{\perp}$.
\end{proof}

 Our main result on nil-invariant almost $h$-structures is now:
 
\begin{thm}\label{thm:radical_nilpotent}
Let $\frg$ be a Lie algebra with a nil-invariant and effective almost
$h$-structure.
Then $\frg  = \frk \ltimes \frn$, where $\frk$ is compact semisimple. Moreover, the nilradical $\frn$ is at most two-step 
nilpotent.
\end{thm}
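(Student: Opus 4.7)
Plan: Both $\met$ and the fundamental two-form $\om$ are nil-invariant by hypothesis, and effectiveness of the $h$-structure coincides with effectiveness of $(\frg,\om)$ because $\frg^\perp = \frg^{\perp_\om}$. Applying Theorem \ref{thm:nil_skew} to $\om$ therefore delivers two conclusions simultaneously: every Levi subalgebra of $\frg$ is of compact type, so in the Levi decomposition $\frg = (\frk \times \frs) \ltimes \frr$ the factor $\frs$ vanishes and one has $\frg = \frk \ltimes \frr$; moreover, the nilradical $\frn$ is at most two-step nilpotent. All that remains is to establish $\frr = \frn$.

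By Engel's theorem, it suffices to show that $\ad(a)$ acts nilpotently on $\frg$ for every $a \in \frr$. Writing the Fitting decomposition $\frg = E_0(a) \oplus E_1(a)$, this amounts to proving $E_1(a) = \zsp$. The decisive input is Lemma \ref{lem:E1}, which yields the containment $E_1(a) \subseteq [\frg,\frn] \cap \frg^\perp$. My strategy is to argue that $\fri := \sum_{a \in \frr} E_1(a) \subseteq [\frg,\frn] \cap \frg^\perp$ generates an ideal of $\frg$ still contained in $\frg^\perp$; effectiveness of the $h$-structure will then force this ideal to vanish, giving $E_1(a) = \zsp$ for each $a \in \frr$. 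The ingredients available are: $[\frg,\frn]$ is an ideal of $\frg$; by Theorem \ref{thm:BGZ_invariance}(2), $\frr \subseteq \frg_\met$, so $\ad(\frr)$ preserves $\frg^\perp$; by Theorem \ref{thm:BGZ_invariance}(3), $\frg^\perp \subseteq \frk \ltimes \frz(\frr)$, which forces $[\frg,\frn] \cap \frg^\perp \subseteq \frz(\frr)$, itself an ideal of $\frg$ because $\frk$ normalises $\frr$; by Theorem \ref{thm:BGZ_invariance}(1), $\met|_\frr$ is $\frg$-invariant; and the $J$-compatibility encoded in Lemmas \ref{lem:JGN2} and \ref{lem:GJGN} governs the action of $\ad(\frg)$ on the Hermitian quotient $V = [\frg,\frn]/([\frg,\frn] \cap [\frg,\frn]^\perp)$, in which the classes of the $E_1(a)$ vanish (since $E_1(a) \subseteq \frg^\perp \subseteq [\frg,\frn]^\perp$).

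The main obstacle will be the interaction of the compact Levi factor $\frk$ with $\frg^\perp$: whereas $\ad(\frr)$ preserves $\frg^\perp$ by Theorem \ref{thm:BGZ_invariance}(2), the action of $\ad(\frk)$ is not known a priori to do so, so closing the ideal property for $\fri$ is not automatic. Overcoming this is the crux, and I expect it to require combining the Hermitian structure on $V$, the anti-linearity statement of Lemma \ref{lem:JandO}, and the complete reducibility of the $\frk$-action (from compact-semisimplicity of $\frk$) to show that $\ad(\frk)$ sends $E_1(a)$ into $\frg^\perp$. Once the ideal generated by $\fri$ is confined to $\frg^\perp$, effectiveness of the $h$-structure forces $\fri = \zsp$, so $E_1(a) = \zsp$ for every $a \in \frr$, and the desired identity $\frr = \frn$ follows.
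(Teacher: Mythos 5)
Your reduction of the problem is sound: Theorem \ref{thm:nil_skew} disposes of the non-compact Levi factor and of the two-step nilpotency of $\frn$, and the remaining task is indeed to show $E_1(a)=\zsp$ for every $a\in\frr$ via Lemma \ref{lem:E1}. But the way you propose to finish is both incomplete and unnecessarily roundabout. The incompleteness: your entire strategy hinges on showing that the ideal of $\frg$ generated by $\fri=\sum_{a\in\frr}E_1(a)$ stays inside $\frg^\perp$, and you yourself flag that the action of $\ad(\frk)$ on $\frg^\perp$ is not controlled by any of the invariance statements you cite (nil-invariance only puts $\frs+\frn$, and via Theorem \ref{thm:BGZ_invariance}(2) all of $\frr$, into $\frg_\met$ --- not $\frk$). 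You then write that you ``expect'' to overcome this by combining the Hermitian structure on $V$, Lemma \ref{lem:JandO}, and complete reducibility, but no argument is given. That is the crux of your proof and it is missing.

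The deeper issue is that the detour through an ideal and effectiveness is not needed at all, and you already have every ingredient for the direct finish. You correctly observe that Theorem \ref{thm:BGZ_invariance}(3) forces $[\frg,\frn]\cap\frg^\perp\subseteq\frz(\frr)$ (since $\gs=\frr$ here and $\frg^\perp\subseteq\frk\ltimes\frz(\frr)$). Hence Lemma \ref{lem:E1} gives $E_1(a)\subseteq\frz(\frr)$. But $a\in\frr$, so $\ad(a)$ annihilates $\frz(\frr)$, and in particular $\ad(a)E_1(a)=\zsp$; since $\ad(a)\colon E_1(a)\to E_1(a)$ is by definition an isomorphism, $E_1(a)=\zsp$. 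This is exactly the paper's argument: no ideal generation, no appeal to effectiveness, no analysis of how $\frk$ moves $\frg^\perp$. Once $\ad(a)$ is nilpotent for every $a$ in the solvable ideal $\frr$, you get $\frr\subseteq\frn$, and Lemma \ref{lem:2nilpotentb} supplies the two-step nilpotency. I recommend you replace your closing two paragraphs with this one-line observation.
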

\begin{proof}  By Theorem \ref{thm:nil_skew}, we have $\frs = \zsp$. 
Now let $a \in \frr$. Lemma \ref{lem:E1} states that
$E_{1}(a)  \subseteq \frn \cap \frg^{\perp}$. By Theorem \ref{thm:BGZ_invariance} (3), $\frn \cap \frg^{\perp}$
is contained in $\frz(\frr)$. Since $a \in \frr$, this implies $E_{1}(a) = \zsp$. Hence, $\ad(a)$ is nilpotent. This shows that
$\frr = \frn$. By Lemma \ref{lem:2nilpotentb}, $\frn$ is at most two-step nilpotent.
\end{proof}

\begin{remark} Note that,  under the assumption of Theorem \ref{thm:radical_nilpotent}, also the various 
properties of Lemma \ref{lem:2nilpotenta} are satisfied for $(\frg, \om)$
%  (3) of Theorem \ref{thm:nil_skew} are satisfied.	
\end{remark}

\begin{cor}\label{cor:solv_abelian}
Let $\frg$ be a solvable Lie algebra with a  nil-invariant and effective almost $h$-structure. Then $\frg$ is abelian and the almost $h$-structure is non-degenerate. 
\end{cor}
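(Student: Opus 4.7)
The plan is to combine Theorem \ref{thm:radical_nilpotent} with Proposition \ref{pro:nilpotent_isab}, and then extract non-degeneracy as a formality from effectivity and commutativity.

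First I would invoke Theorem \ref{thm:radical_nilpotent}, which yields a decomposition $\frg = \frk \ltimes \frn$ with $\frk$ compact semisimple and $\frn$ at most two-step nilpotent. Since $\frg$ is solvable by hypothesis, the semisimple factor $\frk$ must vanish, so $\frg = \frn$ is nilpotent.

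Next I would invoke Proposition \ref{pro:nilpotent_isab} for the fundamental two-form $\om$. Here I need to verify that $(\frg,\om)$ is effective in the sense of Section \ref{sec:skew_Lie}. This is immediate from the definition of the $h$-structure: its kernel satisfies $\frg^{\perp} = \frg^{\perpo}$, so effectivity of the $h$-structure is by definition the statement that $\frg^{\perpo}$ contains no non-trivial ideal of $\frg$. Since $\om$ is nil-invariant and $\frg$ is nilpotent and $(\frg,\om)$ is effective, Proposition \ref{pro:nilpotent_isab} delivers the conclusion that $\frg$ is abelian.

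For the second assertion, observe that once $\frg$ is abelian every linear subspace of $\frg$ is an ideal. In particular, the kernel $\frg^{\perp}$ of the $h$-structure is itself an ideal of $\frg$. But the effectivity hypothesis precisely excludes non-trivial ideals from $\frg^{\perp}$, forcing $\frg^{\perp} = \zsp$. This is the statement that the $h$-structure is non-degenerate, completing the corollary.

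I do not anticipate any genuine obstacle: the heavy lifting has already been done in Theorem \ref{thm:radical_nilpotent} (which consolidates the skew-symmetric analysis of Section \ref{sec:skew_Lie} with the Fitting-decomposition argument for nil-invariant $h$-structures) and in Proposition \ref{pro:nilpotent_isab}; the corollary is essentially a bookkeeping statement, with the non-degeneracy part being the only small observation not already packaged into a previous result.
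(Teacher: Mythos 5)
Your proof is correct and follows essentially the same route as the paper: Theorem \ref{thm:radical_nilpotent} forces $\frk=\zsp$ so that $\frg$ is nilpotent, and abelianness then follows from effectiveness of the fundamental two-form --- the paper cites the invariance of the $h$-structure together with Proposition \ref{prop:complex_invariant_abelian} where you cite Proposition \ref{pro:nilpotent_isab}, but these rest on the same orthogonality $[\frg,\frg]\perp_\omega\frg$. Your explicit derivation of non-degeneracy from effectiveness in the abelian case is a point the paper leaves implicit, and it is correct.
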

\begin{proof}  By Theorem \ref{thm:radical_nilpotent}, $\frg$ is in fact nilpotent. Therefore the $h$-structure is 
invariant. 
%By Theorem \ref{thm:BGZ_invariance}, every effective solvable nil-invariant metric Lie algebra is non-degenerate. Moreover,Therefore, $\frg = \frn = \frg_{\met, \om}$ and so the $h$-structure is invariant. 
Thus $\frg$ is abelian by Proposition \ref{prop:complex_invariant_abelian}
%and 
%$\frn$ acts by skew linear transformations on the Hermitian vector space 
%$\frn$. As in the proof of Lemma \ref{lem:GJGN} it 
%follows that $\frn$ acts by complex anti-linear maps as well. Therefore, $\frg$ is abelian.
\end{proof}

%%%%%%%%%
\section{Application to almost pseudo-Hermitian and almost symplectic
homogeneous spaces} \label{sec:spaces}

% \subsection{Almost complex structures and Hermitian metrics} 
Given a differentiable manifold $M$ with almost complex structure $\J_M$,
a \emph{pseudo-Hermitian metric}  is a pseudo-Riemannian
metric $\g_M$ satisfying the compati\-bility condition
\[
\g_M(\J_M X,\J_M Y) = \g_M(X,Y)
\]
for all vector fields $X,Y$ on $M$. We call $(M, \J_{M}, \g_{M})$ an \emph{almost 
pseudo-Hermitian manifold.} 
% with an \emph{almost pseudo-Hermitian structure} $(\g_M,\J_M)$.
If $\J_M$ is a complex structure, $M$ is called a pseudo-Hermitian manifold. % we drop the term ``almost''.
Associated to $\g_M$ and $\J_M$ is the \emph{fundamental two-form} $\omega_M$  % for  $(M, \J_{M}, \g_{M})$ 
which is given by
\[
\omega_M(X,Y) = \g_M(\J_M  X,Y).
\]

%\begin{remark}
%The metric $\g_M$ is the real part of a sesquilinear form $\h_M$ on $M$, which is
% commonly also called a  ``pseudo-Hermitian metric''.
% The imaginary part of $\h_M$ is $\omega_M$. or i/2 \omega_{M} 
% conventions are a bit murky here. 
%\end{remark}

%\oliver{do we need this definitions here?}
%The automorphism group  of the (almost) complex structure $\J_M$
%will be denoted  by $$ \Aut(M,\J_M) \,  . $$
%% Its Lie algebra $\aut(M,\J_M)$ consists of holomorphic vector fields
%% (cf.~Kobayashi \cite[p.~77]{kobayashi}).
%% Only true for compact complex manifolds 
%% The result of Bochner Montgomery is mentioned later ins section 4
%An automorphism of $\J_M$ that is also an isometry for the (almost)
%pseudo-Hermitian metric $\g_M$ will be called \emph{(almost) complex 
%isometry}.
%The group of (almost) complex isometries is denoted by
%$\Aut(M,\g_M,\J_M)$.

%%%
\subsection{Lie algebra models for pseudo-Hermitian homogeneous spaces}
\label{sec:hermitian_models}

Consider a homogeneous manifold $$ M = G/H$$ that admits a $G$-invariant  almost complex
structure $\J_M$. Let $\frg$ denote the Lie algebra of $G$ and $\frh$ 
the Lie subalgebra of $\frg$ corresponding to $H$.  % If  $\J_M$ is $G$-invariant,
Then $\J_M$ can be described by (the choice of)  an endomorphism $J$ on the Lie algebra
$\frg$ with the following properties: 
\begin{enumerate}
\item[(J1)]
$J x=0$ for $x\in\frh$.
\item[(J2)]
$J^2 x = -x \bmod \frh$ for all $x\in\frg$.
\item[(J3)]
$J\Ad(h)x=\Ad(h) J x \bmod \frh$ for all $h\in H$, $x\in\frg$.
\end{enumerate}
Moreover,  $\J_M$ is a complex structure if and only if
(see e.g.~Koszul \cite[Section 2]{koszul})
\begin{enumerate}
\setcounter{enumi}{3}
\item[(J4)]
$[J x,J y]-[x,y]- J[x, J y]- J[ J x,y]\in\frh$ for all $x,y\in\frg$.
\end{enumerate}
If in addition $(M, \J_{M})$ has a $G$-invariant pseudo-Hermitian metric $\g_M$,
then $\g_M$ induces a symmetric bilinear form $\met$ on $\frg$ 
% with kernel $\frg^{\perp} = \frh$ 
that  satisfies
\begin{enumerate}
\item[(H)]
$\langle x,y\rangle = \langle J x, J y\rangle$  for all $x,y\in\frg$. 
% \tag{H}
% \label{H}
\end{enumerate}
Similarly, $\omega_M$ induces a skew-symmetric bilinear form $\omega$ on
$\frg$, satisfying
\[
\omega(x,y)=\langle J x, y\rangle.
\]

\begin{remark}
% Let $\frh$ denote the Lie algebra of the stabilizer subgroup $H$.
Note that since $\met$ is induced by $\g_M$, we have
\begin{equation}
\begin{split}
\frh
= \frg^\perp
&=
\{x\in\frg\mid \langle x,y\rangle = 0\text{ for all } y\in\frg\} \\
&=
\{x\in\frg\mid \omega(x,y) = 0\text{ for all } y\in\frg\}.
\end{split}
\label{eq:GperpH}
\end{equation}
\end{remark}

\subsubsection{The $h$-algebra model of a pseudo-Hermitian homogeneous space}

By con\-ditions (J1), (J2)  and (H), the data $$ (\frg, \J, \met)$$  associated with the almost pseudo-Hermitian homogeneous space $$ M = (G/H, \J_{M}, \g_{M})$$ form an \emph{almost $h$-algebra} with kernel $\frh$  (see Definition \ref{def:h_algebra}).
In view of (J4), $(\frg, \J, \met)$ is an $h$-algebra if and only if $M$ is a pseudo-Hermitian manifold. 

%
%\begin{definition} \label{def:ass_h_algebra}
%The associated data $(\frg, \J, \met)$  are called the \emph{(almost)  $h$-algebra model} of  the homogeneous (almost) Hermitian manifold $M = (G/H, \J_{M}, \g_{M})$. 
%\end{definition} 

%\subsubsection{Almost $h$-algebras}
\medskip

Recall that $\frh = \frg^{\perp}$ is called the kernel of the $h$-structure. 
\begin{definition} \label{def:h_algebra}
An $h$-structure $(\frg, J, \met)$ is called an \emph{almost $h$-algebra} with kernel $\frh$  if 
the following conditions are satisfied:
\begin{enumerate} 
\item $\frh = \ker  J = \frg^{\perp}$ is a subalgebra of $\frg$. 
\item  $\frh$ is contained in $\frg_{h} = \frg_{\met,\J}$.
\end{enumerate}
If $J$ is also  \emph{integrable}, that is,  if 
\begin{enumerate}
\item[(3)] $[J x,J y]-[x,y]- J[x, J y]- J[ J x,y]\in\frh$ for all $x,y\in\frg$, 
\end{enumerate}
%  (J4)  holds,
then we call  $(\frg, J, \met)$ an \emph{$h$-algebra}.
\end{definition}

\begin{remark}
It follows that (almost) $h$-algebras are precisely the local models of
(almost) pseudo-Hermitian homogeneous spaces, compare Section
\ref{sec:hermitian_models}. 
\end{remark}

\subsubsection{The $h$-algebra models of Hermitian homogeneous spaces of finite volume} 

We note the following application of Theorem \ref{thm:isom_is_quasi-invariant}: 

\begin{cor}\label{cor:hermitian_qi}
Let $M=G/H$ be an almost pseudo-Hermitian homogeneous space with 
%homogeneous space for a Lie group $G$ 
%that  is equipped with a $G$-invariant 
%almost pseudo-Hermitian
%structure $(\g_M,\J_M)$ and 
finite Riemannian volume.
Then the %  associated fundamental two-form $\omega_M$ is also $G$-invariant,
forms $\met$, $\omega$ induced on $\frg$ by $\g_M$, $\omega_M$
are quasi-invariant (with respect to the adjoint representation of $G$).
% and the tensor $\J$ induced on $\frg$ by $\J_M$ is
%quasi-invariant modulo $\frh$.
%The kernels of all three induced tensors coincide with $\frh$.
\end{cor}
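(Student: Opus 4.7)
My plan is to invoke Theorem~\ref{thm:isom_is_quasi-invariant} twice --- once with the multilinear form $\theta = \g_M$ and once with $\theta = \omega_M$ --- and then translate the resulting \emph{strong} quasi-invariance of the induced forms on $\frg$ into the Lie-algebraic quasi-invariance of Definition~\ref{def:quasi-invariant_beta}.

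First, since $G \subseteq \Aut(M,\g_M) \cap \Aut(M,\omega_M)$ preserves the pseudo-Hermitian structure, it also preserves the associated pseudo-Riemannian volume density, which under the finite-volume hypothesis provides a finite $G$-invariant measure on $M$. This places us in the hypotheses of Theorem~\ref{thm:isom_is_quasi-invariant}. At the base point $p = eH$, the evaluation map $\frg \to T_p M$, $x \mapsto X_p$, identifies $\Phi_p(\g_M)$ with $\met$ and $\Phi_p(\omega_M)$ with $\omega$ (the kernel $\frh$ is responsible for the passage from $\frg$ to $\frg/\frh \simeq T_pM$). The theorem then asserts that both $\met$ and $\omega$ are strongly quasi-invariant under the representation $\rho \colon G \to \GL(L^2(\frg))$ induced by $\Ad$.

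To finish, I need to pass from strong quasi-invariance under $\rho(G)$ to Lie-algebraic quasi-invariance in the sense of Definition~\ref{def:quasi-invariant_beta}. Set $A = \ac{\rho(G)}_\RR \subseteq \GL(L^2(\frg))$. Strong quasi-invariance means the $A$-orbit of $\met$ (resp.\ $\omega$) has compact closure, so Lemma~\ref{lem:Gis} yields that the isotropic kernel $A_{\rm is}$ fixes $\met$ (resp.\ $\omega$). Since $\rho$ factors through $\Ad$, the inclusion $\Ad(G) \hookrightarrow \GL(\frg)$ induces an $\RR$-algebraic homomorphism from $\ac{\Ad(G)}_\RR$ to $A$, and Lemma~\ref{lem:split0} guarantees that the isotropic kernel $\ac{\Inn(\frg)}_{\rm is}$ lands inside $A_{\rm is}$. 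Consequently $\ac{\Inn(\frg)}_{\rm is}$ stabilizes both $\met$ and $\omega$, which is exactly the criterion~\eqref{eq:quasi} characterizing quasi-invariance.

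I expect the only delicate point to be the bookkeeping relating the two algebraic closures $\ac{\rho(G)}_\RR \subseteq \GL(L^2(\frg))$ and $\ac{\Inn(\frg)} \subseteq \GL(\frg)$; once the functoriality of the isotropic kernel under $\RR$-algebraic morphisms (Lemma~\ref{lem:split0}) has been invoked, the argument reduces to a formal application of Theorem~\ref{thm:isom_is_quasi-invariant}.
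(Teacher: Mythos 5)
Your proof is correct and takes essentially the same route the paper intends: the corollary is presented as a direct application of Theorem~\ref{thm:isom_is_quasi-invariant}, and your passage from the strong quasi-invariance of $\Phi_{p}$ (at $p=eH$, where $\Phi_p(\g_M)=\met$ and $\Phi_p(\omega_M)=\omega$) to the criterion \eqref{eq:quasi} via Lemma~\ref{lem:Gis} and the functoriality of the isotropic kernel (Lemma~\ref{lem:split0}) is exactly the bookkeeping the authors leave implicit. No gaps.
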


This states that the almost $h$-algebra model $(\frg, J, \met)$ for $M$ constitutes a quasi-invariant $h$-structure 	in the sense of Section \ref{sec:nil_h-structure}. 

\subsection{Almost pseudo-Hermitian homogeneous spaces of finite volume} 

We now  derive some global consequences of the algebraic results on $h$-structures and quasi-invariant skew-symmetric forms on Lie algebras as  developed in  
Section \ref{sec:hermitian_nil}. 
The \emph{almost complex isometry group} of an almost pseudo-Hermitian
manifold $(M,\J,\g)$ consists of the diffeomorphisms of $M$ preserving
both $\J$ and $\g$.

{\renewcommand{\themthm}{\ref{mthm:Levi_compact}}
\begin{mthm}
Any almost pseudo-Hermitian homogeneous space $M$ of finite
volume is compact. The maximal semisimple subgroup of the identity component
of the almost complex isometry group of $M$ is compact,
and its solvable radical is nilpotent and at most two-step nilpotent. 
\end{mthm}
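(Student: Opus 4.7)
The plan is to combine the algebraic analysis of nil-invariant almost $h$-structures from Section \ref{sec:hermitian_nil} with a direct compactness argument for finite-volume homogeneous spaces. Denote by $G$ the identity component of the almost complex isometry group of $M$, so $M = G/H$ for the closed stabilizer $H$ of a chosen base point. As explained in Section \ref{sec:hermitian_models}, the almost pseudo-Hermitian data on $M$ endow the Lie algebra $\frg$ of $G$ with the structure of an almost $h$-algebra $(\frg, J, \met)$ whose kernel is $\frh = \frg^{\perp}$.

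The first step is to verify that this almost $h$-structure is both \emph{effective} and \emph{nil-invariant}. Effectiveness is immediate from the effective action of $G$ on $M$: no nontrivial connected normal subgroup of $G$ can be contained in $H$, so $\frh = \frg^{\perp}$ contains no nontrivial ideal of $\frg$. Nil-invariance follows at once from the finite-volume hypothesis by Corollary \ref{cor:hermitian_qi}, which in fact yields the stronger quasi-invariance of $\met$ and $\omega$ under the adjoint action. Theorem \ref{thm:radical_nilpotent} then applies, producing a Levi decomposition $\frg = \frk \ltimes \frn$ with $\frk$ semisimple of compact type and the nilradical $\frn$ of class at most two. This yields at once the algebraic conclusions of the theorem: the maximal semisimple subalgebra is compact, and the solvable radical $\frr = \frn$ is nilpotent and at most two-step nilpotent.

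The remaining, and subtlest, task is to derive the compactness of $M$ itself from finite volume combined with this structural decomposition. At the group level, after passing to a finite cover if necessary, $G = K \cdot N$ where $K$ is a compact Lie group with Lie algebra $\frk$ and $N$ is the connected closed normal nilpotent subgroup tangent to $\frn$. Since $N$ is normal in $G$, the projection $M \to M/N \cong K/(K \cap N H)$ presents $M$ as a fibration over a compact quotient of the compact group $K$, with fibers diffeomorphic to $N$-orbits of the form $N/(N \cap g H g^{-1})$. Disintegrating the finite $G$-invariant volume on $M$ along this fibration produces a finite $N$-invariant measure on each fiber. The main obstacle is then to invoke the classical fact that a homogeneous space of a connected nilpotent Lie group admits a finite invariant Borel measure if and only if it is compact; this forces the fibers, and consequently $M$ itself, to be compact. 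Handling the possibly non-simply-connected situation carefully, by first arguing in the universal cover of $G$ and then descending, is the one place where technical care is required.
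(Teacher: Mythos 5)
Your argument coincides with the paper's proof in its substantive part: both establish that the almost $h$-algebra model $(\frg,J,\met)$ is effective (from effectivity of the action) and quasi-invariant (from finite volume, via Corollary \ref{cor:hermitian_qi}), and then apply Theorem \ref{thm:radical_nilpotent} to obtain $\frg=\frk\ltimes\frn$ with $\frk$ of compact type and $\frn$ at most two-step nilpotent. For the compactness of $M$ the paper simply cites Mostow's theorem \cite[Theorem 6.2]{mostow4}, which applies precisely because $G=KR$ with $K$ compact and $R$ solvable, whereas you attempt to re-derive that result by fibering $M$ over $M/N$ and disintegrating the invariant measure.

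That last step is where your proposal has a genuine gap. For the fibration $M\to G/NH\cong K/(K\cap NH)$ and the disintegration argument to make sense, you need the subgroup $NH$ (equivalently, the $N$-orbits in $M$) to be closed, and this is not automatic: $NH=\pi^{-1}(\pi(H))$ for the projection $\pi:G\to G/N$, and $\pi(H)$ is a subgroup of the compact group $G/N$ that may fail to be closed (as an irrational winding in a torus illustrates). The base $G/\overline{NH}$ is automatically compact, being a quotient of $K$, so the entire difficulty is concentrated in showing that the fiber $\overline{NH}/H$ is compact; but $\overline{NH}$ is again only nilpotent-by-compact, so one cannot directly quote the classical fact about finite-volume homogeneous spaces of nilpotent groups, and the reduction threatens to become circular. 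Controlling exactly this phenomenon is the technical content of Mostow's proof, and the remark that ``technical care is required'' in the universal cover does not supply it. The cleanest repair is to do what the paper does: once $G=KR$ with $K$ compact and $R$ solvable is established, invoke \cite[Theorem 6.2]{mostow4} directly.
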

}
\begin{proof}
Since the almost complex isometry group $G$ of $M$ acts effectively and
transitively, the almost $h$-algebra model $(\frg, J, \met)$ for $M$ is
quasi-invariant and effective. 

Thus Theorem  \ref{thm:radical_nilpotent} applies and it follows that
$G=KR$, where $K$ is compact semisimple and  the solvable
radical $R$ is nilpotent and at most two-step nilpotent. 
As a consequence of Mostow's result \cite[Theorem 6.2]{mostow4},
the finite volume space $M=(KR)/H$ is in fact compact. 
 \end{proof}

{\renewcommand{\themthm}{\ref{mthm:solvable}}
\begin{mthm}
Let $M$ be a compact almost pseudo-Hermitian homogeneous space with
a transitive effective action by a connected solvable Lie group $G$ of almost complex isometries.
Then $G$ is abelian and compact.
Furthermore, $M$ is a flat complex pseudo-Hermitian torus
$\CC^{p,q}/\,\Gamma$, where $\Gamma$ is a lattice in $\CC^{p,q}$.
\end{mthm}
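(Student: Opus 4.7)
The plan is to reduce the statement directly to the algebraic Corollary~\ref{cor:solv_abelian} via the $h$-algebra model. First I would set up the almost $h$-algebra model $(\frg, J, \met)$ associated to the presentation $M = G/H$ as in Section~\ref{sec:hermitian_models}, so that $\frh = \ker J = \frg^\perp$. Because $M$ has finite (Riemannian) volume and $G$ acts transitively, Corollary~\ref{cor:hermitian_qi} gives that the induced forms $\met$ and $\om$ are quasi-invariant under the adjoint action of $G$, and in particular nil-invariant. Moreover, effectiveness of the $G$-action on $M$ translates to the condition that $\frh$ contains no nontrivial ideal of $\frg$, which is precisely the statement that $(\frg, \met)$ is effective in the sense used throughout Section~\ref{sec:hermitian_nil}.

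Since $\frg$ is solvable, Corollary~\ref{cor:solv_abelian} now applies and yields two crucial conclusions: $\frg$ is abelian, and the almost $h$-structure is non-degenerate. Non-degeneracy means $\frh = \frg^\perp = \zsp$, so the isotropy subalgebra is trivial and $H$ is discrete in $G$. In a connected abelian Lie group every subgroup is normal, hence the kernel of the action of $G$ on $G/H$ is all of $H$; effectiveness of the action then forces $H = \one$, so that $M = G$. Since $M$ is compact, $G$ itself is compact, and being connected abelian it must be a torus of some even real dimension $2n$.

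It remains to identify $M$ with a flat complex pseudo-Hermitian torus $\CC^{p,q}/\Gamma$. Because $\frg$ is abelian, the integrability condition (J4) from Section~\ref{sec:hermitian_models} is trivially satisfied, so $J$ is in fact a complex structure and $M$ is a complex manifold. Passing to the universal cover $\tilde G = \RR^{2n}$, the translation-invariant pair $(J, \met)$ converts $\tilde G$ into a Hermitian vector space $\CC^{p,q}$, where $(p,q)$ is the signature of the (now non-degenerate) form $\met$. The kernel $\Gamma$ of the covering map $\tilde G \to G = M$ is then a lattice in $\CC^{p,q}$, and the flat pseudo-Hermitian structure descends to the quotient, yielding $M = \CC^{p,q}/\Gamma$ as required.

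The genuinely hard work has already been absorbed into Corollary~\ref{cor:solv_abelian}, so no further technical obstacle is expected. The only small point to be careful about is the passage from ``$\frg$ abelian and $h$-structure non-degenerate'' to ``$M$ is a torus'': here one must simultaneously invoke that in a connected abelian group every subgroup is normal (to force $H=\one$ from effectiveness), and that compactness of $M$ then forces compactness of $G$ itself.
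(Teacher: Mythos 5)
Your proof is correct and follows the same essential route as the paper: pass to the almost $h$-algebra model, use Corollary~\ref{cor:hermitian_qi} to obtain quasi-invariance (hence nil-invariance), and feed the resulting effective nil-invariant almost $h$-structure into Corollary~\ref{cor:solv_abelian}. The one structural difference lies in how discreteness of the stabilizer is obtained: the paper first invokes the external result \cite[Theorem~A]{BG} to conclude that $H$ is discrete, deduces that $\met$ is non-degenerate, and only then applies Corollary~\ref{cor:solv_abelian}; you instead apply the corollary directly (its hypotheses do not include non-degeneracy) and read off $\frh=\frg^{\perp}=\zsp$ from its conclusion. This makes your version slightly more self-contained at no cost, since the corollary's effectiveness hypothesis is exactly the absence of nontrivial ideals of $\frg$ inside $\frh$, which you correctly derive from effectiveness of the action. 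The concluding steps --- abelian plus effective forces $H=\one$, so $M=G$ is a compact connected abelian group, hence a torus; condition (J4) is vacuous for abelian $\frg$, giving integrability; and the invariant structure lifts to a flat translation-invariant pseudo-Hermitian form on the universal cover $\RR^{2n}\cong\CC^{p,q}$ with $\Gamma$ the covering lattice --- coincide with the paper's.
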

}
\begin{proof}
As proved in \cite[Theorem A]{BG}, since $G$ is solvable, the stabilizer
subgroup $H$ is a discrete group. Therefore, in the almost $h$-algebra model $(\frg, J, \met)$ 
the scalar product $\met$ on $\frg$ is non-degenerate. 
Then Corollary  \ref{cor:solv_abelian} asserts that $G$ is abelian.
Effectivity of the $G$-action implies that the stabilizer is trivial and that $G$ is compact.

The Lie algebra $\frg$ being abelian also implies that the almost complex structure
on $M$ is integrable, since (J4) is clearly satisfied. Similarly,  any left-invariant pseudo-Riemannian 
connection on an abelian Lie group  is flat.  It follows that $M$ is a flat complex pseudo-Hermitian torus. 
\end{proof}

% \oliver{The following is derived from the results on quasi-invariant skew forms and is a complementary result} 
It is interesting to compare the pseudo-Hermitian case with related situations of considerably 
weaker assumptions. In this regard we mention that the existence of a non-degenerate 
invariant skew-symmetric form on $M$ (e.g.~the fundamental two-form in the pseudo-Hermitian
case) already exhibits a considerable rigidity: 

\begin{cor} \label{cor:almost_sym_ab}
Let $M$ be a  compact manifold equipped with a non-degenerate two-form $\om_{M}$ 
invariant by a locally simply transitive action of a connected solvable Lie group $G$ on $M$.
Then $G$ is abelian and compact and $M$ is a flat symplectic two-torus, that is, 
$\omega$ arises from a parallel symplectic two-form on $\RR^{2n}$. 
\end{cor}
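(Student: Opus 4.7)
The plan is to reduce the statement to the quasi-invariant solvable case already established in Corollary \ref{cor:quasi_solv_om}, and then use the resulting abelian structure together with compactness of $M$ to conclude that $G$ is a torus acting by translations.

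First I would set up the algebraic model. Since the $G$-action is locally simply transitive, the stabilizer subalgebra $\frh\subseteq\frg$ is trivial, so that the two-form $\om_M$ pulls back to a well-defined skew-symmetric bilinear form $\om$ on $\frg$ (identified with $\Tan_p M$ at any $p$). Non-degeneracy of $\om_M$ then translates to non-degeneracy of $\om$, i.e.\ $\frg^{\perpo}=\zsp$. Thus $(\frg,\om)$ is automatically effective.

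Next I would invoke the dynamical input from Section \ref{sec:invariance}. Because $\om_M$ is non-degenerate, its top power $\om_M^{n}$ is a volume form on $M$, and since $M$ is compact this yields a finite $G$-invariant measure. Applying Theorem \ref{thm:isom_is_quasi-invariant} to the multilinear form $\theta=\om_M$ shows that the characteristic forms $\Phi_p$ are strongly quasi-invariant under the adjoint $G$-action on $L^2(\frg)$. Since the orbit map is a local diffeomorphism, $\Phi_p$ is (up to a choice of base point) exactly the form $\om$ on $\frg$, so $\om$ is quasi-invariant in the sense of Definition \ref{def:quasi-invariant_beta}. At this stage I can directly apply Corollary \ref{cor:quasi_solv_om} to the solvable Lie algebra $\frr=\frg$ equipped with the non-degenerate quasi-invariant skew-symmetric form $\om$, and conclude that $\frg$ is abelian.

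From here the geometric conclusion is routine. Since $\frg$ is abelian and the $G$-action is locally simply transitive, $M\cong G/\Gamma$ for a discrete subgroup $\Gamma\subseteq G$; compactness of $M$ forces $\Gamma$ to be a uniform lattice, hence $G\cong\RR^{2n}/\Lambda'$ for some vector-space complement, so $G$ is a compact torus and $M$ is a torus of dimension $2n$. The form $\om$ on $\frg=\RR^{2n}$ is a constant (hence parallel) non-degenerate skew form, and as it is $G$-invariant and $G$ acts transitively by translations, $\om_M$ is the descent to $M$ of a parallel symplectic form on $\RR^{2n}$.

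The main obstacle I anticipate is purely bookkeeping: verifying that the local simple transitivity hypothesis really gives the identification of $\om$ with a characteristic value $\Phi_p$ so that Theorem \ref{thm:isom_is_quasi-invariant} delivers quasi-invariance for $\om$ itself (rather than only for the push-forward of $\om_M$). Once this identification is made, the substantive content is entirely contained in Corollary \ref{cor:quasi_solv_om}, and the remainder is a direct translation from the Lie algebra back to the manifold.
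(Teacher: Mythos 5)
Your proposal follows the paper's own proof almost step for step: the discrete stabilizer gives the non-degenerate Lie algebra model $(\frg,\om)$, Theorem \ref{thm:isom_is_quasi-invariant} (fed by the finite invariant measure coming from the volume form $\om_M^{\,n}$ on the compact $M$) gives quasi-invariance of $\om$, Corollary \ref{cor:quasi_solv_om} then yields that $\frg$ is abelian, and the closedness/parallelism of the invariant two-form on an abelian group is immediate. The one inference that does not hold as written is your derivation of the compactness of $G$: the existence of a uniform lattice $\Gamma$ in a connected abelian Lie group does not force $G\cong\RR^{2n}/\Lambda'$ to be compact --- take $G=\RR^{2n}$ and $\Gamma=\ZZ^{2n}$, for which $M=G/\Gamma$ is a compact torus while $G$ is not compact. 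What is actually needed is effectiveness of the action, which the paper assumes implicitly throughout (compare the proof of Theorem \ref{mthm:solvable}): once $G$ is known to be abelian, the discrete stabilizer $\Gamma$ is normal in $G$ and hence acts trivially on $M=G/\Gamma$, so effectiveness forces $\Gamma=\{e\}$, whence $M=G$ and $G$ is compact; equivalently, one replaces $G$ by its image in $\Diff(M)$, which is the compact torus $G/\Gamma$. With that repair your argument is complete and coincides with the paper's.
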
 
\begin{proof} The condition that $G$ acts locally simply transitively means that the stabilizer group 
$H$ is discrete, that is,  $M = G/ \Gamma$ is a lattice quotient of $G$. The associated Lie algebra 
model $(\frg, \om)$ for $M$  is therefore given by a quasi-invariant non-degenerate skew-symmetric form on $\frg$.  
By application of Corollary \ref{cor:quasi_solv_om} we infer that $\frg$ is abelian. 
Any left-invariant non-degenerate skew-symmetric two-form on the abelian
group $G$ is closed, hence symplectic.
It is also parallel with respect to the canonical connection.
 \end{proof}

% !TEX root = pseudoHermitian.tex

%%%%%%%%%%%%%%%%%%%%%%%
\section{Compact pseudo-Hermitian homogeneous spaces}
\label{sec:complex}
%%%%%%%%%%%%%%%%%%%%%%%

Let $M$ be a compact complex manifold. By a classical result of Bochner and 
Montgomery % \cite{BM}
the  group of biholomorphic maps $\mathrm{Hol}(M)$ % $\Aut(M,\J_M)$ 
is a complex Lie group and acts holomorphically on $M$.  
We shall consider real connected Lie  subgroups $G$ of $\mathrm{Hol}(M)$ 
and our interest is to study the properties of $G$-invariant geometric structures on $M$. 
The group $G$ itself is not necessarily complex, but its action on $M$ extends to 
that of a complex group $G^{\CC}$ contained in  $\Hol(M)$. If $G$ acts transitively on $M$,
then $G^\CC$ acts transitively on $M$, so that $M=G^\CC/H^\CC$ becomes a complex 
homogeneous manifold. % for a stabilizer subgroup $H^\CC$. We may take $H=G\cap H^\CC$.

\smallskip 

\begin{paragraph}{\em Note on terminology}
By a \emph{complex Lie group} $G^{\CC}$ we mean a 
complex analytic group. A \emph{complex Lie subgroup}  $H^{\CC}$ of $G^{\CC}$ 
is a complex analytic submanifold of $G^{\CC}$ with an induced structure of complex Lie group. 
A  \emph{complex homogeneous space} is a quotient space $G^{\CC}/ H^{\CC}$, 
where $G^{\CC}$ is a complex Lie group and $H^{\CC}$ a closed complex Lie subgroup.  
We let  $M = G^{\CC}/ H^{\CC}$ denote the underlying complex manifold.  
Every complex Lie group $G^{\CC}$ is also a real Lie group in the usual sense, and, if not stated otherwise,  a Lie subgroup of $G^{\CC}$ refers to a  Lie subgroup with respect to the real Lie group structure of $G^{\CC}$. A complex Lie group $A^{\CC}$  isomorphic  to $(\CC^{*})^{k}$ is customarily called a \emph{torus} and we will stick to this convention.  A  \emph{compact abelian complex Lie group}  $T^\CC =\CC^{k}/ \,  \Lambda$ is,  as a real Lie group,  isomorphic to a compact real torus $T^{2k} = (\SS^{1})^{2k}$. The underlying complex manifold of $T^{\CC}$ is called a (compact) \emph{complex torus}.  A complex Lie group is called \emph{reductive} if it is finitely covered by a product of a torus and a semisimple complex Lie group.  In the context of complex Lie groups, any maximal reductive complex subgroup of $G^{\CC}$ is called a Levi subgroup of $G^{\CC}$, whereas in the real case, traditionally,  a Levi subgroup denotes a maximal semisimple Lie subgroup. In the complex Lie group $G^\CC$, any maximal semisimple Lie subgroup $S^\CC$ is a complex subgroup as well.
\end{paragraph}  

\subsection{The Tits fibration}
\label{subsec:titsfibration}

According to Tits \cite{tits}, any compact complex homogeneous manifold $M = G^\CC / \, H^\CC$ fibers holomorphically over a generalized flag
manifold with complex parallelizable fibers.
This means the following: 
The normalizer of $(H^\CC)^\circ$ in $G^\CC$ is
a parabolic subgroup $Q^\CC$, meaning it contains a maximal solvable connected subgroup $B^{\CC}$ 
of $G^\CC$.
Then  $X=G^\CC/ \, Q^\CC$ is a homogeneous complex projective manifold, and it is called 
a \emph{generalized flag manifold}.
In particular, $X$ is compact and simply connected. 
%This is equivalent to $X$ being a complex submanifold of $\PP^n(\CC)$
%that is homogeneous under the action of some subgroup of
%$\PGL(n+1,\CC)$.
There is an induced holomorphic  fibration $F \to M\to X$ with fiber $F= Q^\CC/H^\CC$. The fiber $F$ is a compact homogeneous complex manifold, and $F =  {F}^\CC/\, \Lambda$ is the quotient of a connected 
complex Lie group ${F}^\CC=Q^\CC/(H^\CC)^\circ$ by the uniform lattice $\Lambda = H^\CC/ (H^\CC)^\circ $
contained  in ${F}^\CC$.
Such a quotient manifold $F$ is called a \emph{complex parallelizable} manifold. The Tits fibration is characterized as the unique holomorphic fibration of the complex manifold $M$ over a flag manifold with parallizable fibers. 
% and independent of the choice of $G^{\CC}$.}

%
\subsubsection{Nilmanifold fiber $F$}

If the complex Lie group ${F}^\CC$ is nilpotent then the fibers of the Tits fibration are compact complex nilmanifolds. 
This assumption has strong consequences for the structure of $M$ and $G^{\CC}$. In the following we assume that $G^\CC \subseteq \Hol(M)$ is a connected complex Lie group that acts effectively and transitively on $M$. Let $N^{\CC}$ denote the nilradical of $G^{\CC}$  and let $S^{\CC}$ be a Levi subgroup.  

\begin{lem} \label{lem:Tits_nilfiber}
Suppose that the Tits fibration of $M$ has nilmanifold fiber $F$. Then:    
\begin{enumerate}
\item $ G^{\CC} = N^{\CC} \cdot S^{\CC}$.
\item  $S^{\CC}$ commutes with $N^{\CC}$. 
\item $N^{\CC} \cap (H^{\CC})^{\circ} = \{1\}$.
\end{enumerate} 
\end{lem}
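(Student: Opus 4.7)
The plan is to combine the parabolic structure of $Q^{\CC}$ with the nilpotence of the Tits fibre $F^{\CC}$, using the surjective homomorphism of complex Lie groups $\pi:Q^{\CC}\to F^{\CC}=Q^{\CC}/(H^{\CC})^{\circ}$ as the central tool. The key algebraic fact is that a semisimple Lie subalgebra of a nilpotent Lie algebra must vanish, so every reductive subgroup of $F^{\CC}$ is abelian.

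I would first note that, because $Q^{\CC}$ is parabolic, it contains a Borel subgroup of $G^{\CC}$ and hence the solvable radical $R^{\CC}$; in particular $N^{\CC}\subseteq Q^{\CC}$. Fix a Levi subgroup $S^{\CC}$ of $G^{\CC}$ adapted to $Q^{\CC}$ so that $Q^{\CC}=P^{\CC}\cdot R^{\CC}$ with $P^{\CC}:=Q^{\CC}\cap S^{\CC}$ parabolic in $S^{\CC}$, and fix its Levi decomposition $P^{\CC}=L^{\CC}\ltimes U^{\CC}$ with $L^{\CC}$ reductive. Since $\pi(L^{\CC})$ is a reductive subgroup of the nilpotent $F^{\CC}$, its semisimple commutator satisfies $\pi([L^{\CC},L^{\CC}])=\{1\}$, i.e.\ $[L^{\CC},L^{\CC}]\subseteq(H^{\CC})^{\circ}$. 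Analogously, the abelian quotient $R^{\CC}/N^{\CC}$ is a complex torus whose image in $F^{\CC}$ is central, and the split/isotropic directions again must lie in $(H^{\CC})^{\circ}\cap R^{\CC}$ modulo $N^{\CC}$.

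For (1), I would show $R^{\CC}=N^{\CC}$ by arguing that any non-nilpotent contribution to $R^{\CC}$ would produce a nontrivial torus in $R^{\CC}\cap(H^{\CC})^{\circ}$; since $R^{\CC}$ is normal in $G^{\CC}$, taking its $G^{\CC}$-invariant isotypic decomposition one extracts a nontrivial connected normal subgroup of $G^{\CC}$ inside $H^{\CC}$, contradicting effectivity. This yields $G^{\CC}=S^{\CC}\cdot R^{\CC}=S^{\CC}\cdot N^{\CC}$. For (2), I would analyse the adjoint action of $S^{\CC}$ on $\frn^{\CC}$: it is completely reducible and commutes with $\pi$, so each $S^{\CC}$-isotypic piece of $\frn^{\CC}$ projects to an $S^{\CC}$-stable subspace of the nilpotent $\frf^{\CC}$; again nilpotence plus effectivity force the semisimple action to be trivial, so $[\frs^{\CC},\frn^{\CC}]=0$. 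Finally, (3) becomes almost formal: $N^{\CC}\cap(H^{\CC})^{\circ}$ is connected and contained in $N^{\CC}$, and in view of (1) and (2) it is normal in $G^{\CC}=S^{\CC}\cdot N^{\CC}$, hence trivial by effectivity.

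The step I expect to be the main obstacle is passing from the pointwise inclusion $[L^{\CC},L^{\CC}]\subseteq(H^{\CC})^{\circ}$ to a genuine $G^{\CC}$-normal subgroup sitting inside $(H^{\CC})^{\circ}$ to which effectivity can be applied: $[L^{\CC},L^{\CC}]$ is only normalised by $P^{\CC}$, not by all of $G^{\CC}$, so one must take its normal closure and verify, using the transitivity of $S^{\CC}$ on the flag manifold $X=G^{\CC}/Q^{\CC}$ and the rigidity imposed by the nilpotent fibre, that this normal closure still lies in $(H^{\CC})^{\circ}$. The same sort of delicacy arises in extracting $[\frs^{\CC},\frn^{\CC}]=0$ for (2); in both cases a careful tracking of the decomposition of $S^{\CC}$ into simple factors and their interaction with the Tits fibration should resolve it.
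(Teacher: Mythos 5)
Your overall strategy --- use the nilpotence of $F^{\CC}$ to force the reductive pieces of $Q^{\CC}$ to act trivially on the fiber, then invoke effectivity --- is the same as the paper's, but the step you yourself flag as ``the main obstacle'' is precisely where the proof lives, and your proposal does not close it. The difficulty is that only $Q^{\CC}$ (hence only $P^{\CC}=S^{\CC}\cap Q^{\CC}$ and a Borel subgroup $B^{\CC}$ of $S^{\CC}$ contained in it) acts on $F^{\CC}$, so nilpotence of $F^{\CC}$ only yields statements about $B^{\CC}$: a maximal torus $A^{\CC}\subseteq B^{\CC}$ has central image in $F^{\CC}$ (its adjoint action on $\frf^{\CC}$ is simultaneously reductive and unipotent, hence trivial), and since the normal closure of $A^{\CC}$ in $B^{\CC}$ is all of $B^{\CC}$, the whole Borel acts trivially on $F^{\CC}$, in particular on $R^{\CC}/(R^{\CC}\cap(H^{\CC})^{\circ})$. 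To get anything normal in $G^{\CC}$ one must upgrade this to a statement about $S^{\CC}$, and your appeal to ``each $S^{\CC}$-isotypic piece of $\frn^{\CC}$ projects to an $S^{\CC}$-stable subspace of $\frf^{\CC}$'' is not available, because $S^{\CC}$ does not act on $\frf^{\CC}$. The paper's resolving device is the elementary Lemma \ref{lem:GV=BV}: for a complex semisimple $\frs$ with Borel subalgebra $\frb$ and any finite-dimensional module $V$ one has $\frs V=\frb V$. Applied to $V=\frr^{\CC}/(\frr^{\CC}\cap\frh^{\CC})$ this converts ``$\frb$ acts trivially'' into ``$\frs$ acts trivially'', whence $\frr^{\CC}\cap\frh^{\CC}$ is $S^{\CC}$-stable; being already normal in $Q^{\CC}$ (it is the intersection of the two normal subgroups $R^{\CC}$ and $(H^{\CC})^{\circ}$ of $Q^{\CC}$) it is then normal in $G^{\CC}=S^{\CC}R^{\CC}$, and effectivity kills it. Without this, or an equivalent substitute, your ``take the $G^{\CC}$-invariant isotypic decomposition'' step has no content.

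A secondary issue is the logical order. The paper first obtains (3), $R^{\CC}\cap(H^{\CC})^{\circ}=\{1\}$, by the normality argument above; then (1) follows because $R^{\CC}$ now embeds into the nilpotent group $F^{\CC}$, so $R^{\CC}=N^{\CC}$; and (2) is exactly the statement that $S^{\CC}$ acts trivially on $R^{\CC}/(R^{\CC}\cap(H^{\CC})^{\circ})=R^{\CC}$, which was already obtained en route. Your proposal instead tries to prove (1) first via the unsupported claim that a non-nilpotent $R^{\CC}$ ``would produce a nontrivial torus in $R^{\CC}\cap(H^{\CC})^{\circ}$''; this need not be so a priori (the failure of nilpotence could be absorbed by a unipotent part of $R^{\CC}\cap(H^{\CC})^{\circ}$, and in the complex Lie group setting $R^{\CC}/N^{\CC}$ is merely an abelian complex Lie group, not an algebraic torus). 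All three parts really hinge on the single normality statement, and that is the piece you still need to supply.
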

\begin{proof}
% $R^{\CC}$ the solvable 
%radical.  Since  $R^{\CC}$ is contained in $Q^^{\CC}$, $H^{\CC}^{0} \cap R^{\CC}$ is normal in $R^{\CC}$.
%Therefore $[R^{\CC}, H^{\CC}^{0}] $  
Let $G^{\CC} = R^{\CC}  \cdot S^{\CC}$ be a Levi decomposition, where $R^{\CC}$ is the solvable radical  of $G^{\CC}$.  Then  $P^{\CC} =  S^{\CC} \cap Q^{\CC}$ is parabolic and  contains a  Borel subgroup $B^{\CC}$ of  $S^{\CC}$. Let $A^{\CC}$ denote a maximal torus contained in $B^{\CC}$.
Observe that $Q^{\CC}$ acts holomorphically by complex automorphisms on $F^{\CC}  = Q^{\CC} / (H^{\CC})^{\circ}$. 
Since $A^{\CC}$ acts reductively for every complex representation  and the group $F^{\CC}$ is 
nilpotent,  the image of $A^{\CC}$ in  $F^{\CC}$ must be central.  This means that 
$A^{\CC}$ acts trivially on $F^{\CC}$. As $A^\CC$ is a Cartan subgroup of $B^\CC$, the action of the Borel subgroup $B^{\CC}$ on $F^{\CC}$ is also
trivial. In particular, $B^{\CC}$ acts trivially on the subgroup $R^{\CC}/ (R^{\CC} \cap (H^{\CC})^{\circ})$. 
Since $R^{\CC} \cap (H^{\CC})^{\circ}$ is normalized by $B^{\CC}$, every non-trivial 
subrepresentation of the action of $S^{\CC}$ on the Lie algebra of $R^{\CC}$ is contained in the Lie algebra of  $R^{\CC} \cap (H^{\CC})^{\circ}$,
cf.~Lemma \ref{lem:GV=BV}. It follows that  $R^{\CC} \cap (H^{\CC})^{\circ}$ is normalized by $S^{\CC}$, 
and  $S^{\CC}$ acts trivially on the factor group. 
In particular, $R^{\CC} \cap (H^{\CC})^{\circ}$ is a normal subgroup of $G^{\CC}$. Since 
we assume that $G^{\CC}$ acts effectively on $M$, this implies  $R^{\CC} \cap (H^{\CC})^{\circ} = \{ 1 \}$.
Thus  $R^{\CC}$ embeds as a subgroup into $F^{\CC}$, showing that $R^{\CC} = N^{\CC}$ is nilpotent. 
 \end{proof} 

%\oliver{To uncover the structure of $(H^{\CC})^{0}$:  Observe that $(H^{\CC})^{0} \cap  N^{\CC} = \{ 1 \}$ and  $(H^{\CC})^{0}$ contains every Levi subgroup of $P^{\CC}$.}

\begin{lem}\label{lem:GV=BV}
Let $\frs$ be a complex semisimple Lie algebra and $\frb$ a Borel
subalgebra of $\frs$.
If $V$ is a finite-dimensional $\frs$-module, then
$\frs V=\frb V$.
\end{lem}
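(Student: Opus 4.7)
The plan is to reduce to the irreducible case and then apply standard highest weight theory. By Weyl's complete reducibility theorem, the $\frs$-module $V$ decomposes as a direct sum of irreducible submodules, and both $\frs V$ and $\frb V$ respect any such decomposition; hence I may assume that $V$ itself is irreducible. If $V$ is the trivial module, then both $\frs V$ and $\frb V$ are zero and there is nothing to prove. I therefore assume that $V$ is a nontrivial irreducible $\frs$-module.

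In that case $\frs V$ is a nonzero $\frs$-submodule of the simple module $V$, so $\frs V = V$, and it suffices to show that $\frb V = V$, or equivalently that $V / \frb V = 0$. The quotient $V / \frb V$ carries a $\frb$-module structure on which $\frb$ acts trivially, since $\frb \cdot \frb V \subseteq \frb V$. I will now dualize: the annihilator of $\frb V$ inside $V^*$ coincides with the subspace $(V^*)^{\frb}$ of $\frb$-invariants under the contragredient action, because the condition $\phi(Xv) = 0$ for all $v \in V$ is equivalent to $X \cdot \phi = 0$.

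Finally, any vector $\phi \in (V^*)^{\frb}$ is annihilated simultaneously by the Cartan subalgebra $\frh \subseteq \frb$ and by the nilpotent radical of $\frb$, and hence is a highest weight vector of weight zero in $V^*$. But $V^*$ is itself a nontrivial irreducible $\frs$-module, so its highest weight is nonzero and no such vector exists. Therefore $(V^*)^{\frb} = 0$, and dualizing back gives $V / \frb V = 0$ as required. I do not anticipate any substantial obstacle; the only point needing a quick check is the identification of the annihilator of $\frb V$ with $(V^*)^{\frb}$, which is a routine verification once the sign convention $(X\phi)(v) = -\phi(Xv)$ of the contragredient representation is fixed.
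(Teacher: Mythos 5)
Your proof is correct, and it takes a route dual to the paper's. Both arguments reduce to a nontrivial irreducible $V$ via Weyl's theorem, and both ultimately rest on the same fact---that the extremal weight of such a module is nonzero---but they exploit it from opposite ends. The paper works inside $V$: writing $\frb=\fra\ltimes\frn^+$, it picks a minimal weight $\lambda_-\neq 0$, notes that $\fra V_{\lambda_-}=V_{\lambda_-}$ because $\fra$ acts on that space through the nonzero functional $\lambda_-$, and generates the rest of $V$ from $V_{\lambda_-}$ by applying $\frn^+$, so that $V=\frb V$ by direct construction. You instead compute the cokernel: $(V/\frb V)^*$ is identified with the invariants $(V^*)^{\frb}$, and a nonzero element there would be an $\frn^+$-invariant vector of weight zero, forcing the highest weight of the nontrivial irreducible module $V^*$ to be zero---a contradiction. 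Since the minimal weight of $V$ is minus the highest weight of $V^*$, the two nonvanishing conditions are literally dual to one another. Your version is slightly more conceptual (it is the vanishing of the $\frb$-coinvariants of $V$, dual to the vanishing of the $\frb$-invariants of $V^*$), at the cost of invoking the standard fact that a nonzero $\frn^+$-invariant weight vector in a finite-dimensional irreducible module is a highest weight vector; the paper's version is more hands-on and self-contained. Your identification of the annihilator of $\frb V$ with $(V^*)^{\frb}$ under the contragredient action is the routine check you flag, and it goes through exactly as you describe. One cosmetic remark: you denote the Cartan subalgebra by a symbol that this paper reserves for isotropy subalgebras; the paper's proof uses $\fra$ for the Cartan subalgebra here.
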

\begin{proof}
For a certain choice of positive simple roots, let $\frn^+$, $\frn^-$
denote the nilpotent subalgebras spanned by the positive and negative
root spaces, respectively.
Then we may assume $\frb=\fra\ltimes\frn^+$ for some Cartan subalgebra
$\fra$ of $\frs$.
%\smallskip 
%The module $V$ has a weight space decomposition
%$V=V_0\oplus\bigoplus_{\lambda\neq0} V_{\lambda}$
%with respect to $\fra$.
%For a non-zero weight $\lambda$ we have
%$V_\lambda=\fra V_{\lambda}\subseteq\frb V$.
%Also, $\frn^- V_0\subseteq \bigoplus_{\lambda\neq0} V_{\lambda}=\bigoplus_{\lambda\neq0} \fra V_{\lambda}\subseteq\frb V$.
%Since $\frs=\frb+\frn^-$, we have $\frs V_0\subseteq\frb V$.
%Hence $\frs V=\frb V$. \oliver{The ``Hence'' hides a gap in the proof which is 
%overcome by the fact that $[\frn^+, \frn^-] = \fra$.}  
%
%\smallskip 
%\oliver{Why not use the following standard reasoning instead:}
Also we  may assume that $V$ is irreducible and non-trivial. Let $\lambda_{-} \neq 0$ be a 
\emph{minimal weight} for the action of $\fra$ on
$V$. Then $\fra V_{\lambda_{-}} = V_{\lambda_{-}}$ and $V =  V_{\lambda_{-}} +  \frn^+ V_{\lambda_{-}}$. 
In particular, $V= \frb V$. 
\end{proof}

\subsubsection{Transitive Lie subgroups of $G^{\CC}$} 

Let 
$G$ be a connected Lie subgroup of $G^\CC$ that acts transitively on $M$. 
Then $ M = G/H $ with $H= G \cap H^\CC$. Moreover,  $L=G\cap Q^\CC$ contains $H$ and  $X=G/L$.
It also follows that $L$ is connected,  since $X$ is simply connected.
The fiber of  $M\to X$ can be identified with $L/H$ 
% and have the form $(L/H^\circ)\big/(H/H^\circ)$.
and therefore the Lie group $L/H^\circ$ is a covering group of
${F}^\CC = L / (L \cap (H^\CC)^\circ) $. 
In particular,  $L/H^\circ$ attains the structure of
a complex Lie group locally isomorphic to  ${F}^\CC$.

\begin{prop} \label{transitive_1} 
% Let $G$ be a Lie subgroup of\/ $G^{\CC}$ which is transitive on $M$. 
Assume that $G^\CC$ admits a transitive Lie subgroup $G$ with a Levi decomposition of the form  $G = N \cdot K$, where $N$ is the nilradical of $G$ and $K$ is compact 
semisimple. Then the Tits fibration of $M$ has nilmanifold fiber. In particular, $K$ centralizes $N$.
\end{prop}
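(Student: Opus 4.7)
The plan is to reduce to a compact transitive action via Montgomery's theorem, then invoke a classical theorem on compact complex manifolds homogeneous under a compact Lie group to conclude the Tits fibration has complex torus fibers, and finally apply Lemma \ref{lem:Tits_nilfiber} to extract the centralizing statement.

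First I would observe that since $M$ is compact and $G = N \cdot K$ is a connected Lie group acting transitively on $M$, Montgomery's theorem provides a maximal compact subgroup $K' \subseteq G$ that also acts transitively on $M$. Such a $K'$ contains a conjugate of $K$ and may in addition pick up compact components coming from the nilradical $N$ (e.g.\ a compact torus factor). Because $K' \subseteq G \subseteq \Hol(M)$, $K'$ acts by holomorphic automorphisms, and hence $M = K' / H_{K'}$ is a compact complex homogeneous manifold under the compact connected Lie group $K'$.

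Next I would invoke the classical structure theorem, going back to Wang, for compact complex manifolds homogeneous under a compact Lie group: such a manifold is a holomorphic fiber bundle over a generalized flag manifold whose fibers are compact complex tori, and this bundle coincides with the Tits fibration $\pi : M \to X$. In particular the Tits fiber is a nilmanifold, so Lemma \ref{lem:Tits_nilfiber} applies, yielding a decomposition $G^\CC = N^\CC \cdot S^\CC$ in which the complex semisimple Levi factor $S^\CC$ centralizes $N^\CC$. Since $G^\CC$ has complex semisimple Levi $K^\CC$ (the complexification of $K$), we may identify $S^\CC = K^\CC$, and restricting to real forms yields that $K$ centralizes $N$.

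The main obstacle is the correct invocation of the classical structure theorem: one must verify that the maximal compact $K'$ supplied by Montgomery's theorem acts by biholomorphisms (automatic from $K' \subseteq \Hol(M)$) and that the intrinsic Tits fibration of $M$ is $K'$-equivariant (which is clear, since it is defined purely from the complex structure on $M$ and hence preserved by every holomorphic transformation). With these checks in place the proof runs as outlined; the heavy lifting is outsourced to Wang's classical result and to Lemma \ref{lem:Tits_nilfiber}.
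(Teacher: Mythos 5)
Your argument breaks down at the very first step. Montgomery's theorem yields a transitive maximal compact subgroup only when $\pi_1(M)$ is finite, and the compact complex homogeneous manifolds covered by this proposition typically have infinite fundamental group. Worse, the conclusion you want from that step is actually false under the stated hypotheses. Take $N^{\CC}$ to be the complex Heisenberg group, $\Gamma$ a lattice in it, and set $M = (N^{\CC}/\Gamma) \times X$ with $X = S^{\CC}/P^{\CC}$ a flag manifold; then $G = N\cdot K$, with $N$ the (non-compact) image of $N^{\CC}$ in $\Hol(M)$ and $K$ a compact real form of $S^{\CC}$, satisfies every hypothesis of the proposition and acts transitively, yet no compact subgroup of $G$ acts transitively on $M$: the maximal compact subgroup of $N$ is the two-torus coming from the center, whose orbits in the six-dimensional Iwasawa factor $N^{\CC}/\Gamma$ are two-dimensional. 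The same example defeats your second step: here the Tits fiber is $N^{\CC}/\Gamma$, a nilmanifold that is \emph{not} a complex torus, whereas the Wang/Hano--Kobayashi route would force a torus fiber. The proposition deliberately concludes only ``nilmanifold fiber'' rather than ``torus fiber''; a proof that yields a torus fiber proves too much and therefore must be using a hypothesis that is not available.

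The paper's proof avoids any reduction to a compact transitive group. It shows that $N$ acts trivially on the projective base $X$ (a solvable group of automorphisms of $X$ has a fixed point, and the fixed-point set of the normal subgroup $N$ is $G$-invariant), so $N\subseteq Q^{\CC}$ and $L = G\cap Q^{\CC} = N\cdot C$ with $C = K\cap Q^{\CC}$ compact. Then $L/H^{\circ}$ is a \emph{complex} Lie group that is an extension of a compact group by a connected nilpotent normal subgroup, and Lemma \ref{nilpotent} --- resting on the fact that compact complex reductive groups have only trivial holomorphic representations --- forces $L/H^{\circ}$, hence $F^{\CC}$, to be nilpotent. Only your final step, extracting ``$K$ centralizes $N$'' from Lemma \ref{lem:Tits_nilfiber} once the nilmanifold fiber is in hand, coincides with the paper's argument and is sound.
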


The proof of the proposition builds on the following lemma:  
% \oliver{Compare Lemma \ref{lem:LH_solvable} below}.

\begin{lem}  \label{nilpotent}
Let $E^\CC$ be a complex Lie group such that  $E^\CC = V \cdot T$ is an extension of a  compact 
Lie group $T$ by some connected nilpotent normal Lie subgroup $V$ of $E^\CC$. 
Then $E^\CC$ is nilpotent. In particular,  the compact subgroup $T$ is contained in the center of $E^\CC$. 
\end{lem}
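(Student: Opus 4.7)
The plan is to pass to the smallest complex Lie subgroup containing $V$, show that the compact group $T$ centralizes it by induction on the nilpotency class, and then invoke the fact that every compact subgroup of a connected nilpotent Lie group is central. Let $\fre$, $\frv$, $\frt$ denote the Lie algebras of $E^{\CC}$, $V$, $T$, and write $J$ for the complex structure on $\fre$. Because $E^{\CC}$ is complex, the bracket on $\fre$ is $\CC$-bilinear, so the $J$-hull $\frv+J\frv$ is again an ideal of $\fre$; and a direct computation using $[Jx,Jy]=-[x,y]$ shows that it inherits nilpotency from $\frv$. Integrating yields a connected complex normal nilpotent Lie subgroup $V^{\CC}\supseteq V$ of $E^{\CC}$, and $E^{\CC}=V^{\CC}\cdot T$; moreover $E^{\CC}/V^{\CC}$ is a quotient of the compact group $T$ and carries an induced complex structure, so it is a compact complex torus.

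The central claim is that $T^{\circ}$ centralizes $V^{\CC}$. Arguing by induction on the nilpotency class of $V^{\CC}$, I begin with the abelian base case. Fix $x\in\frt$. Compactness of $T$ makes $\ad(x)$ semisimple with purely imaginary eigenvalues, and its action on the complex subspace $\frv^{\CC}$ is $\CC$-linear since $\Ad(t)$ is holomorphic for $t\in T$. Decompose $Jx=v+y$ with $v\in\frv^{\CC}$, $y\in\frt$, which is possible because $\fre=\frv^{\CC}+\frt$. The complex Lie algebra identity $\ad(Jx)=J\circ\ad(x)$ makes $\ad(Jx)|_{\frv^{\CC}}$ an operator with purely real eigenvalues. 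But since $\frv^{\CC}$ is abelian, $\ad(v)|_{\frv^{\CC}}=0$, so $\ad(Jx)|_{\frv^{\CC}}=\ad(y)|_{\frv^{\CC}}$, whose eigenvalues are purely imaginary. They must therefore all vanish, giving $\ad(x)|_{\frv^{\CC}}=0$.

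For the inductive step, let $Z$ be the identity component of the centre of $V^{\CC}$, which is a complex abelian $E^{\CC}$-normal subgroup. The base case argument applied with $\frz$ in place of $\frv^{\CC}$ still goes through, because $\ad(v)|_{\frz}=0$ for every $v\in\frv^{\CC}$; this yields that $T^{\circ}$ centralizes $Z$. The inductive hypothesis applied to $E^{\CC}/Z$ then gives that $T^{\circ}$ centralizes $V^{\CC}/Z$, i.e., $\ad(x)$ sends $\frv^{\CC}$ into $\frz$ and annihilates $\frz$. Thus $\ad(x)^{2}=0$ on $\frv^{\CC}$, and combined with the semisimplicity of $\ad(x)$ this forces $\ad(x)=0$.

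Once $T^{\circ}$ centralizes $V^{\CC}$, the connected group $T^{\circ}$ is an extension of a subgroup of the abelian complex torus $E^{\CC}/V^{\CC}$ by the compact subgroup $T^{\circ}\cap V^{\CC}$, which lies in the centre of the nilpotent group $V^{\CC}$ and is therefore abelian. So $T^{\circ}$ is a compact connected solvable Lie group, hence a torus. Consequently $E^{\CC}=V^{\CC}\cdot T^{\circ}$ is an almost direct product of a nilpotent and an abelian group, so it is nilpotent, and $T$ is central because every compact subgroup of a connected nilpotent Lie group sits in the centre. The main obstacle is the abelian base case, where the purely imaginary eigenvalues forced by compactness must be reconciled with the purely real eigenvalues produced by the $J$-conjugation rule in a complex Lie algebra, and it is exactly this tension that collapses $\ad(x)$ to zero.
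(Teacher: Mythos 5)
Your proof is correct, but it takes a genuinely different route from the paper's. The paper argues globally and structurally: since $E^\CC/V$ is compact, the Levi subgroup of $E^\CC$ is a compact complex semisimple group, hence trivial, so $E^\CC$ is solvable with nilradical $N^\CC \supseteq V$ and compact complex torus quotient $T^\CC_1=E^\CC/N^\CC$; every holomorphic representation of a compact complex torus is trivial, so $T^\CC_1$ acts trivially on the abelianization $N^\CC/[N^\CC,N^\CC]$, and a reductive group of automorphisms of a nilpotent group that is trivial on the abelianization is trivial, whence $E^\CC=N^\CC T$ is nilpotent. You instead work infinitesimally: for $x\in\frt$ the operator $\ad(x)$ has purely imaginary spectrum by compactness of $T$, while $\ad(Jx)=J\ad(x)$ has purely real spectrum, and writing $Jx=v+y$ with $v$ in the nilpotent ideal and $y\in\frt$ forces everything to vanish; an induction on the nilpotency class, using $\ad(x)^2=0$ together with semisimplicity, replaces the paper's one-step reduction to the abelianization. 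This is exactly the ``totally real'' mechanism of Lemma \ref{lem:JandO} and Lemma \ref{lem:su_1}, so your argument is very much in the spirit of Section \ref{sec:prelim}; what it buys is independence from the representation theory of compact complex tori (you still use that a compact connected complex Lie group is abelian, to see that $T^\circ$ is a torus), at the cost of a slightly longer induction. One technical point to tidy up: to form the quotients $E^\CC/V^\CC$ and $E^\CC/Z$ you need $V^\CC$ and $Z$ to be closed in $E^\CC$, which is not automatic for the analytic subgroup generated by $\frv+J\frv$. The cleanest fix is to run your argument with the nilradical $N^\CC$ in place of $V^\CC$ (it is closed, $J$-invariant by the same bracket computation you indicate, and contains $V$), or to phrase the induction entirely at the Lie-algebra level, where no quotient groups are needed; with that adjustment everything goes through.
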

\begin{proof} % Since $V$ is a connected nilpotent and normal Lie subgroup, 
By assumption, $V$ is contained in the nilradical $N^\CC$ of $E^\CC$. It also follows by assumption that the Levi subgroup $S^\CC$ of $E^\CC$ is compact. Since $S^\CC$ is a complex semisimple Lie group, this implies that $S^\CC$ is trivial. Hence, $E^\CC$ is solvable and  
%% Let $N$ be the nilpotent radical of $E$.  That is, $N$ is the maximal %nilpotent normal connected subgroup of $E$. Then $N$ is a normal complex % Lie subgroup of $L$, 
$T^\CC_{1} = E^\CC/N^\CC$ is an abelian complex Lie group  and compact. Note that $T_{1}^\CC$ acts by automorphisms on the complex abelian Lie group  $H^{1}(N^\CC) = N^\CC/ \, [N^\CC,N^\CC]$. Since every complex linear representation  of $T^\CC_{1}$ is trivial, $T^\CC_{1}$ acts trivially on  $H^{1}(N^\CC)$. Therefore, the compact group $T$ acts trivially on  $H^{1}(N^\CC)$ and (thus) also on $N^\CC$. Hence,  $E^\CC = N^\CC T$ is nilpotent. 
\end{proof} 

\begin{proof}[Proof of Proposition \ref{transitive_1}]
First note that $N$ is contained in $Q^\CC$, since every solvable Lie group of complex automorphisms of the projective variety $X$ has a fixed point. Let $L = G \cap Q^{\CC} = N \cdot C$, where $C = K \cap Q^{\CC}$ is compact. By the above remarks,   $L/H^\circ$ is a complex Lie group covering $F^\CC$ and it satisfies the assumption of Lemma~\ref{nilpotent}. Hence,  $L/H^\circ$ is nilpotent. So ${F}^\CC=Q^\CC/(H^\CC)^\circ$  is nilpotent, which means that the Tits fibration for $M$ has nilmanifold fiber.  By Lemma \ref{lem:Tits_nilfiber}, $G^\CC = N^\CC S^\CC$, where $S^\CC$ centralizes $N^\CC$. We may assume that $N$ is contained in $N^\CC$ and $K$ in $S^\CC$. It follows that $K$ centralizes $N$.
\end{proof} 

\begin{remark}
Since $N$ is contained in $L$, $X = K/C$ for the compact subgroup
$C = L \cap  K$. 
Since the parabolic subgroup $P^\CC= Q^\CC \cap S^\CC$ of $S^\CC$ is the centralizer of a torus \cite[4.15 Th\'eor\`eme]{BT},  $C= K \cap P^\CC$ is the centralizer of a torus $T_0$ in $K$.
In particular, $C$ contains a maximal torus $T$ of $K$. We may write  $C = T_{0} C_{0}$, where $C_{0}$ is compact semisimple. 
Since $L/H^{\circ}$ is nilpotent, $C_{0}$ is contained in  $H^{\circ}$.  Thus  $H^{\circ} \cap K = T_{1}  C_{0}$, where $T_{1}$ is contained in $T_{0}$. Choose a compact torus $T_{2}$ such that $T_{0} = T_{1} T_{2}$. 
Then $L/H^{\circ} = N \, \bar T_{2}$, where  $\bar T_{2}$ is a compact torus that centralizes $N$. 
\end{remark}

We infer the following addendum to Lemma \ref{lem:Tits_nilfiber}:

\begin{lem} \label{lem:Titsnilfiber2}
Suppose that $M$ has nilmanifold fiber. Then $F^\CC/ N^\CC$ is compact and $N^\CC$ has compact center. 
\end{lem}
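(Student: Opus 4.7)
The plan is to build on the structure established in the preceding Remark: $P := L/H^{\circ} = N \cdot \bar T_{2}$, where $\bar T_{2}$ is a compact torus centralizing $N$, and the natural map $P \to F^{\CC}$ is a covering of real Lie groups. Let $\hat N$ and $\hat T$ denote the images of $N$ and $\bar T_{2}$ in $F^{\CC}$ under this covering.

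For the first assertion, the surjectivity of $P \to F^{\CC}$ yields $F^{\CC} = \hat N \cdot \hat T$. The complex Lie subgroup of $F^{\CC}$ generated by the real subgroup $\hat N$ is precisely $\hat N^{\CC}$, the image of $N^{\CC}$ in $F^{\CC}$ (which embeds by Lemma \ref{lem:Tits_nilfiber}). Hence $F^{\CC} = \hat N^{\CC} \cdot \hat T$, and the quotient $F^{\CC}/N^{\CC}$ is a continuous image of the compact torus $\hat T$, hence compact.

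For the second assertion, I first note that $\hat T$ centralizes $\hat N$ in $F^{\CC}$, since $\bar T_{2}$ centralizes $N$. As conjugation by elements of $\hat T$ is a holomorphic self-action of $F^{\CC}$ that fixes the real subgroup $\hat N$ pointwise, analytic continuation extends the fixing to the complex subgroup $\hat N^{\CC}$ generated by $\hat N$. Hence $\hat T \subseteq \Zen_{F^{\CC}}(\hat N^{\CC})$ and in particular $\hat T \cap \hat N^{\CC} \subseteq \Zen(\hat N^{\CC})$. A dimension count combining $\dim_{\RR} F^{\CC} = \dim_{\RR} N + \dim_{\RR} \bar T_{2}$ (from the covering), $\dim_{\RR} \hat N^{\CC} = 2\dim_{\RR} N$, and the subgroup formula $\dim_{\RR} F^{\CC} = \dim_{\RR} \hat N^{\CC} + \dim_{\RR} \hat T - \dim_{\RR}(\hat N^{\CC} \cap \hat T)$ (valid because $\hat N^{\CC}$ is normal in $F^{\CC}$ and $F^{\CC} = \hat N^{\CC} \hat T$) yields $\dim_{\RR}(\hat T \cap \hat N^{\CC}) = \dim_{\RR} N = \dim_{\CC} \hat N^{\CC}$. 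Since compact subgroups of complex Lie groups are totally real (the exponential of any complex line in the Lie algebra would be a non-compact $\CC$ or $\CC^{\ast}$), $\hat T \cap \hat N^{\CC}$ is a compact real form of the complex nilpotent Lie group $\hat N^{\CC}$. Because any compact connected nilpotent Lie group is a torus, this forces $\hat N^{\CC}$ to be complex abelian, so $\Zen(N^{\CC}) = N^{\CC}$.

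The main obstacle will be the final compactness step for $\Zen(N^{\CC})$. The plan is to combine the compact real form $\hat T \cap \hat N^{\CC}$ with the rationality of $\hat N^{\CC}$ relative to the uniform lattice $\Lambda \subset F^{\CC}$: the latter should follow because $\hat N^{\CC}$ is a closed normal subgroup of $F^{\CC}$ arising from the nilradical of the ambient $G^{\CC}$, so that $\Lambda \cap \hat N^{\CC}$ is a uniform lattice in $\hat N^{\CC}$. Together these rule out non-compact factors $\CC^{a}$ and $(\CC^{\ast})^{b}$ in the decomposition of the abelian complex Lie group $\hat N^{\CC}$, forcing $\hat N^{\CC}$ to be a compact complex torus and hence $\Zen(N^{\CC})$ to be compact.
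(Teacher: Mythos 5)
Your treatment of the first assertion is essentially the paper's: once the Remark gives $F^{\CC}=N^{\CC}\bar T_{2}$ with $\bar T_{2}$ a compact torus, $F^{\CC}/N^{\CC}$ is a continuous image of $\bar T_{2}$ and hence compact. The only thing you elide is that the Remark is conditional on having a transitive subgroup of the form $N\cdot K$; under the lemma's hypotheses one must first produce it, which the paper does by taking $K$ a compact real form of $S^{\CC}$ (transitive on $X$ by the Iwasawa decomposition $S^{\CC}=K\cdot B^{\CC}$), so that $G=N^{\CC}K$ is transitive and its nilradical $N$ is $N^{\CC}$ itself.

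The second assertion is where your argument breaks down. The dimension count hinges on $\dim_{\RR}\hat N^{\CC}=2\dim_{\RR}N$, i.e.\ on $N$ being a totally real, half-dimensional form of $N^{\CC}$. Nothing provides such an $N$: in the transitive subgroup the hypotheses actually yield, $N=N^{\CC}$, so $\dim_{\RR}\hat N^{\CC}=\dim_{\RR}\hat N$ and the count collapses ($\hat T\cap\hat N^{\CC}$ may be trivial). Worse, your intermediate conclusion that $N^{\CC}$ is abelian, and your final goal that $N^{\CC}$ is a compact complex torus, are both false in general: for the Iwasawa manifold $M=\Heis(\CC)/\Lambda$, with $G^{\CC}=\Heis(\CC)/(\Lambda\cap \Zen(\Heis(\CC)))$ acting effectively, the Tits fibration is trivial and $N^{\CC}=F^{\CC}=G^{\CC}$ is a non-abelian, non-compact nilpotent complex group whose center is a compact elliptic curve --- which is exactly, and only, what the lemma claims. (Your auxiliary claim that compact subgroups of complex Lie groups are totally real also fails outside the linear case: a compact complex torus sits inside itself.) Finally, even for an abelian complex group, admitting a uniform lattice does not force compactness, since $\CC$ and $\CC^{*}$ both admit uniform lattices; so the last step of your plan cannot close. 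The ingredient you are missing is effectiveness. The paper argues that $\Lambda$ meets $N^{\CC}$ in a uniform lattice, hence by nilpotency meets the center $Z$ of $N^{\CC}$ in a uniform lattice; but $Z$ is central in all of $G^{\CC}$ by Proposition \ref{transitive_1}, so $H^{\CC}\cap Z$ is a normal subgroup acting trivially on $M$ and is therefore trivial. A group in which the trivial subgroup is a uniform lattice is compact, and this is what yields the compactness of $Z$.
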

\begin{proof}  Let $K$ be a compact real form of $S^\CC$. Since $K$ is a maximal compact subgroup,
$S^\CC = K \cdot B^\CC$ by the Iwasawa decomposition.
Hence, $K$ acts transitively on $X$. Thus the Lie subgroup  $G = N^{\CC} K$ acts transitively on $M$. As the above remark shows, $F^\CC = N^\CC \bar T_2$, where $\bar T_2$ is a compact torus. Since $F = F^\CC / \Lambda$, where $\Lambda = H^\CC/(H^\CC)^\circ $, is compact,  $\Lambda$ intersects $N^\CC$ in a uniform lattice. Since $N^\CC$ is nilpotent, it follows that $\Lambda$ also intersects the center $Z$ of $N^\CC$ in a uniform lattice. However, since $Z$ is also central in $G^\CC$ by Proposition \ref{transitive_1}, $H^\CC \cap Z$ is trivial. We deduce that $Z$ is compact.
\end{proof}

% and its Lie algebra $\frl/\frh$ is identified with
%$\frq/\frh^\CC$.
%
%Consider a Levi decomposition $\frg=\frs\ltimes\frr$ with solvable
%radical $\frr$ and semisimple subalgebra $\frs$.
%With the complex algebras they generate, we obtain a Levi decomposition
%$\frg^\CC=\frs^\CC\ltimes\frr^\CC$.
%The parabolic subalgebra $\frq$ of $\frg^\CC$ has the form
%$\frq=\frp\ltimes\frr^\CC$, where $\frp$ is a parabolic subalgebra of
%$\frs^\CC$.
%It follows that $\frl=\frg\cap\frq$ contains $\frr$
%and thus we can assume $\frl$ has the form $\frl=\frc\ltimes\frr$ for
%some subalgebra $\frc$ of $\frs$.
%
\subsubsection{$G$-invariant pseudo-Hermitian metric} 

We say that the Tits fibration for $M$ has \emph{torus fiber} if $F^{\CC}$ is abelian. 
This means that the fiber  $F = F^{\CC}/\, \Lambda$ of the Tits fibration is a compact abelian complex Lie group. In fact, as we assume that the action
of $G^\CC$ on $M$ is effective, $F^\CC$ is a compact complex Lie group due to Lemma \ref{lem:Titsnilfiber2} above. It follows: 

\begin{lem} \label{lem:Tits_torusfiber}
If the compact complex homogeneous space $G^\CC/ H^\CC$ has torus fiber then the solvable radical of $G^\CC$ is a compact complex abelian group.
\end{lem}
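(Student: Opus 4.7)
The plan is to combine Lemmas \ref{lem:Tits_nilfiber} and \ref{lem:Titsnilfiber2} with the assumption that $F^\CC$ is abelian. Since an abelian complex Lie group is nilpotent, the hypothesis that $M$ has torus fiber (i.e.\ $F^\CC$ is abelian) is a special case of $M$ having nilmanifold fiber. Both preceding lemmas therefore apply.

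First I would invoke the proof of Lemma \ref{lem:Tits_nilfiber}, which establishes in passing that the solvable radical $R^\CC$ of $G^\CC$ coincides with the nilradical $N^\CC$, and that $N^\CC$ embeds into $F^\CC$ via the quotient map $Q^\CC\to F^\CC=Q^\CC/(H^\CC)^\circ$ (possible precisely because $N^\CC\cap(H^\CC)^\circ=\{1\}$). Since $F^\CC$ is abelian by assumption, this injection forces $N^\CC$ to be an abelian complex Lie group.

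Next, I would apply Lemma \ref{lem:Titsnilfiber2} to conclude that the center $\Zen(N^\CC)$ is compact. Because $N^\CC$ is already abelian, one has $\Zen(N^\CC)=N^\CC$, and so $N^\CC$ itself is compact. Combined with the identification $R^\CC=N^\CC$, this shows that the solvable radical of $G^\CC$ is a compact complex abelian group, as desired.

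No step seems to present an obstacle: once one observes that ``torus fiber'' is a special case of ``nilmanifold fiber,'' the statement is a straightforward consequence of the structure already extracted by the two preceding lemmas, together with the elementary fact that an abelian group coincides with its own center.
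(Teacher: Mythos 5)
Your argument is correct and is essentially the paper's own: the paper gives no separate proof of this lemma, deriving it ("It follows") from exactly the observations you use — that a torus fiber is a special case of a nilmanifold fiber, that $R^\CC=N^\CC$ embeds into $F^\CC$ by Lemma \ref{lem:Tits_nilfiber}, and that Lemma \ref{lem:Titsnilfiber2} then makes the (now abelian) group $N^\CC$ equal to its own compact center.
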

%%\begin{proof} By Lemma \ref{lem:Tits_nilfiber}, $N^\CC$ embeds into the abelian Lie group $F^\CC = (N^\CC \cdot \P^\CC)/ (H^\CC$)^0 = N^\CC \bar P^\CC$, where $\bar P^\CC$ is a torus. 
%	
%%\end{proof}

The following theorem characterizes compact complex homogeneous manifolds with an underlying homogeneous pseudo-Hermitian structure:

{\renewcommand{\themthm}{\ref{mthm:torus_fiber}}
\begin{mthm}
Let $M$ be a compact complex homogeneous manifold, and let $G$ be a
closed Lie subgroup of $\Hol(M)$.
Assume that $G$ preserves a
pseudo-Hermitian metric and acts transitively on $M$.
Then the Tits fibration of $M$ has torus fiber and the group $G$ is compact.
%
%
%Let $M$ be a compact complex homogeneous manifold
%and $G^{\CC}$ a connected complex
%Lie group acting transitively and effectively on $M$.
%Assume that $G^\CC$ has a closed  Lie subgroup $G$ that preserves a
%pseudo-Hermitian metric on $M$ and acts transitively and effectively on $M$.
%Then the Tits fibration of $M$ has a torus fiber and the group $G$ is compact.
%% Let $G$ be a Lie subgroup of\/ $G^{\CC}$ which is transitive on $M$. 
%Assume that $G^\CC$ admits a closed  Lie subgroup $G$ which preserves a pseudo-Hermitian 
%metric on $M$ and that $G$ acts transitively on $M$.  Then the Tits fibration of $M$ has torus fiber and the  group $G$ is compact. 
%%Moreover,  $G = T \cdot K$ is compact. 
%%(Here $T$ is a compact connected abelian Lie group and $K$ is compact 
%%semisimple and centralizes $T$.) \oliver{$H^0$ does not intersect $N$ and contains a Levi subgroup of $C$}
\end{mthm}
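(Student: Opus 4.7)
The plan is to combine the algebraic rigidity of quasi-invariant $h$-structures with the structure theory of the Tits fibration, upgrading the ``nilmanifold fiber'' conclusion of Proposition \ref{transitive_1} to a ``torus fiber'' conclusion and then reading off compactness of $G$.

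First I would apply Theorem \ref{mthm:Levi_compact} to the identity component $G^\circ$, which still acts transitively on the connected manifold $M$. This yields a decomposition $G^\circ = K \cdot N$ where $K$ is compact semisimple and $N$ is the (at most two-step nilpotent) solvable radical. With this decomposition in hand, Proposition \ref{transitive_1} applies directly: the Tits fibration of $M = G^\CC/H^\CC$ has nilmanifold fiber, $K$ centralizes $N$, and by Lemma \ref{lem:Tits_nilfiber} the complexified group decomposes as $G^\CC = N^\CC \cdot S^\CC$ with $S^\CC$ centralizing $N^\CC$.

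The heart of the argument is the promotion from nilmanifold fiber to torus fiber, which reduces to showing that $\frn$ is abelian. Here the \emph{skew-symmetric} fundamental two-form $\omega$ of the pseudo-Hermitian metric is decisive: by Corollary \ref{cor:hermitian_qi}, $\omega$ induces a quasi-invariant skew-symmetric form on $\frg$, and effectiveness of the $G$-action makes $(\frg,\omega)$ effective. Since $K$ centralizes $N$, the Levi decomposition is a direct product $\frg = \frk \times \frn$, so Theorem \ref{thm:skew_solvable} yields that $\frn$ is abelian. Complexifying, $\frn^\CC$ is an abelian complex Lie algebra, so $N^\CC$ is an abelian complex Lie group. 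Lemma \ref{lem:Titsnilfiber2} tells us $N^\CC$ has compact center; as an abelian group equals its center, $N^\CC$ is itself compact. This shows the Tits fiber $F = F^\CC/\Lambda$ is a compact complex torus.

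For the compactness of $G$: the real subgroup $N$ is a closed subgroup of the now-compact $N^\CC$, hence $N$ is compact, and $G^\circ = KN$ is compact. To pass from $G^\circ$ to $G$ one argues that the isotropy subgroup $G_p$ embeds faithfully via its isotropy representation into the pseudo-unitary group of $(T_pM, g_p)$, with $G_p \cap G^\circ$ compact; combined with $G = G^\circ \cdot G_p$ this controls $G/G^\circ$. I expect the main obstacle to be the step from the ``nilpotent'' conclusion of Theorem \ref{mthm:Levi_compact} to the stronger ``abelian'' conclusion for $\frn$. This relies essentially on the prolongation argument behind Theorem \ref{thm:skew_solvable}, which uses genuine quasi-invariance of the skew form and not merely nil-invariance, as Example \ref{ex:nilinv_nonabelian} warns. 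Without exploiting the skew-form datum of the pseudo-Hermitian structure, one would remain at the weaker Theorem \ref{mthm:Levi_compact} and never obtain a torus fiber.
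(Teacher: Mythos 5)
Your main line of argument is sound and is essentially the paper's: pass to the algebraic model, obtain $G=K\cdot N$ with $K$ compact semisimple and $N$ nilpotent (Theorem \ref{mthm:Levi_compact} / Theorem \ref{thm:radical_nilpotent}), invoke Proposition \ref{transitive_1} to get a nilmanifold fiber and $[K,N]=\{1\}$, upgrade $N$ to abelian using the quasi-invariant fundamental two-form, and then read off the torus fiber and compactness. The one point where you genuinely deviate is the abelianness of $\frn$: you route it through Theorem \ref{thm:skew_solvable}(1), which is legitimate precisely because $K$ centralizing $N$ turns the Levi decomposition into a direct product $\frg=\frk\times\frn$; the paper instead notes that $[\frn,\frn]$ is central in $\frg$ and lies in $\frg^{\perpo}=\frh$, so $[N,N]$ is a normal subgroup of $G$ contained in $H^{\circ}$ and hence trivial by effectivity. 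Both arguments rest on the same orthogonality computations, so this is a cosmetic difference. Your deduction of the torus fiber (abelian $N$ forces $N^{\CC}$ abelian because the bracket of $\frg^{\CC}$ is $\CC$-bilinear and $\mathrm{Lie}(N^{\CC})=\frn+\J\frn$; then Lemma \ref{lem:Titsnilfiber2} gives $N^{\CC}=\Zen(N^{\CC})$ compact) is an equivalent repackaging of the paper's passage through $F^{\CC}=N^{\CC}\bar T_{2}$ and Lemma \ref{lem:Tits_torusfiber}; just make sure you justify that the nilradical of $G^{\CC}$ really is $\frn+\J\frn$, which follows from the direct product decomposition of Lemma \ref{lem:Tits_nilfiber}.

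The final step, however, is wrong and should be deleted. The proposed passage from $G^{\circ}$ to $G$ via the isotropy representation gives no control: that representation lands in the pseudo-unitary group of $(\Tan_{p}M,\g_{p},\J_{p})$, which is non-compact, so faithfulness tells you nothing about compactness of $G_{p}$ or finiteness of $G/G^{\circ}$. More seriously, the statement you are trying to prove there is false: for the flat pseudo-Hermitian torus $\CC^{p,q}/\ZZ^{2p+2q}$ the full holomorphic isometry group is a closed, transitive, metric-preserving subgroup of $\Hol(M)$ containing the non-compact group $\UU(\h)\cap\SL(2p+2q,\ZZ)$ --- this is the paper's own example, stated immediately after Theorem \ref{mthm:torus_fiber} precisely to show that compactness fails for disconnected groups. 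The theorem is to be read with the standing convention of Section \ref{sec:complex} that $G$ is connected (only under simple connectedness of $M$ does compactness pass to the full group, Corollary \ref{mcor:hermitian}). With $G$ connected, your observation that $N$ is closed in the compact group $N^{\CC}$ already finishes the proof.
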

}
\begin{proof}%[Proof of Theorem \ref{thm:torus_fiber}] 
Since $G$ acts holomorphically by isometries of a pseudo-Hermitian metric on $M$, it also preserves its associated fundamental 
two-form $\om$ on $M$. By Theorem \ref{thm:radical_nilpotent}, the existence of the invariant non-degenerate two-form $\om$  implies $G = N \cdot K$, where $N$ is a connected nilpotent  normal subgroup and $K$ is compact semisimple. Since $G$ is a subgroup of $G^{\CC}$ that acts transitively on $M$, Proposition \ref{transitive_1} asserts that $K$ centralizes $N$. Theorem  \ref{thm:radical_nilpotent} also states that the commutator subgroup $[N,N]$ is contained in the center of $N$. Thus,  $[N,N]$  is contained in the center of $G$. In view of the remark following Theorem \ref{thm:radical_nilpotent}% (cf.~(4) of Theorem \ref{thm:nil_skew})
, this implies that $[N, N]$ is contained in $H^{\circ}$, showing that the normal subgroup $[N,N]$ of $G$ is
trivial, since $G$ acts effectively. Therefore, $N$ is abelian. By the remark following the proof of Proposition \ref{transitive_1}, $L/H^{\circ} = N \bar T_{2}$ is abelian. Therefore, 
$F^{\CC}$ is abelian and the Tits fibration of $M$ has torus fiber. 
Lemma \ref{lem:Tits_torusfiber} implies that  $N^\CC K$ is compact. Hence, the closed subgroup $G$ is compact.
\end{proof}

\begin{cor} \label{cor:torus_fiber} 
A compact complex homogeneous manifold $M$ admits a homogeneous pseudo-Hermitian structure if and only if its Tits fibration has torus fiber.
Moreover, every such manifold admits a homogeneous 
positive definite Hermitian metric. 
\end{cor}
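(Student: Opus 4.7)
The forward implication of the corollary is immediate: if $M$ admits a homogeneous pseudo-Hermitian structure, Theorem \ref{mthm:torus_fiber} yields that the Tits fiber is a torus. The substance lies in the converse together with the ``Moreover'' clause.

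My plan is to establish the stronger assertion that $M$ carries a \emph{positive definite} homogeneous Hermitian metric, from which both parts of the corollary follow immediately. The strategy is to exhibit a compact subgroup $U$ of $\Hol(M)$ that acts transitively on $M$; given such a $U$, averaging any smooth positive definite Hermitian metric $h_{0}$ on $M$ over $U$ with respect to Haar measure will produce a $U$-invariant positive definite Hermitian metric on $M$, which is homogeneous and pseudo-Hermitian.

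To construct $U$, I would fix a connected complex Lie group $G^{\CC} \subseteq \Hol(M)$ acting transitively on $M$, and let $R^{\CC}$ be its solvable radical. Since the Tits fibration of $M$ has torus fiber, Lemma \ref{lem:Tits_torusfiber} gives that $R^{\CC}$ is a compact complex abelian Lie group. Because a compact torus has only a discrete group of continuous automorphisms, the compact connected normal subgroup $R^{\CC}$ is forced to be central in the connected group $G^{\CC}$. Next, let $K$ be a compact real form of a Levi subgroup $S^{\CC}$ of $G^{\CC}$. Then $U := R^{\CC} \cdot K$ is a compact subgroup of $G^{\CC}$ and, up to finite index, a maximal compact one.

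The principal obstacle is to verify that $U$ acts transitively on $M$. I plan to invoke Montgomery's theorem, which asserts that if a connected Lie group acts transitively on a compact manifold, then any of its maximal compact subgroups also acts transitively. Applied to the action of $G^{\CC}$ on $M$, this yields transitivity of $U$. A self-contained alternative would combine an Iwasawa decomposition $S^{\CC} = KAN$ with the structure of the parabolic $P^{\CC} = S^{\CC} \cap Q^{\CC}$; the delicate point there would be to check that $R^{\CC}$ together with a compact form of the Levi of $P^{\CC}$ surjects onto the complex torus $F^{\CC} = Q^{\CC}/(H^{\CC})^{\circ}$ of the Tits fibration. Once transitivity of $U$ is in hand, the averaging step described above delivers the desired homogeneous positive definite Hermitian metric, completing both the converse direction and the ``Moreover'' clause.
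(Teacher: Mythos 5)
Your overall strategy --- torus fiber $\Rightarrow$ the radical $R^{\CC}$ is a compact central complex torus (Lemma \ref{lem:Tits_torusfiber}), adjoin a compact real form $K$ of a Levi subgroup, show $U=R^{\CC}\cdot K$ acts transitively, and average a definite Hermitian metric over $U$ --- is exactly the paper's proof. The forward implication, the centrality argument (automorphism groups of compact tori are discrete), and the averaging step are all fine.

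The gap is in how you justify transitivity of $U$. Montgomery's theorem does \emph{not} assert that a maximal compact subgroup of a connected Lie group acting transitively on a compact manifold is again transitive; it requires $\pi_1(M)$ to be finite (and the paper accordingly invokes \cite{montgomery} only in Corollary \ref{mcor:hermitian}, where $M$ is simply connected). The statement as you quote it is false: $\RR$ acts transitively on $S^1$ through $\RR/\ZZ$, and its maximal compact subgroup is trivial; a Heisenberg nilmanifold is homogeneous under a group whose maximal compact subgroup is a circle. In the present corollary $M$ is typically \emph{not} simply connected --- complex tori and Hopf manifolds are the basic examples of manifolds with torus fiber --- so Montgomery's theorem is not available. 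What is needed is precisely the ``delicate point'' you defer to your self-contained alternative: one must check that $U$ covers the fiber group $F^{\CC}=Q^{\CC}/(H^{\CC})^{\circ}$ as well as the base. That is how the paper argues: the Iwasawa decomposition $S^{\CC}=K\cdot B^{\CC}$ makes $K$ transitive on the flag manifold $X$, and the remark following Proposition \ref{transitive_1} identifies $F^{\CC}=N^{\CC}\bar{T}_2$ with $\bar{T}_2$ a compact torus coming from $K$, whence $G=T^{\CC}K$ is transitive on $M$ (proof of Lemma \ref{lem:Titsnilfiber2}). So your alternative route is not optional; it is the proof, and the step you flag as delicate is exactly the content of Lemma \ref{lem:Titsnilfiber2}.
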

\begin{proof} If the Tits fibration of $M = G^\CC/ H^\CC$ has torus fiber we may always choose an underlying homogeneous Hermitian structure on $M$: In fact, by Lemma \ref{lem:Tits_torusfiber}, we have  $G^\CC = T^\CC S^\CC$, where $T^{\CC}$ is compact and central in $G^\CC$ and  $S^\CC$ is a semisimple complex Lie group. Let $K$ be a compact real form of $S^\CC$. As noted in the proof of Lemma \ref{lem:Titsnilfiber2}, $G = T^{\CC} K$ acts transitively on $M$. Invariant integration on $M = G/H$ furnishes a $G$-invariant (definite) Hermitian structure.
\end{proof}

\begin{remark} Hano and Kobayashi studied the implications of the existence of $G$-invariant volume forms on complex homogeneous spaces. The assertion of Theorem \ref{mthm:torus_fiber} for  a definite Hermitian metric is actually contained in \cite[Theorem B]{HanoKob}. 
\end{remark}

\subsection{Applications of Theorem \ref{mthm:torus_fiber}}

%Now we have all the ingredients together to prove our next main
%results.
%We write $(\g,\J)$ (rather than $(\g_M,\J_M)$) for the pseudo-Hermitian
%structure on $M$.
Let $(M,\g,\J)$ be a compact homogeneous pseudo-Hermitian manifold.
Taking $G = \Aut(M,\g,\J)^\circ$ in Theorem \ref{mthm:torus_fiber}
 we obtain that
the identity component of the holomorphic isometry group of a
compact homogeneous pseudo-Hermitian manifold is compact.
In the simply connected case, this can be strengthened 
to: 
{\renewcommand{\themthm}{\ref{mcor:hermitian}}
\begin{mcor}
The holomorphic isometry group of a simply connected
compact homogeneous pseudo-Hermitian manifold is compact.
\end{mcor}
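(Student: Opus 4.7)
Let $I = \Aut(M, \g, \J)$ and $G = I^\circ$. Applying Theorem~\ref{mthm:torus_fiber} to $G$ gives that $G$ is compact, acts transitively on $M$, and the Tits fibration $F \to M \to X$ has torus fiber $F = F^\CC$ and simply connected generalized flag manifold base $X$. Since the Tits fibration is canonical, every $\phi \in I$ preserves it, yielding a homomorphism $\tau \colon I \to \Hol(X)$. The plan is to show that $I$ is compact by proving compactness of $\tau(I)$ and $\ker\tau$ separately.

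For $\tau(I)$: the image $\tau(G)$ is the continuous image of the compact group $G$ and can be identified with a maximal compact subgroup $K_X$ of $\Hol(X)^\circ$ (the torus part of $G$ lies in $F^\CC$ and acts trivially on $X$, while the semisimple part maps onto the compact real form of the semisimple part of $G^\CC$). Because $I$ normalizes $G$, the image $\tau(I)$ normalizes $K_X$; but a maximal compact subgroup of a complex reductive group is its own normalizer, so $N_{\Hol(X)^\circ}(K_X) = K_X$. Combined with the fact that $\Hol(X)/\Hol(X)^\circ$ is finite for generalized flag manifolds (being governed by outer automorphisms of the underlying semisimple complex group and permutations of simple factors), $N_{\Hol(X)}(K_X)$ is compact, and hence so is $\tau(I)$.

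For $\ker\tau$: simple connectedness of $M$ together with connectedness of $G$ forces the isotropy $H = G \cap I_e$ at a basepoint to be connected via the long exact sequence of $H \to G \to M$, and then Lemma~\ref{lem:Tits_nilfiber}(3) implies that $F^\CC$ acts freely on $M$, so the Tits fibration is a principal $F^\CC$-bundle. Each $\phi \in \ker\tau$ preserves every fiber and acts on the fiber over the basepoint as a biholomorphism of the compact complex torus $F^\CC$. The long exact sequence combined with $\pi_1(M) = 0$ makes the connecting map $\pi_2(X) \to \pi_1(F)$ surjective; since $\phi$ acts trivially on $X$ it acts trivially on $\pi_2(X)$, and equivariance of the connecting map forces $\phi_*$ to be trivial on $\pi_1(F)$. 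A holomorphic self-map of a complex torus fixing its lattice pointwise is a translation, so $\phi$ acts fiberwise by translations.

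These fiberwise translations assemble into a holomorphic map $X \to F^\CC$, which must be constant since $X$ is a simply connected compact complex manifold with $H^0(X, \Omega^1_X) = 0$. Hence $\phi$ acts on $M$ as a single element $\alpha_\phi \in F^\CC$, and the assignment $\phi \mapsto \alpha_\phi$ is a continuous injective homomorphism $\ker\tau \hookrightarrow F^\CC$ (injectivity from freeness of the $F^\CC$-action); since $F^\CC$ is a compact complex torus, $\ker\tau$ is compact. Combining the two steps, $I$ is an extension of the compact group $\tau(I)$ by the compact group $\ker\tau$, and hence compact. The step that I expect to require the most care is the $\pi_1$-rigidity argument forcing each $\phi \in \ker\tau$ to act fiberwise by a translation; once this and the normalizer observation for $\tau(I)$ are in hand, the remaining assembly is routine.
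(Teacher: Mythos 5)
Your overall strategy is genuinely different from the paper's, and much heavier. The paper's proof is short: since $M$ is simply connected, Montgomery's theorem gives that a maximal compact semisimple subgroup $K$ of $G=\Aut(M,\g,\J)^\circ$ already acts transitively, so $\Tan_xM\cong\frk/(\frk\cap\frh)$; as $K$ is characteristic in $G=KA$, the isotropy representation of the \emph{full} stabilizer $\Aut(M,\g,\J)_x$ factors through a closed subgroup of the compact group $\Aut(K)$, hence has compact closure in $\GL(\Tan_xM)$, so averaging produces an invariant Riemannian metric and compactness follows. Note where simple connectedness enters there: it kills the abelian factor $A$ (whose automorphism group is \emph{not} compact) from the isotropy representation. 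Your route through the Tits fibration can in principle work, and your $\pi_1$-rigidity argument forcing elements of $\ker\tau$ to act by fiberwise translations is sound; moreover $\ker\tau$ is genuinely compact, though for the right reason: its elements literally coincide, as diffeomorphisms of $M$, with elements of the compact torus acting on $M$, and $\ker\tau$ is closed — a mere continuous injective homomorphism into a compact group would not suffice (consider $\ZZ\to\SS^1$).

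The genuine gap is in the treatment of $\tau(I)$ and the final assembly. You show (modulo the point below) that $\tau(I)$ is \emph{contained in} a compact subgroup of $\Hol(X)$, but a subgroup of a compact group need not be closed, hence need not be compact; and $I$ compact requires $I/\ker\tau$ compact in the quotient topology, i.e.\ $\tau(I)$ closed, i.e.\ $I/(G\cdot\ker\tau)$ finite. This is exactly the crux of upgrading ``$G$ compact'' to ``$I$ compact'': the torus example in the introduction shows that the component group $I/I^\circ$ can be an infinite lattice sitting inside (the real points of) a group whose maximal compact subgroups would happily contain any finite approximation of it. Your argument nowhere rules out that $I$ has infinitely many components whose images wind densely in the compact normalizer, so the concluding sentence ``extension of compact by compact, hence compact'' is not yet earned. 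A secondary, fixable, inaccuracy: $\tau(G)$ need not be a maximal compact subgroup of $\Hol(X)^\circ$ (e.g.\ $\Sp(n+1)$ acts transitively on $\mathrm{P}^{2n+1}\CC$ but is not maximal compact in $\PGL(2n+2,\CC)$, and one can choose an indefinite $\Sp(n+1)$-invariant pseudo-Hermitian metric there), so ``a maximal compact subgroup is its own normalizer'' does not apply as stated; the normalizer of a compact \emph{transitive} subgroup $K_1\subseteq\Hol(X)$ is still compact, because its centralizer embeds into $\Nor_{K_1}(H_1)/H_1$ and its outer part is finite, and that is what you should invoke. Likewise, $F^\CC=Q^\CC/(H^\CC)^\circ$ is a quotient, not a subgroup, so Lemma \ref{lem:Tits_nilfiber}(3) does not directly give a free $F^\CC$-action; the principal bundle structure comes instead from $H^\CC$ being connected and normal in $Q^\CC$ with abelian quotient. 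The cleanest repair of the main gap is to fall back on the paper's device: show that the isotropy group $I_x$ has bounded image in $\GL(\Tan_xM)$ and average.
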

}
\begin{proof}
Since $M$ is simply connected, a maximal compact semisimple subgroup
$K$ of $G$ already acts transitively on
$M$ (\cf \cite{montgomery}).  
Therefore we may identify $\Tan_x M$ at $x\in M$ with $\frk/(\frh\cap \frk)$,
where $\frk$ is the Lie algebra of $K$. 
Now $G =KA$ is an almost direct product of a compact semisimple Lie
group $K$ and a compact abelian group $A$. In particular, 
 $K$ is characteristic in $G$ and the isotropy representation of the stabilizer $\Aut(M,\g,\J)_x$  
factorizes over a closed subgroup of the automorphism group of $K$.
As this latter group is compact, the isotropy representation has compact
closure in $\GL(\Tan_x M)$. If follows that there exists a Riemannian
metric on $M$ that is preserved by $\Aut(M,\g,\J)$. 
Hence $\Aut(M,\g,\J)$ is compact.
\end{proof}

A particular interesting case for Corollary \ref{mcor:hermitian} arises if in the Tits fibration 
the fiber is trivial, that is, if $M$ is a generalized flag manifold. In fact, generalized flag 
manifolds are always simply connected and have non-zero Euler characteristic, conversely 
any compact complex homogeneous  space with non-zero Euler characteristic is a generalized flag manifold 
(see \cite{goto,tits}). 

\begin{example}\label{ex:euler_non0}
Let $M$ be a compact homogeneous pseudo-Hermitian manifold with
non-zero Euler charac\-teristic. Then $M$ is biholomorphic to a generalized 
flag manifold. By Corollary \ref{mcor:hermitian}, 
the holomorphic isometry group $G$ of $M$ is compact.
Since the Euler characteristic of $M$ is non-zero, the stabilizer $H$
contains a maximal torus of $G$.
Hence $G=K$ is a compact semisimple Lie group.
Moreover, $M$ is a product of pseudo-Hermitian irreducible flag manifolds.
\end{example} 
\begin{proof}
The only thing that remains to be proved is the last statement.
Let $\frg_1$, $\frg_2$ be two simple factors of $\frg$, and let
$x_1\in\frg_1$, $x_2\in\frg_2$.
Since $\frh$ contains a maximal torus $\frt$ of $\frg$,
$x_1$ can be written $x_1=[t,y_1]$ for some $t\in\frt\cap\frg_1$
and $y_1\in\frg_1$.
Recalling that the induced symmetric bilinear form $\met$ on $\frg$ is
$\frh$-invariant, we obtain
\[
\langle x_1, x_2\rangle
=
\langle [t,y_1],y_2\rangle
=
-\langle y_1, [t,y_2]\rangle
= 0.
\]
It follows that the simple factors of $\frg$ are mutually orthogonal
with respect to $\met$. This implies that the pseudo-Hermitian metric
$\g$ on $M$ is a product of the metrics induced on the irreducible factors of $M$.
\end{proof} 

%%%%
%\subsection{Some examples}
%The following examples illustrate certain aspects of compact complex
%pseudo-Hermitian homogeneous manifolds.
%
%\begin{example}[Non-compact holomorphic isometry group]\label{ex:torus_SLZ}
%Let $T=\CC^{p,q}/\ZZ^{2p+2q}$ be a complex torus with a pseudo-Hermitian
%metric of complex signature $(p,q)$ with $p,q>0$.
%The connected component of the holomorphic isometry group is $T$ itself.
%However, the full holomorphic isometry group contains
%$\Lambda=\UU_{p,q}\cap\SL_{2p+2q}(\ZZ)$. This group
%$\Lambda$ is a lattice in $\SU_{p,q}$, and infinite since $p,q>0$.
%Hence the group of holomorphic isometries of $T$ is not compact.
%\end{example}
%
%It is well-known that the compact Hermitian homogeneous manifolds are
%not necessarily K\"ahlerian:
%
%\begin{example}[Non-K\"ahler metric]
%Wang \cite[Section 12]{wang} described a class of simply connected compact complex
%homogeneous spaces not admitting any K\"ahler metric (they are
%called \emph{M-spaces} in \cite{wang}).
%Since each of these spaces can be equipped with a positive definite
%Hermitian metric, we see that not all compact Hermitian homogeneous
%spaces are K\"ahler manifolds.
%\end{example}

%
\subsubsection{Examples}

In general, the decomposition of $\Aut(M,\g,\J)^\circ$ into a compact semisimple
factor and an abelian factor does not translate into a splitting of $M$
into corresponding complex factors.
This distinguishes the case of arbitrary pseudo-Hermitian metrics
from that of pseudo-K\"ahler metrics (discussed in  Appendix A).

\begin{example}[Complex structures on the unitary group $\UU(2)$]\label{ex:no_product}
Let $\Lambda$ be a discrete uniform subgroup in $\RR^{>0} \leq \CC^*$ and let 
$$    M =   (\CC^2 - \zsp)  /\,    \Lambda    $$
be  a  two-dimensional Hopf manifold, 
which  is a homogeneous space for $\GL(2,\CC)$.
The complex homogeneous manifold
$$
\tilde M = \GL(2 ,\CC) / \Aff(\CC)
= \CC^2-\zsp
$$
is a covering manifold of $M$,
where $\Aff(\CC) = \{ \left(\begin{smallmatrix}
	\alpha & u \\ 0 & 1
\end{smallmatrix}\right)\ \mid\ \alpha \neq 0 \}$.
Observe that  $\tilde M$ has a holomorphic fibration 
\begin{equation*} \label{eq:holf}
\CC^*  \longto \tilde M  \longto   \mathrm{P}^1 \CC = \SL(2,\CC) / B \, ,
\end{equation*}
where $B$ is the Borel subgroup of $\SL(2, \CC)$ and $\CC^*$ acts as the center of $\GL(2,\CC)$.  
We then put 
$$  G^\CC = \GL(2,\CC) / \Lambda \quad  \text{ and } \quad
M= G^\CC / \Aff(\CC) . $$
Note that $ G^\CC$ has maximal compact subgroup $ \SU(2) \cdot  T^1_{\CC}$, where $T^1_\CC = \CC^*/\Lambda$.
%  where $T_{\CC}$ is the center of $G^\CC$.  
Let $N = \RR^{>0}/ \Lambda$ and observe that the real compact Lie subgroup $$G = \SU(2) \cdot N $$ acts simply transitively on $M$.  Since $G$ is isomorphic to $\UU(2)$,  $M$ is biholomorphic to the compact Lie group 
 $\UU(2)$ endowed with a left-invariant complex structure. Clearly, 
$M$ (being  homeomorphic to $\SS^1 \times \SS^3$)  
cannot be K\"ahler, and neither is it a product of complex manifolds.
Consider the  compact torus  $\SS^{1} = \SU(2) \cap B$. Then  
the Tits fibration of $M$ (induced by $T^1_{\CC}$) takes the form
$$  N \SS^{1} \longto M = G  \longto   \mathrm{P}^1 \CC = \SS^{3}/\, \SS^{1} \; .  $$
\end{example}

\begin{remark}
Left-invariant pseudo-Hermitian metrics on  the unitary group $\UU(2)$ are computed in \cite[Theorem 4.6]{ACHK}).
These  metrics  are \emph{locally conformally pseudo-K\"ahler}.
In fact, \cite[Theorem 1]{HK} shows that the Tits fibration of a
complex compact homogeneous manifold with a (non-K\"ahler) locally
conformally K\"ahler metric has one-dimensional fiber.
%In fact, our Theorem \ref{}
%together with  \cite[Theorem 1]{HK} shows that
%a compact homogeneous complex manifold admits a (non-K\"ahler) locally 
%conformally K\"ahler metric if and only if its Tits fibration has 
%one-dimensional fiber.
\end{remark}

The Hopf manifolds are not simply connected. It is a famous observation due to Calabi and Eckmann \cite{CE} that there are non-K\"ahler simply connected compact homogeneous complex manifolds. 

\begin{example}[Calabi-Eckmann manifolds]
Let $\Lambda \cong \CC$ act on  $\CC^{p}  \times \CC^{q} $ via $(u, v) \mapsto ( \e^{t} u, \e^{\alpha t} v)$, where $\alpha \in \CC - \RR$, and consider
$$    M  = \bigl((\CC^{p} - \zsp)\times(\CC^{q}  - \zsp)\bigr)/  \Lambda  \;  , \quad  \text{ $p,q >1$} .   $$ 
Then $M$ is a compact complex homogeneous manifold for $\GL(p,\CC) \times \GL(q, \CC)$. Its 
Tits fibration is of the form 
$$    \CC/ (\ZZ + \alpha \ZZ)   \longto   M  \longto  \mathrm{P}^{p-1} \CC\times  \mathrm{P}^{q-1} \CC $$
and has torus fiber (an elliptic curve). In particular, $M$ admits invariant (pseudo-) Hermitian metrics 
and is diffeomorphic to $\SS^{2p-1} \times \SS^{2q-1}$. 

\end{example}

\appendix

% !TEX root = pseudoHermitian.tex

%%%%%%%%%%%%%%%%%%%%%%%
\section{Additional proofs}
\label{sec:newproof}

In this appendix we derive 
% the splitting property of a compact homogeneous
%pseudo-K\"ahler manifold.  Namely, 
that any compact homogeneous 
pseudo-K\"ahler manifold is a  pseudo-K\"ahlerian product of homogeneous spaces for a compact 
semisimple group and  for a compact abelian group.
Besides the compactness of the identity component of the holomorphic
isometry group, this 
is the essential content of Dorfmeister and Guan's theorem \cite{DG}.
The splitting is distinctive for the K\"ahler and symplectic case. 
%in relation to the more general Hermitian situation which we described in 
%Theorem \ref{mthm:hermitian}. 
For the proof  we use our Theorem \ref{mthm:torus_fiber} on compact pseudo-Hermitian homogeneous spaces and specialize it to the case  that the fundamental  two-form is closed.
Note that proofs for Dorfmeister and Guan's theorem using the properties of
Hamiltonian group actions have been given by Huckleberry \cite{huckleberry}
and Guan \cite{guan}.

%%%%%%%%%%%%%%%%%%%%%%%
%%
%\subsection{Simply connected complex homogeneous spaces}
%
%\begin{thm}\label{thm:complex}
%Let $M$ be a simply connected complex homogeneous manifold with a finite
%$\Aut(M,\J)$-invariant measure.
%Then $M$ and $\Aut(M,\J)$ are compact.
%\end{thm}
%\begin{proof}
%Let $G=\Aut(M,\J)$.
%The invariant measure induces the Hermitian Bergman metric $\g$ on $M$
%(cf.~Koszul \cite[Section 3]{koszul}), and this metric is preserved by
%$\Aut(M,\J)$, that is, $\Aut(M,\J)=\Aut(M,\g,\J)$.
%So Corollary \ref{mcor:hermitian} implies that $\Aut(M,\J)$ is compact,
%and hence $M$ is.
%\end{proof}
%
%Compare this theorem to a result of Wang \cite[Theorem III]{wang}, which
%states that for a simply connected compact complex homogeneous
%manifold, $\Aut(M,\J)^\circ$ is an almost direct product of a complex
%semisimple Lie group and a complex abelian Lie group.
%
\subsection{Splitting of compact homogeneous symplectic manifolds} \label{A1}

We  first review the important splitting  result on homogeneous symplectic manifolds
given by Zwart and Boothby \cite[Theorem 5.11, Corollary 6.10]{ZB}:  
%Their main content is  the splitting  into a homogeneous symplectic space of a compact semisimple group and
%a symplectic homogeneous space of a solvable group.

\begin{thm}[Zwart \& Boothby]\label{thm:ZB}
Let $M$ be a compact symplectic homogeneous manifold, with
$G\subseteq\Aut(M,\omega_M)$ a connected Lie group acting transitively
on $M$, so that $M=G/H$ for some closed subgroup $H$ of $G$. Then:
\begin{enumerate}
\item
$G$ is a  direct product $G=KR$, where $K$ is a compact semisimple
subgroup with trivial center and $R$ is the solvable radical of $G$.
\item
$H$ is a  direct product of $H_K=H\cap K$ and $H_R=H\cap R$.
Moreover, $H_K$ is connected and is the centralizer of a torus. 
\item
$M=M_K\times M_R$ splits as a product of compact symplectic homogeneous
spaces $M_K=K/H_K$ and $M_R=R/H_R$.
\item
$R=A\ltimes N$ with an abelian subgroup $A$ and abelian nilradical $N$,
where $A$ acts on $N$ by rotations.
\end{enumerate}
\end{thm}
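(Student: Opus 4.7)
My plan is to deduce Theorem \ref{thm:ZB} from the algebraic structure theorem on quasi-invariant skew-symmetric forms (Theorem \ref{thm:skew_solvable}), combined with the additional symmetry encoded by $\d\omega_M = 0$. First, I translate the hypothesis into Lie algebra language: let $\omega$ be the skew-symmetric form on $\frg$ obtained by evaluating $\omega_M$ on fundamental vector fields at the base point. Since $M$ has finite volume, Theorem \ref{thm:isom_is_quasi-invariant} applied to $\theta = \omega_M$ shows that $\omega$ is quasi-invariant under the adjoint action of $G$; non-degeneracy of $\omega_M$ on $T_pM = \frg/\frh$ forces $\frh = \frg^{\perpo}$; and effectivity of the $G$-action makes $(\frg,\omega)$ effective. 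Crucially, closedness of $\omega_M$ translates, via the standard formula for $\d$ on left-invariant forms, into the Chevalley--Eilenberg cocycle identity
\[
\omega([x,y],z) + \omega([y,z],x) + \omega([z,x],y) = 0 \quad \text{for all } x,y,z \in \frg.
\]

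Applying Theorem \ref{thm:skew_solvable} gives a direct product $\frg = \frk \times \frr$ with $\frk$ compact semisimple, $\frr$ solvable of imaginary type, and nilradical $\frn$ abelian. I then use the cocycle identity to establish the orthogonality $\omega(\frk,\frr) = 0$: taking $x,z \in \frk$ and $y \in \frr$, the brackets $[z,y]$ and $[y,x]$ vanish because $[\frk,\frr]=0$, collapsing the identity to $\omega([x,z],y)=0$, and since $\frk = [\frk,\frk]$ this forces $\omega(\frk,\frr) = 0$. Consequently, writing any $h \in \frh = \frg^{\perpo}$ as $h = h_\frk + h_\frr$ with $h_\frk\in\frk$, $h_\frr\in\frr$, the equations $0 = \omega(h,\frk) = \omega(h_\frk,\frk)$ combined with $\omega(\frk,\frr)=0$ force both $h_\frk$ and $h_\frr$ into $\frg^{\perpo} = \frh$. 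Setting $\frh_K := \frh\cap\frk$ and $\frh_R := \frh\cap\frr$, I obtain the splitting $\frh = \frh_K \oplus \frh_R$, and $\omega$ descends to non-degenerate symplectic forms on each of $\frk/\frh_K$ and $\frr/\frh_R$.

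To lift this to the group and manifold level, observe that $\frk \cap \frr = 0$ means the connected Lie subgroups $K, R$ form an almost direct product $G = KR$ with finite central intersection $K \cap R$. Effectivity of the $G$-action implies $\Zen(K)\cap H$ contains no nontrivial normal subgroup of $G$; after passing to the adjoint quotient of the semisimple factor if necessary, one arrives at a true direct product $G = K \times R$ with $K$ of trivial center. The algebraic splitting $\frh = \frh_K \oplus \frh_R$ then integrates to a decomposition $H = H_K \times H_R$, and consequently $M = M_K \times M_R$ as a symplectic product of $M_K = K/H_K$ and $M_R = R/H_R$. The factor $M_K$ is a compact symplectic homogeneous space of a compact semisimple Lie group; by the classical description of such spaces as coadjoint orbits, it is a generalized flag manifold and $H_K$ is the connected centralizer of a torus in $K$. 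For the decomposition $R = A \ltimes N$ asserted in part (4), I exploit that $\frr/\frn$ is abelian and that its action on $\frn$ has purely imaginary eigenvalues: a Cartan subalgebra of the solvable $\frr$ furnishes an abelian complement $\fra$ to $\frn$, integrating to $R = A \ltimes N$ with $A$ acting on $N$ by rotations.

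The main obstacle I anticipate is the group-level passage from the Lie algebra direct product $\frg = \frk\times\frr$ to a genuine direct product $G = K\times R$ with $K$ of trivial center, while simultaneously ensuring $H$ splits compatibly as $H_K\times H_R$. This requires careful bookkeeping of the finite center of the compact semisimple factor and its interaction with the isotropy $H$, possibly forcing a passage to the adjoint quotient of $K$. A secondary technical point is the construction of the abelian complement $\fra$ to $\frn$ in $\frr$, which is not automatic for general solvable Lie algebras but is available here thanks to the imaginary-type structure with abelian nilradical.
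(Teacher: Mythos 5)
Your reduction to the algebraic model is set up correctly (quasi-invariance via Theorem \ref{thm:isom_is_quasi-invariant}, $\frh=\frg^{\perpo}$, effectivity, and the cocycle identity \eqref{eq:closed}), but the central structural step is circular. Theorem \ref{thm:skew_solvable} \emph{assumes} that $\frg=\frk\times\frr$ is a direct product; it does not produce that decomposition. What quasi-invariance alone gives you (Proposition \ref{prop:Sorthogonal}, Theorem \ref{thm:nil_skew}) is only the semidirect product $\frg=\frk\ltimes\frr$ with $\frk$ of compact type. Your subsequent derivation of $\om(\frk,\frr)=\zsp$ then uses $[\frk,\frr]=\zsp$ to kill two of the three terms of the cocycle identity, i.e.\ it presupposes exactly the commutation you still need to prove. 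The paper's route avoids this: closedness is first used \emph{without} any product assumption to show $[\frg,\frg]\perpo\frn$, hence $\frk\perpo\frr$, $[\frk,\frn]\perpo\frg$ and $[\frn,\frn]\perpo\frg$ (Lemma \ref{lem:symplectic}); effectivity then forces $[\frn,\frn]=\zsp$, after which $[\frk,\frn]$ is an ideal contained in $\frg^{\perpo}$ and hence zero, giving $[\frk,\frr]\subseteq[\frk,\frn]=\zsp$ (Proposition \ref{pro:symplectic}). This is the one place where $\d\omega_M=0$ is genuinely indispensable for part (1), and your proposal skips it.

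A secondary weak point: the passage from $\frh=\frh_K\oplus\frh_R$ to $H=H_K\times H_R$ does not simply ``integrate,'' since $H$ need not be connected. The paper gets this by showing $g\in G_{\om}$ iff $g=kr$ with $k\in K\cap G_{\om}$, $r\in R\cap G_{\om}$ (using $\frk\perpo\frr$), then invoking Borel's theorem that $K_{\om}=K\cap G_{\om}$ is connected, equal to the centralizer of a torus, and contained in $H$ — which simultaneously yields the connectedness statement in (2) and the triviality of $\Zen(K)$ and $K\cap R$. Your appeal to the coadjoint-orbit description of $M_K$ is an acceptable substitute for that classical input once the splitting of $H$ is in hand, and your sketch of (4) is consistent with Theorem \ref{thm:skew_solvable}; but the direct-product step above must be repaired before any of (1)--(3) follows.
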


\subsubsection{Nil-invariant closed skew-symmetric forms}

Let $\om$  be a nil-invariant closed skew-symmetric form on the Lie algebra
$\frg$. A skew-symmetric form on $\frg$ is called \emph{closed},  if for all $x,y,z\in\frg$,
\begin{equation}
\omega([x,y],z) + \omega([y,z],x) + \omega([z,x],y) = 0.
\label{eq:closed}
\end{equation}
Since $\om$ is nil-invariant, we may assume that $\frg = \frk \ltimes \frr$, where $\frk$ is a compact semisimple Lie subalgebra and $\frr$ is the solvable radical of $\frg$. Furthermore, let $\frn \subseteq \frr$ denote the nilradical of $\frg$. The following two results can then be seen as an addendum to 
Section \ref{sec:nil_skew}:

\begin{lem}[Nil-invariant closed skew-symmetric forms] \label{lem:symplectic}
\hspace{1cm}
\begin{enumerate}
\item $[\frg, \frg] \, \perpo \, \frn$ and $\frk  \, \perpo \, \frr$. 
\item $[\frk, \frn] \,  \perpo \, \frg$ and $[\frn, \frn] \,  \perpo \, \frg$.
\item $\frg^{\perpo} = (\frg^{\perpo} \cap \frk) +  (\frg^{\perpo} \cap \frr )$. 
\end{enumerate}
\end{lem}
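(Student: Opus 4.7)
The plan is to prove (1) first and reduce everything else to it. The driving identity I will isolate is the following closedness-plus-nil-invariance consequence: if $n\in\frg_\om$, then $\omega([x,y],n)=0$ for all $x,y\in\frg$. To get this, apply the closedness identity $\omega([x,y],n)+\omega([y,n],x)+\omega([n,x],y)=0$ and use $n\in\frg_\om$ to rewrite the last two terms as $\omega(y,[n,x])$ and $-\omega(x,[n,y])$; by skew-symmetry of $\omega$ and one more application of $n\in\frg_\om$, these cancel, leaving $\omega([x,y],n)=0$. Since $\frn\subseteq\frg_\om$ by nil-invariance, this immediately yields $[\frg,\frg]\perpo\frn$. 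For $\frk\perpo\frr$, I would use $\frk=[\frk,\frk]$ to write $k=[k_1,k_2]$ and invoke closedness on the triple $(k_1,k_2,r)$: both cyclic terms then have the form $\omega([\frk,\frr],\frk)$, and since $[\frk,\frr]\subseteq\frn$ and $\frk\subseteq[\frg,\frg]$, these pairings vanish by the first part of~(1).

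For (2), the master identity does essentially all the work. To prove $[\frn,\frn]\perpo\frg$, apply closedness to $(n,n',x)$; since both $n$ and $n'$ lie in $\frg_\om$, the two non-trivial cyclic terms $\omega([n',x],n)$ and $\omega([x,n],n')$ are of the form $\omega([\cdot,\cdot],\text{skew})$ and vanish by the identity. To prove $[\frk,\frn]\perpo\frg$, apply closedness to $(k,n,x)$: the term $\omega([x,k],n)$ vanishes by the master identity since $n\in\frg_\om$, while $\omega([n,x],k)$ vanishes because $[n,x]\in\frn\subseteq\frr$ and $\frk\perpo\frr$ by~(1).

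For (3), only (1) is needed. Decompose $\frg=\frk\oplus\frr$ and write $v=v_k+v_r$ and $x=x_k+x_r$. The two ``cross'' terms $\omega(v_k,x_r)$ and $\omega(v_r,x_k)$ vanish by $\frk\perpo\frr$, so $v\in\frg^{\perpo}$ reduces to $\omega(v_k,x_k)+\omega(v_r,x_r)=0$ for all $x_k\in\frk$, $x_r\in\frr$. Setting $x_r=0$ and then $x_k=0$ separates the equation, giving $v_k,v_r\in\frg^{\perpo}$ individually.

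I do not expect any serious obstacle: the whole argument is a short closedness-plus-skewness exercise, and the only point requiring a bit of attention is isolating the clean ``master identity'' $\omega([x,y],n)=0$ for $n\in\frg_\om$, after which (1), (2), (3) fall out by bookkeeping on the Levi decomposition and the inclusions $[\frk,\frr]\subseteq\frn$ and $[\frn,\frg]\subseteq\frn$.
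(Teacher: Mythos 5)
Your proof is correct and follows essentially the same route as the paper: the ``master identity'' $\omega([x,y],n)=0$ for $n\in\frg_\omega$ is exactly the paper's opening computation, and the deductions of $\frk\perpo\frr$, parts (2) and (3) from it are the same bookkeeping (the paper merely routes $[\frk,\frn]\perpo\frg$ through $[\frk,\frn]\perpo\frr$ plus $\frn\perpo\frk$ instead of one direct application of closedness, and leaves (3) to the reader). No gaps.
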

\begin{proof}
Using \eqref{eq:closed} and $\frn \subseteq  \frg_\omega$, 
for all $x\in \frn$,  $k_{1}, k_{2} \in \frg$,  we compute 
\begin{align*}
 \omega([k_1,k_2],x)
&=-\omega([k_2,x],k_1)-\omega([x,k_1],k_2) 
=\omega(k_2,[k_1,x])-\omega([x,k_1],k_2)
=0.
\end{align*}
Hence,  $[\frg, \frg] \perp_{\omega}\frn$. Recall that $\frk = [\frk, \frk]$ and $[\frk, \frr] \subseteq \frn$. 
Thus,  for any $k = [k_1,k_2]$, and $r \in \frr$,  $ \omega(k, r) =  -\omega([k_2,r],k_1)-\omega([r,k_1],k_2) = 0$.
Hence, $\frk \perpo \frr$. Finally, $\omega([k,x],  r)  =  -\omega([x,r],k)-\omega([r,k],x) = 0$, for $x \in \frn$. 
This shows $[\frk, \frn] \perpo \frr$. Since  $[\frk, \frn] \subseteq \frn \perpo \frk$, this implies 
$[\frk, \frn] \perpo \frg$. Similarly, $[\frn, \frn] \perpo \frg$. Hence (1) and (2) hold and (3) follows. 
\end{proof}

\begin{prop}  \label{pro:symplectic}
Suppose that $(\frg, \om)$ is effective. Then:
\begin{enumerate}
\item $\frn$ is abelian.  
 \item $\frg = \frk \times \frr$ is a direct product. 
\end{enumerate}
 \end{prop}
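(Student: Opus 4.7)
The plan is to prove both statements by the same mechanism: produce, in each case, an ideal of $\frg$ contained in the kernel $\frg^{\perpo}$ and then invoke effectiveness of $(\frg,\om)$.

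For part (1), I would start from Lemma \ref{lem:symplectic}(2), which already furnishes $[\frn,\frn]\subseteq\frg^{\perpo}$. Since $\frn$ is an ideal of $\frg$, the Jacobi identity immediately shows that $[\frn,\frn]$ is again an ideal of $\frg$, and effectiveness forces $[\frn,\frn]=0$.

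For part (2), I would set $V=[\frk,\frr]$ and show that $V$ is an ideal of $\frg$ contained in $\frg^{\perpo}$. The standard fact $[\frg,\frr]\subseteq\frn$ (used earlier in the form $[\frs,\frr]=[\frs,\frn]$ in the proof of Proposition \ref{prop:Sorthogonal}) gives $V\subseteq\frn$. Complete reducibility of the $\frk$-action on $\frr$ yields a decomposition $\frr=\frr^\frk\oplus V$ of $\frk$-modules; writing any $v=[k,r]\in V$ with $r=r_0+r_1$ and $r_1\in V\subseteq\frn$, one obtains $v=[k,r_1]\in[\frk,\frn]$, so in fact $V=[\frk,\frn]$. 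Lemma \ref{lem:symplectic}(2) then gives $V\subseteq\frg^{\perpo}$. Now $V=[\frk,\frn]$ is visibly $\frk$-invariant, and for $\frr$-invariance the Jacobi identity expands
\[
[r,[k,n]] = [k,[r,n]] + [[r,k],n]
\]
for $r\in\frr$, $k\in\frk$, $n\in\frn$. The first summand lies in $[\frk,\frn]=V$; the second lies in $[\frn,\frn]$ (since $[r,k]\in[\frg,\frr]\subseteq\frn$), which vanishes by part (1). Hence $V$ is an ideal of $\frg$ sitting inside $\frg^{\perpo}$, and effectiveness forces $V=0$, yielding $[\frk,\frr]=0$ and $\frg=\frk\times\frr$.

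The main subtlety is the logical dependence between the two parts: the check that $V$ is $\frr$-invariant in part (2) relies on killing the residual term $[[r,k],n]\in[\frn,\frn]$, which is precisely the conclusion of part (1). Thus part (1) must be established first and used as input for part (2); the two halves are short individually but cannot be disentangled. Beyond this, the essential ingredient is the identification $V=[\frk,\frr]=[\frk,\frn]$, which is the one place where the Levi-theoretic fact $[\frg,\frr]\subseteq\frn$ enters the argument. Note also that closedness of $\om$ has already been absorbed into Lemma \ref{lem:symplectic}, so no further use of \eqref{eq:closed} is needed here.
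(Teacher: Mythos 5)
Your proof is correct and follows essentially the same route as the paper: part (1) is verbatim the paper's argument, and part (2) is the paper's one-line claim that $[\frk,\frn]$ is an ideal contained in $\frg^{\perpo}$ (hence zero), with you supplying the details the paper leaves implicit — namely the Jacobi-identity check that $[\frk,\frn]$ is $\frr$-invariant once $[\frn,\frn]=\zsp$, and the complete-reducibility identification $[\frk,\frr]=[\frk,\frn]$ needed to pass from $[\frk,\frn]=\zsp$ to $\frg=\frk\times\frr$.
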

 \begin{proof} By the previous lemma, $[\frn, \frn] \subseteq \frg^{\perpo}$. Part (1) follows.
 Since $\frn$ is abelian, $[\frk, \frn]$ is an ideal in $\frg$ and contained in $\frg^{\perpo}$. Thus $[\frk, \frn] = \zsp$.
 \end{proof}
 
 \subsubsection{Proof of the symplectic splitting theorem}  \label{sec:symplectic_split}
 In the algebraic model $(\frg, \om)$ for  the homogeneous symplectic manifold  
 $\om$ is a quasi-invariant \emph{closed} skew-symmetric form and  $\frg^\perpo = \frh$ is  the Lie algebra of 
 $H$. Moreover,  $(\frg, \om)$ is effective.  

According to  Proposition \ref{prop:Sorthogonal},  
$\frg = \frk \ltimes \frr$, where $\frk$  compact semisimple 
and $\frr$ is  solvable. By Proposition \ref{pro:symplectic},  $\frk$ and $\frr$ commute. Hence $G = K \cdot R$ is an almost direct product, meaning that both $K$ and $R$ are closed normal subgroups  of $G$, 
 and that  their intersection is finite and central in $G$. 
 
Moreover,  $\Ad(k)|_{\frr}=\id_{\frr}$ and
$\Ad(r)|_{\frk}=\id_{\frk}$ for all $k\in K$, $r\in R$. Let $G_{\om}$ denote the 
subgroup formed by all elements  $g \in G$ such that $\Ad(g) \in \OO(\om)$.  
Since $\frk \perpo \frr$, by  Lemma \ref{lem:symplectic},  it follows that
$g\in G_{\omega}$ if and only if $g=kr$ with $k\in K\cap G_{\omega}$ and
$r\in R\cap G_{\omega}$.  
Furthermore, by construction,  we have that  $H \subseteq G_{\om}$. 

By Borel \cite[Theorem 1]{borel_kaehlerian}, 
$K$ and  $K_{\om} = K\cap G_{\omega} $ are  connected, $K_{\om}$ is the centralizer of a torus, 
and $ K_{\omega} \subseteq H$. Therefore $H = (H \cap K) (H \cap R)$ (see  \cite[5.10, 5,11]{ZB}). 
Note that $K \cap R$ is central in $K$ and therefore trivial.  %  since $G$ acts effectively.
In the view of  Lemma \ref{lem:symplectic} (3),  we have now proved (1) to (3) of Theorem \ref{thm:ZB}.

\subsection{Compact homogeneous pseudo-K\"ahler manifolds} \label{A2}

Let $M$ be a compact homogeneous pseudo-K\"ahler manifold.
That is, $M$ is pseudo-Hermitian manifold where the metric
$\g_M$ is a pseudo-K\"ahler metric on $M$, which means that the fundamental two-form 
$\omega_M$ is closed. %   (see \eqref{eq:closed}).
%Then the induced form $\omega$ on $\frg$ is closed, that is,
%for all $x,y,z\in\frg$,
%\begin{equation}
%\omega([x,y],z) + \omega([y,z],x) + \omega([z,x],y) = 0.
%\label{eq:closed}
%\end{equation}

\begin{thm}[Dorfmeister \& Guan]\label{thm:DG}
Let $M$ be a compact homogeneous pseudo-K\"ahler manifold,
$G=\Aut(M,\g_M,\J_M)^\circ$, and $H$ a closed subgroup of $G$ such that $M=G/H$.
Then:
\begin{enumerate}
\item
$G$ is compact and $H\subseteq K$.
\item
$M=T\times M_K$, where $T$ is a complex torus and
$M_K=K/H$ is a rational homogeneous variety.
\item
$H$ is connected, has trivial center and is the centralizer of a torus
in $K$.
\item
$M_K=M_{K_1}\times\cdots\times M_{K_m}$ is a product of pseudo-K\"ahler
manifolds, where the $K_i$ are the simple factors of $K$ and
$M_{K_i}=K_i/(H\cap K_i)$.
\end{enumerate}
\end{thm}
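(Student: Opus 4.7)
The plan is to combine Theorem \ref{mthm:torus_fiber}, which already yields that the identity component of the holomorphic isometry group is compact, with the Zwart--Boothby splitting of compact symplectic homogeneous spaces (Theorem \ref{thm:ZB}), and then to upgrade the resulting symplectic product to a holomorphic one by a short representation-theoretic argument. Applying Theorem \ref{mthm:torus_fiber} to $G=\Aut(M,\g_M,\J_M)^\circ$ yields that $G$ is compact. Since the fundamental two-form $\omega_M$ is $G$-invariant, closed and non-degenerate, Theorem \ref{thm:ZB} applies and produces a decomposition $G=K\cdot R$, with $K$ compact semisimple, $\Zen(K)=\{e\}$, and $R$ the solvable radical. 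Compactness of $G$ forces $R$ to be a compact connected solvable group, hence a torus; combined with $\Zen(K)=\{e\}$ this gives a direct product $G=K\times R$.

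The next step is to establish $H\subseteq K$. Zwart--Boothby provides $H=H_K\times H_R$ together with a symplectic splitting $M=M_K\times M_R$ induced by the product action. Effectiveness of $G$ on $M$ forces $R$ to act effectively on $M_R$, and since $R$ is abelian the kernel of this action coincides with $H_R$; hence $H_R=\{e\}$, $H=H_K\subseteq K$, and $M_R=R$ is a compact abelian Lie group. Theorem \ref{mthm:solvable}, applied to the effective transitive solvable action of $R$ on $M_R$, then identifies $M_R$ with a flat complex pseudo-Hermitian torus $T$.

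The main technical step, and the one I expect to be the principal obstacle, is upgrading the topological splitting $M=T\times M_K$ to a holomorphic one. For this I would work at the base point, where $\Tan M\cong\frg/\frh=(\frk/\frh)\oplus\frr$, and exploit that $\J$ descends to an $\Ad(H)$-equivariant endomorphism of $\frg/\frh$. Since $K$ and $R$ commute and $H\subseteq K$, the representation of $H$ on $\frr$ is trivial. On the other hand, Theorem \ref{thm:ZB} identifies $H$ as the centralizer of a torus $T_0\subseteq K$, so $\frk/\frh$ is the isotropy representation of a generalized flag manifold; decomposing it into root spaces for $T_0$ shows that all weights are non-trivial and in particular $(\frk/\frh)^H=\{0\}$. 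A Schur-type averaging argument over the compact group $H$ then forces both off-diagonal blocks of $\J$ to vanish: the block $\frk/\frh\to\frr$ is $H$-invariant with trivial-rep target, hence factors through $(\frk/\frh)^H=0$, while the block $\frr\to\frk/\frh$ has $H$-fixed source, hence has image in $(\frk/\frh)^H=0$. Thus $\J$ preserves the splitting at the base point, and by $G$-invariance on all of $M$, which proves (2). The factor $M_K=K/H$ is then a rational homogeneous variety because $H^\CC$ is the Levi part of a parabolic subgroup in $K^\CC$.

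The remaining parts are essentially bookkeeping. Part (3) is contained in Theorem \ref{thm:ZB} (connectedness of $H_K$ and its description as the centralizer of a torus), combined with $H\cap\Zen(K)=\{e\}$, which is automatic as $\Zen(K)$ is already trivial. For part (4), the argument of Example \ref{ex:euler_non0} applies verbatim: since $H$ contains a maximal torus of $K$ and the form $\met$ on $\frk$ is $H$-invariant, the computation $\langle x_1,x_2\rangle=\langle[t,y_1],x_2\rangle=-\langle y_1,[t,x_2]\rangle=0$ for $x_i\in\frk_i$ and $t\in\frk_1\cap\frh$ shows that distinct simple factors of $\frk$ are mutually $\met$-orthogonal, so the pseudo-K\"ahler metric on $M_K$ splits as a product of the induced metrics on the irreducible flag manifolds $M_{K_i}=K_i/(H\cap K_i)$.
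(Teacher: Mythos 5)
Your proposal is correct, and for parts (1), (3) and (4) it follows essentially the same route as the paper: Theorem \ref{mthm:torus_fiber} for compactness of $G$, the Zwart--Boothby/Borel structure theory for $H$, and the computation of Example \ref{ex:euler_non0} for the orthogonality of the simple factors. The genuine difference lies in the key step, the holomorphic splitting in (2). The paper exploits closedness of $\omega$ directly: since $\frk$ is semisimple, $\omega(x,y)=\kappa(a,[x,y])$ on $\frk$ for some $a\in\frk$, so $\frh=\frk^{\perp_{\omega}}\cap\frk=\zen_{\frk}(a)$ is its own normalizer in $\frk$; condition (J3) then forces $\J\frr$ to normalize $\frh$, whence $\J\frr\subseteq\frr+\frg^\perp$ and $\J\frk\subseteq\frk$, and $\frk\perp_{\met}\frr$ follows from $\langle x,r\rangle=\omega(x,\J r)$. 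You instead import from Zwart--Boothby the fact that $H$ is the centralizer of a torus, deduce $(\frk/\frh)^{H}=\zsp$, and kill the off-diagonal blocks of the $\Ad(H)$-equivariant endomorphism $\J$ of $\frg/\frh=(\frk/\frh)\oplus\frr$ by Schur's lemma. (Strictly, the block $\frk/\frh\to\frr$ factors through the \emph{coinvariants} of $\frk/\frh$, not the invariants, but for compact $H$ these coincide by complete reducibility, so your conclusion stands.) Your route is more uniform --- the same isotypic argument disposes at once of the off-diagonal blocks of $\J$, $\met$ and $\omega$, and also of the blocks between distinct simple factors $\frk_i$, $\frk_j$ --- at the price of using the centralizer-of-a-torus description of $H$ as a black box, which the paper's computation in Lemma \ref{lem:pk_kr_orthog2} effectively re-derives from the closedness of $\omega$ and the non-degeneracy of the Killing form.

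Two small points. First, you should invoke Theorem \ref{mthm:solvable} (or simply observe that a compact abelian Lie group with an invariant complex structure is a complex torus) only \emph{after} the splitting of $\J$ is established; before that, the Zwart--Boothby factor $M_R$ carries no induced almost pseudo-Hermitian structure, so the hypotheses of that theorem are not yet available. Second, for (4) the computation of Example \ref{ex:euler_non0} only yields $\frk_i\perp_{\met}\frk_j$, which by itself does not imply that $\J$ preserves each factor modulo $\frg^\perp$; you still need $\J\frk_i\subseteq\frk_i+\frg^\perp$, which your own Schur argument supplies (since the $H$-modules $\frk_i/\frh_i$ and $\frk_j/\frh_j$ share no non-trivial constituents), and which the paper obtains by the normalizer argument showing $\frk_j\cap\J\frk_i\subseteq\frh_j$.
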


From Theorem \ref{mthm:torus_fiber} we know that the Lie algebra $\frg$ of
$G$ is a product $\frk\times\frr$, where $\frk$ is semisimple of compact
type and the solvable radical $\frr$ is abelian.
Recall that $\frg^\perp=\frg^\perpo = \frh$ is the Lie algebra of $H$,
where $\omega$ is the fundamental two-form of the effective $h$-structure 
$(\frg, J, \met)$,  which describes the homogeneous pseudo-K\"ahler manifold, 
see Section \ref{sec:hermitian_models}. 

\begin{lem}\label{lem:pk_kr_orthog}
$\frk\perp_{\omega}\frr$ and $\frg^\perp=\frk^{\perp_{\omega}}\cap\frk$.
\end{lem}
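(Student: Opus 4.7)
The plan is to deduce both assertions from the structural input already available at this point in the paper, namely that $\frg=\frk\times\frr$ is a direct product (with $\frk$ compact semisimple and $\frr$ abelian, by Theorem \ref{mthm:torus_fiber} and the paragraph preceding the lemma) together with the fact that the fundamental two-form $\omega$ is closed (pseudo-K\"ahler) and quasi-invariant under the adjoint action by Corollary \ref{cor:hermitian_qi}. Since quasi-invariance implies nil-invariance, the hypotheses of Lemma \ref{lem:symplectic} are met, and its part (1) yields immediately $\frk\perp_{\omega}\frr$. This disposes of the first assertion.

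For the second assertion $\frg^\perp=\frk^{\perp_\omega}\cap\frk$, one inclusion is automatic: if $x\in\frk$ with $\omega(x,\frk)=\zsp$, then combining with $\frk\perp_\omega\frr$ gives $\omega(x,\frg)=\zsp$, so $x\in\frg^{\perp_\omega}=\frg^\perp$. The nontrivial direction amounts to showing $\frg^\perp\subseteq\frk$, i.e.\ that $\frg^\perp\cap\frr=\zsp$. The key observation is that because $\frg=\frk\times\frr$ is a \emph{direct} product and $\frr$ is abelian, $\frr$ is central in $\frg$; in particular, every vector subspace of $\frr$ is automatically an ideal of $\frg$. Therefore $\frg^\perp\cap\frr$ is an ideal of $\frg$ contained in the kernel $\frg^\perp$ of the $h$-structure. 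Since $(\frg,J,\met)$ is effective (it arises as the $h$-algebra model of the effective homogeneous space $M=G/H$), the kernel $\frg^\perp$ contains no nontrivial ideal of $\frg$, forcing $\frg^\perp\cap\frr=\zsp$. Consequently $\frg^\perp\subseteq\frk$, and then, using again $\frk\perp_\omega\frr$, we find $\frg^\perp=\frg^{\perp_\omega}\cap\frk=\frk^{\perp_\omega}\cap\frk$.

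There is no real obstacle here; the lemma is essentially a bookkeeping step that packages the consequences of the direct product structure $\frg=\frk\times\frr$ (already supplied by Theorem \ref{mthm:torus_fiber}), the closedness of $\omega$ (via Lemma \ref{lem:symplectic}), and the effectiveness of the $h$-structure. The only point that requires a moment's care is recognizing that the centrality of $\frr$ in $\frg$ upgrades arbitrary subspaces of $\frr$ to ideals, which is exactly what allows effectiveness to be invoked.
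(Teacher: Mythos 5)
Your proposal is correct and follows essentially the same route as the paper: the paper also derives $\frk\perp_\omega\frr$ from Lemma \ref{lem:symplectic} and obtains $\frg^\perp\cap\frr=\zsp$ from the centrality of $\frr$ together with effectiveness, exactly as you spell out. The only step you leave implicit is that passing from $\frg^\perp\cap\frr=\zsp$ to $\frg^\perp\subseteq\frk$ requires the decomposition $\frg^{\perp_\omega}=(\frg^{\perp_\omega}\cap\frk)+(\frg^{\perp_\omega}\cap\frr)$, which is part (3) of Lemma \ref{lem:symplectic} and also follows in one line from $\frk\perp_\omega\frr$.
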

\begin{proof} This is a direct consequence of Lemma \ref{lem:symplectic}, 
taking into account that, since $\frr$ is the center of $\frg$, we have $\frg^\perp\cap\frr=\zsp$. 
%We may suppose $k\in\frk$ is a commutator $k=[k_1,k_2]$, $k_1,k_2\in\frk$.
%By \eqref{eq:closed}, for all $x\in\frg_\omega$,
%\begin{align*}
%\omega(k,x) = \omega([k_1,k_2],x)
%&=-\omega([k_2,x],k_1)-\omega([x,k_1],k_2) \\
%&=\omega(k_2,[k_1,x])-\omega([x,k_1],k_2)
%=0.
%\end{align*}
%Hence $\frk\perp_{\omega}\frg_\omega$.
%In particular, $\frk\perp_{\omega}\frr$, and thus it follows immediately
%that $\frg^\perp=(\frk\cap\frg^\perp)\times(\frr\cap\frg^\perp)$.
%However, since $\frr$ is the center of $\frg$, we have $\frg^\perp\cap\frr=\zsp$. Since $\frk\perp_{\omega}\frr$, we have
%$\frg^\perp=\frk^{\perp_{\omega}}\cap\frk$.
\end{proof}

Let $\frk_1,\ldots,\frk_m$ denote the simple ideals of $\frk$
and $\frh_i=\frg^\perp\cap\frk_i$ for $i=1,\ldots,m$.

\begin{lem}\label{lem:pk_kr_orthog2}
$\frk\perp_{\met}\frr$, and $\frk_i\perp_{\met}\frk_j$ for all $i\neq j$.
In particular,
$\frg^\perp=\frh_1\times\cdots\times\frh_m$.
\end{lem}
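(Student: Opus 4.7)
The plan is to exploit the identity $\met(x, y) = \omega(x, Jy)$, which follows from the $J$-invariance of $\met$ combined with the definition $\omega(u, v) = \met(Ju, v)$. This lets me convert each desired $\met$-orthogonality statement into an $\omega$-orthogonality statement, attackable via Lemma \ref{lem:pk_kr_orthog} and the closedness of $\omega$. The key additional input is an understanding of how $J$ behaves on $\frr$ and on the simple factors $\frk_i$, which I extract from condition (J3) together with the self-normalizing property of the stabilizer.

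First I would apply Zwart-Boothby (Theorem \ref{thm:ZB}) to the underlying compact symplectic homogeneous space $(M, \omega_M)$: combined with $\frh \subseteq \frk$ from Lemma \ref{lem:pk_kr_orthog}, this forces $H_R = \{e\}$ and identifies $\frh = \frh_K$ as the centralizer of a torus $\frt \subseteq \frk$. Decomposing $\frt = \frt_1 \oplus \cdots \oplus \frt_m$ along the simple factors and using that centralizers in a direct product of groups factor accordingly, one gets $\frh = \bigoplus_i \frh_i$ with $\frh_i = Z_{\frk_i}(\frt_i)$ self-normalizing in $\frk_i$ (by the standard root-space argument). Differentiating (J3) yields $[Z, Jx] \equiv J[Z, x] \bmod \frh$ for $Z \in \frh$, $x \in \frg$. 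For $y \in \frr$ (central) this gives $[Z, Jy] \in \frh$; writing $Jy = a + b$ with $a \in \frk$, $b \in \frr$, centrality of $\frr$ gives $[Z, a] \in \frh$ for all $Z \in \frh$, so $a \in \nor_\frk(\frh) = \frh$ and $Jy \in \frh + \frr$. Consequently, for $x \in \frk$,
\[
\met(x, y) = \omega(x, a) + \omega(x, b) = 0,
\]
since $a \in \frh \subseteq \frg^{\perp_\omega}$ and $\frk \perp_\omega \frr$ by Lemma \ref{lem:pk_kr_orthog}; this yields $\frk \perp_\met \frr$.

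For $\frk_i \perp_\met \frk_j$ with $i \neq j$, I would first establish $\frk_i \perp_\omega \frk_j$ by the same closedness argument used for $\frk \perp_\omega \frr$ in Lemma \ref{lem:symplectic}, applied to triples in $\frk_i, \frk_i, \frk_j$ (using that distinct simple factors commute). Then for $y \in \frk_j$ and $Z \in \frh_i \subseteq \frk_i$, the vanishing $[Z, y] = 0$ gives $[Z, Jy] \in \frh$; decomposing $Jy = \sum_k a_k + b$, only the $\frk_i$-component contributes to $[Z, Jy]$, so $[Z, a_i] \in \frh \cap \frk_i = \frh_i$, and self-normalizing of $\frh_i$ in $\frk_i$ gives $a_i \in \frh_i$. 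Then $\met(x, y) = \omega(x, Jy)$ vanishes term by term for $x \in \frk_i$. Finally, the $\met$-orthogonal decomposition $\frg = \frr \oplus \bigoplus_i \frk_i$ together with $\frg^\perp \subseteq \frk$ and $\frg^\perp \cap \frr = \zsp$ gives $\frg^\perp = \bigoplus_i (\frg^\perp \cap \frk_i) = \frh_1 \times \cdots \times \frh_m$. The main obstacle I expect is establishing that the relevant components of $Jy$ land in $\frh$ (resp.~$\frh_i$), which I handle via the self-normalizing property; this is the one place where Zwart-Boothby's identification of $\frh$ as the centralizer of a torus is essential.
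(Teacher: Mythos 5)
Your proof is correct, and its overall architecture coincides with the paper's: establish that $\frg^\perp$ is self-normalizing in $\frk$, use (J3) to deduce $\J\frr\subseteq\frr+\frg^\perp$ and that the $\frk_i$-component of $\J\frk_j$ lies in $\frh_i$, prove $\frk_i\perp_{\omega}\frk_j$ from closedness exactly as you do, and then convert each $\met$-orthogonality into $\omega$-orthogonality via $\langle x,y\rangle=\omega(x,\J y)$. The one genuine divergence is where the self-normalizing property comes from. You import it from Theorem \ref{thm:ZB} (so ultimately from Borel's theorem on K\"ahlerian coset spaces), identifying $\frh$ as the centralizer of a torus; this is legitimate and non-circular in the paper's logical order, but it leans on a comparatively heavy external input and requires the small extra argument that the centralizer decomposes along the simple factors. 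The paper instead gets it in two lines from the data at hand: since $\frk$ is semisimple and $\omega$ is closed, $\omega|_{\frk}$ is exact, so $\omega(x,y)=\kappa(a,[x,y])$ for some $a\in\frk$ and hence $\frg^\perp=\frk^{\perp_{\omega}}\cap\frk=\zen_{\frk}(a)$, which contains a maximal torus and is therefore self-normalizing in each $\frk_i$ simultaneously. That route is more self-contained, and it is worth internalizing because the same exactness trick is what drives the whole pseudo-K\"ahler splitting; your version is a perfectly valid substitute given that Theorem \ref{thm:ZB} has already been established at this point in the appendix.
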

\begin{proof}
Since $\frk$ is semisimple and $\omega$ is closed, there exists a linear
form $\lambda\in\frk^*$ such that $\omega(x,y)=\lambda([x,y])$ for all
$x,y\in\frk$, and since the Killing form $\kappa$ of $\frk$ is
non-degenerate, there exists $a\in\frk$ such that $\lambda=\kappa(a,\cdot)$.
By invariance of $\kappa$, $\frk^{\perp_{\omega}}\cap\frk=\zen_{\frk}(a)$.
Combined with Lemma \ref{lem:pk_kr_orthog},
\[
\frg^\perp=\frk^{\perp_{\omega}}\cap\frk=\zen_{\frk}(a).
\]
As $a\in\frk$ is contained in some maximal torus of $\frk$ which in
turn is contained in $\zen_{\frk}(a)$, it follows that $\frg^\perp$
is its own normalizer in $\frk$.
By (J3) (\cf Section \ref{sec:hermitian_models}), $[\J\frr,\frg^\perp]\subseteq\frg^\perp$, so $\J\frr$ normalizes
$\frg^\perp$ and thus $\J\frr\subseteq\frr+\frg^\perp$.
It follows that for all $r\in\frr$, $x\in\frk$,
\[
\langle x,r\rangle = \omega(x,\J r) = 0,
\]
which proves $\frk\perp_{\met}\frr$.

Suppose $x_i\in\frk_i$, $y_j\in\frk_j$ for $i\neq j$. We may assume that
$x_i=[x_i',x_i'']$ for some $x_i',x_i''\in\frk_i$. By \eqref{eq:closed},
\[
\omega(x_i,y_j)
=
\omega([x_i',x_i''],y_j)
=
-\omega([x_i'',y_j],x_i') -\omega([y_j,x_i'],x_i'')
= 0.
\]
It follows that $\frk_i\perp_{\omega}\frk_j$.
This immediately implies
\[
\frg^\perp = \frh_1\times\cdots\times\frh_m.
\]
Similar to above we can now use (J3) to find that $\frk_j\cap\J\frk_i$ normalizes
$\frg^\perp$ and is thus contained in $\frk_j\cap(\frr+\frg^\perp)=\frh_j$.
So
\[
\langle x_i,y_j\rangle = \omega(\J x_i,y_j) = 0,
\]
which concludes the proof.
\end{proof}

%Let $\tilde{G}$ denote the universal covering group of $G$. Then
%$\tilde{G}=\tilde{K}\times\tilde{R}$, where $\tilde{K}$ and $\tilde{R}$ are
%connected subgroups with Lie algebras $\frk$ and $\frr$, respectively.
%Further, let $\tilde{H}$ denote the preimage of $H$ in $\tilde{G}$.

Now we prove Dorfmeister and Guan's theorem.

\begin{proof}[Proof of Theorem \ref{thm:DG}]
As remarked in \ref{sec:symplectic_split}, $ H = (K\cap G_{\omega})  (H \cap R)$. 
Since $G$ is compact by Theorem \ref{mthm:torus_fiber},  $H\cap R$ is central 
in $G$ and hence trivial, so  $H\subseteq K$. 

By Lemmas \ref{lem:pk_kr_orthog} and \ref{lem:pk_kr_orthog2},
$\frk\perp\frr$ with respect to both $\omega$ and $\met$.
Hence $\J\frk\subseteq\frk$ and $\J\frr\subseteq\frr+\frg^\perp$.
We may thus assume that $\J\frr\subseteq\frr$ without changing the complex
structure on $M$. It follows that $M_K=K/H$ and $R$ are both compact
pseudo-K\"ahler manifolds, where $R$ is a compact torus.

The claim  (3) and the symplectic splitting in (4) is the content of the aforementioned theorem of Borel \cite[Theorem 1]{borel_kaehlerian}. 
%Now (4), namely that $H$ is connected and $K$ has trivial
%center, and that moreover 
Moroever, $M_K=M_{K_1}\times\cdots\times M_{K_m}$ is a
product of pseudo-K\"ahler manifolds 
%with $M_{K_i}=K_i/(H\cap K_i)$,
%are the content of a theorem of Borel \cite[Theorem 1]{borel_kaehlerian}, in
% combination with 
by Lemma \ref{lem:pk_kr_orthog2}.
\end{proof}

%

% !TEX root = pseudoHermitian.tex

%%%%%%%%%%%%%%%%%%%%%%
\subsection{Metric compact symplectic homogeneous spaces}
\label{sec:noncompatible}
%%%%%%%%%%%%%%%%%%%%%%%

In this section, we adopt a slightly different point of view and
study compact homogeneous manifolds $M=G/H$ equipped with an invariant
symplectic form $\omega_M$ and an invariant pseudo-Riemannian metric
$\g_M$, but with no assumption of compatibility between the two.

\begin{thm}\label{thm:metric_symplectic}
Let $M$ be a compact symplectic homogeneous manifold, with
$G\subseteq\Aut(M,\omega_M)$ a connected Lie group acting transitively
on $M$, so that $M=G/H$ for some closed subgroup $H$ of $G$.
In addition, suppose $M$ is equipped with a $G$-invariant pseudo-Riemannian
metric $\g_M$. Then:
\begin{enumerate}
\item
$G$ is an almost direct product $G=KR$, where $K$ is a compact semisimple
subgroup and the solvable radical $R$ of $G$ is a compact abelian subgroup.
\item
$H\subseteq K$ is connected and contains the center of $K$.
\item
$M=(K/H)\times R$ is a product of symplectic manifolds.
\end{enumerate}
\end{thm}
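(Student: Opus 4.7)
The plan is to combine the Zwart--Boothby splitting that comes from the invariant symplectic structure with the Baues--Globke rigidity \cite[Theorem A]{BG} for pseudo-Riemannian solvmanifolds.

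First I apply Theorem \ref{thm:ZB} to $(M,\omega_M,G)$. This immediately yields the decomposition $G = K\cdot R$ with $K$ compact semisimple and $R$ the solvable radical, the factorization $H = H_K\cdot H_R$ with $H_K$ connected and a centralizer of a torus in $K$, the symplectic splitting $M = M_K\times M_R$ into $M_K = K/H_K$ and $M_R = R/H_R$, and the structure $R = A\ltimes N$ with $A$ abelian acting on the abelian nilradical $N$ by rotations.

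Next I exploit the $G$-invariant pseudo-Riemannian metric. Since $\g_M$ is $G$-invariant, the induced symmetric bilinear form $\met$ on $\frg$ is ad-invariant; together with $[\frk,\frr] = \zsp$ and $\frk = [\frk,\frk]$ a short calculation gives $\frk\perp_\met\frr$. Thus $\g_M$ restricts to a genuine $R$-invariant pseudo-Riemannian metric on the compact symplectic factor $M_R$, and \cite[Theorem A]{BG} forces the stabilizer $H_R$ to be discrete. Discreteness of $H_R$ makes $\omega|_\frr$ a non-degenerate ad-invariant skew form on $\frr$ (ad-invariance again comes from $G$-invariance of $\omega_M$), so Proposition \ref{prop:skew_invariant_abelian} applied to the effective pair $(\frr,\omega|_\frr)$ forces $\frr$, and hence $R$, to be abelian. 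Abelianness of $R$ combined with $[\frk,\frr] = \zsp$ turns $H_R$ into a discrete normal subgroup of $G$, and effectivity of $G\subseteq\Aut(M,\omega_M)$ then forces $H_R = \one$. Consequently $H = H_K\subseteq K$, the factor $M_R = R$ is compact (so $R$ is a compact torus), and the symplectic splitting $M = (K/H)\times R$ is inherited from Zwart--Boothby. Connectedness of $H$ and the containment $\Zen(K)\subseteq H$ are likewise inherited, since $H_K$ is the centralizer of a torus in $K$.

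The main technical obstacle is isolating the rigidity step: neither Zwart--Boothby alone (which only forces $R$ to have purely imaginary adjoint eigenvalues via $R = A\ltimes N$) nor the invariance of $\met$ alone suffices to pin down the compactness and commutativity of the radical. The decisive ingredient is the Baues--Globke rigidity theorem, whose application demands that the restricted metric on the submanifold $M_R$ be genuinely non-degenerate --- which ultimately rests on the full ad-invariance of $\met$ afforded by $G$-invariance of $\g_M$, stronger than the nil- or quasi-invariance available in more general pseudo-Riemannian settings.
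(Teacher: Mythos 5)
Your overall architecture (Zwart--Boothby first, then use the metric to kill $H_R$ and abelianize $R$) is the same as the paper's, but the pivotal step rests on a false premise. $G$-invariance of $\g_M$ does \emph{not} make the induced form $\met$ on $\frg$ $\ad(\frg)$-invariant: the characteristic map $p\mapsto\Phi_p$ is only $G$-equivariant, so $\met$ is invariant under $\Ad(H)$ alone, and the finite-volume machinery (Theorem \ref{thm:isom_is_quasi-invariant} combined with Theorem \ref{thm:BGZ_invariance}) upgrades this only to nil-/quasi-invariance, i.e.\ to skewness of $\ad(\frr)$ on $\frg$ and $\ad(\frg)$-invariance of $\met|_{\frr}$ --- never to skewness of $\ad(\frk)$. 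Both conclusions you extract from full invariance then fail: $\frk\perp_{\met}\frr$ need not hold, and $\g_M$ need not restrict to a non-degenerate metric on $M_R$. Example \ref{ex:non_splitting} is an explicit counterexample inside the hypotheses of this very theorem: there $\met$ pairs a complement $W\subseteq\frk$ dually with $\RR^d=\frr$ and vanishes identically on $\frr\times\frr$, so the ``restricted metric'' on $M_R=\TT^d$ is the zero tensor and \cite[Theorem A]{BG} cannot be invoked. Your subsequent appeal to Proposition \ref{prop:skew_invariant_abelian} has the same defect: that proposition needs $\omega|_{\frr}$ to be genuinely invariant ($\frr=\frr_\omega$), whereas quasi-invariance only places the nilradical in $\frr_\omega$; the rotational part $A$ of $R=A\ltimes N$ need not act $\omega$-skewly a priori.

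The gap is repairable, and the repair is what the paper does. By Theorem \ref{thm:BGZ_invariance}\,(2), $\ad(\frr)$ is $\met$-skew on all of $\frg$, so $\frg^{\perp_{\met}}\cap\frr$ is an ideal of $\frr$, hence (using $[\frk,\frr]=\zsp$ from Theorem \ref{thm:ZB}) an ideal of $\frg$ contained in $\frh$, hence $\zsp$ by effectiveness. Since $\frg^{\perp_{\met}}=\frg^{\perp_{\omega}}=\frh$ and $\frk\perp_{\omega}\frr$ by Lemma \ref{lem:symplectic}, this already gives non-degeneracy of $\omega|_{\frr}$ and discreteness of $H_R$ --- no metric on $M_R$ is needed --- and then the \emph{quasi}-invariant statement (Corollary \ref{cor:almost_sym_ab}, equivalently Corollary \ref{cor:quasi_solv_om}) yields that $\frr$ is abelian. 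From there your endgame ($H_R$ discrete, central, normal, hence trivial by effectivity; $R\cong M_R$ compact; the splitting and the properties of $H=H_K$ inherited from Zwart--Boothby) goes through as you wrote it.
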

\begin{proof}
The metric $\g_M$ induces a nil-invariant symmetric bilinear form $\met$ on
$\frg$, and,
as in \eqref{eq:GperpH}, $\frg^{\perp_{\omega}}=\frg^{\perp_{\met}}$ is the
Lie algebra of $H$, and we simply write $\frg^\perp$ for it.

The subalgebra $\frg^\perp$ contains no non-trivial ideal of $\frg$, and
by the splitting in Theorem \ref{thm:ZB}, also
\[
\frk^{\perp_{\omega}}\cap\frk=\frg^\perp\cap\frk
\]
contains no non-trivial ideal of $\frk$ and
\[
\frr^{\perp_{\omega}}\cap\frr=\frg^\perp\cap\frr
\]
contains no non-trivial ideal of $\frr$.

By Theorem \ref{thm:BGZ_invariance}, the restriction of $\met$ to $\frr$
is an invariant bilinear form. Hence $\frg^{\perp_{\met}}\cap\frr$ is an
ideal in $\frr$, and since $[\frk,\frr]=\zsp$, it is an ideal in
$\frg$. Hence
\[
\frr^{\perp_{\omega}}\cap\frr=\frg^\perp\cap\frr=\frg^{\perp_{\met}}\cap\frr=\zsp.
\]
This means $\omega$ is non-degenerate on $\frr$.
From the splitting $\frg^\perp=(\frg^\perp\cap\frk)\times(\frg^\perp\cap\frr)$
it follows that $\frg^\perp\subseteq\frk$.
By Corollary  \ref{cor:almost_sym_ab}, $R$ is abelian.
Then $H_R$ is a normal subgroup of $G$, and since $G$ acts effectively,
$H_R$ is trivial. So $R$ is compact.
\end{proof}

\begin{example}\label{ex:non_splitting}
Let $M_K=K/H$ be any symplectic homogeneous space for a compact
semisimple Lie group $K$ and a suitable closed subgroup $H$.
Consider some vector space decomposition $\frk=\frh\oplus W$ of the Lie
algebra $\frk$ of $K$.
Let $d=\dim W$ and equip $\RR^d$ with the canonical symplectic
form.
On the Lie algebra $\frk\times\RR^d$, define a symmetric bilinear form
$\met$ by $\met|_{\frk\times\frk}=0$, $\met|_{\RR^d\times\RR^d}=0$ and
such that $W$ and $\RR^d$ are dually paired.
This $\met$ extends to a symmetric bilinear form on the
Lie group $G=K\times\TT^d$, and $\met$ is trivially nil-invariant.
Then $M=G/H$ is a compact symplectic homogeneous space with a
pseudo-Riemannian metric. Furthermore, $M$ splits as a product of
symplectic manifolds $K$ and $\TT^d$, but not as a product of
pseudo-Riemannian manifolds.
\end{example}

%%%%% Bibliography %%%%%%

% !TEX root = pseudoHermitian.tex

%%%%% Bibliography %%%%%%

\end{document}